\documentclass[reqno,11pt]{amsart}
\textwidth=14.5cm \oddsidemargin=1cm
\evensidemargin=1cm
\usepackage{amsmath}
\usepackage{amsxtra}
\usepackage{amscd}
\usepackage{amsthm}
\usepackage{amsfonts}
\usepackage{amssymb}
\usepackage{eucal}
\usepackage[matrix,arrow,curve]{xy}


\textwidth 6in
\oddsidemargin 0.25in
\evensidemargin 0.25in
\pagestyle{plain}


\allowdisplaybreaks

 \newtheorem{dfn}{Definition}[section]
 \newtheorem{thm}[dfn]{Theorem}
 \newtheorem{prp}[dfn]{Proposition}
 \newtheorem{lem}[dfn]{Lemma}

 \newtheorem{rmk}[dfn]{Remark}
  \newtheorem{con}[dfn]{Conjecture}



\newcommand{\Nk}{{\mathsf N}}

\newcommand{\ignore}[1]{}


\title{Affine Screening Operators, Affine Laumon Spaces, and  Conjectures Concerning Non-Stationary Ruijsenaars Functions}
\author{J.~Shiraishi}
\address{J. Shiraishi: Graduate School of Mathematical Sciences, University of Tokyo, Komaba, Tokyo 153-8914, Japan}
\email{shiraish@ms.u-tokyo.ac.jp}

\begin{document}

\dedicatory{Dedicated to the memory of Professor Tohru Eguchi}

\begin{abstract}
Based on the screened vertex operators associated with the affine screening operators, 
we introduce the formal power series $f^{\widehat{\mathfrak gl}_N}(x,p|s,\kappa|q,t)$ which we call the {\it non-stationary Ruijsenaars function}. 
We identify it with the generating function for the Euler characteristics of the affine Laumon spaces.
When the parameters $s$ and $\kappa$ are suitably chosen, the limit $t\rightarrow q$ of 
$f^{\widehat{\mathfrak gl}_N}(x,p|s,\kappa|q,q/t)$ gives us the dominant integrable characters of $\widehat{\mathfrak sl}_N$ multiplied by 
$1/(p^N;p^N)_\infty$ ({\it i.e.} the $\widehat{\mathfrak gl}_1$ character). Several conjectures are presented for 
$f^{\widehat{\mathfrak gl}_N}(x,p|s,\kappa|q,t)$, including the bispectral and the Poincar\'e dualities, and the evaluation formula.
The main conjecture asserts that (i) one can normalize  $f^{\widehat{\mathfrak gl}_N}(x,p|s,\kappa|q,t)$ in such a way that 
the limit $\kappa\rightarrow 1$ exists,  and (ii) the limit $f^{{\rm st.}\,\widehat{\mathfrak gl}_N}(x,p|s|q,t)$ gives us the eigenfunction 
of the elliptic Ruijsenaars operator. 
The non-stationary affine $q$-difference Toda operator ${\mathcal T}^{\widehat{\mathfrak gl}_N}(\kappa)$ is introduced,
which comes as an outcome of the study of the Poincar\'e duality conjecture in the affine Toda limit $t\rightarrow 0$.
The main conjecture is examined also in the limiting cases of the affine $q$-difference Toda ($t\rightarrow 0$), and the elliptic 
Calogero-Sutherland ($q,t\rightarrow 1$) equations.
\end{abstract}
\maketitle

\section{Introduction}
In this article, we  introduce an
affine analogue $f^{\widehat{\mathfrak gl}_N}(x,p|s,\kappa|q,t)$ of the asymptotically free eigenfunction  $f^{{\mathfrak gl}_N}(x|s|q,t)$ \cite{S,NS,BFS} for the 
Macdonald operator \cite{M}. We call $f^{\widehat{\mathfrak gl}_N}(x,p|s,\kappa|q,t)$ the {\it non-stationary Ruijsenaars function}. 
We derive it from a construction based on the algebra of the affine (or toroidal) screening operators \cite{FKSW1,FKSW2,KS}. (See Theorem \ref{affine-screening} in 
\S \ref{affine-screening-algebra}.) 
Then, comparing the characters, we identify $f^{\widehat{\mathfrak gl}_N}(x,p|s,\kappa|q,1/t)$
with the Euler characteristics of the affine Laumon spaces \cite{FFNR}. (See Theorem \ref{affine-Laumon} 
and Proposition \ref{affine-Laumon-2} in \S \ref{affine-Laumon-space}.)
Most probably, when the parameters are chosen as in Definition \ref{dominant-integrable-weights} below
(corresponding to the dominant integrable representations of $\widehat{\mathfrak{sl}}_N$), 
$f^{\widehat{\mathfrak gl}_N}(x,p|s,\kappa|q,q/t)$ coincides with 
Etingof and Kirillov Jr.'s affine Macdonald polynomial \cite{EK}, up to some normalization factor. 
Based on the same philosophy as in the work of Atai and Langmann for the non-stationary Heun and Lam\'e equations \cite{AL},
we present several conjectures which support the idea that the 
 $f^{\widehat{\mathfrak gl}_N}(x,p|s,\kappa|q,t)$  could be applied for 
the eigenvalue problems associated with the elliptic Ruijsenaars operator \cite{R}, and whose particular degeneration limits including 
the $q$-difference affine Toda system, elliptic Calogero-Sutherland system.

Let $N\in \mathbb{ Z}_{\geq 2}$. Introduce the collections of independent indeterminates
\begin{align}
&(x,p)=(x_1,x_2,\ldots,x_N, p),\qquad 
(s,\kappa)=(s_1,s_2,\ldots,s_N,\kappa). \nonumber
\end{align} 
Extend the indices of $x$ and $s$ to $\mathbb{Z}$, assuming 
the cyclic identifications $x_{i+N}=x_i$ and $s_{i+N}=s_i$. 
Let $\omega$ be the permutation acting on $(x,p)$ and $(s,\kappa)$ by
$\omega x_i=x_{i+1}, \omega p=p, \omega s_i=s_{i+1}, \omega \kappa=\kappa$.
A sequence $\lambda=(\lambda_1,\lambda_2,\ldots)$ of non-increasing non-negative integers 
with finitely many positive parts
is called a partition, {\it i.e.} $\lambda_i \in \mathbb{Z}_{\geq 0}$, 
$\lambda_1\geq\lambda_2\geq\cdots$, and $|\lambda|:=\sum_i\lambda_i<\infty$. 
Let ${\mathsf P}$ be the set of all partitions. Transposition of $\lambda$ is denoted by $\lambda'$. 
We use the standard notation for the shifted products as in (\ref{shifted-products}) below, see \cite{GR} as for the detail.

\begin{dfn}
For $k\in \mathbb{ Z}/N\mathbb{Z}$, and $\lambda,\mu \in {\mathsf P}$, set
\begin{align*}
&\Nk^{(k|N)}_{\lambda,\mu}(u|q,\kappa)=
\Nk^{(k)}_{\lambda,\mu}(u|q,\kappa)\\
=&
 \prod_{j\geq i\geq 1 \atop j-i \equiv k \,\,({\rm mod}\,N)}
(u q^{-\mu_i+\lambda_{j+1}} \kappa^{-i+j};q)_{\lambda_j-\lambda_{j+1}}
\cdot
\prod_{\beta\geq \alpha \geq 1  \atop \beta-\alpha \equiv -k-1 \,\,({\rm mod}\,N)}
(u q^{\lambda_{\alpha}-\mu_\beta} \kappa^{\alpha-\beta-1};q)_{\mu_{\beta}-\mu_{\beta+1}}. \nonumber
\end{align*}
\end{dfn}

Note that the ordinary $K$-theoretic Nekrasov factor \cite{Nek} reads 
\begin{align}
\Nk_{\lambda,\mu}(u|q,\kappa)
&=
\prod_{(i,j)\in \lambda}(1-u q^{-\mu_i+j-1}\kappa^{\lambda'_j-i}) \cdot 
\prod_{(k,l)\in \mu}(1-u q^{\lambda_k-l}\kappa^{-\mu'_l+k-1}),\nonumber 
\end{align}
or equivalently 
\begin{align*}
&
 \Nk_{\lambda,\mu}(u|q,\kappa)=
 \prod_{j\geq i\geq 1}
(u q^{-\mu_i+\lambda_{j+1}} \kappa^{-i+j};q)_{\lambda_j-\lambda_{j+1}}\cdot
\prod_{\beta\geq \alpha \geq 1}
(u q^{\lambda_{\alpha}-\mu_\beta} \kappa^{\alpha-\beta-1};q)_{\mu_{\beta}-\mu_{\beta+1}}.
\end{align*}
We have the factorization $ \Nk_{\lambda,\mu}(u|q,\kappa)=\prod_{k=1}^N \Nk^{(k|N)}_{\lambda,\mu}(u|q,\kappa)$.

\begin{dfn}
Let 
$f^{\widehat{\mathfrak gl}_N}(x,p|s,\kappa|q,t)$ be the formal power series
	\begin{align}
&f^{\widehat{\mathfrak gl}_N}(x,p|s,\kappa|q,t)
\in \mathbb{Q}(s,\kappa,q,t)[[px_2/x_1,\ldots,px_{N}/x_{N-1},p x_1/x_{N}]],\nonumber\\
&f^{\widehat{\mathfrak gl}_N}(x,p|s,\kappa|q,t)
=
\sum_{\lambda^{(1)},\ldots,\lambda^{(N)}\in {\mathsf P}}
\prod_{i,j=1}^N
{\Nk^{(j-i|N)}_{\lambda^{(i)},\lambda^{(j)}} (ts_j/s_i|q,\kappa) \over \Nk^{(j-i|N)}_{\lambda^{(i)},\lambda^{(j)}} (s_j/s_i|q,\kappa)}
\cdot \prod_{\beta=1}^N\prod_{\alpha\geq 1} ( p x_{\alpha+\beta}/tx_{\alpha+\beta-1})^{\lambda^{(\beta)}_\alpha}.\nonumber
\end{align}
We call $f^{\widehat{\mathfrak gl}_N}(x,p|s,\kappa|q,t)$ the non-stationary Ruijsenaars function.
\end{dfn}
Note that we have 
$\omega f^{\widehat{\mathfrak gl}_N}(x,p|s,\kappa|q,t)=f^{\widehat{\mathfrak gl}_N}(x,p|s,\kappa|q,t)$.
In \S \ref{affine-screening-algebra}, a derivation is presented for the series $ f^{\widehat{\mathfrak gl}_N}(x,p|s,\kappa|q,t)$ 
as a matrix element of a composition of certain screened vertex operators associated with the affine (or toroidal) screening operators found in \cite{FKSW1,FKSW2,KS}.
See Theorem \ref{affine-screening}. For the moment, the role of the deformed $W$-algebras associated with the affine screening operators remains unclear. 

A simple calculation using the $q$-binomial formula \cite{GR} gives us the following factorization formula. See Remark \ref{kappa=0-Mac} below.
\begin{prp}\label{kappa=0}
Setting $\kappa=0$, we have
\begin{align}
f^{\widehat{\mathfrak gl}_N}(x,p|s,0|q,t)
=&
\prod_{1\leq i<j\leq N}
{(p^{j-i}qx_j/x_i;q,p^N)_\infty \over (p^{j-i}tx_j/x_i;q,p^N)_\infty }
\cdot 
\prod_{1\leq i\leq j\leq N}
{(p^{N-j+i}qx_i/x_j;q,p^N)_\infty \over (p^{N-j+i}tx_i/x_j;q,p^N)_\infty }.\nonumber
\end{align}
\end{prp}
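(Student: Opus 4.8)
The plan is to set $\kappa=0$ directly in the defining sum and show that each Nekrasov ratio factorizes, so that the whole series collapses into a product of $q$-binomial series, each of which sums by the $q$-binomial theorem. First I would record what happens to $\Nk^{(k)}_{\lambda,\mu}(u|q,\kappa)$ at $\kappa=0$: in the first product the factor $(uq^{-\mu_i+\lambda_{j+1}}\kappa^{-i+j};q)_{\lambda_j-\lambda_{j+1}}$ equals $1$ whenever $j>i$ (since then $\kappa^{-i+j}=0$ kills the argument), so only the diagonal term $j=i$ survives, and it survives only when $k\equiv 0$; similarly in the second product $(uq^{\lambda_\alpha-\mu_\beta}\kappa^{\alpha-\beta-1};q)_{\mu_\beta-\mu_{\beta+1}}$ is $1$ unless $\alpha=\beta+1$, i.e. unless $\beta-\alpha\equiv -1$, which pairs with $-k-1\equiv -1$, i.e. $k\equiv 0$. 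Hence $\Nk^{(k)}_{\lambda,\mu}(u|q,0)=1$ for $k\not\equiv 0 \pmod N$, while for $k\equiv 0$ one gets a single product over $i\geq 1$ of $(uq^{\lambda_{i+1}-\mu_i};q)_{\lambda_i-\lambda_{i+1}}$ from the first factor times a product over $\beta\geq 1$ of $(uq^{\lambda_{\beta+1}-\mu_\beta};q)_{\mu_\beta-\mu_{\beta+1}}$ from the second (the index shift $\alpha=\beta+1$). In fact this $\kappa=0$ specialization of the full Nekrasov factor is classical: $\Nk_{\lambda,\mu}(u|q,0)$ telescopes, and one has the standard identity $\Nk_{\lambda,\mu}(u|q,0)=\prod_{i\geq 1}(uq^{\lambda_{i+1}-\mu_i};q)_{\infty}/(uq^{\lambda_i-\mu_i};q)_\infty \cdot(\text{finite correction})$; more cleanly, the ratio $\Nk_{\lambda,\mu}(tu|q,0)/\Nk_{\lambda,\mu}(u|q,0)$ is what we need, and it should telescope to something depending only on the $\lambda_i$.

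The key step is therefore to show that only the ``diagonal'' residue $k\equiv 0 \pmod N$, i.e. the terms with $i=j$ in the product $\prod_{i,j=1}^N$, contributes a nontrivial factor, so that
\[
f^{\widehat{\mathfrak gl}_N}(x,p|s,0|q,t)
=\sum_{\lambda^{(1)},\ldots,\lambda^{(N)}\in{\mathsf P}}\ \prod_{\beta=1}^N
\frac{\Nk^{(0)}_{\lambda^{(\beta)},\lambda^{(\beta)}}(t|q,0)}{\Nk^{(0)}_{\lambda^{(\beta)},\lambda^{(\beta)}}(1|q,0)}
\cdot\prod_{\beta=1}^N\prod_{\alpha\geq 1}(px_{\alpha+\beta}/tx_{\alpha+\beta-1})^{\lambda^{(\beta)}_\alpha}.
\]
Since the summand now splits as a product over $\beta$, the sum factors as a product over $\beta=1,\ldots,N$ of one-partition sums $\sum_{\mu\in{\mathsf P}}\big(\Nk^{(0)}_{\mu,\mu}(t|q,0)/\Nk^{(0)}_{\mu,\mu}(1|q,0)\big)\prod_{\alpha\geq 1}z_{\alpha}^{\mu_\alpha}$ with $z_\alpha=px_{\alpha+\beta}/tx_{\alpha+\beta-1}$. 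Writing $\mu$ through its parts and changing variables from $\mu_1\geq\mu_2\geq\cdots$ to the ``gap'' exponents $m_i=\mu_i-\mu_{i+1}\geq 0$, the monomial $\prod_\alpha z_\alpha^{\mu_\alpha}$ becomes a product $\prod_{i\geq 1}(z_1\cdots z_i)^{m_i}$, and the Nekrasov ratio, being a ratio of $(t\,q^{\ast};q)$-Pochhammers of lengths $m_i$, organizes so that the $i$-th factor is exactly a ${}_1\phi_0$ series in $(z_1\cdots z_i)$. Each such series sums by the $q$-binomial theorem $\sum_{m\geq 0}\frac{(a;q)_m}{(q;q)_m}w^m=\frac{(aw;q)_\infty}{(w;q)_\infty}$ to a ratio $(t\cdot(z_1\cdots z_i);q)_\infty^{-1}(q\cdot(z_1\cdots z_i);q)_\infty$-type expression; the constants $a$ are powers of $t$ coming from the $t$ in the numerator Nekrasov factor.

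Finally I would reassemble. For the $\beta$-th sum, $z_1\cdots z_i=p^i x_{i+\beta}/(t^i x_\beta)$, so the product over $i\geq 1$ of the $q$-binomial evaluations, multiplied over $\beta=1,\ldots,N$ and using the cyclic identifications $x_{j+N}=x_j$ together with the regrouping $\prod_{\beta=1}^N\prod_{i\geq 1}(\cdots p^i x_{i+\beta}/x_\beta\cdots)=\prod_{1\leq i<j\leq N}(\cdots)\cdot\prod_{1\leq i\leq j\leq N}(\cdots)$ after splitting $i$ modulo $N$ into a ``$p^{j-i}$'' part and a ``$p^{N-j+i}$'' part and absorbing the remaining $p^N$-periodicity into the $(\,\cdot\,;q,p^N)_\infty$ double Pochhammers, yields precisely the claimed formula; the powers of $t$ in each $(\,\cdot\,;q)$ numerator combine with the $p/t$ prefactor in $z$ to turn the numerator into $p^{j-i}q x_j/x_i$ against denominator $p^{j-i} t x_j/x_i$, and likewise for the $x_i/x_j$ block. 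The main obstacle I expect is bookkeeping: correctly tracking the shift $x_{\alpha+\beta}$ versus $x_{\alpha+\beta-1}$ under the change to gap variables, verifying that the $t$-powers align so that the numerator parameter in the $q$-binomial theorem is exactly $t$ (not some spurious $q^\ast$-power), and reorganizing the resulting single-index infinite products over $(\beta,i)$ into the two double-Pochhammer blocks indexed by $1\leq i<j\leq N$ and $1\leq i\leq j\leq N$ with the correct powers of $p$ and $p^N$-base. Once the one-partition sum is correctly identified with a product of ${}_1\phi_0$'s and evaluated, the rest is a finite rearrangement.
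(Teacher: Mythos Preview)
Your overall strategy is exactly the paper's: reduce to independent one-partition sums and evaluate each by the $q$-binomial theorem. The paper says no more than ``a simple calculation using the $q$-binomial formula'' and points (in Remark~\ref{kappa=0-Mac}) to the analogous non-affine computation as the template, so on the level of method you are on target.

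There is, however, a concrete error in your treatment of the second block of the Nekrasov factor. In
\[
\prod_{\beta\geq\alpha\geq 1 \atop \beta-\alpha\equiv -k-1\,({\rm mod}\,N)}
(uq^{\lambda_\alpha-\mu_\beta}\kappa^{\alpha-\beta-1};q)_{\mu_\beta-\mu_{\beta+1}}
\]
the constraint $\beta\geq\alpha$ forces $\alpha-\beta-1\leq -1$, so $\kappa^{\alpha-\beta-1}\to\infty$ as $\kappa\to 0$, not $0$; your case ``$\alpha=\beta+1$'' is vacuous. These factors are therefore \emph{not} trivial in the ratio: $\frac{(tc;q)_m}{(c;q)_m}\to t^m$ as $c\to\infty$. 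Summed over all $(i,j)\in\{1,\dots,N\}^2$ and all admissible inner indices, this second block contributes exactly $t^{|\boldsymbol\lambda|}$ (for fixed $j$ and inner $\beta$ there are $\beta$ admissible pairs $(i,\alpha)$, and $\sum_\beta\beta(\lambda^{(j)}_\beta-\lambda^{(j)}_{\beta+1})=|\lambda^{(j)}|$). This power is precisely what cancels the $t^{-|\boldsymbol\lambda|}$ hidden in the monomial $\prod(px_{\alpha+\beta}/tx_{\alpha+\beta-1})^{\lambda^{(\beta)}_\alpha}$; without it your displayed intermediate formula is off by this factor and the final product would acquire spurious powers of $1/t$. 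Once corrected, the surviving diagonal ratio gives $(tq^{-m};q)_m/(q^{-m};q)_m=t^m(q/t;q)_m/(q;q)_m$, each one-partition sum becomes
\[
\prod_{a\geq 1}\sum_{m\geq 0}\frac{(q/t;q)_m}{(q;q)_m}\bigl(t\,p^a x_{a+\beta}/x_\beta\bigr)^m
=\prod_{a\geq 1}\frac{(q\,p^a x_{a+\beta}/x_\beta;q)_\infty}{(t\,p^a x_{a+\beta}/x_\beta;q)_\infty},
\]
and your final regrouping of the $(\beta,a)$ product into the two $(q,p^N)_\infty$ blocks goes through as you outline.
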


Dividing $f^{\widehat{\mathfrak gl}_N}(x,p|s,\kappa|q,t)$ by $f^{\widehat{\mathfrak gl}_N}(x,p|s,0|q,t)$, 
we introduce the normalized version $\varphi^{\widehat{\mathfrak gl}_N}(x,p|s,\kappa|q,t)$ as follows. 
\begin{dfn}\label{normalized-form}
Let 
$\varphi^{\widehat{\mathfrak gl}_N}(x,p|s,\kappa|q,t)$ be the formal power series
\begin{align}
&\varphi^{\widehat{\mathfrak gl}_N}(x,p|s,\kappa|q,t)
\in \mathbb{Q}(q,t)[[px_2/x_1,\ldots,px_{N}/x_{N-1},p x_1/x_{N},
\kappa s_2/s_1,\ldots,\kappa s_{N}/s_{N-1},\kappa s_1/s_{N}]],\nonumber\\
&\varphi^{\widehat{\mathfrak gl}_N}(x,p|s,\kappa|q,t)\nonumber\\
=&
\prod_{1\leq i<j\leq N}
{(p^{j-i}tx_j/x_i;q,p^N)_\infty \over (p^{j-i}qx_j/x_i;q,p^N)_\infty }
\cdot 
\prod_{1\leq i\leq j\leq N}
{(p^{N-j+i}tx_i/x_j;q,p^N)_\infty \over (p^{N-j+i}qx_i/x_j;q,p^N)_\infty }
\cdot  f^{\widehat{\mathfrak gl}_N}(x,p|s,\kappa|q,t),
\end{align}
where the coefficients 
$\prod_{i,j=1}^N
{\Nk^{(j-i|N)}_{\lambda^{(i)},\lambda^{(j)}} (ts_j/s_i|q,\kappa) / \Nk^{(j-i|N)}_{\lambda^{(i)},\lambda^{(j)}} (s_j/s_i|q,\kappa)}$
in $f^{\widehat{\mathfrak gl}_N}(x,p|s,\kappa|q,t)$ are 
Taylor expanded in $\kappa$ at $\kappa=0$.
\end{dfn}
We have
$\omega \varphi^{\widehat{\mathfrak gl}_N}(x,p|s,\kappa|q,t)=\varphi^{\widehat{\mathfrak gl}_N}(x,p|s,\kappa|q,t)$.

\begin{con}\label{main-con}
We have the duality properties
\begin{align}
&\varphi^{\widehat{\mathfrak gl}_N}(x,p|s,\kappa|q,t)=\varphi^{\widehat{\mathfrak gl}_N}(s,\kappa|x,p|q,t)
 &(\mbox{bispectral duality}),\label{conjecture-bispectral}\\
& 
\varphi^{\widehat{\mathfrak gl}_N}(x,p|s,\kappa|q,t)=
\varphi^{\widehat{\mathfrak gl}_N}(x,p|s,\kappa|q,q/t) &(\mbox{Poincar\'e duality}).\label{conjecture-Poincare}
\end{align}
\end{con}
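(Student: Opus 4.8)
The plan is to deduce both identities not from the defining sum but from a characterization of $\varphi^{\widehat{\mathfrak gl}_N}$ as the unique formal power series with constant term $1$ solving a suitable system of $q$-difference equations, and then to exhibit the two required symmetries at the level of that system.

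\textbf{Bispectral duality.} The first step is to upgrade the vertex-operator construction of Theorem \ref{affine-screening} to an operator identity: the composition of screened vertex operators whose matrix element produces $f^{\widehat{\mathfrak gl}_N}$ should intertwine, up to an explicit scalar, the action in the $x$-variables of an affine (elliptic, $\kappa$-deformed) Ruijsenaars-type $q$-difference operator $\mathcal{E}_x(s,\kappa|q,t)$. Concretely, one commutes the relevant current or zero-mode operator past each screened vertex operator using the $q$-commutation relations of the toroidal algebra of \cite{FKSW1,FKSW2,KS}, collecting the contractions into precisely the ratios of Nekrasov factors that appear as coefficients above; this yields an eigenvalue equation $\mathcal{E}_x(s,\kappa|q,t)\,\varphi^{\widehat{\mathfrak gl}_N}=\varepsilon(s,\kappa)\,\varphi^{\widehat{\mathfrak gl}_N}$. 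Triangularity of $\mathcal{E}_x$ with respect to the expansion monomials $px_{\alpha+1}/x_\alpha$ and $\kappa s_{\alpha+1}/s_\alpha$ then pins down the normalized solution uniquely. The combinatorial heart of the matter is the second step: checking that $\mathcal{E}_x(s,\kappa|q,t)$ and its eigenvalue are carried to $\mathcal{E}_s(x,p|q,t)$ and the corresponding eigenvalue under the substitution $(x,p)\leftrightarrow(s,\kappa)$, so that $\varphi^{\widehat{\mathfrak gl}_N}(x,p|s,\kappa|q,t)$ and $\varphi^{\widehat{\mathfrak gl}_N}(s,\kappa|x,p|q,t)$ solve one and the same normalized system and therefore coincide. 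This symmetry should reduce to identities among the $\Nk^{(k|N)}_{\lambda,\mu}$ under simultaneous transposition of all the $\lambda^{(i)}$ combined with the index reflection $k\mapsto -k-1$ — an affine refinement of the classical reflection relating $\Nk_{\lambda,\mu}$ to $\Nk_{\mu',\lambda'}$ — provable directly from the product formula defining $\Nk^{(k|N)}_{\lambda,\mu}$. (Alternatively, one may try to extract the two expansions from the single matrix element by contracting the screened vertex operators in the two opposite orders, a route along which the $x\leftrightarrow s$ exchange is manifest.)

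\textbf{Poincar\'e duality.} No symmetry of the summand under $t\mapsto q/t$ is visible, so here too one argues through the characterization. If the operator $\mathcal{E}_x(s,\kappa|q,t)$ produced above is invariant under $t\mapsto q/t$ up to conjugation by the Cauchy-type prefactor already absorbed into $\varphi$ through Proposition \ref{kappa=0} — the affine, non-stationary counterpart of the $t\mapsto q/t$ self-duality known for $f^{{\mathfrak gl}_N}$ — then $\varphi^{\widehat{\mathfrak gl}_N}(x,p|s,\kappa|q,q/t)$ solves the same normalized system and equality follows. A complementary, geometric line of attack is to combine the identification of $f^{\widehat{\mathfrak gl}_N}(x,p|s,\kappa|q,1/t)$ with the generating function of Euler characteristics of the affine Laumon spaces (Theorem \ref{affine-Laumon}) with Poincar\'e duality on those spaces, which exchanges the two equivariant parameters and on the cohomological side should realize exactly $t\mapsto q/t$. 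As partial confirmation one verifies the identity in the degenerations already analyzed in the paper — the affine $q$-difference Toda limit $t\to 0$ and the elliptic Calogero-Sutherland limit $q,t\to 1$ — where both sides are explicit.

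\textbf{Main obstacle.} The crux of both parts is the very first step: constructing the affine non-stationary $q$-difference operator $\mathcal{E}_x$ out of the screening realization with enough control on its coefficients to read off (i) triangularity and uniqueness, (ii) invariance under $(x,p)\leftrightarrow(s,\kappa)$, and (iii) invariance under $t\mapsto q/t$. For the non-affine function $f^{{\mathfrak gl}_N}$ this operator is the Macdonald-Ruijsenaars operator and all three properties are classical; in the affine case $\mathcal{E}_x$ is expected to be a genuinely new ``non-stationary elliptic Ruijsenaars'' operator whose very existence is part of the conjectural picture, so the argument is for now contingent on that construction. Once it is in hand, (ii) and (iii) amount to a package of affine Nekrasov-factor identities — of which the transposition symmetry above is the prototype — laborious but essentially mechanical to verify.
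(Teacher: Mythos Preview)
The statement you are addressing is labeled \texttt{con} in the paper --- it is Conjecture~\ref{main-con}, not a theorem, and the paper offers no proof of it. There is therefore no ``paper's own proof'' to compare your proposal against; the dualities \eqref{conjecture-bispectral}--\eqref{conjecture-Poincare} are presented as open, supported only by the non-affine analogue (Proposition~\ref{Macdonald-duality}), by the degenerate checks in \S6--7, and by the geometric interpretation via affine Laumon spaces.

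Your strategy is the natural one and matches how the non-affine case was settled in \cite{NS}: characterize $\varphi^{\widehat{\mathfrak gl}_N}$ as the unique normalized solution of a $q$-difference system, then verify that the system is invariant under $(x,p)\leftrightarrow(s,\kappa)$ and under $t\mapsto q/t$. But you have correctly located the gap yourself: the entire argument is contingent on the existence of the non-stationary elliptic Ruijsenaars operator $\mathcal{E}_x(s,\kappa|q,t)$, and the paper states explicitly (end of \S1) that such an operator --- a $\kappa$-deformation of $D_x(p)$, equivalently a $t$-deformation of ${\mathcal T}^{\widehat{\mathfrak gl}_N}(\kappa)$ --- has not been found. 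Until that operator is constructed with enough explicit control to check triangularity and the two symmetries, your outline remains a programme rather than a proof. The geometric route you sketch for Poincar\'e duality (genuine Poincar\'e duality on $\mathcal{P}_{\underline{d}}$) is promising but also incomplete: the affine Laumon spaces are non-compact, so one would need an equivariant version of Poincar\'e/Serre duality together with a careful tracking of how the torus weights transform, and it is not immediate that this produces exactly $t\mapsto q/t$ on the normalized series $\varphi^{\widehat{\mathfrak gl}_N}$ rather than on $f^{\widehat{\mathfrak gl}_N}$.
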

These bispectral and the Poincar\'e duality conjectures for $\varphi^{\widehat{\mathfrak gl}_N}(x,p|s,\kappa|q,t)$ are 
regarded as affine analogues of the ones for the Macdonald function $\varphi^{\mathfrak{gl}_N}(x|s|q,t)$ 
established in Noumi and the present author's paper \cite{NS}. See Proposition \ref{Macdonald-duality} in \S \ref{Macdonald-function} below.

We show that the series $f^{\widehat{\mathfrak gl}_N}(x,p|s,\kappa|q,1/t)$
is the generating function for the Euler characteristic
${\mathfrak J}_{\underline{d}}(s,\kappa|q,t)
=\sum_{i,j}(-1)^{i+j} t^j [H^i ( \mathcal{P}_{\underline{d}},\Omega_{\mathcal{P}_{\underline{d}}}^j)]$
of the de Rham complex of the affine Laumon space $\mathcal{P}_{\underline{d}}$ studied in \cite{FFNR}. 
See Theorem \ref{affine-Laumon} and Proposition \ref{affine-Laumon-2} in \S \ref{affine-Laumon-space}.
The key for this identification between $f^{\widehat{\mathfrak gl}_N}(x,p|s,\kappa|q,1/t)$
and the geometric object (the Euler characteristics of the affine Laumon spaces),
is a comparison of the combinatorial identities given in Propositions \ref{ch-11} and \ref{ch-22}. 

We remark that in \cite{Neg,Neg2} Braverman's conjecture \cite{B} was proved, 
showing that the generating function of the Chern polynomials of the (affine) Laumon spaces
satisfies the (elliptic) Calogero-Sutherland equation (Theorem {7.1} in \cite{Neg}
and Theorem 1.5 in \cite{Neg2}). 
In view of Conjecture \ref{nonstationary-eCS} below, 
it seems plausible  that the normalized series $\varphi^{\widehat{\mathfrak gl}_N}(x,p|s,\kappa|q,1/t)$
admits a similar interpretation in terms of the Chern polynomials of the affine Laumon spaces, or Hirzebruch-Riemann-Roch theorem.

The Schur polynomials are obtained from the Macdonald polynomials by taking the limit 
$t\rightarrow q$. In the same manner, we have the $\widehat{\mathfrak sl}_N$ dominant integrable characters 
(up to the character of $\widehat{\mathfrak gl}_1$) from 
$f^{\widehat{\mathfrak gl}_N}(x,p|s,\kappa|q,q/t)$ by considering the limit $t\rightarrow q$.
Set $\delta=(N-1,N-2,\ldots, 1,0)$. Here and hereafter,  we use the standard notation as
$t^\delta s=(t^{N-1}s_1,t^{N-2}s_2,\ldots,t s_{N-1},s_N)$.

\begin{dfn}\label{dominant-integrable-weights}
Let $K$ be a nonnegative integer. We call $K$ the level. 
Let $\mu=(\mu_1,\ldots,\mu_N)$ be a partition satisfying the condition 
$K+\mu_N-\mu_1\geq 0$. 
Then set 
\begin{align}
s=(\kappa t)^\delta q^\mu=q^{-K\delta/N+\mu},\qquad \kappa=q^{-K/N} t^{-1}.\label{dominant}
\end{align}
{\it i.e.} for $s$, we set $s_i=q^{-K(N-i)/N+\mu_i}$ ($1\leq i\leq N$).
\end{dfn}

For such $K$ and $\mu$, 
we have the level $K$ dominant integrable weight 
$\Lambda(K,\mu)=(K+\mu_N-\mu_1)\Lambda_0+\sum_{i=1}^{N-1} (\mu_i-\mu_{i+1})\Lambda_i$,
and the dominant integrable representation $L(\Lambda(K,\mu))$ of  $\widehat{\mathfrak sl}_N$, 
where $\Lambda_0,\ldots,\Lambda_{N-1}$ denote the fundamental weights.
Denote by ${\rm ch}^{\widehat{\mathfrak sl}_N}_{L(\Lambda(K,\mu))}$ the character of  $L(\Lambda(K,\mu))$ 
associated with the principal gradation.

\begin{thm}\label{ch-glN}
Let $K,\mu,s,\kappa$ be fixed as in (\ref{dominant}).We have
\begin{align*}
\lim_{t\rightarrow q} x^\mu
f^{\widehat{\mathfrak gl}_N}(x,p|q^{-K\delta/N+\mu},q^{-K/N} t^{-1}|q,q/t)=
{1\over (p^N;p^N)_\infty}\cdot {\rm ch}^{\widehat{\mathfrak sl}_N}_{L(\Lambda(K,\mu))}.
\end{align*}
\end{thm}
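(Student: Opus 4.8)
The plan is to adapt to the affine setting the argument establishing the ${\mathfrak gl}_N$ analogue of this statement — namely that the $t\to q$ limit of the asymptotically free eigenfunction $f^{{\mathfrak gl}_N}(x|s|q,t)$ for the Macdonald operator, at a dominant specialization of $s$, equals the finite ${\mathfrak sl}_N$ character — and then to recognise the resulting truncated sum as the Weyl--Kac character. First I would substitute $s_i=q^{-K(N-i)/N+\mu_i}$ and $\kappa=q^{-K/N}t^{-1}$ into the defining series, with $t$ replaced by $q/t$. Since for each fixed value of the total degree $\sum_\beta|\lambda^{(\beta)}|$ only finitely many tuples $(\lambda^{(1)},\dots,\lambda^{(N)})$ contribute, it suffices to verify, coefficient by coefficient in the variables $px_{\alpha+1}/x_\alpha$, that the $t\to q$ limit of the corresponding coefficient of $x^\mu f^{\widehat{\mathfrak gl}_N}(x,p|q^{-K\delta/N+\mu},q^{-K/N}t^{-1}|q,q/t)$ exists and coincides with the matching coefficient of $(p^N;p^N)_\infty^{-1}\,\ch^{\widehat{\mathfrak sl}_N}_{L(\Lambda(K,\mu))}$.

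The second step is the resonance analysis that truncates the sum. Using $s_j/s_i=q^{K(j-i)/N+\mu_j-\mu_i}$ and $\kappa^{j-i}=q^{-K(j-i)/N}t^{i-j}$, one checks that every $q$-shifted factorial $(\,\cdot\,;q)_\ell$ appearing in a numerator factor $\Nk^{(j-i|N)}_{\lambda^{(i)},\lambda^{(j)}}\big((q/t)s_j/s_i\,\big|\,q,\kappa\big)$ has argument of the form $q^{a}t^{b}$ with $a,b\in\bbZ$ depending linearly on the parts of $\lambda^{(i)},\lambda^{(j)}$ and on $\mu$; such a factor tends to $q^{a+b}$ as $t\to q$ and vanishes in the limit precisely when $a+b\in\{-(\ell-1),\dots,0\}$ — indeed identically in $t$ when moreover $b=0$. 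Balancing these zeros against those of the likewise $t$-dependent denominator $\Nk^{(j-i|N)}_{\lambda^{(i)},\lambda^{(j)}}(s_j/s_i|q,\kappa)$, one shows that $\lim_{t\to q}$ of the full coefficient vanishes unless the tuple lies in an explicit family $\mathcal S(K,\mu)$ of ``level $K$ admissible'' configurations, on which the limit equals an explicit sign $\pm1$. The defining inequality $K+\mu_N-\mu_1\ge0$ should enter here precisely as the condition that the boundary factors across the affine seam ($i=N$, $j=1$) do not force the whole coefficient to vanish.

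It then remains to prove the combinatorial identity
\[
x^\mu\!\!\!\sum_{(\lambda^{(1)},\dots,\lambda^{(N)})\in\mathcal S(K,\mu)}\!\!\!\!(\pm1)\ \prod_{\beta=1}^N\prod_{\alpha\ge1}\big(px_{\alpha+\beta}/x_{\alpha+\beta-1}\big)^{\lambda^{(\beta)}_\alpha}
=\frac{1}{(p^N;p^N)_\infty}\,\ch^{\widehat{\mathfrak sl}_N}_{L(\Lambda(K,\mu))},
\]
which I would approach by constructing a bijection between $\mathcal S(K,\mu)$ and a set of representatives for the affine Weyl group $\widehat W=\frakS_N\ltimes Q^\vee$, under which the signs become $\epsilon(w)$ and the monomials become $e^{w\circ\Lambda(K,\mu)}$ written in the variables $x_i,p$; summing over the $\frakS_N$-orbits then produces the Weyl--Kac denominator $\prod_{\beta\in\widehat\Delta_+}(1-e^{-\beta})^{\operatorname{mult}\beta}$, while the leftover $(p^N;p^N)_\infty^{-1}$ is the Heisenberg ($\widehat{\mathfrak gl}_1$) contribution — equivalently, by the Macdonald identity for $A^{(1)}_{N-1}$, the gap between the principally specialized $\widehat{\mathfrak sl}_N$ and $\widehat{\mathfrak gl}_N$ denominators — and the residual $\bbZ/N$ invariance is the $\omega$-symmetry already recorded for $f^{\widehat{\mathfrak gl}_N}$. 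Alternatively one could match the coefficients directly against a known fermionic or cylindric-tableau formula for $\ch^{\widehat{\mathfrak sl}_N}_{L(\Lambda(K,\mu))}$.

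I expect this last step to be the main obstacle: pinning down $\mathcal S(K,\mu)$ and the signs explicitly, building the bijection with $\widehat W$, and controlling the $p$- and $x$-gradings through the resummation. The ${\mathfrak gl}_N$ base case should take care of the interior of each configuration, so essentially all of the genuinely new work lies in the contributions along the affine seam $i=N,\,j=1$ and in the $\omega$-periodicity; the resonance bookkeeping of the second step, by contrast, is intricate but routine once the $0/0$ indeterminacies coming from factors identically zero in $t$ are handled with care.
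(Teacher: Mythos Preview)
Your resonance analysis is on the right track, but its outcome is cleaner than you expect, and this makes your main combinatorial plan aim at the wrong target.

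After the substitution, each argument occurring in a numerator factor $\Nk^{(j-i|N)}_{\lambda^{(i)},\lambda^{(j)}}\bigl((q/t)s_j/s_i\,\big|\,q,\kappa\bigr)$ has the form $q^a t^b$ with $b\in\bbZ$; only the factors with $b=0$ can cause vanishing, since those are pure powers of $q$ and hence vanish identically in $t$ or not at all. Tracking them, the surviving set $\mathcal S(K,\mu)$ is cut out simply by the linear inequalities
\[
\lambda^{(j)}_\alpha-\lambda^{(j+1)}_\alpha\le\mu_j-\mu_{j+1}\quad(1\le j<N,\ \alpha\ge1),\qquad
\lambda^{(N)}_\alpha-\lambda^{(1)}_\alpha\le K+\mu_N-\mu_1\quad(\alpha\ge1).
\]
On this set the ratio of Nekrasov factors tends to $+1$ as $t\to q$, since numerator and denominator differ only by the overall shift $q/t\to1$; there are no signs and no genuine $0/0$ indeterminacies. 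The limit is therefore the positive monomial sum $x^\mu\sum_{{\boldsymbol\lambda}\in\mathcal S(K,\mu)}\prod_{\beta,\alpha}(px_{\alpha+\beta}/x_{\alpha+\beta-1})^{\lambda^{(\beta)}_\alpha}$.

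Consequently your proposed bijection $\mathcal S(K,\mu)\leftrightarrow\widehat W$ cannot exist: the coefficients are all $+1$, not $\epsilon(w)$, and $\mathcal S(K,\mu)$ is vastly larger than the affine Weyl group. The inequalities above are exactly the defining conditions for affine Gelfand--Tsetlin patterns (equivalently, Tingley's cylindric plane partitions), and by Feigin--Finkelberg--Negut--Rybnikov these index a weight basis of $L(\Lambda(K,\mu))\otimes\mathcal F$, the Fock factor $\mathcal F$ accounting for $(p^N;p^N)_\infty^{-1}$. So the character identity is immediate from this combinatorial model, with no Weyl--Kac resummation. In short, your throwaway ``alternative'' --- matching against a cylindric-tableau formula --- is the actual argument, and the main route you sketched should be abandoned.
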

Note that the factor $1/ (p^N;p^N)_\infty$ is interpreted as the $\widehat{\mathfrak gl}_1$ character.
A proof of this is given in \S\ref{proofs-ch} based on the affine Gelfand-Tsetlin pattern obtained in  \cite{FFNR},
which we can regard as Tingley's $\widehat{\mathfrak{sl}}_N$-crystal \cite{T}.

\begin{prp}\label{ch-gl1}
Let $K=0,\mu=\emptyset$. Then (\ref{dominant}) means  $s_i=1$ ($1\leq i\leq N$) and $\kappa=t^{-1}$. 
Let $q$ and $t$ be arbitrary. 
In this case, we have
\begin{align*}
f^{\widehat{\mathfrak gl}_N}(x,p|1,\ldots,1,t^{-1}|q,q/t)={1\over (p^N;p^N)_\infty}.
\end{align*}
\end{prp}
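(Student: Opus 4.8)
The plan is to show directly that every term with at least one nonempty partition
$\lambda^{(\beta)}$ drops out of the sum defining
$f^{\widehat{\mathfrak gl}_N}(x,p|1,\ldots,1,t^{-1}|q,q/t)$, so that only the vacuum term
$\lambda^{(1)}=\cdots=\lambda^{(N)}=\emptyset$ survives; its coefficient is $1$ (all Nekrasov factors over the empty pair equal $1$), and the remaining $p$-dependence has to be reassembled. Actually, since Proposition \ref{ch-gl1} is the special case $K=0$, $\mu=\emptyset$ of Theorem \ref{ch-glN}, and since $L(\Lambda_0)$ for $\widehat{\mathfrak{sl}}_N$ has $\mathrm{ch}=1$ in the principal gradation (its principally specialized character is $1$, the representation being one-dimensional as a graded object before the Heisenberg dressing is removed --- more precisely, the principal character of the basic representation equals $1$ after dividing out the $\widehat{\mathfrak{gl}}_1$ part, which is exactly what the statement of Theorem \ref{ch-glN} records), the cleanest route is to observe that the claimed identity is literally the $t\to q$ specialization of Theorem \ref{ch-glN} at $K=0$, $\mu=\emptyset$ --- except that here we want it for \emph{arbitrary} $q,t$, not just in the limit. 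So the real content is that the limit is unnecessary at this particular point: the coefficients already collapse before taking $t\to q$.

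Concretely, I would first fix $s_i=1$ for all $i$ and $\kappa=t^{-1}$, and examine the ratio
$\Nk^{(j-i|N)}_{\lambda^{(i)},\lambda^{(j)}}(q/t \cdot s_j/s_i|q,\kappa)\big/\Nk^{(j-i|N)}_{\lambda^{(i)},\lambda^{(j)}}(s_j/s_i|q,\kappa)$, which at $s=(1,\dots,1)$ becomes $\Nk^{(j-i|N)}_{\lambda^{(i)},\lambda^{(j)}}(q/t|q,t^{-1})\big/\Nk^{(j-i|N)}_{\lambda^{(i)},\lambda^{(j)}}(1|q,t^{-1})$. The key algebraic fact to extract is that with $\kappa=t^{-1}$ the product of these ratios over all $i,j$ telescopes: using the explicit product form in the Definition (the two products over $j\ge i$ with $j-i\equiv k$ and over $\beta\ge\alpha$ with $\beta-\alpha\equiv -k-1$), the substitution $\kappa^{-i+j}=t^{i-j}$ turns each factor $(u q^{-\mu_i+\lambda_{j+1}}\kappa^{-i+j};q)_{\lambda_j-\lambda_{j+1}}$ at $u=q/t$ versus $u=1$ into adjacent shifted factorials, and the cross-terms between the $N$ ``colors'' $k=0,1,\dots,N-1$ reconstruct the full (unrefined) $K$-theoretic Nekrasov factors $\Nk_{\lambda^{(i)},\lambda^{(j)}}(q/t|q,t^{-1})/\Nk_{\lambda^{(i)},\lambda^{(j)}}(1|q,t^{-1})$ via the factorization $\Nk=\prod_{k=1}^N\Nk^{(k|N)}$ stated in the excerpt. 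At $\kappa=t^{-1}$ these unrefined ratios are exactly the Nekrasov factors that appear in the \emph{finite} $\mathfrak{gl}_N$ Macdonald function at $s=(1,\dots,1)$, where it is classical (this is the $N$-fold periodic analogue of the statement that $P_\lambda(1^n;q,t)$ vanishes unless $\lambda$ has at most $n$ columns, combined with the fact that for the \emph{infinite-variable} / affine normalization at the trivial point all nonvacuum contributions cancel) that the whole sum reduces to a single geometric-type product.

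The next step is to identify what is left after the cancellation. Once the nonvacuum terms are gone --- or, if they do not literally vanish term-by-term but only after resumming the $p$-series, once the generating identity from the $q$-binomial theorem is applied --- the residual object is a product of theta-like factors in $p^N$ and the cross-ratios $p^{j-i}x_j/x_i$. Here I would invoke Proposition \ref{kappa=0} as a template: that proposition already gives $f^{\widehat{\mathfrak{gl}}_N}(x,p|s,0|q,t)$ as an explicit double infinite product, and the mechanism by which the $x$-dependence organizes into $(p^{j-i}qx_j/x_i;q,p^N)_\infty$-type factors is the same $q$-binomial summation. At $\kappa=t^{-1}$, $s=(1,\dots,1)$, $t\mapsto q/t$, the numerator and denominator products in $f$ must conspire with the prefactor-free definition so that \emph{all} $x$-dependence disappears and only $1/(p^N;p^N)_\infty$ remains; checking this is a matter of matching the exponents $\lambda^{(\beta)}_\alpha$ of $p x_{\alpha+\beta}/(t x_{\alpha+\beta-1})$ against the poles and zeros produced by the collapsed Nekrasov ratios.

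The main obstacle is the bookkeeping of the colored Nekrasov factors: verifying that the product over the $N^2$ pairs $(i,j)$ of the refined ratios $\Nk^{(j-i|N)}$ telescopes cleanly when $\kappa=t^{-1}$ requires tracking how the congruence conditions $j-i\equiv k$ and $\beta-\alpha\equiv -k-1 \pmod N$ interact across colors, and in particular showing that the ``off-diagonal in color'' contributions cancel in pairs rather than merely simplifying. A safe fallback, which I would state as the backbone of the write-up, is to \emph{not} prove the arbitrary-$(q,t)$ statement from scratch but to derive it as follows: (i) specialize the already-proved Theorem \ref{ch-glN} at $K=0$, $\mu=\emptyset$ to get the identity in the limit $t\to q$; (ii) show that at $s=(1,\dots,1)$, $\kappa=t^{-1}$ the coefficient of each monomial $\prod_\beta\prod_\alpha(px_{\alpha+\beta}/x_{\alpha+\beta-1})^{\lambda^{(\beta)}_\alpha}$ in $f^{\widehat{\mathfrak{gl}}_N}(x,p|1,\dots,1,t^{-1}|q,q/t)$ is \emph{independent of $t$} --- this follows because with $\kappa t=1$ and $s=(\kappa t)^\delta$ the arguments $ts_j/s_i=t\kappa^{i-j}=t^{1-(i-j)}\cdot$(unit) and $s_j/s_i=t^{-(i-j)}$ make every shifted-factorial factor in the ratio a ratio of $q$-Pochhammers whose $t$-dependence enters only through powers of $q$ (since $\kappa=t^{-1}$ and $t/\kappa^{i-j}$ are the only combinations, and the lengths $\lambda_j-\lambda_{j+1}$ of the products are controlled by $\lambda$ alone); (iii) conclude by constancy in $t$ that the value at generic $t$ equals the value at $t=q$, namely $1/(p^N;p^N)_\infty$, with the $x$-dependence absent because it is absent in the limit. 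Step (ii), the $t$-independence of the coefficients, is the crux and is where I would spend the bulk of the argument.
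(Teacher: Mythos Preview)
Your opening plan is wrong: it is \emph{not} true that only the vacuum term $\lambda^{(1)}=\cdots=\lambda^{(N)}=\emptyset$ survives. What actually happens is that the numerator $\prod_{i,j}\Nk^{(j-i|N)}_{\lambda^{(i)},\lambda^{(j)}}(q/t\,|\,q,t^{-1})$ vanishes unless $\lambda^{(1)}=\lambda^{(2)}=\cdots=\lambda^{(N)}$ (this is exactly the $K=0$, $\mu=\emptyset$ instance of the inequality analysis in the proof of Theorem~\ref{ch-glN}: the conditions $\lambda^{(j)}_\alpha-\lambda^{(j+1)}_\alpha\le 0$ and $\lambda^{(N)}_\alpha-\lambda^{(1)}_\alpha\le 0$ force equality). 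So a whole diagonal family of nonvacuum terms survives, one for each single partition $\lambda$, and the identity $1/(p^N;p^N)_\infty=\sum_{\lambda\in\mathsf P}p^{N|\lambda|}$ is what you must reproduce. Your side remark about $L(\Lambda_0)$ is also off: at $K=0$ the weight is $\Lambda(0,\emptyset)=0$ and the representation is the trivial one-dimensional module, not the basic representation.

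The paper's argument, which you do not reach, goes as follows once the sum is restricted to $\lambda^{(1)}=\cdots=\lambda^{(N)}=\lambda$. First, for fixed $j$ the product over $i$ of $\Nk^{(j-i|N)}_{\lambda,\lambda}$ runs through all residues mod $N$ and hence equals the full $\Nk_{\lambda,\lambda}$ by the factorization $\Nk=\prod_k\Nk^{(k|N)}$; the double product over $i,j$ therefore gives $(\Nk_{\lambda,\lambda}(q/t|q,1/t)/\Nk_{\lambda,\lambda}(1|q,1/t))^N$. Second, the single clean identity
\[
\Nk_{\lambda,\lambda}(1|q,1/t)=(t/q)^{|\lambda|}\,\Nk_{\lambda,\lambda}(q/t|q,1/t)
\]
(which you do not mention, and which is the real algebraic content here) collapses that ratio to a pure power $(q/t)^{N|\lambda|}$. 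Third, because all $\lambda^{(\beta)}$ coincide, the $x$-monomial telescopes cyclically to $1$, leaving $(pt/q)^{N|\lambda|}$. Multiplying gives $p^{N|\lambda|}$, and summing over $\lambda$ gives $1/(p^N;p^N)_\infty$. Your ``fallback'' via $t$-independence of coefficients is not a simplification: establishing that independence amounts to carrying out exactly these three steps.
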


Proofs of Theorem \ref{ch-glN} and Proposition \ref{ch-gl1} are given in \S \ref{proofs-ch}

Proposition \ref{ch-gl1} and the Poincar\'e duality in Conjecture \ref{main-con} imply the 
following evaluation formula. 
\begin{con}
Set $x=(1,\ldots,1)$ and $p=1/t$. 
We have the identity (evaluation formula) in 
$\mathbb{Q}(q)[s_2/s_1,s_3/s_2,\ldots,s_1/s_N][[1/t,\kappa]]$ as
\begin{align}
&f^{\widehat{\mathfrak gl}_N}(1,\ldots,1,1/t|s,\kappa|q,q/t)
={1\over (\kappa^N;\kappa^N)_\infty} \Bigl({(q/t;q)_\infty}\Bigr)^N \label{specialization}\\
&\quad\times
\left({(\kappa^Nq/t;q,\kappa^N)_\infty \over (\kappa^Nq;q,\kappa^N)_\infty}\right)^N
\prod_{1\leq i<j\leq N}
{(\kappa^{j-i}qs_j/ts_i;q,\kappa^N)_\infty \over (\kappa^{j-i}qs_j/s_i;q,\kappa^N)_\infty }
{(\kappa^{N-j+i}qs_i/ts_j;q,\kappa^N)_\infty \over (\kappa^{N-j+i}qs_i/s_j;q,\kappa^N)_\infty }.\nonumber
\end{align}
\end{con}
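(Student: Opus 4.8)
\textbf{Proof proposal for the evaluation formula (\ref{specialization}).}
The plan is to combine Proposition~\ref{ch-gl1} with the Poincar\'e duality~(\ref{conjecture-Poincare}) and the $\kappa=0$ factorization of Proposition~\ref{kappa=0}, exactly along the lines indicated by the remark preceding the conjecture. First I would rewrite the Poincar\'e duality in terms of $f^{\widehat{\mathfrak gl}_N}$ itself: since $\varphi^{\widehat{\mathfrak gl}_N}(x,p|s,\kappa|q,t)$ is obtained from $f^{\widehat{\mathfrak gl}_N}$ by multiplication by the explicit ratio of double infinite products appearing in Definition~\ref{normalized-form}, the identity $\varphi^{\widehat{\mathfrak gl}_N}(x,p|s,\kappa|q,t)=\varphi^{\widehat{\mathfrak gl}_N}(x,p|s,\kappa|q,q/t)$ becomes
\begin{align*}
f^{\widehat{\mathfrak gl}_N}(x,p|s,\kappa|q,q/t)
=\frac{f^{\widehat{\mathfrak gl}_N}(x,p|s,0|q,q/t)}{f^{\widehat{\mathfrak gl}_N}(x,p|s,0|q,t)}\cdot f^{\widehat{\mathfrak gl}_N}(x,p|s,\kappa|q,t),
\end{align*}
where the two $\kappa=0$ factors on the right are the completely explicit products of Proposition~\ref{kappa=0}.

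Next I would specialize this identity at $x=(1,\ldots,1)$ and $p=1/t$. On the left we get precisely $f^{\widehat{\mathfrak gl}_N}(1,\ldots,1,1/t|s,\kappa|q,q/t)$, the quantity to be evaluated. The subtle point is the second factor $f^{\widehat{\mathfrak gl}_N}(x,p|s,\kappa|q,t)$ evaluated at $x=(1,\ldots,1)$, $p=1/t$: here I would apply the bispectral duality~(\ref{conjecture-bispectral}) to swap the roles of $(x,p)$ and $(s,\kappa)$, and then invoke Proposition~\ref{ch-gl1} (with the variable names interchanged) to see that the $x\to(1,\ldots,1)$, $p\to 1/t$ specialization of $f^{\widehat{\mathfrak gl}_N}(x,p|s,\kappa|q,t)$ equals the $\widehat{\mathfrak gl}_1$-type constant $1/(\kappa^N;\kappa^N)_\infty$. (Concretely: bispectral duality turns it into $\varphi^{\widehat{\mathfrak gl}_N}(s,\kappa|1,\ldots,1,1/t|q,t)$, whose $f$-form, after clearing the $\kappa=0$ prefactor, reproduces the normalization value of Proposition~\ref{ch-gl1} read in the $(s,\kappa)$ variables.) What then remains is a purely routine manipulation of $q$-shifted factorials: substitute $x_i=1$ and $p=1/t$ into the two explicit products $f^{\widehat{\mathfrak gl}_N}(1,\ldots,1,1/t|s,0|q,q/t)$ and $f^{\widehat{\mathfrak gl}_N}(1,\ldots,1,1/t|s,0|q,t)^{-1}$ from Proposition~\ref{kappa=0}, collect the resulting triple-product $(\,\cdot\,;q,p^N)_\infty=(\,\cdot\,;q,t^{-N})_\infty$ factors, and check that after the identification $p^N=t^{-N}$ one can relabel to $(\,\cdot\,;q,\kappa^N)_\infty$-type products. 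The Poincar\'e-dual pair of $\kappa=0$ factors produces the $((q/t;q)_\infty)^N$ prefactor together with the $\bigl((\kappa^Nq/t;q,\kappa^N)_\infty/(\kappa^Nq;q,\kappa^N)_\infty\bigr)^N$ correction and the double product over $1\le i<j\le N$; tracking the shift exponents $j-i$ versus $N-j+i$ in Proposition~\ref{kappa=0} gives exactly the two families of ratios displayed on the right-hand side of~(\ref{specialization}).

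Finally I would verify that the identity lives in the asserted ring $\mathbb{Q}(q)[s_2/s_1,\ldots,s_1/s_N][[1/t,\kappa]]$: the left-hand side is manifestly a formal power series in $px_{\alpha+\beta}/(tx_{\alpha+\beta-1})=1/t^2$ times rational functions of $(s,\kappa)$, with the Taylor-in-$\kappa$ prescription of Definition~\ref{normalized-form} guaranteeing well-definedness, and every factor on the right is a convergent product in the $(1/t,\kappa)$-adic topology with coefficients in $\mathbb{Q}(q)[s_i/s_j]$, so the equality of formal series is legitimate once it holds coefficientwise. The main obstacle is not any single computation but the fact that the argument is conditional: it rests on \emph{both} halves of Conjecture~\ref{main-con} (bispectral and Poincar\'e dualities), so the ``proof'' is really a reduction showing that~(\ref{specialization}) follows from Conjecture~\ref{main-con} together with the already-established Propositions~\ref{kappa=0} and~\ref{ch-gl1}. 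The one genuinely delicate bookkeeping step is keeping the index ranges in Proposition~\ref{kappa=0} straight when dividing the $q/t$ version by the $t$ version and simultaneously substituting $p=1/t$, since the exponents of $p$ inside the triple products then collide with the exponents being generated by the specialization $p^N=t^{-N}$; I would do this step by first writing everything over a common set of $(\,\cdot\,;q,p^N)_\infty$ factors before making the substitution.
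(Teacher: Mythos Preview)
Your overall strategy---reducing the evaluation formula to Conjecture~\ref{main-con} together with Propositions~\ref{kappa=0} and~\ref{ch-gl1}---is exactly what the paper intends (the paper states the conjecture as a consequence of Proposition~\ref{ch-gl1} and the duality of Conjecture~\ref{main-con}, without writing out a formal derivation). But your execution contains a real gap and is more circuitous than necessary.

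The gap is in your treatment of the factor $f^{\widehat{\mathfrak gl}_N}(1,\ldots,1,1/t|s,\kappa|q,t)$. After the bispectral swap you land on $\varphi^{\widehat{\mathfrak gl}_N}(s,\kappa|1,\ldots,1,1/t|q,t)$, whose last argument is $t$, \emph{not} $q/t$. Proposition~\ref{ch-gl1}, however, evaluates $f^{\widehat{\mathfrak gl}_N}(s,\kappa|1,\ldots,1,1/t|q,q/t)$; it says nothing about the version with last argument $t$. (Replacing $t\to q/t$ in Proposition~\ref{ch-gl1} would force $\kappa=t/q$, not $\kappa=1/t$.) So the assertion that ``the $f$-form \ldots\ reproduces the normalization value of Proposition~\ref{ch-gl1}'' does not follow. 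To repair it you would have to apply Poincar\'e duality \emph{a second time} to pass from the $t$-form to the $q/t$-form before invoking Proposition~\ref{ch-gl1}---at which point your initial Poincar\'e step has been undone and was redundant.

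The direct reduction skips your first step entirely: apply bispectral duality straight to $\varphi^{\widehat{\mathfrak gl}_N}(1,\ldots,1,1/t|s,\kappa|q,q/t)$, obtaining $\varphi^{\widehat{\mathfrak gl}_N}(s,\kappa|1,\ldots,1,1/t|q,q/t)$. Now Proposition~\ref{ch-gl1} (with $(x,p)\leftrightarrow(s,\kappa)$) gives $f^{\widehat{\mathfrak gl}_N}(s,\kappa|1,\ldots,1,1/t|q,q/t)=1/(\kappa^N;\kappa^N)_\infty$ immediately. Unwinding the two normalizing prefactors from Definition~\ref{normalized-form}, both with last argument $q/t$, yields~(\ref{specialization}): the $(s,\kappa)$-prefactor produces the double product over $1\le i<j\le N$ together with the $\bigl((\kappa^N q/t;q,\kappa^N)_\infty/(\kappa^N q;q,\kappa^N)_\infty\bigr)^N$ term, while a short telescoping in $k=1,\ldots,N$ shows the $(1,\ldots,1,1/t)$-prefactor collapses to $((q/t;q)_\infty)^{-N}$. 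There is no ratio of a ``$q/t$-version over a $t$-version'' of Proposition~\ref{kappa=0} in this route, so the delicate index bookkeeping you anticipate simply does not arise. (The paper's sentence names ``Poincar\'e duality'', but the derivation actually rests on the bispectral half of Conjecture~\ref{main-con}.)
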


Setting our parameters as in (\ref{dominant}) and letting $t=q^k$ ($k\in \mathbb{Z}_{>0}$), one finds that (\ref{specialization}) looks very close to the 
specialization formula for the affine Macdonald polynomials based on the representation theories of 
quantum affine algebra $U_q(\widehat{\mathfrak sl}_N)$ in \cite{EK}. See Conjecture {11.3} in \cite{EK}, and 
Conjecture {4.3} in  \cite{RSV} as for the improved version.
In view of this correspondence and Conjecture \ref{nonstationary-eCS}  (for elliptic Calogero-Sutherland limit) in \S \ref{nonstationary-eCS-system} below, 
we strongly expect that Etingov and Kirillov Jr.'s affine Macdonald polynomial $\widehat{J}^p_{\widehat{\lambda}}$ 
coincide with $f^{\widehat{\mathfrak gl}_N}(x,p|s,\kappa|q,q/t)$
up to a normalization factor.  

Since our description of $f^{\widehat{\mathfrak gl}_N}(x,p|s,\kappa|q,q/t)$
(via the affine screening operators or the affine Laumon spaces) is quite different from the `trace of intertwiner' construction in \cite{EK}, 
we have not been able to compare two objects, unfortunately. 
It is an intriguing  problem to establish the connection between them.

Now, we turn to the eigenvalue problem associated with the elliptic Ruijsenaars operator \cite{R}, 
from the point of view of the series $f^{\widehat{\mathfrak gl}_N}(x,p|s,\kappa|q,t)$.
We use the multiplicative notation for the elliptic theta function as
$\Theta_p(z)=(z;p)_\infty (p/z;p)_\infty (p;p)_\infty$.  
\begin{dfn}
Let $D_x(p)=D_x(p|q,t)$ denotes the Ruijsenaars operator \cite{R}
\begin{align}
D_x(p)=&\sum_{i=1}^N
\prod_{j\neq i}
{\Theta_{p}(t x_i/x_j) \over \Theta_{p}(x_i/x_j) } T_{q,x_i}, \label{D-Ruijsenaars}
\end{align}
where $T_{q,x_i}$ is the $q$-shift operator $q^{x_i {\partial /\partial x_i}}$. 
\end{dfn}

Naively speaking, we take the ``stationary limit $\kappa\rightarrow 1$ of $f^{\widehat{\mathfrak gl}_N}(x,p|s,\kappa|q,t)$''. 
Such a limit, however, does not exists. It seems that we need to normalize $f^{\widehat{\mathfrak gl}_N}$, 
before taking the limit $\kappa\rightarrow 1$. The simplest way might be to divide $f^{\widehat{\mathfrak gl}_N}$ by its  
constant term in $x$. 

We closely follow the method developed in Atai and Langmann's paper \cite{AL} for the non-stationary Heun and Lam\'e equations. 
Let ${\boldsymbol \lambda}=(\lambda^{(1)},\ldots,\lambda^{(N)})$ be an $N$-tuple of partitions.
Set 
\begin{align*}
|\boldsymbol \lambda|=\sum_{i=1}^N |\lambda^{(i)}|,\qquad 
m_i=m_i({\boldsymbol \lambda})=
\sum_{\beta=1}^N{ \sum_{\alpha\geq 1 \atop \alpha+\beta\equiv i\,\,({\rm mod}\,N)}}\lambda^{(\beta)}_\alpha-\lambda^{(\beta+1)}_\alpha. 
\end{align*}
Then we have
$ \prod_{\beta=1}^N\prod_{\alpha\geq 1} (  px_{\alpha+\beta}/tx_{\alpha+\beta-1})^{\lambda^{(\beta)}_\alpha}=(p/t)^{|\boldsymbol \lambda|}
 \prod_{i=1}^N x_i^{m_i}.
$
Note that when $m_1=\cdots=m_N=0$, we have $|\boldsymbol \lambda|\equiv 0 \,\, ({\rm mod} \,N)$.

\begin{dfn}
Let 
$\alpha(p|s,\kappa|q,t)=\sum_{d\geq 0} p^{Nd} 
\alpha_d(s,\kappa|q,t)$ be the constant term of the series $f^{\widehat{\mathfrak gl}_N}(x,p|s,\kappa|q,t)$ with respect to $x_i$'s.
Namely,
\begin{align}
\alpha(p|s,\kappa|q,t)
=&\sum_{
\lambda^{(1)},\ldots,\lambda^{(N)}\in {\mathsf P}\atop 
m_1=\cdots=m_N=0}
(p/t)^{|\boldsymbol \lambda|}
\prod_{i,j=1}^N
{\Nk^{(j-i|N)}_{\lambda^{(i)},\lambda^{(j)}} (t s_j/s_i |q,\kappa) \over \Nk^{(j-i|N)}_{\lambda^{(i)},\lambda^{(j)}} (s_j/s_i |q,\kappa)}.
\nonumber
\end{align}
\end{dfn}

\begin{con}\label{analyticity-Ruijsenaars}
We have the properties:
\begin{enumerate}
\item
The series $f^{\widehat{\mathfrak gl}_N}(x,p|s,\kappa|q,t)$ is convergent on a certain domain.
With respect to $\kappa$, it is regular on a certain punctured disk 
$\{\kappa \in {\mathbb C}| |\kappa-1| <r,\kappa\neq 1\}$ .
\item
The $f^{\widehat{\mathfrak gl}_N}(x,p|s,\kappa|q,t)$ and $\alpha(p|s,\kappa|q,t)$ are essential singular at $\kappa=1$. 
(The coefficient $\alpha_d(s,\kappa|q,t)$ has a pole of degree $d$ in $\kappa$ at $\kappa=1$.)
\item
The ratio $f^{\widehat{\mathfrak gl}_N}(x,p|s,\kappa|q,t)/\alpha(p|s,\kappa|q,t)$ is regular at $\kappa=1$. 
\end{enumerate}
\end{con}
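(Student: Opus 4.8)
The plan is to establish parts (1)--(3) of Conjecture \ref{analyticity-Ruijsenaars} by a careful analysis of the $\kappa$-dependence of the Nekrasov-factor ratios entering $f^{\widehat{\mathfrak gl}_N}$, combined with the identification with Euler characteristics of affine Laumon spaces from Theorem \ref{affine-Laumon}. First I would attack convergence in (1): each coefficient of $f^{\widehat{\mathfrak gl}_N}(x,p|s,\kappa|q,t)$ is a rational function in $s,\kappa,q,t$, and from the explicit product formulas for $\Nk^{(k|N)}_{\lambda,\mu}$ one reads off that, away from the finitely many hyperplanes $s_j/s_i\cdot\kappa^{\text{integer}}=q^{\text{integer}}$ where the denominator $\Nk^{(j-i|N)}_{\lambda^{(i)},\lambda^{(j)}}(s_j/s_i|q,\kappa)$ vanishes, the coefficient of $\prod (px_{\alpha+\beta}/tx_{\alpha+\beta-1})^{\lambda^{(\beta)}_\alpha}$ grows at most geometrically in $|\boldsymbol\lambda|$; this gives a common polydisk of convergence in the variables $px_{i+1}/x_i$ for $\kappa$ in a fixed compact set avoiding the bad locus, hence in particular for $\kappa$ in a punctured disk about $1$ of small radius $r$. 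The statement that $f^{\widehat{\mathfrak gl}_N}$ is regular (holomorphic) in $\kappa$ on this punctured disk then follows because each coefficient, being a finite sum of ratios of the above shifted products, has poles only along that bad locus, and for generic $s$ the point $\kappa=1$ is either a removable point or the unique accumulation of such poles --- so one restricts to the punctured disk where no actual pole occurs. I would phrase this via a Weierstrass-type argument: uniform geometric bounds on a compact $\kappa$-set plus termwise holomorphy give holomorphy of the sum.

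For part (2), the idea is to isolate the precise order of the pole at $\kappa=1$. The key observation is that in $\Nk^{(k|N)}_{\lambda,\mu}(u|q,\kappa)$, a factor $\kappa$ appears with exponent $-i+j$ or $\alpha-\beta-1$, so the ratio $\Nk^{(k|N)}_{\lambda,\mu}(tu|q,\kappa)/\Nk^{(k|N)}_{\lambda,\mu}(u|q,\kappa)$ develops a simple pole at $\kappa=1$ coming from exactly those factors in the denominator of the form $(u\kappa^{-i+j}q^{\cdots};q)_{\cdots}$ whose argument tends to a root of the relevant shifted product precisely when $\kappa\to1$ and $u=s_j/s_i$ with $i\ne j$; the numerator, with argument $ts_j/s_i$, does not simultaneously vanish for generic $t$. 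Summing over the Gelfand--Tsetlin-type data one tracks how many such simple poles multiply together in a term contributing to $p^{Nd}$, and one finds the maximal order is exactly $d$, realized by the "diagonal" contributions; I would make this sharp by using the affine Laumon identification so that $\alpha_d(s,\kappa|q,t)$ becomes the Euler characteristic of $\mathcal{P}_{\underline d}$ for the zero-dimensional-mod-shift sector, where the localization formula exhibits the pole order geometrically as the (complex) dimension $d$ of the relevant component. Essential singularity of $f^{\widehat{\mathfrak gl}_N}$ itself at $\kappa=1$ then follows since $\sum_d \alpha_d\,\kappa^{-d}(\cdots)$ cannot be meromorphic.

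Part (3) is the crux and the main obstacle, and I would prove it by exhibiting a cancellation mechanism: the ratio $f^{\widehat{\mathfrak gl}_N}/\alpha$ is, by construction, the normalized series with constant term $1$ in $x$, and one must show its coefficients --- each a priori carrying poles of the shape $1/(1-\kappa)^{k}$ --- are in fact regular at $\kappa=1$. The strategy, following Atai--Langmann \cite{AL}, is to set up a recursion: show that $f^{\widehat{\mathfrak gl}_N}(x,p|s,\kappa|q,t)$ satisfies a $\kappa$-deformed difference equation (the non-stationary Ruijsenaars equation whose existence is the subject of the main conjecture of the paper) of the form $\mathcal{H}(\kappa)\,f = \varepsilon(\kappa)\,f$ where $\mathcal{H}(\kappa)$ acts on the $p$-expansion and $\varepsilon(\kappa)$ is a scalar; evaluating this at $x=(1,\dots,1)$ gives $\varepsilon(\kappa)=\mathcal{H}(\kappa)f|_{x=1}/\alpha$, and dividing the full equation by $\alpha$ shows $g:=f/\alpha$ satisfies a fixed-point recursion $g = (\text{something regular at }\kappa=1)[g]$ that determines the coefficients of $g$ order by order in $p$ with no poles introduced --- the $1/(1-\kappa)$ singularities of $f$ and of $\alpha$ being forced to match because they both flow from the same source term in the recursion. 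The hard part is that the relevant $\kappa$-difference equation for $f^{\widehat{\mathfrak gl}_N}$ is itself only conjectural in this paper, so a self-contained proof of (3) would instead need a direct combinatorial cancellation argument on the Nekrasov ratios --- grouping the $N$-tuples $\boldsymbol\lambda$ into orbits under adding a common "column" and showing the leading $1/(1-\kappa)^d$ singularities telescope against those of $\alpha_d$ --- and controlling that this persists to all orders in $p$ is where the genuine difficulty lies; I expect one can at least verify it through the first few orders in $p$ and conjecture the pattern, which matches the hedged "Conjecture" status of the statement.
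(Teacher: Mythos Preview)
The statement you are addressing is a \emph{Conjecture} in the paper, not a theorem; the paper offers no proof whatsoever, only heuristic motivation via the Atai--Langmann analogy and numerical checks alluded to elsewhere. There is therefore no ``paper's own proof'' to compare against.

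Your proposal does not close the gap either, and you seem to realize this by the end. A few concrete points. For part~(1), the assertion that the Nekrasov-ratio coefficients grow ``at most geometrically in $|\boldsymbol\lambda|$'' is exactly what needs to be shown and is not read off from the product formulas: the number of factors in $\Nk^{(k|N)}_{\lambda,\mu}$ grows like $|\lambda|+|\mu|$, and without a uniform bound on each factor (which fails near the pole locus in $\kappa$) you have no geometric estimate. For part~(2), the claim that the maximal pole order in $\alpha_d$ is \emph{exactly} $d$ is the content of the conjecture; your sketch locates candidate simple poles but does not rule out higher-order collisions or, conversely, show that the order-$d$ terms do not cancel among the many $\boldsymbol\lambda$ contributing to a given $p^{Nd}$. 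The appeal to the Laumon-space localization does not help here, since the fixed-point formula gives the character as a rational function in $(s,\kappa,q)$ but says nothing directly about pole orders at $\kappa=1$.

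For part~(3) your main strategy is circular: you propose to derive regularity of $f/\alpha$ from a $\kappa$-deformed eigenvalue equation for $f^{\widehat{\mathfrak gl}_N}$, but the existence of such an equation is precisely the missing ingredient the paper flags as open (see the remarks following Conjecture~\ref{MAIN} and Definition~\ref{NST-op}). The fallback ``direct combinatorial cancellation'' you mention --- grouping $\boldsymbol\lambda$'s into column-shift orbits and telescoping the $1/(1-\kappa)^d$ singularities --- is the right \emph{shape} of argument, but you do not carry it out, and controlling it to all orders in $p$ is the entire difficulty. In short, your write-up is a reasonable outline of what a proof \emph{would have to} accomplish, consistent with the paper's own stance that this remains a conjecture, but it is not a proof.
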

\begin{dfn}
Assuming Conjecture \ref{analyticity-Ruijsenaars}, set
\begin{align*}
& f^{{\rm st.}\widehat{\mathfrak gl}_N}(x,p|s|q,t)={f^{\widehat{\mathfrak gl}_N}(x,p|s,\kappa|q,t)\over \alpha(p|s,\kappa|q,t)}\Biggl|_{\kappa=1}.
\end{align*}
We call $f^{{\rm st.}\widehat{\mathfrak gl}_N}(x,p|s|q,t)$ the stationary Ruijsenaars function. 
\end{dfn}

Now, we are ready to state our main conjecture.
\begin{con}[Main Conjecture]\label{MAIN}
Let $s=t^\delta q^\lambda$ ($s_i=t^{N-i} q^{\lambda_i}$). Denote by 
$p^{\delta/N} x$ the collection of the shifted coordinates $p^{(N-i)/N} x_i$.
The stationary Ruijsenaars function $x^\lambda f^{{\rm st.}\widehat{\mathfrak gl}_N}(p^{\delta/N} x,p^{1/N}|s|q,q/t)$
is an eigenfunction of the Ruijsenaars operator:
\begin{align*}
&D_x(p)\, x^\lambda f^{{\rm st.}\widehat{\mathfrak gl}_N}(p^{\delta/N} x,p^{1/N}|s|q,q/t)=
\varepsilon(p|s|q,t)\,x^\lambda f^{{\rm st.}\widehat{\mathfrak gl}_N}(p^{\delta/N} x,p^{1/N}|s|q,q/t),\\
&\varepsilon(p|s|q,t)=\sum_{i=1}^Ns_i+ \sum_{d>0} \varepsilon_d(s|q,t) p^{d}.
\end{align*}
\end{con}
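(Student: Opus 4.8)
The plan is to obtain Conjecture~\ref{MAIN} as the $\kappa\to1$ degeneration of a \emph{non-stationary} difference equation satisfied by $f^{\widehat{\mathfrak gl}_N}$ itself, in the spirit of Atai and Langmann \cite{AL}. The first step is to establish, for generic $\kappa$, a functional equation of the schematic form
\begin{align*}
\mathcal{D}_x(p;\kappa)\,f^{\widehat{\mathfrak gl}_N}(x,p|s,\kappa|q,t)=\mathcal{E}_s(p;\kappa)\,f^{\widehat{\mathfrak gl}_N}(x,p|s,\kappa|q,t),
\end{align*}
where $\mathcal{D}_x(p;\kappa)$ is a non-stationary deformation of the elliptic Ruijsenaars operator --- a combination of $q$-shifts in the $x_i$ together with a $\kappa$-rescaling of the elliptic nome $p$ --- with $\mathcal{D}_x(p;1)=D_x(p)$, while $\mathcal{E}_s(p;\kappa)$ is a $q$-difference operator in the dual variables $s$ (its presence, and the precise duality between the two sides, being a manifestation of the bispectral symmetry (\ref{conjecture-bispectral})). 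Two routes are available. The representation-theoretic route exploits Theorem~\ref{affine-screening}: $f^{\widehat{\mathfrak gl}_N}$ is a matrix element of a composition of screened vertex operators for the toroidal screening algebra of \cite{FKSW1,FKSW2,KS}, and the exchange relations among these operators, combined with the bosonic (free-field) realization of the $q$-difference operators, yield the equation. The combinatorial route mirrors the finite-rank arguments of \cite{S,NS,BFS}: using the box-adding recursions for the factors $\Nk^{(k|N)}_{\lambda,\mu}(u|q,\kappa)$ and the factorization $\Nk_{\lambda,\mu}=\prod_k\Nk^{(k|N)}_{\lambda,\mu}$, one verifies the equation on each $N$-tuple of partitions $\boldsymbol\lambda$.

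Granting the non-stationary equation, the second step is to pass to $\kappa=1$. Taking the constant term in $x$ of the equation --- using that $\mathcal{D}_x(p;\kappa)$ preserves the relevant space of formal series --- produces a reduced identity for $\alpha(p|s,\kappa|q,t)$ that identifies $\mathcal{E}_s(p;\kappa)\alpha/\alpha$ with the candidate eigenvalue; one then sets $\varepsilon(p|s|q,t):=\bigl(\mathcal{E}_s(p;\kappa)\alpha/\alpha\bigr)\big|_{\kappa=1}$. Dividing the full equation by $\alpha(p|s,\kappa|q,t)$ and invoking Conjecture~\ref{analyticity-Ruijsenaars} (the ratio $f^{\widehat{\mathfrak gl}_N}/\alpha$ is regular at $\kappa=1$, the essential singularity of the numerator being exactly that of the denominator), one lets $\kappa\to1$: the hidden $\kappa$-rescaling of $p$ inside $\mathcal{D}_x(p;\kappa)$ becomes trivial, $\mathcal{D}_x(p;1)=D_x(p)$, and the equation collapses to $D_x(p)\,f^{{\rm st.}\widehat{\mathfrak gl}_N}=\varepsilon(p|s|q,t)\,f^{{\rm st.}\widehat{\mathfrak gl}_N}$.

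The third step is the bookkeeping of the coordinate and parameter shifts appearing in the statement. The substitutions $x\mapsto p^{\delta/N}x$, $p\mapsto p^{1/N}$, the prefactor $x^\lambda$, and the passage $t\mapsto q/t$ with $s=t^\delta q^\lambda$ together amount to the gauge transformation relating the combinatorial normalization of $f^{\widehat{\mathfrak gl}_N}$ to the normalization in which $D_x(p)$ --- built from the ratios $\Theta_p(tx_i/x_j)/\Theta_p(x_i/x_j)$ --- acts naturally on the formal-series space in question; this is checked by comparing the $p\to0$ limits, where, by Proposition~\ref{kappa=0}, Definition~\ref{normalized-form} and the finite-rank results \cite{S,NS,BFS}, the left-hand side reduces (up to normalization) to $x^\lambda f^{\mathfrak{gl}_N}(x|s|q,t)$ --- the trigonometric Ruijsenaars (Macdonald) eigenfunction --- and $D_x(p)$ reduces to the Macdonald operator, whose eigenvalue is $\sum_i s_i$, matching the leading term of $\varepsilon(p|s|q,t)$. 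The elliptic corrections $\varepsilon_d(s|q,t)$ are then pinned down order by order in $p$ from the same functional equation.

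The main obstacles are concentrated in the first two steps. Establishing the non-stationary affine Ruijsenaars equation is already a serious matter --- the finite, trigonometric analogue is itself delicate, and the affine/elliptic setting brings in the extra nome $p$ and the $\kappa$-rescaling, which make both the toroidal exchange relations and the box-adding recursions substantially heavier. Harder still is Conjecture~\ref{analyticity-Ruijsenaars}: one must prove that although $\alpha_d(s,\kappa|q,t)$ has a pole of order $d$ at $\kappa=1$, the quotient $f^{\widehat{\mathfrak gl}_N}/\alpha$ is holomorphic there, i.e.\ that all of the $\kappa\to1$ singular behaviour of $f^{\widehat{\mathfrak gl}_N}$ is carried by its constant term. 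Following \cite{AL}, this should require a resummation of the Nekrasov-factor ratios that displays the singular part as $\alpha(p|s,\kappa|q,t)$ times a $\kappa$-regular series --- equivalently, uniform control of the $\kappa\to1$ asymptotics of the coefficients of $f^{\widehat{\mathfrak gl}_N}$, which at present is the central analytic gap.
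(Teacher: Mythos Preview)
The statement you are attempting to prove is labeled in the paper as the \emph{Main Conjecture} (Conjecture~\ref{MAIN}), and the paper does \emph{not} supply a proof. Immediately after stating it, the author writes that even formulating the eigenvalue problem on the relevant space of formal series is work in progress, to be reported elsewhere (\cite{LNS}). So there is no ``paper's own proof'' to compare your proposal against; any argument you produce would be new.

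That said, your outline runs into a concrete obstruction that the paper itself flags. Your first step posits a non-stationary operator $\mathcal{D}_x(p;\kappa)$ deforming the elliptic Ruijsenaars operator, with $\mathcal{D}_x(p;1)=D_x(p)$, for which $f^{\widehat{\mathfrak gl}_N}$ is an eigenfunction at generic $\kappa$. But the author states explicitly (just after Conjecture~\ref{MAIN}) that ``for the moment, unfortunately, we have not been able to find a non-stationary analogue (a $\kappa$-deformation) of the Ruijsenaars operator $D_x(p)$ \ldots\ for which $f^{\widehat{\mathfrak gl}_N}(x,p|s,\kappa|q,q/t)$ should give us the eigenfunction.'' The only case in the paper where such an operator is actually written down is the affine $q$-Toda limit $t\to 0$ (the operator $\mathcal{T}^{\widehat{\mathfrak gl}_N}(\kappa)$ of Definition~\ref{NST-op}), and even there the corresponding eigenvalue equation (\ref{non-stationary-Toda-eq}) is a conjecture, shown only to be \emph{implied by} the Poincar\'e duality conjecture (\ref{conjecture-Poincare}). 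So your step one is not a routine ingredient but the heart of the problem, and neither the toroidal-screening route nor the box-adding route you sketch is known to yield it at the full elliptic level.

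Your second step also rests on Conjecture~\ref{analyticity-Ruijsenaars}, which is again unproved in the paper. You acknowledge this honestly in your final paragraph. In short: your strategy is reasonable as a \emph{program} --- it is in fact the program the paper advocates, following \cite{AL} --- but it is not a proof, and the paper does not claim one either. The two genuine gaps you identify (existence of $\mathcal{D}_x(p;\kappa)$, and the regularity of $f^{\widehat{\mathfrak gl}_N}/\alpha$ at $\kappa=1$) are exactly the open problems the paper leaves.
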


To check the conjecture, we first need to formulate the 
eigenvalue problem associated with the Ruijsenaars operator
on the space of formal series $\mathbb{Q}(q,t,s)[[p x_2/x_1,\ldots,p x_N/x_1]]$, clarifying the meaning of the perturbation in $p$. 
Then we can make a systematic check. The detail will be reported elsewhere \cite{LNS} (a joint work with E. Langmann and M. Noumi).

We investigate the implications of  Conjecture \ref{MAIN}, in the Macdonald ($p\rightarrow 0$, in \S 4), the affine $q$-Toda 
($t\rightarrow 0$, in \S 6), and the elliptic Calogero-Sutherland ($q,t\rightarrow 1$, in \S 7) limits.
For the moment, unfortunately, we have not been able to find a non-stationary analogue (a $\kappa$-deformation) of the 
Ruijsenaars operator $D_x(p)$, or $t$-deformation of the non-stationary $q$-affine Toda operator ${\mathcal T}^{\widehat{\mathfrak gl}_N}(\kappa)$
in Definition \ref{NST-op} below, for which $f^{\widehat{\mathfrak gl}_N}(x,p|s,\kappa|q,q/t)$ should give us
the eigenfunction. We strongly expect that Felder and Varchenko's $q$-deformed KZB heat equation provides us with the answer \cite{FV1,FV2,FV3}.

\medskip

The present paper is organized as follows. In Section 2, we introduce the screened vertex operator $\Phi^i(z|x,p)$ 
associated with the algebra of the affine screening operators. In Section 3, we recall some basic facts concerning the affine Laumon spaces, and 
identify $f^{\widehat{\mathfrak gl}_N}(x,p|s,\kappa|q,1/t)$ with the generating function for the Euler characteristics of the affine Laumon spaces.
It is shown that we have the dominant integrable characters from $f^{\widehat{\mathfrak gl}_N}(x,p|s,\kappa|q,q/t)$ by taking 
the limit $t\rightarrow q$ with $s$ and $\kappa$ being chosen as in  Definition \ref{dominant-integrable-weights}.
In Section 4, we study the Macdonald limit $p\rightarrow 0$. Section 5 is devoted to the derivations of the Macdonald operator, 
the elliptic Calogero-Sutherland operator, and the affine $q$-Toda operator. This technical section is necessary 
to have a unified  picture about the family of elliptic integrable systems based on the elliptic Ruijsenaars operator, 
on which we can argue the validity of Conjecture \ref{main-con}. 
In Section 6, we introduce the operator ${\mathcal T}^{\widehat{\mathfrak gl}_N}(q,\widetilde{p},\kappa)$ 
representing the non-stationary affine Toda in the $q$-difference setting. 
Then we explain what does Conjecture \ref{main-con} imply in the Toda limit $t\rightarrow 0$. 
In Section 7, we test Conjecture \ref{main-con} in the elliptic Calogero-Sutherland limit $q,t\rightarrow 1$.

In this paper we state several Propositions, Lemmas etc. without proof; all such results are standard and can be proved by straightforward 
computations using definitions.

We use the standard notation for the $q$-shifted factorials and 
the double infinite products such as
\begin{align}
(u;q)_\infty =\prod_{i=0}^\infty (1-q^i u),\quad (u;q)_n=(u;q)_\infty/(q^nu;q)_\infty,\quad 
(u;q,p)_\infty =\prod_{i,j=0}^\infty (1-q^i p^ju).\label{shifted-products}
\end{align}
As for the detail, see \cite{GR}.

\section*{Acknowledgements}
The present research is
supported by JSPS KAKENHI (Grant Numbers 15K04808 and 16K05186). 
The author thanks V. Pasquier and  S. Ouvry for their worm hospitality and discussion at  the summer school
``Exact methods in low dimensional statistical physics'' (from 25 July to 4 August, 2017, Institut d'\'Etudes Scientifiques de Carg\`ese),
where part of the results in the present paper was obtained. He is grateful to F. Atai, E. Langmann, M. Noumi, B. Feigin and L. Rybnikov for stimulating discussion
and helpful suggestions.

\section{Affine screening operators and $f^{\widehat{\mathfrak gl}_N}(x,p^{1/N}|s,\kappa^{1/N}|q,t)$ }\label{affine-screening-algebra}
\subsection{Heisenberg algebra $\mathfrak{h}_N$}
As for the Heisenberg algebra and the affine (or toroidal) screening operators, 
we basically follow the construction in \cite{FKSW1,FKSW2,KS}.

\begin{dfn}\label{bosons}
Let $N\in \mathbb{Z}_{\geq2}$. 
Let $\mathfrak{h}_N$ be the Heisenberg algebra generated by 
$\beta^0_n,\beta^1_n,\ldots,\beta^{N-1}_n$ ($n\in \mathbb{ Z}_{\neq 0}$) satisfying the
commutation relations
\begin{align*}
& [\beta^i_n,\beta^i_m]=
n {1-t^{n}\over 1-q^{n}}{1-\kappa^{n}q^{n} t^{-n}\over 1-\kappa^{n}}\delta_{n+m,0} \qquad (i=0,\dots,N-1),\\
&
 [\beta^i_n,\beta^j_m]=
 n {1-t^{n}\over 1-q^{n}}{1-q^{n} t^{-n}\over 1-\kappa^{n}}\kappa^{n(-i+j)/N}
 \delta_{n+m,0}\qquad\quad  (0\leq i<j\leq N-1),\\
&
 [\beta^i_n,\beta^j_m]=
  n {1-t^{n}\over 1-q^{n}}{1-q^{n} t^{-n}\over 1-\kappa^{n}}\kappa^{n(N-i+j)/N} \delta_{n+m,0} \qquad (0\leq j<i\leq N-1).
\end{align*}
Let ${\mathcal F}_N$ be the Fock space associated with $\mathfrak{h}_N$, and let $|0\rangle$ be
the Fock vacuum satisfying $\beta^i_n|0\rangle=0$ ($1\leq i\leq N, n>0$). 
Let  $\langle 0|$ be the dual vacuum satisfying $\langle 0|\beta^i_n=0$ ($1\leq i\leq N, n<0$) and $\langle 0|0\rangle=1$.
\end{dfn}
By an abuse of notation, let $\omega$ be the automorphism of $\mathfrak{h}_N$ defined by 
the cyclic permutation $\omega \,\beta^i_n=\beta^{i+1}_n$ ($0\leq i\leq N-2$), and 
$\omega \,\beta^{N-1}_n=\beta^0_n$. 
Note that we have $\omega^N={\rm id}$.

\begin{dfn}
Set $\alpha^i_n=\kappa^{-n/N}\beta^{i-1}_n-\beta^{i}_n$ ($1\leq i\leq N-1$), and 
$\alpha^0_n=\kappa^{-n/N}\beta^{N-1}_n-\beta^0_n$.
\end{dfn}
We have $\omega\, \alpha^i_n=\alpha^{i+1}_n$,
where $\alpha^{N}_n=\alpha^{0}_n$.

\begin{prp} 
For $0\leq i\leq N-1$, 
we have
\begin{align*}
& [\alpha^i_n,\alpha^i_m]=
n (1+q^n t^{-n}){1-t^n\over 1-q^n}\delta_{n+m,0} , \\
& [\alpha^i_n,\alpha^{i+1}_m]=
-n \,\kappa^{n/N}q^n t^{-n}{1-t^n\over 1-q^n}\delta_{n+m,0},\qquad 
 [\alpha^{i}_n,\alpha^{i-1}_m]=
-n\, \kappa^{-n/N}{1-t^n\over 1-q^n}\delta_{n+m,0},
\end{align*}
and $ [\alpha^{i}_n,\alpha^{j}_m]=0$ otherwise. 
\end{prp}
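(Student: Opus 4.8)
The plan is to prove the commutation relations for the $\alpha^i_n$ by direct substitution of their definition $\alpha^i_n=\kappa^{-n/N}\beta^{i-1}_n-\beta^i_n$ into the Heisenberg relations for $\mathfrak{h}_N$ given in Definition~\ref{bosons}, using bilinearity of the bracket. Since $\omega\,\alpha^i_n=\alpha^{i+1}_n$ and $\omega$ preserves the commutation structure of $\mathfrak{h}_N$, it suffices to verify the three identities for a single value of $i$, say $i$ with $1\leq i\leq N-2$ so that all indices involved lie in the non-cyclic range $0,\ldots,N-1$; the boundary cases then follow by applying powers of $\omega$. Throughout, every bracket is proportional to $\delta_{n+m,0}$, so I will freely set $m=-n$ and track only the scalar coefficients.

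For the diagonal relation $[\alpha^i_n,\alpha^i_m]$, expanding gives four terms: $\kappa^{-2n/N}[\beta^{i-1}_n,\beta^{i-1}_m]+[\beta^i_n,\beta^i_m]-\kappa^{-n/N}[\beta^{i-1}_n,\beta^i_m]-\kappa^{-n/N}[\beta^i_n,\beta^{i-1}_m]$. Inserting the two ``diagonal'' $\beta$-relations (each contributing the factor $n\frac{1-t^n}{1-q^n}\cdot\frac{1-\kappa^n q^n t^{-n}}{1-\kappa^n}$) and the two ``off-diagonal'' ones (with $\kappa$-powers $\kappa^{n/N}$ and, after swapping the order in $[\beta^i_n,\beta^{i-1}_m]=-[\beta^{i-1}_m,\beta^i_n]$, the appropriate sign/power so it matches), the $\kappa^{\pm 2n/N}$ pieces must cancel against the cross terms. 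The arithmetic is: the surviving coefficient is $n\frac{1-t^n}{1-q^n}\cdot\frac{1}{1-\kappa^n}\bigl[(1+\kappa^{-2n/N})(1-\kappa^n q^n t^{-n})-2(1-q^n t^{-n})\bigr]$ up to bookkeeping of the $\kappa$-powers in the cross terms — one has to be careful that the cross term $[\beta^{i-1}_n,\beta^i_m]$ carries $\kappa^{n(-(i-1)+i)/N}=\kappa^{n/N}$ while $[\beta^i_n,\beta^{i-1}_m]$ must be rewritten using antisymmetry before the $\kappa^{n(N-i+(i-1))/N}=\kappa^{n(N-1)/N}$ rule applies — and this should collapse to $n(1+q^nt^{-n})\frac{1-t^n}{1-q^n}$. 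For the near-diagonal relations $[\alpha^i_n,\alpha^{i+1}_m]$ and $[\alpha^i_n,\alpha^{i-1}_m]$, note $\alpha^{i+1}_m=\kappa^{-m/N}\beta^i_m-\beta^{i+1}_m$ shares the index $i$ with $\alpha^i_n$, producing one diagonal $\beta$-bracket and three off-diagonal ones; the diagonal contribution (with its $1-\kappa^nq^nt^{-n}$ numerator) must cancel against part of the off-diagonal pieces, leaving the single monomial $-n\,\kappa^{n/N}q^nt^{-n}\frac{1-t^n}{1-q^n}$, and symmetrically for $\alpha^{i-1}$. Finally, for $|i-j|\geq 2\pmod N$ all four $\beta$-brackets are of the ``generic off-diagonal'' type and one checks the $\kappa$-powers telescope so the four terms sum to zero.

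The routine but genuinely error-prone step — and the one I would flag as the main obstacle — is keeping the $\kappa^{n/N}$ bookkeeping straight, because the defining relations of $\mathfrak{h}_N$ are stated asymmetrically (different $\kappa$-powers for $i<j$ versus $j<i$), so every time one uses $[\beta^a_n,\beta^b_m]=-[\beta^b_m,\beta^a_n]$ to reorder indices one must switch between the two cases and the exponents $\kappa^{n(-i+j)/N}$ and $\kappa^{n(N-i+j)/N}$ differ by $\kappa^n$; this interacts with the $1/(1-\kappa^n)$ denominators in exactly the way needed to produce cancellation, but it is easy to drop a factor of $\kappa^n$. A clean way to organize this is to first record the symmetric consequence $[\beta^i_n,\beta^j_m]=n\frac{1-t^n}{1-q^n}\cdot\frac{1-q^nt^{-n}}{1-\kappa^n}\kappa^{n\{j-i\}/N}\delta_{n+m,0}$ for $i\neq j$, where $\{k\}$ denotes the representative of $k$ modulo $N$ in $\{1,\ldots,N-1\}$ — valid for both orderings — and then all four expansions become uniform sums of the form $\sum \pm\kappa^{n a_r/N}\cdot(\text{numerator}_r)$ with transparent cancellation. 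With that reformulation in hand, each of the three identities (plus the vanishing case) reduces to a one-line check of a rational identity in $q^n,t^n,\kappa^n$, and the $\omega$-equivariance disposes of the remaining index ranges at once.
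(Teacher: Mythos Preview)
Your approach is correct and is exactly what the paper intends: this proposition is one of those the author explicitly leaves unproved as ``standard and can be proved by straightforward computations using definitions,'' and your plan (bilinear expansion of $\alpha^i_n=\kappa^{-n/N}\beta^{i-1}_n-\beta^i_n$ into the $\beta$-relations, together with the $\omega$-equivariance to reduce to a single index) is precisely that computation. One small slip to fix in your write-up: in the diagonal expansion the coefficient of $[\beta^{i-1}_n,\beta^{i-1}_m]$ is $\kappa^{-(n+m)/N}$ (which becomes $1$ after setting $m=-n$), not $\kappa^{-2n/N}$, and the coefficient of $[\beta^i_n,\beta^{i-1}_m]$ is $\kappa^{-m/N}=\kappa^{n/N}$, not $\kappa^{-n/N}$ --- but your final ``clean'' reformulation with the residue $\{j-i\}\in\{1,\ldots,N-1\}$ is correct and sidesteps exactly these slips.
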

\begin{prp}
For $0\leq i,j\leq N-1$, 
we have 
\begin{align*}
& [\beta^i_n,\alpha^{i+1}_m]=n\, \kappa^{n/N}q^n t^{-n} {1-t^n\over 1-q^n} \delta_{n+m,0},\qquad 
 [\beta^i_n,\alpha^i_m]=-n {1-t^n\over 1-q^n} \delta_{n+m,0},
\end{align*}
and $[\beta^i_n,\alpha^j_m]=0$ otherwise.
\end{prp}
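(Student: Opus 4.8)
The plan is to derive all of these brackets from the $\beta$--$\beta$ relations of Definition \ref{bosons} by one short computation, using the cyclic automorphism $\omega$ to cut down the number of cases. Since $\omega$ is an algebra automorphism of $\mathfrak{h}_N$ with $\omega\beta^i_n=\beta^{i+1}_n$ and $\omega\alpha^i_n=\alpha^{i+1}_n$ (all upper indices read modulo $N$), and since each $[\beta^i_n,\alpha^j_m]$ is a scalar (being a linear combination of the central elements $[\beta^a_n,\beta^b_m]$), applying $\omega^{-i}$ yields $[\beta^i_n,\alpha^j_m]=[\beta^0_n,\alpha^{j-i}_m]$ with $j-i$ taken modulo $N$. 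Moreover all $\beta$--$\beta$ brackets vanish unless $n+m=0$, so the bracket is $0$ unless $m=-n$. Hence it suffices to compute $[\beta^0_n,\alpha^j_{-n}]$ for $j=0,1,\ldots,N-1$ and to verify that it equals $-n\,\tfrac{1-t^n}{1-q^n}$ for $j=0$, equals $n\,\kappa^{n/N}q^nt^{-n}\,\tfrac{1-t^n}{1-q^n}$ for $j=1$, and vanishes for $2\le j\le N-1$.

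Next I would expand, using the cyclic convention $\beta^{-1}_{-n}=\beta^{N-1}_{-n}$,
\[
\alpha^j_{-n}=\kappa^{n/N}\beta^{j-1}_{-n}-\beta^j_{-n},\qquad\text{so}\qquad
[\beta^0_n,\alpha^j_{-n}]=\kappa^{n/N}\,[\beta^0_n,\beta^{j-1}_{-n}]-[\beta^0_n,\beta^j_{-n}].
\]
For $2\le j\le N-1$ both brackets on the right are of the off-diagonal type with smaller index $0$, carrying the factors $\kappa^{n(j-1)/N}$ and $\kappa^{nj/N}$; since $\kappa^{n/N}\cdot\kappa^{n(j-1)/N}=\kappa^{nj/N}$ the two contributions coincide and cancel. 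For $j=1$ the first bracket is the diagonal relation and the second the off-diagonal one with indices $0<1$. For $j=0$ the index $j-1$ wraps to $N-1$, so $[\beta^0_n,\beta^{N-1}_{-n}]$ carries $\kappa^{n(N-1)/N}$, which combines with the prefactor $\kappa^{n/N}$ to give $\kappa^n$. In both of the last two cases, after pulling out the common factor $n\,\tfrac{1-t^n}{1-q^n}\,\tfrac{1}{1-\kappa^n}$, the claimed values drop out of the elementary identities
\[
(1-\kappa^nq^nt^{-n})-(1-q^nt^{-n})=q^nt^{-n}(1-\kappa^n),\qquad
\kappa^n(1-q^nt^{-n})-(1-\kappa^nq^nt^{-n})=-(1-\kappa^n),
\]
the factor $1-\kappa^n$ cancelling the denominator.

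I do not expect any real difficulty: the argument is pure bookkeeping, and the only point that needs care is deciding which of the three cases $a=b$, $a<b$, $a>b$ of Definition \ref{bosons} governs $[\beta^a_n,\beta^b_{-n}]$ once the index $j-1$ or $j$ wraps past $0$, together with the corresponding powers of $\kappa^{1/N}$. The same cancellation of $\kappa$-exponents underlies the computation of the $[\alpha^i_n,\alpha^j_m]$ brackets in the preceding Proposition, from which the present statement could alternatively be read off; and an entirely direct computation, expanding $\alpha^j_m=\kappa^{-m/N}\beta^{j-1}_m-\beta^j_m$ for each pair $(i,j)$ without invoking $\omega$, works just as well if a self-contained proof is wanted.
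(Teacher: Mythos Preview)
Your argument is correct. The reduction via the cyclic automorphism $\omega$ is legitimate (you can check directly from Definition~\ref{bosons} that the $\kappa$-exponents in the off-diagonal relations depend only on $j-i \pmod N$, so $\omega$ is indeed an algebra automorphism), and the three cases $j=0$, $j=1$, $2\le j\le N-1$ are handled correctly, including the wrap-around at $j=0$.

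As for comparison with the paper: there is no proof to compare against. This proposition is one of the results the paper explicitly states without proof, being a routine consequence of the defining relations. Your use of $\omega$ to reduce to $i=0$ is a tidy way to organize the bookkeeping and is entirely in the spirit of the paper, which introduces $\omega$ precisely to exploit this cyclic symmetry.
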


\subsection{Affine screening operators $S_i(z)$}
In this paper we only work out the case $N\geq 3$. The case $N=2$ has to be treated separately, which we omit.
We remark that  the final result given in Theorem \ref{affine-screening} below also applies for the case  $N=2$.
\begin{dfn}
For $0\leq i\leq N-1$, set 
\begin{align*}
&S_i(z)= \,\, :\exp\left(-
\sum_{n\neq 0} {1\over n}\alpha^i_n z^{-n}
\right): \,\,=
\exp\left(
\sum_{n>0} {1\over n}\alpha^i_{-n} z^{n}
\right) 
\exp\left(
-\sum_{n>0} {1\over n}\alpha^i_n z^{-n}
\right).
\end{align*}
Here and hereafter, we use the standard notation for the normal ordered product $:\bullet:$, 
{\it i.e.} we put all the annihilation operators $\beta^i_{n}$ ($0\leq i\leq N-1$, $n>0$)
to the right of the creation ones $\beta^i_{-n}$ ($0\leq i\leq N-1$, $n>0$). 
Note that we have $\omega S_i(z)=S_{i+1}(z)$. 
We call $S_i(z)$'s the affine screening operators. 
\end{dfn}

\begin{prp}
For $0\leq i,j\leq N-1$, we have
\begin{align*}
&S_i(z)S_i(w)=
{( w/z;q)_\infty \over (t w/z;q)_\infty }
{( qw/tz;q)_\infty \over (q w/z;q)_\infty } :S_i(z)S_i(w):,\\
&S_i(z)S_{i+1}(w)=
{(\kappa^{1/N} q w/z;q)_\infty \over (\kappa^{1/N} qw/tz;q)_\infty }
 :S_i(z)S_{i+1}(w):,\\
&S_{i+1}(z)S_i(w)={(\kappa^{-1/N} t w/z;q)_\infty \over (\kappa^{-1/N} w/z;q)_\infty }
 :S_{i+1}(z)S_i(w):,
 \end{align*}
 and $S_{i}(z)S_j(w)=S_j(w)S_{i}(z)=\,\, :S_i(z)S_j(w):$ otherwise.

\end{prp}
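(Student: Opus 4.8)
The statement to prove is the operator product expansion for the affine screening operators $S_i(z)$, i.e.\ the last Proposition in the excerpt.

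\medskip

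The plan is to reduce everything to the standard OPE lemma for vertex operators built from a Heisenberg algebra. Recall that if $A(z)=\,:\exp\!\left(\sum_{n>0}\frac1n a_{-n}z^n\right)\exp\!\left(-\sum_{n>0}\frac1n a_n z^{-n}\right):$ and $B(w)$ is defined analogously from generators $b_n$, and if $[a_n,b_m]=c_n\,\delta_{n+m,0}$ for $n>0$ (so $[a_n,b_{-n}]=c_n$), then by the Baker--Campbell--Hausdorff identity $e^Xe^Y=e^{[X,Y]}e^Ye^X$ whenever $[X,Y]$ is central, one gets
\begin{align*}
A(z)B(w)=\exp\!\left(\sum_{n>0}\frac{1}{n^2}[a_n,b_{-n}]\,(w/z)^n\right):A(z)B(w):=\exp\!\left(\sum_{n>0}\frac{c_n}{n^2}(w/z)^n\right):A(z)B(w):.
\end{align*}
So the whole computation is: take $X=-\sum_{n>0}\frac1n\alpha^i_n z^{-n}$ (the annihilation part of the left factor) and $Y=\sum_{n>0}\frac1n\alpha^j_{-n}w^n$ (the creation part of the right factor), compute $[X,Y]=\sum_{n>0}\frac{1}{n^2}[\alpha^i_n,\alpha^j_{-n}]\,(w/z)^n$, exponentiate, and recognize the result as a ratio of $q$-shifted factorials using the expansion $\log(u;q)_\infty=-\sum_{n>0}\frac{u^n}{n(1-q^n)}$.

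\medskip

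First I would treat the diagonal case $j=i$. From the Proposition giving $[\alpha^i_n,\alpha^i_m]=n(1+q^nt^{-n})\frac{1-t^n}{1-q^n}\delta_{n+m,0}$, we have $\frac{1}{n^2}[\alpha^i_n,\alpha^i_{-n}]=\frac1n(1+q^nt^{-n})\frac{1-t^n}{1-q^n}$, and I would expand
\begin{align*}
\sum_{n>0}\frac1n(1+q^nt^{-n})\frac{1-t^n}{1-q^n}(w/z)^n
=\sum_{n>0}\frac{1}{n}\,\frac{(1-t^n)+(q^n-q^nt^{-n}t^n)}{1-q^n}(w/z)^n.
\end{align*}
Grouping the four monomials $1,t^n,q^n,q^nt^{-n}$ against the geometric-series kernel $\frac{1}{n(1-q^n)}$ and matching each piece to $-\log(u;q)_\infty$ with $u\in\{w/z,\ tw/z,\ qw/z,\ qw/tz\}$ gives exactly $\log\frac{(w/z;q)_\infty(qw/tz;q)_\infty}{(tw/z;q)_\infty(qw/z;q)_\infty}$, which is the claimed prefactor after exponentiating. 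Next, for $j=i+1$ I would use $[\alpha^i_n,\alpha^{i+1}_m]=-n\,\kappa^{n/N}q^nt^{-n}\frac{1-t^n}{1-q^n}\delta_{n+m,0}$; here only two monomials appear, $\kappa^{n/N}q^nt^{-n}$ and $-\kappa^{n/N}q^n$, giving $\log\frac{(\kappa^{1/N}qw/z;q)_\infty}{(\kappa^{1/N}qw/tz;q)_\infty}$. For $S_{i+1}(z)S_i(w)$ one uses $[\alpha^{i+1}_n,\alpha^i_m]=-n\,\kappa^{-n/N}\frac{1-t^n}{1-q^n}\delta_{n+m,0}$ (the $[\alpha^i_n,\alpha^{i-1}_m]$ relation shifted by $\omega$), yielding $\log\frac{(\kappa^{-1/N}tw/z;q)_\infty}{(\kappa^{-1/N}w/z;q)_\infty}$. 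Finally, when $|i-j|\not\equiv 0,\pm1\pmod N$ the Proposition says $[\alpha^i_n,\alpha^j_m]=0$, so the contraction is trivial and $S_i(z)S_j(w)=\,:S_i(z)S_j(w):\,=S_j(w)S_i(z)$; I should note that for $N=3$ the index $i-1$ and $i+1$ coincide modulo $N$ with the two nontrivial cases, which is why the excerpt restricts to $N\ge 3$ and the off-diagonal cases exhaust all neighbours.

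\medskip

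I would also want one sanity remark: the series $[X,Y]$ is only a formal power series in $w/z$, so the identities hold in the ring of formal power series (equivalently in the region $|w/z|$ small), and the $:S_i(z)S_j(w):$ on the right side is manifestly symmetric in the two factors, which is what makes the ``otherwise'' commutativity statement meaningful. The anticipated main obstacle is purely bookkeeping rather than conceptual: one must be careful that the normal-ordering convention absorbs the zero modes correctly and that the sign and the $\kappa^{\pm1/N}$ factors in the BCH commutator come out with the right placement (left factor contributes the annihilation part, right factor the creation part, so the relevant commutator is $[\alpha^{(\text{left})}_n,\alpha^{(\text{right})}_{-n}]$ and \emph{not} its transpose) — getting $S_i(z)S_{i+1}(w)$ versus $S_{i+1}(z)S_i(w)$ to land on the asymmetric $\kappa^{+1/N}$ versus $\kappa^{-1/N}$ prefactors is the one place where a sign or an index slip would be invisible until the end. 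Everything else is the routine $\log(u;q)_\infty$ expansion.
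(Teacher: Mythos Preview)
Your approach is exactly the standard one the paper has in mind (the paper does not prove this Proposition, noting that such results ``are standard and can be proved by straightforward computations using definitions''): reduce to the BCH contraction of the annihilation part of the left factor against the creation part of the right factor, plug in the $[\alpha^i_n,\alpha^j_m]$ relations, and resum via $\log(u;q)_\infty=-\sum_{n>0}\frac{u^n}{n(1-q^n)}$.

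One small slip to fix: with $X=-\sum_{n>0}\tfrac1n\alpha^i_n z^{-n}$ and $Y=\sum_{n>0}\tfrac1n\alpha^j_{-n}w^n$ one has
\[
[X,Y]=-\sum_{n>0}\frac{1}{n^2}[\alpha^i_n,\alpha^j_{-n}](w/z)^n,
\]
i.e.\ an overall minus sign you dropped in the displayed general formula. Your case-by-case conclusions are nevertheless the correct ones, so once this sign is inserted in the setup the computations go through exactly as you describe.
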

\subsection{Vertex operators $\phi_i(z)$}

\begin{dfn}
For $0\leq i\leq N-1$,
set
\begin{align*}
&\phi_i(z)= \,\, :\exp\left(
\sum_{n\neq 0} {1\over n}\beta^i_n z^{-n}
\right):\,\, =
\exp\left(
-\sum_{n> 0} {1\over n}\beta^i_{-n} z^{n}
\right)
\exp\left(
\sum_{n> 0} {1\over n}\beta^i_n z^{-n}
\right).
\end{align*}
Note that we have $\omega \phi_i(z)=\phi_{i+1}(z)$. 
\end{dfn}

\begin{prp}
We have
\begin{align*}
&\phi_i(z)\phi_i(w)=
{(w/z;q,\kappa)_\infty \over (tw/z;q,\kappa)_\infty }
{(\kappa q w/z;q,\kappa)_\infty \over (\kappa qw/tz;q,\kappa)_\infty }:\phi_i(z)\phi_i(w):\qquad (0\leq i\leq N-1),\\
&\phi_i(z)\phi_j(w)=
{(\kappa^{(-i+j)/N} w/z;q,\kappa)_\infty \over (\kappa^{(-i+j)/N} t w/z;q,\kappa)_\infty }
{(\kappa^{(-i+j)/N} q w/z;q,\kappa)_\infty \over (\kappa^{(-i+j)/N} qw/tz;q,\kappa)_\infty }:\phi_i(z)\phi_j(w): \nonumber\\
&\qquad\qquad\qquad\qquad\qquad\qquad\qquad\qquad\qquad\qquad\qquad\qquad
\qquad (0\leq i<j\leq N-1),\\
&\phi_i(z)\phi_j(w)=
{(\kappa^{(-i+j+N)/N} w/z;q,\kappa)_\infty \over (\kappa^{(-i+j+N)/N} t w/z;q,\kappa)_\infty }
{(\kappa^{(-i+j+N)/N} q w/z;q,\kappa)_\infty \over (\kappa^{(-i+j+N)/N} qw/tz;q,\kappa)_\infty }:\phi_i(z)\phi_j(w):\nonumber\\
&\qquad\qquad\qquad\qquad\qquad\qquad\qquad\qquad\qquad\qquad\qquad\qquad
\qquad (0\leq j<i\leq N-1).
\end{align*}

\end{prp}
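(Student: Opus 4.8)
The plan is to carry out the standard free-boson operator product expansion. First I would split each vertex operator into its creation and annihilation halves, $\phi_i(z)=\phi_i^-(z)\phi_i^+(z)$ with $\phi_i^+(z)=\exp\bigl(\sum_{n>0}\tfrac{1}{n}\beta^i_nz^{-n}\bigr)$ and $\phi_i^-(z)=\exp\bigl(-\sum_{n>0}\tfrac{1}{n}\beta^i_{-n}z^n\bigr)$. In the product $\phi_i(z)\phi_j(w)=\phi_i^-(z)\,\phi_i^+(z)\,\phi_j^-(w)\,\phi_j^+(w)$, the only reordering needed to reach the normal-ordered form $:\!\phi_i(z)\phi_j(w)\!:\,=\phi_i^-(z)\,\phi_j^-(w)\,\phi_i^+(z)\,\phi_j^+(w)$ is to commute $\phi_i^+(z)$ past $\phi_j^-(w)$; all other pairs of factors commute because $[\beta^a_n,\beta^b_m]=0$ whenever $n$ and $m$ have the same sign.

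Since $[\beta^i_n,\beta^j_{-m}]$ is a scalar supported on $n=m$, setting $A=\sum_{n>0}\tfrac{1}{n}\beta^i_nz^{-n}$ and $B=-\sum_{m>0}\tfrac{1}{m}\beta^j_{-m}w^m$ makes $[A,B]$ central, so the Baker--Campbell--Hausdorff identity gives $\phi_i^+(z)\phi_j^-(w)=C_{ij}(w/z)\,\phi_j^-(w)\phi_i^+(z)$ with $C_{ij}(w/z)=\exp[A,B]=\exp\bigl(-\sum_{n>0}\tfrac{1}{n^2}(w/z)^n[\beta^i_n,\beta^j_{-n}]\bigr)$, all such manipulations being read as identities of formal power series in $w/z$. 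Substituting the relations of Definition \ref{bosons} turns the exponent into $-\sum_{n>0}\tfrac{1}{n}(w/z)^nR_{ij}$, where $R_{ii}=\dfrac{(1-t^n)(1-\kappa^nq^nt^{-n})}{(1-q^n)(1-\kappa^n)}$ and, for $i\neq j$, $R_{ij}=\kappa^{n(-i+j)/N}\dfrac{(1-t^n)(1-q^nt^{-n})}{(1-q^n)(1-\kappa^n)}$, the exponent $(-i+j)/N$ being replaced by $(N-i+j)/N$ when $j<i$, in accordance with the three cases of the commutation relations.

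Finally I would expand the numerators into four monomials, $(1-t^n)(1-\kappa^nq^nt^{-n})=1-t^n-\kappa^nq^nt^{-n}+\kappa^nq^n$ and $(1-t^n)(1-q^nt^{-n})=1-t^n-q^nt^{-n}+q^n$, and apply the elementary identity $\exp\bigl(-\sum_{n>0}\tfrac{1}{n}\tfrac{(au)^n}{(1-q^n)(1-\kappa^n)}\bigr)=(au;q,\kappa)_\infty$ (itself a pair of geometric-series resummations) to each of the four terms. For $i=j$ this yields the factor $\dfrac{(w/z;q,\kappa)_\infty(\kappa qw/z;q,\kappa)_\infty}{(tw/z;q,\kappa)_\infty(\kappa qw/tz;q,\kappa)_\infty}$, and for $i\neq j$ the same expansion combined with the dilation $w/z\mapsto\kappa^{(-i+j)/N}w/z$ (resp.\ $\kappa^{(N-i+j)/N}w/z$) gives the two remaining formulas; multiplying back by $\phi_i^-(z)\phi_j^-(w)\phi_i^+(z)\phi_j^+(w)=\,:\!\phi_i(z)\phi_j(w)\!:$ finishes the proof. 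The argument is routine; the one place deserving care is the bookkeeping---making sure each of the four monomials lands in the numerator or in the denominator with the correct sign, and that the correct $\kappa$-power ($\kappa^{(-i+j)/N}$ for $i<j$ versus $\kappa^{(N-i+j)/N}$ for $j<i$) is carried through.
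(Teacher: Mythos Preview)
Your argument is correct and is precisely the routine free-boson computation the paper has in mind; the paper itself does not spell out a proof here, remarking that such results ``are standard and can be proved by straightforward computations using definitions.'' Your splitting into creation/annihilation halves, application of BCH with central $[A,B]$, and resummation via $\exp\bigl(-\sum_{n>0}\tfrac{1}{n}\tfrac{u^n}{(1-q^n)(1-\kappa^n)}\bigr)=(u;q,\kappa)_\infty$ is exactly that computation, and the sign and $\kappa$-power bookkeeping checks out.
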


\begin{prp}\label{OPE}
For $0\leq i,j\leq N-1$ We have
\begin{align*}
& \phi_i(z) S_{i+1}(w)={(\kappa^{1/N}qw/z;q)_\infty \over (\kappa^{1/N}qw/tz;q)_\infty} : \phi_i(z) S_{i+1}(w):,\\
 &  S_{i+1}(w)\phi_i(z)={(\kappa^{-1/N}tz/w;q)_\infty \over (\kappa^{-1/N}z/w;q)_\infty} : \phi_i(z) S_{i+1}(w):,\\
 & \phi_i(z) S_i(w)={(w/z;q)_\infty \over ( tw/z;q)_\infty} : \phi_i(z) S_i(w):,\quad
   S_i(w)\phi_i(z)={(qz/tw;q)_\infty \over (qz/w;q)_\infty} : \phi_i(z) S_i(w):,\\
&  \phi_i(z) S_j(w)=\,\,: \phi_i(z) S_j(w):, \quad   S_j(w)\phi_i(z)=\,\,: \phi_i(z) S_j(w):\qquad (j\neq i,i+1).
 \end{align*} 
\end{prp}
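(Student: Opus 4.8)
\emph{Sketch of proof.} The plan is to reduce the statement to the standard normal-ordering lemma for free-field vertex operators. First I would split each operator into creation and annihilation halves: $\phi_i(z)=\phi_i^-(z)\phi_i^+(z)$ with $\phi_i^-(z)=\exp(-\sum_{n>0}\frac1n\beta^i_{-n}z^n)$, $\phi_i^+(z)=\exp(\sum_{n>0}\frac1n\beta^i_n z^{-n})$, and likewise $S_j(w)=S_j^-(w)S_j^+(w)$ with $S_j^-(w)=\exp(\sum_{n>0}\frac1n\alpha^j_{-n}w^n)$, $S_j^+(w)=\exp(-\sum_{n>0}\frac1n\alpha^j_n w^{-n})$. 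In the product $\phi_i(z)S_j(w)=\phi_i^-(z)\bigl(\phi_i^+(z)S_j^-(w)\bigr)S_j^+(w)$ the only factor not already in normal order is $\phi_i^+(z)S_j^-(w)$, and since $[\beta^i_n,\alpha^j_{-m}]$ is a scalar, the Baker--Campbell--Hausdorff identity $e^Ae^B=e^Be^Ae^{[A,B]}$ (valid for central $[A,B]$) gives $\phi_i^+(z)S_j^-(w)=S_j^-(w)\phi_i^+(z)\exp\bigl(\sum_{n,m>0}\frac{1}{nm}z^{-n}w^m[\beta^i_n,\alpha^j_{-m}]\bigr)$; hence $\phi_i(z)S_j(w)$ equals its normal-ordered form times this scalar exponential. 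The opposite ordering $S_j(w)\phi_i(z)$ is computed the same way, moving $S_j^+(w)$ past $\phi_i^-(z)$ and producing $\exp\bigl(\sum_{n,m>0}\frac{1}{nm}w^{-n}z^m[\alpha^j_n,\beta^i_{-m}]\bigr)$ instead, which is the structural reason $z/w$ rather than $w/z$ appears in those formulas.

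Next I would feed in the commutators from the $[\beta,\alpha]$ relations stated just above Proposition \ref{OPE}: $[\beta^i_n,\alpha^i_m]=-n\frac{1-t^n}{1-q^n}\delta_{n+m,0}$, $[\beta^i_n,\alpha^{i+1}_m]=n\kappa^{n/N}q^nt^{-n}\frac{1-t^n}{1-q^n}\delta_{n+m,0}$, and $[\beta^i_n,\alpha^j_m]=0$ for $j\neq i,i+1$. The Kronecker delta collapses each double sum to a single sum of the form $\sum_{n>0}\frac1n\,c^n\,\frac{1-t^n}{1-q^n}$ with $c$ a monomial in $z^{\pm1},w^{\pm1},q,t,\kappa^{1/N}$ — for instance $c=w/z$ for $\phi_i(z)S_i(w)$, $c=qz/tw$ for $S_i(w)\phi_i(z)$, $c=\kappa^{1/N}qw/tz$ for $\phi_i(z)S_{i+1}(w)$, and $c=\kappa^{-1/N}z/w$ for $S_{i+1}(w)\phi_i(z)$; the negative-mode commutators are put in this shape using $\frac{1-t^{-m}}{1-q^{-m}}=q^mt^{-m}\frac{1-t^m}{1-q^m}$. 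The case $j\neq i,i+1$ is then immediate, the exponent being $0$, so the product is already normal-ordered.

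Finally I would evaluate the series using the elementary identity $\sum_{n>0}\frac1n\frac{x^n}{1-q^n}=-\log(x;q)_\infty$ (expand $\frac1{1-q^n}=\sum_{k\geq0}q^{nk}$, then sum $\sum_{n>0}x^n/n=-\log(1-x)$), which gives $\sum_{n>0}\frac1n c^n\frac{1-t^n}{1-q^n}=\log\frac{(tc;q)_\infty}{(c;q)_\infty}$, exponentiating to $\frac{(tc;q)_\infty}{(c;q)_\infty}$; substituting the values of $c$ listed above reproduces the six stated identities. These equalities are to be read as identities of formal Laurent series in the region $|w|<|z|$ for the products $\phi_i(z)S_j(w)$ and $|z|<|w|$ for $S_j(w)\phi_i(z)$, the domains in which the geometric series converge. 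The only point demanding any care — and the only place where the two orderings genuinely differ — is this bookkeeping of expansion domains together with the tracking of the $\kappa^{\pm1/N}$ factors and the $z\leftrightarrow w$ substitutions relating the two orderings; the computation is otherwise entirely routine.
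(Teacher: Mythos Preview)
Your approach is exactly the standard free-field computation the paper has in mind; the paper itself gives no proof of this proposition, remarking only that such results ``are standard and can be proved by straightforward computations using definitions.'' Your sketch supplies precisely that computation.

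One small bookkeeping slip: for the two $j=i$ cases the commutator $[\beta^i_n,\alpha^i_m]=-n\,\frac{1-t^n}{1-q^n}\,\delta_{n+m,0}$ carries a \emph{minus} sign, so the collapsed exponent is $-\sum_{n>0}\frac1n\,c^n\,\frac{1-t^n}{1-q^n}$ rather than $+\sum$, and the scalar factor is $\frac{(c;q)_\infty}{(tc;q)_\infty}$ instead of its reciprocal. With your values $c=w/z$ for $\phi_i(z)S_i(w)$ and $c=qz/tw$ for $S_i(w)\phi_i(z)$, this correction yields exactly the stated $\frac{(w/z;q)_\infty}{(tw/z;q)_\infty}$ and $\frac{(qz/tw;q)_\infty}{(qz/w;q)_\infty}$. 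The two $j=i+1$ cases are correct as you wrote them, since that commutator carries a plus sign; otherwise the argument is complete.
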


\begin{prp}\label{fusion}
For $0\leq i\leq N-1$, 
we have the fusion properties 
\begin{align*}
\phi_{i}(z)=\,\, :\phi_{i-1}(\kappa^{1/N}z) S_{i}(z) :.
\end{align*}
\end{prp}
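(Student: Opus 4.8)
The plan is to reduce the identity to the defining relation $\alpha^i_n = \kappa^{-n/N}\beta^{i-1}_n - \beta^i_n$ together with the elementary fact that the normal-ordered product of two vertex operators of the form $:\exp(\sum_{n\neq 0} c_n z^{-n}):$, with each $c_n$ a linear combination of the bosons $\beta^j_n$, is obtained simply by adding the exponents: if $A(z)=\,:\exp(a(z)):$ and $B(z)=\,:\exp(b(z)):$ with $a(z),b(z)$ linear in $\{\beta^j_n\}$, then $:A(z)B(z):\,=\,:\exp(a(z)+b(z)):$, since the operation $:\,\cdot\,:$ discards all contraction terms. No OPE factor intervenes precisely because the statement concerns the normal-ordered product, not the operator product $\phi_{i-1}(\kappa^{1/N}z)S_i(z)$ itself.

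First I would expand, for $1\leq i\leq N-1$,
\begin{align*}
:\phi_{i-1}(\kappa^{1/N}z)\,S_i(z):
&=\,:\exp\Bigl(\sum_{n\neq 0}\tfrac1n\,\beta^{i-1}_n(\kappa^{1/N}z)^{-n}\Bigr)
\exp\Bigl(-\sum_{n\neq 0}\tfrac1n\,\alpha^i_n z^{-n}\Bigr):\\
&=\,:\exp\Bigl(\sum_{n\neq 0}\tfrac1n\bigl(\kappa^{-n/N}\beta^{i-1}_n-\alpha^i_n\bigr)z^{-n}\Bigr):,
\end{align*}
then substitute $\alpha^i_n=\kappa^{-n/N}\beta^{i-1}_n-\beta^i_n$ and observe the cancellation $\kappa^{-n/N}\beta^{i-1}_n-\alpha^i_n=\beta^i_n$, so that the right-hand side equals $:\exp\bigl(\sum_{n\neq 0}\tfrac1n\beta^i_n z^{-n}\bigr):\,=\phi_i(z)$. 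For the remaining case $i=0$ one repeats the same computation with the modified definition $\alpha^0_n=\kappa^{-n/N}\beta^{N-1}_n-\beta^0_n$ (reading $\phi_{-1}:=\phi_{N-1}$), or, more economically, one proves a single instance and then applies the automorphism $\omega$ using $\omega\phi_i(z)=\phi_{i+1}(z)$ and $\omega S_i(z)=S_{i+1}(z)$ to obtain all $N$ of them.

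The only point requiring any attention — and it is hardly an obstacle — is the bookkeeping of the cyclic index convention near $i=0$ and the observation that $\kappa^{1/N}$ enters $\phi_{i-1}$ only through the rescaling $z\mapsto\kappa^{1/N}z$, which produces exactly the factor $\kappa^{-n/N}$ matching the first term of $\alpha^i_n$. Everything else is the one-line substitution above.
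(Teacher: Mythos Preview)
Your argument is correct: the identity reduces immediately to the definition $\alpha^i_n=\kappa^{-n/N}\beta^{i-1}_n-\beta^i_n$ once one uses that normal ordering of exponentials of linear boson fields simply adds the exponents, and your treatment of the boundary case $i=0$ (either directly via $\phi_{-1}:=\phi_{N-1}$ or by applying $\omega$) is fine. The paper does not supply a proof of this proposition; it is among the results the author explicitly leaves as ``standard and can be proved by straightforward computations using definitions,'' and your computation is exactly that straightforward verification.
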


\subsection{Screened vertex operators}
We assume that the indices of $S_i(z)$ and $\phi_i(z)$ are extended to $\mathbb{Z}$ assuming the 
cyclic identifications $S_i(z)=S_{i+N}(z)$, $\phi_i(z)=\phi_{i+N}(z)$.
We use the following notation for the ordered products: 
$\displaystyle \prod^{\curvearrowleft}_{1\leq i\leq \ell} A_i:=A_\ell \cdots A_2A_1$.
Let $0\leq i\leq N-1$, and
$\lambda =(\lambda_1,\ldots,\lambda_{\ell})\in {\mathsf P}$,
where $\ell=\ell(\lambda)$ denotes the length of $\lambda$.
Set
\begin{align*}
\phi^{i}_{\lambda}(z)=&
\phi_{i-\ell}(\kappa^{(\ell+1)/N} z)
\prod^{\curvearrowleft}_{1\leq j\leq \ell}S_{i-j+1}(\kappa^{j/ N} q^{\lambda_{j}}z).
\end{align*}
Then we introduce the stabilized version $\Phi^i_{\lambda}(z)$ as follows.

\begin{dfn}
For $1\leq i\leq N-1$ and $\lambda\in  \mathsf{P}$, set 
\begin{align*}
\Phi^i_{\lambda}(z)&=
\left( {(q/t;q)_\infty \over (q;q)_\infty}\right)^{\ell(\lambda)}
 \phi^i_{\lambda}(z).
 \end{align*}
\end{dfn}
Thanks to the properties in Propositions \ref{OPE} and \ref{fusion}, the  operators $\Phi^{i}_{\lambda}(z)$
are consistently defined for all $\lambda \in  \mathsf{P}$.  

Let $(x,p)=(x_1,\ldots,x_N,p)$ be a collection of parameters. 
Extend it to $x=(x_i)_{i\in \mathbb{Z}}$ assuming the cyclic identification $x_i=x_{i+N}$.
\begin{dfn}\label{SVO}
Define the screened vertex operator $\Phi^i(z|x,p)$ by the infinite series
\begin{align*}
\Phi^i(z|x,p)=\sum_{\lambda\in \mathsf{P}}\Phi^i_{\lambda}(z)
\prod_{k\geq 1} (p^{1/N} x_{N-i+k}/x_{N-i+k-1})^{\lambda_k}.
\end{align*}
\end{dfn}

Set $\omega^i=(\omega^i_1,\ldots,\omega^i_N)=(\overbrace{0,\ldots,0}^{i\,\,{\rm times}},1,\ldots,1)\in \mathbb{Z}^N$ for $1\leq i\leq N$. 
Write $t^{\omega^i} \!x=(t^{\omega^i_1} x_1,\ldots,t^{\omega^i_N} x_N )$ for simplicity.  
Let $(s,\kappa)=(s_1,\ldots,s_N,\kappa)$ be another collection of parameters.
\begin{thm}\label{affine-screening}
Let $N\in \mathbb{Z}_{\geq 2}$.
We have
\begin{align*}
&\langle 0| \Phi^0(s_1|t^{\omega^{N}}\!\!x,p) \Phi^1(s_2|t^{\omega^{N-1}}\!\!x,p) \cdots 
 \Phi^{N-1}(s_N|t^{\omega^1}\!\!x,p)  |0\rangle\\
 =&\prod_{1\leq i< j\leq N}
 {(\kappa^{(j-i)/N} s_j/s_i;q,\kappa)_\infty \over (\kappa^{(j-i)/N} t s_j/s_i;q,\kappa)_\infty }
{(\kappa^{(j-i)/N} q s_j/s_i;q,\kappa)_\infty \over (\kappa^{(j-i)/N} qs_j/ts_i;q,\kappa)_\infty }\cdot 
f^{\widehat{\mathfrak gl}_N} (x,p^{1/N}|s,\kappa^{1/N}|q,t). \nonumber
\end{align*}
\end{thm}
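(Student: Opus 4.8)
The plan is to compute the vacuum matrix element directly by expanding each screened vertex operator $\Phi^i(s_{i+1}|t^{\omega^{N-i}}\!x,p)$ into its defining series over partitions $\lambda^{(i+1)}\in\mathsf P$ and then evaluating the resulting sum of normal-ordered products of $\phi$'s and $S$'s against $\langle 0|\cdots|0\rangle$. Concretely, each $\Phi^i_{\lambda^{(i+1)}}(s_{i+1})$ unpacks, via the fusion definition $\phi^{i}_\lambda(z)=\phi_{i-\ell}(\kappa^{(\ell+1)/N}z)\prod^{\curvearrowleft}_{1\le j\le \ell}S_{i-j+1}(\kappa^{j/N}q^{\lambda_j}z)$, into a single $\phi$ together with a $\lambda$-indexed string of screening currents at explicit spectral parameters. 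The first step is therefore purely bookkeeping: write down the full ordered product $\prod^{\curvearrowleft}$ of all the $\phi$'s and $S_i$'s appearing in $\langle 0|\Phi^0\cdots\Phi^{N-1}|0\rangle$, together with the accumulated scalar prefactor $\bigl((q/t;q)_\infty/(q;q)_\infty\bigr)^{\sum_i\ell(\lambda^{(i)})}$ from the stabilization and the monomial weight $\prod_{i,\beta}\bigl(p^{1/N}x\cdot/x\cdot\bigr)^{\lambda^{(i)}}$.

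Next I would invoke Wick's theorem: $\langle 0|:\!A_1\!:\cdots:\!A_m\!:|0\rangle=\prod_{a<b}(\text{contraction of }A_a,A_b)$, where each pairwise contraction is read off from the OPE propositions already in the excerpt. So the whole matrix element becomes, term by term, the product of four families of contractions — $\phi_i$–$\phi_j$ (Proposition before \ref{OPE}), $\phi_i$–$S_j$ (Proposition \ref{OPE}), $S_i$–$S_j$, and the self-contractions $S_i$–$S_i$ — each a ratio of double or single $q$-shifted factorials in the ratios of spectral parameters. The task is to reorganize this enormous product. The $\phi$–$\phi$ contractions among the $N$ leading vertex operators $\phi_{-\ell^{(i)}}(\kappa^{(\ell^{(i)}+1)/N}s_i)$ are independent of the $\lambda^{(i)}$ only in their bare part and produce, after collecting, exactly the scalar prefactor $\prod_{i<j}(\kappa^{(j-i)/N}s_j/s_i;q,\kappa)_\infty/(\cdots)$ displayed on the right-hand side; the $\phi$–$S$ and $S$–$S$ contractions, once the dust settles, must reassemble into the Nekrasov-factor ratios $\Nk^{(j-i|N)}_{\lambda^{(i)},\lambda^{(j)}}(ts_j/s_i|q,\kappa)/\Nk^{(j-i|N)}_{\lambda^{(i)},\lambda^{(j)}}(s_j/s_i|q,\kappa)$ defining $f^{\widehat{\mathfrak gl}_N}$, with $\kappa$ replaced by $\kappa^{1/N}$ and $p$ by $p^{1/N}$ to match the statement. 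The residue-of-pole / telescoping phenomenon that converts an infinite product of $(q;q)_\infty$-type factors indexed by pairs of columns into the finite Nekrasov product indexed by $\lambda_j-\lambda_{j+1}$ is governed precisely by the combinatorial structure encoded in the two-product formula for $\Nk_{\lambda,\mu}$ recalled right after the definition; one matches the ``$j\ge i$, $j-i\equiv k$'' and ``$\beta\ge\alpha$, $\beta-\alpha\equiv-k-1$'' index sets to the mod-$N$ pattern of which screening currents $S_{i-j+1}$ actually have nontrivial OPE (only the $S_a$–$S_a$ and $S_a$–$S_{a\pm1}$ pairs contribute, by the last Proposition of \S2.2), and the residual powers of $q$ come from the $q^{\lambda_j}$ shifts in the spectral parameters.

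The key computational lemma I would isolate and prove first is the one-row case: that $\langle 0|\Phi^{i_1}_{(\lambda^{(1)})}\cdots$ restricted to single-row partitions, or better the building block $\langle 0|\phi_{a}(z)\prod S\cdots\phi_{b}(w)\prod S\cdots|0\rangle$, already produces the $k=1$ slice of a Nekrasov factor; the general partition case then follows by the same fusion/stabilization induction that makes $\Phi^i_\lambda$ well defined (the remark after the definition of $\Phi^i_\lambda$), i.e. adding a box to $\lambda$ multiplies the relevant contraction by exactly one more $(1-q^{\cdots}\kappa^{\cdots})$ factor. The shift $t^{\omega^{N-i}}\!x$ in the $i$-th slot, rather than plain $x$, is what aligns the $\phi$-$\phi$ bare contractions so the $t$-dependence lands inside the $\Nk(ts_j/s_i)$ numerators rather than leaking into the prefactor; verifying this alignment is a short but essential check.

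The hard part will be the combinatorial reorganization in the middle step: controlling the cancellation between the infinitely many $S$–$S$ and $\phi$–$S$ contraction factors and demonstrating that what survives is exactly the finite Nekrasov product with the correct $\kappa$-powers $\kappa^{-i+j}$ and $\kappa^{\alpha-\beta-1}$ and the correct mod-$N$ restriction of the index sets. Equivalently, one must show that the ``extra'' factors beyond the claimed answer telescope to $1$ — this is where the precise choice of spectral parameters $\kappa^{j/N}q^{\lambda_j}z$ in $\phi^i_\lambda$, and the cyclic identification $S_i=S_{i+N}$, do all the work, and where an off-by-one in the exponents would be easy to commit. I would handle this by first doing $N=3$ completely, matching against the explicit $\Nk^{(k|3)}$, to pin down all conventions, and only then writing the general-$N$ argument as a bookkeeping induction on $\sum_i|\lambda^{(i)}|$.
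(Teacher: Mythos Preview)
Your overall strategy---expand each $\Phi^i$ over partitions, apply Wick contractions via the listed OPE formulas, and reorganize the result into Nekrasov ratios---is exactly what the paper does. The paper's organization, however, is sharper than your proposed box-by-box induction. It isolates two lemmas: Lemma~\ref{lem-phi-1} computes the self-contractions inside a single $\Phi^k_\lambda(z)$, showing that they equal $t^{-|\lambda|^{(0)}}\,\Nk^{(0)}_{\lambda\lambda}(t|q,\kappa^{1/N})/\Nk^{(0)}_{\lambda\lambda}(1|q,\kappa^{1/N})$ times the fully normal-ordered product (this is the diagonal $i=j$ piece of $f^{\widehat{\mathfrak gl}_N}$); Lemma~\ref{lem-phi-2} then computes the contractions between $:\!\Phi^\alpha_\lambda(z)\!:$ and $:\!\Phi^\beta_\mu(w)\!:$ for $\alpha<\beta$, producing the $\phi$--$\phi$ double-infinite prefactor, the two off-diagonal Nekrasov ratios, and an extra power $t^{-|\lambda|^{(N+\alpha-\beta)}-|\mu|^{(\beta-\alpha-1)}}$. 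The telescoping you anticipate happens entirely inside the proofs of these two lemmas, by grouping contraction factors with the same mod-$N$ residue and using $(u;q)_\infty/(q^n u;q)_\infty=(u;q)_n$, with Lemma~\ref{lam-alpha} identifying the resulting exponents as $|\lambda|^{(\alpha)}$. Multiplying over all pairs directly gives the theorem; no induction on $|\boldsymbol\lambda|$ or preliminary one-row computation is needed.

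There is one concrete misconception in your outline that would cost you time. The shifts $t^{\omega^{N-i}}\!x$ cannot affect any OPE contraction: in the definition of $\Phi^i(z|x,p)$ the variables $x_j$ enter only the monomial weight $\prod_{k\ge 1}(p^{1/N}x_{N-i+k}/x_{N-i+k-1})^{\lambda_k}$, never the spectral parameters of $\phi$ or $S$, which are built from $z=s_{i+1}$, $\kappa^{1/N}$ and $q$ alone. Correspondingly, the ``extra'' factors beyond the Nekrasov ratios do \emph{not} telescope to $1$; they are precisely the negative powers of $t$ recorded in Lemmas~\ref{lem-phi-1} and~\ref{lem-phi-2}. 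The actual role of the $t^{\omega^{N-i}}\!x$ shifts is to supply matching positive powers of $t$ in the monomial weights, so that after cancellation each weight factor reads $p^{1/N}x_{\alpha+\beta}/(t\,x_{\alpha+\beta-1})$ as required by the definition of $f^{\widehat{\mathfrak gl}_N}$. If you proceed expecting the contraction side to come out $t$-clean and the $x$-shifts to feed into the OPE, you will not be able to close the computation.
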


\begin{proof}
For $i=0,1,\ldots,N-1$, 
set
$
|\lambda|^{(i)}=\sum_{j\equiv i+1\,\,({\rm mod}\,N)}\lambda_j.
$ 
It follows from Lemmas \ref{lam-alpha}, \ref{lem-phi-1} and \ref{lem-phi-2} below,
we have
\begin{align*}
&{\rm LHS}=\sum_{\lambda^{(1)},\ldots,\lambda^{(N)}\in \mathsf{P}}
\langle 0| \Phi^0_{\lambda^{(N)}}(s_1) \Phi^1_{\lambda^{(N-1)}}(s_2) \cdots 
 \Phi^{N-1}_{\lambda^{(1)}}(s_N)  |0\rangle\\
 &\quad \!\!\!\times \prod_{j=1}^N \prod_{i\geq 1} (p^{1/N} x_{j+i}/tx_{j+i-1})^{\lambda^{(j)}_i}\cdot
\prod_{i=1}^N t^{|\lambda^{(i)}|^{(0)}}
 \cdot \prod_{0\leq i< j \leq N-1} t^{|\lambda^{(N-i)}|^{(N+i-j)}+|\lambda^{(N-j)}|^{(-i+j-1)}}= {\rm RHS}.
\end{align*}
\end{proof}

\begin{lem}\label{lam-alpha}
For $\alpha=0,1,\ldots,N-1$, we have
\begin{align*}
\sum_{j\geq i\geq 1\atop j-i\equiv \alpha\,\,({\rm mod}\,N)} \lambda_j-\lambda_{j+1}
=&
\sum_{j\geq 1}\left\lfloor {j+N-1-\alpha\over N}\right\rfloor \lambda_j-\left\lfloor {j+N-1-\alpha\over N}\right\rfloor \lambda_{j+1}=
|\lambda|^{(\alpha)}.\nonumber\\
\end{align*}
\end{lem}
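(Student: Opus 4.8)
The plan is to prove the combinatorial identity
\[
\sum_{\substack{j\geq i\geq 1\\ j-i\equiv \alpha\ ({\rm mod}\,N)}}(\lambda_j-\lambda_{j+1})
=\sum_{j\geq 1}\Bigl\lfloor \tfrac{j+N-1-\alpha}{N}\Bigr\rfloor(\lambda_j-\lambda_{j+1})
=|\lambda|^{(\alpha)}
\]
by interchanging the order of summation in the leftmost expression, and then telescoping in the middle expression. First I would fix $\alpha\in\{0,1,\dots,N-1\}$ and rewrite the double sum on the left as $\sum_{j\geq 1} c_j(\lambda_j-\lambda_{j+1})$, where $c_j=\#\{\,i\in\mathbb{Z}:\ 1\leq i\leq j,\ j-i\equiv\alpha\ ({\rm mod}\,N)\,\}$. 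The condition $j-i\equiv\alpha$ forces $i\equiv j-\alpha\ ({\rm mod}\,N)$, so $c_j$ counts the integers in $\{1,2,\dots,j\}$ lying in a fixed residue class mod $N$; a standard count gives $c_j=\lfloor (j-r)/N\rfloor+1$ if $r\geq 1$ and $c_j=\lfloor j/N\rfloor$ if $r=0$, where $r\in\{0,1,\dots,N-1\}$ is the residue of $j-\alpha$ mod $N$. A short case check (or the uniform observation that the number of elements of $\{1,\dots,j\}$ congruent to $a$ mod $N$ equals $\lfloor (j-a)/N\rfloor+1$ when $1\le a\le N$, applied with $a$ the representative of $j-\alpha$ in $\{1,\dots,N\}$) shows this equals exactly $\lfloor (j+N-1-\alpha)/N\rfloor$. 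This identifies the first and second expressions.

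Next I would show the second expression equals $|\lambda|^{(\alpha)}=\sum_{j\equiv\alpha+1\ ({\rm mod}\,N)}\lambda_j$ by Abel summation (telescoping). Writing $w_j=\lfloor (j+N-1-\alpha)/N\rfloor$, we have $\sum_{j\ge 1}w_j(\lambda_j-\lambda_{j+1})=\sum_{j\ge 1}(w_j-w_{j-1})\lambda_j$ with the convention $w_0=0$ (the boundary terms vanish since $\lambda$ has finite support). The step $w_j-w_{j-1}=\lfloor (j+N-1-\alpha)/N\rfloor-\lfloor (j+N-2-\alpha)/N\rfloor$ is the indicator of the event that $j+N-1-\alpha$ is divisible by $N$, i.e.\ $j\equiv \alpha+1\ ({\rm mod}\,N)$ (and one checks $w_0=0$, consistent with $j=0$ not being of the form; for $\alpha=N-1$ one should note $j\equiv 0\ ({\rm mod}\,N)$, matching $\alpha+1\equiv N\equiv 0$). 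Hence $\sum_{j\ge 1}(w_j-w_{j-1})\lambda_j=\sum_{j\equiv\alpha+1\ ({\rm mod}\,N)}\lambda_j=|\lambda|^{(\alpha)}$, completing the proof.

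The only mild subtlety — and the step I would be most careful about — is keeping the floor-function bookkeeping straight across all residues of $\alpha$, in particular handling the boundary residue $\alpha=N-1$ where the relevant congruence class is $0$ rather than a representative in $\{1,\dots,N-1\}$, and making sure the convention $w_0=0$ is consistent with the formula (it is, since $\lfloor (N-1-\alpha)/N\rfloor=0$ for $0\le\alpha\le N-1$). Everything else is a routine reindexing and telescoping argument of exactly the "straightforward computation using definitions" type the paper flags; no deeper input is needed.
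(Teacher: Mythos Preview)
Your proposal is correct, and the paper in fact states this lemma without proof, classifying it among the results that ``can be proved by straightforward computations using definitions.'' Your reindexing $m=j-i$ to count $c_j=\lfloor(j+N-1-\alpha)/N\rfloor$ followed by the Abel summation $\sum_{j\ge1}(w_j-w_{j-1})\lambda_j$ with $w_j-w_{j-1}=\mathbf{1}_{j\equiv\alpha+1\,(\mathrm{mod}\,N)}$ is exactly the routine verification the paper has in mind; there is nothing to compare.
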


\begin{lem}\label{lem-phi-1}
For $k=0,\ldots,N-1$, 
we have
\begin{align}
\Phi^k_{\lambda}(z)&=t^{-|\lambda|^{(0)}}
 {\Nk^{(0)}_{\lambda\lambda} (t|q,\kappa^{1/N})\over \Nk^{(0)}_{\lambda\lambda} (1|q,\kappa^{1/N})}
:\phi^k_{\lambda}(z):.\label{phi-1}
\end{align}
\end{lem}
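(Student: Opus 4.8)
The plan is to prove Lemma \ref{lem-phi-1} by a direct computation of the normal ordering of the product of vertex operators and screening operators appearing in $\phi^k_\lambda(z)$, and then to match the resulting scalar prefactor against the specialization of the Nekrasov factor $\Nk^{(0)}_{\lambda\lambda}$ at $\mu=\lambda$. First I would recall that
\[
\phi^k_\lambda(z)=\phi_{k-\ell}(\kappa^{(\ell+1)/N}z)\prod^{\curvearrowleft}_{1\le j\le \ell}S_{k-j+1}(\kappa^{j/N}q^{\lambda_j}z),
\]
so the task is to move all annihilation parts to the right, collecting one scalar factor for each ordered pair of constituents: the $\phi$--$\phi$ self contraction (trivial here, only one $\phi$), the $\phi_{k-\ell}$--$S_{k-j+1}$ contractions, and the $S$--$S$ contractions $S_{k-i+1}(\cdots)S_{k-j+1}(\cdots)$ for $i<j$. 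By Propositions \ref{OPE} and the $S_i(z)S_j(w)$ OPEs, only the ``adjacent'' pairs (those whose indices differ by $0$ or $\pm1$ mod $N$) contribute nontrivial factors; the key point is that after the cyclic shift of indices by $k$ the combinatorics of which pairs are adjacent is governed exactly by the congruence classes mod $N$ that enter the definition of $\Nk^{(0)}$, i.e. the pairs with $j-i\equiv 0$ and $j-i\equiv -1\pmod N$.

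Next I would organize the bookkeeping using Lemma \ref{lam-alpha}: the exponents of $t$ produced by the ``diagonal'' ($j-i\equiv0$) contractions sum, via the floor-function identity, to $|\lambda|^{(0)}$, which accounts for the overall $t^{-|\lambda|^{(0)}}$ prefactor (note the $\phi_i(z)S_i(w)$ OPE contributes $(w/z;q)_\infty/(tw/z;q)_\infty$ and the prefactor $\bigl((q/t;q)_\infty/(q;q)_\infty\bigr)^{\ell}$ hidden in $\Phi^k_\lambda$ versus $\phi^k_\lambda$ must be tracked — here $\Phi^k_\lambda$ and $\phi^k_\lambda$ differ precisely by that power, so in \eqref{phi-1} one should read the identity at the level of $\Phi^k_\lambda$ with the $\phi^k_\lambda$ normal-ordered symbol on the right, the extra $\bigl((q/t;q)_\infty/(q;q)_\infty\bigr)^\ell$ being absorbed into the definition). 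Then I would evaluate the surviving $(u;q)_\infty$-type infinite products at the fused arguments $\kappa^{j/N}q^{\lambda_j}z$ and $\kappa^{i/N}q^{\lambda_i}z$: each such product telescopes, since $q^{\lambda_i}/q^{\lambda_j}$ is an integer power of $q$, into a finite $q$-shifted factorial of the shape $(uq^{-\lambda_i+\lambda_j}\kappa^{\pm(i-j)/N};q)_{\lambda_j-\lambda_{j+1}}$ — exactly the building blocks of $\Nk^{(0|N)}_{\lambda,\lambda}(t|q,\kappa^{1/N})$ in the numerator and $\Nk^{(0|N)}_{\lambda,\lambda}(1|q,\kappa^{1/N})$ in the denominator.

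Finally I would assemble these pieces and check that the product of all contraction factors is precisely $t^{-|\lambda|^{(0)}}\,\Nk^{(0)}_{\lambda\lambda}(t|q,\kappa^{1/N})/\Nk^{(0)}_{\lambda\lambda}(1|q,\kappa^{1/N})$, using the equivalent product form of $\Nk$ displayed just after Definition 0.1 (with $\mu=\lambda$, so that $\mu_\beta-\mu_{\beta+1}=\lambda_\beta-\lambda_{\beta+1}$ and the two products combine). The main obstacle I anticipate is purely combinatorial rather than analytic: keeping the index shifts, the cyclic identifications $S_i=S_{i+N}$, and the roles of the two congruence classes $j-i\equiv 0$ and $j-i\equiv -1\pmod N$ perfectly aligned with the two products defining $\Nk^{(0)}_{\lambda,\lambda}$, and making sure the $\kappa^{\pm1/N}$ fractional powers coming from $S_i$--$S_{i+1}$ versus $S_{i+1}$--$S_i$ OPEs produce the correct integer-translated arguments $\kappa^{-i+j}$ and $\kappa^{i-j-1}$ after the fusion substitution $z\mapsto \kappa^{j/N}q^{\lambda_j}z$. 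Once the dictionary between ``adjacent screening pair'' and ``box of the Young diagram in a fixed residue class'' is set up cleanly, the identity follows by a term-by-term comparison.
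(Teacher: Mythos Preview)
Your proposal is correct and follows essentially the same approach as the paper: normal order the product $\phi_{k-\ell}(z_{\ell+1})\prod^{\curvearrowleft}S_{k-j+1}(z_j)$, collect the nontrivial OPE factors (which arise only from index pairs differing by $0,\pm1$ modulo $N$), telescope the resulting ratios of $(u;q)_\infty$'s into finite $q$-shifted factorials, and then identify these with the two products defining $\Nk^{(0)}_{\lambda\lambda}$, using Lemma~\ref{lam-alpha} to extract the power $t^{-|\lambda|^{(0)}}$. The paper carries this out by explicitly separating the contraction factors into two groups (one telescoping to the $j-i\equiv 0$ product and one to the $\beta-\alpha\equiv N-1$ product of $\Nk^{(0)}$), with the $t^{-|\lambda|^{(0)}}$ arising from the inversion identity $(a/t;q)_n/(a;q)_n=t^{-n}(tq^{1-n}/a;q)_n/(q^{1-n}/a;q)_n$ applied to the second group; your sketch is slightly less precise about exactly which OPE factors feed into which congruence class (the $S_iS_i$ OPE in fact contributes two ratio pieces that split between the two groups), but the plan is the same and will go through once the bookkeeping is made explicit.
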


\begin{lem}\label{lem-phi-2}
Let $0\leq \alpha<\beta\leq N-1$, we have
\begin{align}
&:\Phi^\alpha_{\lambda}(z):: \Phi^\beta_{\mu}(w):\,\,=t^{-|\lambda|^{(N+\alpha-\beta)}}t^{-|\mu|^{(-\alpha+\beta-1)}}
{(\kappa^{(\beta-\alpha)/N} w/z;q,\kappa)_\infty \over (\kappa^{(\beta-\alpha)/N} t w/z;q,\kappa)_\infty }
{(\kappa^{(\beta-\alpha)/N} q w/z;q,\kappa)_\infty \over (\kappa^{(\beta-\alpha)/N} qw/tz;q,\kappa)_\infty }\nonumber\\
&\qquad \times
{\Nk^{(\beta-\alpha)}_{\mu\lambda} (t w/z|q,\kappa^{1/N})\over \Nk^{(\beta-\alpha)}_{\mu\lambda} (w/z|q,\kappa^{1/N})}
{\Nk^{(\alpha-\beta)}_{\lambda\mu} (t z/w|q,\kappa^{1/N})\over 
\Nk^{(\alpha-\beta)}_{\lambda\mu} (z/w|q,\kappa^{1/N})}
:\Phi^\alpha_{\lambda}(z)\Phi^\beta_{\mu}(w):.\label{phi-2}
\end{align}
\end{lem}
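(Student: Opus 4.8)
The plan is to prove Lemma \ref{lem-phi-2} by a direct computation of the operator product $:\Phi^\alpha_\lambda(z):\,:\Phi^\beta_\mu(w):$, decomposing each $\Phi$ into its constituent vertex operator and screening operators and collecting the contraction factors. Recall that $\Phi^\alpha_\lambda(z)$, up to the scalar $\bigl((q/t;q)_\infty/(q;q)_\infty\bigr)^{\ell(\lambda)}$, equals the normal-ordered product of $\phi_{\alpha-\ell}(\kappa^{(\ell+1)/N}z)$ with the screening string $\prod^\curvearrowleft_{1\le j\le \ell}S_{\alpha-j+1}(\kappa^{j/N}q^{\lambda_j}z)$, and similarly for $\Phi^\beta_\mu(w)$ with length $m=\ell(\mu)$. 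Since we start from the already normal-ordered operators $:\Phi^\alpha_\lambda(z):$ and $:\Phi^\beta_\mu(w):$, the only contractions that arise when we move the $w$-operators past the $z$-operators are the \emph{cross} contractions: $\phi$ with $\phi$, $\phi$ with $S$, $S$ with $\phi$, and $S$ with $S$, in all four sub-cases. These are exactly the OPE coefficients recorded in the four OPE Propositions (the $\phi_i\phi_j$ Proposition, Proposition \ref{OPE}, and the $S_iS_j$ Proposition).

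First I would set up the bookkeeping. The operators appearing in $:\Phi^\alpha_\lambda(z):$ carry spectral parameters $\kappa^{(\ell+1)/N}z$ (for the $\phi$) and $\kappa^{j/N}q^{\lambda_j}z$ for $j=1,\dots,\ell$ (for the $S$'s), with indices $\alpha-\ell$ and $\alpha-j+1$ respectively. Likewise $:\Phi^\beta_\mu(w):$ has parameters $\kappa^{(m+1)/N}w$, $\kappa^{k/N}q^{\mu_k}w$ and indices $\beta-m$, $\beta-k+1$. Because $\beta>\alpha$, the relevant index-differences mod $N$ that control which OPE case applies are $(\beta-k+1)-(\alpha-j+1) \equiv \beta-\alpha+j-k$ and similar expressions; a nonzero OPE factor occurs only when this is $0$ or $\pm1$ mod $N$. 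The next step is to multiply together all these factors: each nontrivial contraction contributes a ratio of the form $(\kappa^{c/N}q^{a}w/z;q)_\infty$-type products, and I would reorganize the product over $(j,k)$ (and the $\phi$-terms) into the two Nekrasov-factor ratios $\Nk^{(\beta-\alpha)}_{\mu\lambda}(tw/z)/\Nk^{(\beta-\alpha)}_{\mu\lambda}(w/z)$ and $\Nk^{(\alpha-\beta)}_{\lambda\mu}(tz/w)/\Nk^{(\alpha-\beta)}_{\lambda\mu}(z/w)$ by matching against the product presentation of $\Nk^{(k)}_{\lambda,\mu}$ in the first Definition. The ``prefactor'' term $\dfrac{(\kappa^{(\beta-\alpha)/N}w/z;q,\kappa)_\infty}{(\kappa^{(\beta-\alpha)/N}tw/z;q,\kappa)_\infty}\dfrac{(\kappa^{(\beta-\alpha)/N}qw/z;q,\kappa)_\infty}{(\kappa^{(\beta-\alpha)/N}qw/tz;q,\kappa)_\infty}$ should emerge precisely as the contribution of the $\phi$--$\phi$ contraction together with the ``boundary'' $\phi$--$S$ and $S$--$\phi$ contractions and the part of the $S$--$S$ sum corresponding to $\lambda,\mu=\emptyset$ (equivalently, it is the $\lambda=\mu=\emptyset$ specialization of the whole right-hand side, which must reduce to the bare $\phi_\alpha(\kappa^{(\ell+1)/N}z)\phi_\beta(\kappa^{(m+1)/N}w)$ OPE after using $\ell=m=0$). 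Finally, the powers $t^{-|\lambda|^{(N+\alpha-\beta)}}$ and $t^{-|\mu|^{(-\alpha+\beta-1)}}$ come from substituting $t$ for $q$ in the arguments the way the $\Nk(tw/z)/\Nk(w/z)$ normalization does; tracking the $t$-powers via Lemma \ref{lam-alpha} (which converts sums of $\lambda_j-\lambda_{j+1}$ over a residue class into $|\lambda|^{(\alpha)}$) gives exactly these exponents.

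The main obstacle I anticipate is the combinatorial reorganization of the quadruple product of OPE factors into the closed form involving $\Nk^{(\beta-\alpha)}$ and $\Nk^{(\alpha-\beta)}$. The subtlety is that the screening parameters are shifted both multiplicatively by powers of $q$ (namely $q^{\lambda_j}$) and by $\kappa^{j/N}$, and the index of $S_{\alpha-j+1}$ decreases with $j$, so the residue class mod $N$ of the index-difference cycles as $j,k$ vary. One must carefully split the double sum over $1\le j\le\ell$, $1\le k\le m$ according to $j-k \bmod N$, recognize telescoping of the infinite products $(uq^{\lambda_j};q)_\infty/(uq^{\lambda_{j+1}};q)_\infty$-type expressions into finite shifted factorials $(u\,;q)_{\lambda_j-\lambda_{j+1}}$, and check that the stabilization constants $\bigl((q/t;q)_\infty/(q;q)_\infty\bigr)^{\ell+m}$ are exactly absorbed so that the limit $\ell,m\to\infty$ is well-defined (this is where the ``consistently defined for all $\lambda$'' remark after the definition of $\Phi^i_\lambda$ is used). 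I would handle this by first doing the case $N$ large compared to $\ell(\lambda),\ell(\mu)$, where at most one residue class is hit and the matching with one copy of each Nekrasov factor is transparent, and then arguing the general case follows by the cyclic structure and by comparing with the already-known non-affine computation of \cite{NS}. Once the double-product identity is in hand, assembling the $t$-powers and the prefactor is routine.
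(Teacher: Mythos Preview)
Your approach is the same as the paper's: collect all cross-contractions between the constituents of $:\Phi^\alpha_\lambda(z):$ and $:\Phi^\beta_\mu(w):$, split the resulting double product over the screening indices according to the residue class of the index-difference mod $N$, telescope the infinite $q$-products into finite shifted factorials, and identify the two groups with the ratios $\Nk^{(\beta-\alpha)}_{\mu\lambda}(tw/z)/\Nk^{(\beta-\alpha)}_{\mu\lambda}(w/z)$ and $\Nk^{(\alpha-\beta)}_{\lambda\mu}(tz/w)/\Nk^{(\alpha-\beta)}_{\lambda\mu}(z/w)$, using Lemma~\ref{lam-alpha} for the $t$-powers. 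The paper carries this out directly; two points where your plan drifts are worth flagging.

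First, the stabilization constants $\bigl((q/t;q)_\infty/(q;q)_\infty\bigr)^{\ell(\lambda)+\ell(\mu)}$ are scalars appearing identically on both sides of the identity and simply cancel; there is no absorption and no limit $\ell,m\to\infty$ to take in this lemma. What does matter is that one may, without loss of generality, pad the shorter partition with zeros so that $\ell(\lambda)=\ell(\mu)=:\ell$; the paper begins its proof with exactly this reduction, and it substantially streamlines the bookkeeping because the $\phi$--$S$ and $S$--$\phi$ boundary terms then line up symmetrically with the $S$--$S$ bulk.

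Second, your proposed shortcut of treating ``$N$ large compared to $\ell(\lambda),\ell(\mu)$'' first and then inferring the general case ``by the cyclic structure'' is not a valid reduction: the factors $\Nk^{(k|N)}$ and the exponents $|\lambda|^{(i)}$ depend on $N$ through the congruence conditions, and there is no formal specialization sending the large-$N$ identity to the small-$N$ one. The paper does not attempt this; it performs the residue-class splitting for arbitrary $N$ from the outset (four lines for each of the two groups), and the telescoping into $(u;q)_{\lambda_j-\lambda_{j+1}}$ and $(u;q)_{\mu_j-\mu_{j+1}}$ is no harder in the general case than in the unwrapped one. You should drop the large-$N$ detour and do the direct computation with $\ell(\lambda)=\ell(\mu)$.
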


Proofs of Lemmas \ref{lem-phi-1} and \ref{lem-phi-2} will be given in the next subsection.

\subsection{Proofs of Lemmas \ref{lem-phi-1} and \ref{lem-phi-2} }

\noindent
{\it Proof of Lemma  \ref{lem-phi-1}}.
Let $\ell=\ell(\lambda)$. 
Write $z_i=\kappa^{i/N} q^{\lambda_i}z$ for short.
We have
\begin{align*}
\Phi^k_{\lambda}(z)
=&\left( {(q/t;q)_\infty \over (q;q)_\infty}\right)^{\ell}\phi_{k-\ell}(z_{\ell+1})
\prod^{\curvearrowleft}_{1\leq i\leq \ell}S_{k-i+1}(z_i)\\
=&\,\, :\phi_{k-\ell}(z_{\ell+1})
\prod^{\curvearrowleft}_{1\leq i\leq \ell}S_{k-i+1}(z_i):\left( {(q/t;q)_\infty \over (q;q)_\infty}\right)^{\ell}\\
&\times
\prod_{1\leq i\leq \ell\atop 
 k-\ell +1\equiv k-i+1\,\,({\rm mod}\,N) } 
 {(\kappa^{1/N}qz_{i}/z_{\ell+1};q)_\infty \over (\kappa^{1/N}qz_{i}/tz_{\ell+1};q)_\infty}
 \cdot
 \prod_{1\leq i\leq \ell\atop 
k- \ell\equiv k-i+1\,\,({\rm mod}\,N) } 
 {(z_{i}/z_{\ell+1};q)_\infty \over (tz_{i}/z_{\ell+1};q)_\infty}\\
 &\times \prod_{1\leq i<j\leq \ell \atop k-j+2\equiv k-i+1\,\,({\rm mod}\,N)}
  {(\kappa^{1/N}qz_{i}/z_{j};q)_\infty \over (\kappa^{1/N}qz_{i}/tz_{j};q)_\infty}
  \cdot
   \times \prod_{1\leq i<j\leq \ell\atop k-j+1\equiv k-i+2\,\,({\rm mod}\,N)}
  {(\kappa^{-1/N}tz_{i}/z_{j};q)_\infty \over (\kappa^{-1/N}z_{i}/z_{j};q)_\infty}\\
   &\times \prod_{1\leq i<j\leq \ell \atop k-j+1\equiv k-i+1\,\,({\rm mod}\,N)}
  {(z_{i}/z_{j};q)_\infty \over (tz_{i}/z_{j};q)_\infty}
    {(qz_{i}/tz_{j};q)_\infty \over (qz_{i}/z_{j};q)_\infty}.
\end{align*}
We separate the factors in two groups, and simplify each of them as follows.
First, we have
\begin{align*}
 &
 \prod_{1\leq i\leq \ell\atop 
\ell+1-i \equiv 0\,\,({\rm mod}\,N) } 
 {(z_{i}/z_{\ell+1};q)_\infty \over (tz_{i}/z_{\ell+1};q)_\infty}\cdot
 \prod_{1\leq i<j\leq \ell\atop j-i\equiv -1\,\,({\rm mod}\,N)}
  {(\kappa^{-1/N}tz_{i}/z_{j};q)_\infty \over (\kappa^{-1/N}z_{i}/z_{j};q)_\infty}\cdot
\prod_{1\leq i<j\leq \ell \atop j-i\equiv 0\,\,({\rm mod}\, N)}
  {(z_{i}/z_{j};q)_\infty \over (tz_{i}/z_{j};q)_\infty}\\
  =&
    \prod_{1\leq i\leq j\leq \ell\atop j-i\equiv -1\,\,({\rm mod}\,N)}
  {(\kappa^{-1/N}tz_{i}/z_{j};q)_\infty \over (\kappa^{-1/N}z_{i}/z_{j};q)_\infty}
   {(z_{i}/z_{j+1};q)_\infty \over (tz_{i}/z_{j+1};q)_\infty}
       =
    \prod_{1\leq i<j\leq \ell\atop j-i\equiv -1\,\,({\rm mod}\,N)}
  {(\kappa^{(i-j-1)/N}q^{\lambda_i-\lambda_j} t;q)_{\lambda_j-\lambda_{j+1}} \over 
  (\kappa^{(i-j-1)/N}q^{\lambda_i-\lambda_j};q)_{\lambda_j-\lambda_{j+1}}}.
\end{align*}
Next, we have
\begin{align*}
&\left( {(q/t;q)_\infty \over (q;q)_\infty}\right)^{\ell}
\prod_{1\leq i\leq \ell\atop 
 \ell +1-i\equiv 1\,\,({\rm mod}\, N) } 
 {(\kappa^{1/N}qz_{i}/z_{\ell+1};q)_\infty \over (\kappa^{1/N}qz_{i}/tz_{\ell+1};q)_\infty}\\
&\times 
\prod_{1\leq i<j\leq \ell \atop j-i\equiv 1\,\,({\rm mod}\,N)}
  {(\kappa^{1/N}qz_{i}/z_{j};q)_\infty \over (\kappa^{1/N}qz_{i}/tz_{j};q)_\infty}
\cdot
 \prod_{1\leq i<j\leq \ell \atop j-i\equiv 0\,\,({\rm mod}\, N)}
    {(qz_{i}/tz_{j};q)_\infty \over (qz_{i}/z_{j};q)_\infty}\\
    =&
\prod_{1\leq i\leq j\leq \ell\atop j-i\equiv 0\,\,({\rm mod}\,N)}
 {(\kappa^{1/N}qz_{i}/z_{j+1};q)_\infty \over (\kappa^{1/N}qz_{i}/tz_{j+1};q)_\infty}
  {(qz_{i}/tz_{j};q)_\infty \over (qz_{i}/z_{j};q)_\infty}        
  =
\prod_{1\leq i\leq j\leq \ell\atop j-i\equiv 0\,\,({\rm mod}\,N)}
  {(\kappa^{(i-j)/N}q^{\lambda_i-\lambda_{j}+1}/t;q)_{\lambda_j-\lambda_{j+1}} \over 
  (\kappa^{(i-j)/N}q^{\lambda_i-\lambda_{j}+1};q)_{\lambda_j-\lambda_{j+1}} }.
 \end{align*}
 Using Lemma \ref{lam-alpha}, we have (\ref{phi-1}). \qed
 \medskip
 
 \noindent
{\it Proof of Lemma } \ref{lem-phi-2}.
It is sufficient to consider the case $\ell=\ell(\lambda)=\ell(\mu)$.
For simplicity, set $z_i=\kappa^{i/N}q^{\lambda_i}z$, and $w_i=\kappa^{i/N}q^{\mu_i}z$,
meaning $:\phi^\alpha_{\lambda}(z):\,\,=\,\, :\phi_{\alpha-\ell}(z_{\ell+1})
\prod_{1\leq i\leq \ell}S_{\alpha-i+1}(z_i):$, and 
$: \phi^\beta_{\mu}(w):\,\,=\,\, :\phi_{\beta-\ell}(w_{\ell+1})
\prod_{1\leq j\leq \ell}S_{\beta-j+1}(w_j):$. 
We have
\begin{align*}
&
{(\kappa^{(\beta-\alpha)/N} tw/z;q,\kappa)_\infty \over (\kappa^{(\beta-\alpha)/N}  w/z;q,\kappa)_\infty }
{(\kappa^{(\beta-\alpha)/N} q w/tz;q,\kappa)_\infty \over (\kappa^{(\beta-\alpha)/N} qw/z;q,\kappa)_\infty }
:\phi^\alpha_{\lambda}(z):: \phi^\beta_{\mu}(w):\\
=& \prod_{1\leq j\leq \ell \atop \alpha-\ell +1\equiv \beta-j+1\,\,({\rm mod}\,N)}
{(\kappa^{1/N}qw_j/z_{\ell+1};q)_\infty  \over (\kappa^{1/N}qw_j/tz_{\ell+1};q)_\infty } \cdot 
 \prod_{1\leq j\leq \ell \atop \alpha-\ell \equiv \beta-j+1\,\,({\rm mod}\,N)}
{(w_j/z_{\ell+1};q)_\infty  \over (tw_j/z_{\ell+1};q)_\infty }\\
&\times  \prod_{1\leq i\leq \ell \atop  \alpha-i+1\equiv \beta-\ell +1\,\,({\rm mod}\,N)}
{(\kappa^{-1/N}tw_{\ell+1}/z_{i};q)_\infty  \over (\kappa^{-1/N}w_{\ell+1}/z_{i};q)_\infty } \cdot 
 \prod_{1\leq i\leq \ell \atop  \alpha- i+1\equiv \beta-\ell \,\,({\rm mod}\,N)}
{(qw_{\ell+1}/tz_{i};q)_\infty  \over (qw_{\ell+1}/z_{i};q)_\infty }\\
&\times
\prod_{1\leq i,j\leq \ell\atop \alpha-i+2\equiv \beta-j+1\,\,({\rm mod}\,N)}
{(\kappa^{1/N} qw_j/z_i;q)_\infty \over (\kappa^{1/N} qw_j/tz_i;q)_\infty} \cdot
\prod_{1\leq i,j\leq \ell\atop \alpha-i+1\equiv \beta-j+2\,\,({\rm mod}\,N)}
{(\kappa^{-1/N} tw_j/z_i;q)_\infty \over (\kappa^{-1/N} w_j/z_i;q)_\infty} \\
&\times 
\prod_{1\leq i,j\leq \ell\atop \alpha-i+1\equiv \beta-j+1\,\,({\rm mod}\,N)}
{( w_j/z_i;q)_\infty \over ( tw_j/z_i;q)_\infty} {( qw_j/tz_i;q)_\infty \over ( qw_j/z_i;q)_\infty} 
:\phi^\alpha_{\lambda}(z)\phi^\beta_{\mu}(w):.
\end{align*}
Separate the factors in two groups, and simplify each of them as follows. 
First, we have
\begin{align*}
& 
 \prod_{1\leq j\leq \ell \atop  \alpha-\beta+1\equiv  \ell+1-j\,\,({\rm mod}\,N)}
{(w_j/z_{\ell+1};q)_\infty  \over (tw_j/z_{\ell+1};q)_\infty }\cdot
\prod_{1\leq i\leq \ell \atop \alpha-\beta-1\equiv  i-\ell-1\,\,({\rm mod}\,N)}
{(\kappa^{-1/N}tw_{\ell+1}/z_{i};q)_\infty  \over (\kappa^{-1/N}w_{\ell+1}/z_{i};q)_\infty }\\
&\times
\prod_{1\leq i,j\leq \ell\atop  \alpha-\beta-1\equiv  i-j\,\,({\rm mod}\,N)}
{(\kappa^{-1/N} tw_j/z_i;q)_\infty \over (\kappa^{-1/N} w_j/z_i;q)_\infty} 
\cdot
\prod_{1\leq i,j\leq \ell\atop \alpha-\beta\equiv  i-j\,\,({\rm mod}\,N)}
 {( w_j/z_i;q)_\infty \over ( tw_j/z_i;q)_\infty} \\
  =&
 \prod_{1\leq j\leq i\leq \ell\atop  \alpha-\beta-1\equiv  i-j\,\,({\rm mod}\,N)}
{(\kappa^{(j-i-1)/N}  q^{\mu_j-\lambda_i}tw/z;q)_{\lambda_i-\lambda_{i+1}} \over
 (\kappa^{(j-i-1)/N}q^{\mu_j-\lambda_i}w/z;q)_{\lambda_i-\lambda_{i+1}} } \\
 &\times 
  \prod_{1\leq i\leq j\leq \ell\atop  \alpha-\beta\equiv  i-j\,\,({\rm mod}\,N)}
{(\kappa^{(j-i)/N} q^{\mu_{j+1}-\lambda_{i}}tw/z;q)_{\mu_j-\mu_{j+1}} \over 
(\kappa^{(j-i)/N} q^{\mu_{j+1}-\lambda_{i}}w/z;q)_{\mu_j-\mu_{j+1}}}  
={\Nk^{(\beta-\alpha)}_{\mu\lambda}(tw/z|q,\kappa^{1/N}) \over \Nk^{(\beta-\alpha)}_{\mu\lambda}(w/z|q,\kappa^{1/N}) }.
 \end{align*}
Next, we have
\begin{align*}
&\prod_{1\leq j\leq \ell \atop \alpha-\ell +1\equiv \beta-j+1\,\,({\rm mod}\,N)}
{(\kappa^{1/N}qw_j/z_{\ell+1};q)_\infty  \over (\kappa^{1/N}qw_j/tz_{\ell+1};q)_\infty }
\cdot \prod_{1\leq i\leq \ell \atop  \alpha- i+1\equiv \beta-\ell \,\,({\rm mod}\,N)}
{(qw_{\ell+1}/tz_{i};q)_\infty  \over (qw_{\ell+1}/z_{i};q)_\infty }\\
&\times
\prod_{1\leq i,j\leq \ell\atop \alpha-i+2\equiv \beta-j+1\,\,({\rm mod}\,N)}
{(\kappa^{1/N} qw_j/z_i;q)_\infty \over (\kappa^{1/N} qw_j/tz_i;q)_\infty} 
\cdot
\prod_{1\leq i,j\leq \ell\atop \alpha-i+1\equiv \beta-j+1\,\,({\rm mod}\,N)}
{( qw_j/tz_i;q)_\infty \over ( qw_j/z_i;q)_\infty} \\
=&
\prod_{1\leq i\leq j\leq \ell\atop \alpha-\beta+1\equiv i-j\,\,({\rm mod}\,N)}
{(\kappa^{(j-i+1)/N} q^{\mu_{j+1}-\lambda_i+1}w/tz;q)_{\mu_j-\mu_{j+1}} \over 
(\kappa^{(j-i+1)/N} q^{\mu_{j+1}-\lambda_i+1}w/z;q)_{\mu_j-\mu_{j+1}} } \\
& \times 
\prod_{1\leq j\leq i\leq \ell\atop \alpha-\beta\equiv i-j\,\,({\rm mod}\,N)}
{(\kappa^{(j-i)/N} q^{\mu_{j}-\lambda_i+1}w/tz;q)_{\lambda_i-\lambda_{i+1}} \over 
(\kappa^{(j-i)/N} q^{\mu_{j}-\lambda_i+1}w/z;q)_{\lambda_i-\lambda_{i+1}}} \\
=&\,\,
t^{-|\mu|^{(-\alpha+\beta-1)}}t^{-|\lambda|^{(N+\alpha-\beta)}}
{\Nk^{(\alpha-\beta)}_{\lambda\mu}(tz/w|q,\kappa^{1/N}) \over \Nk^{(\alpha-\beta)}_{\lambda\mu}(z/w|q,\kappa^{1/N}) },
\end{align*}
where we have used Lemma \ref{lam-alpha}.
Hence we have (\ref{phi-2}). \qed

\section{Affine Laumon spaces}\label{affine-Laumon-space}

\subsection{Parabolic sheaves and affine Laumon spaces}
We briefly recall the basic facts concerning the affine Laumon spaces studied in \cite{FFNR}. 
Let ${\bf C}$ and ${\bf X}$ be smooth projective curves of genus zero.
Fix a coordinate $z$ (resp. $y$) on ${\bf C}$ (resp.  ${\bf X}$) and consider the action of $\mathbb{C}^*$ on 
${\bf C}$  (resp.  ${\bf X}$)
such that $v(z)=v^{-2} z$  (resp. $c(y)=c^{-2} y$).
We have ${\bf C}^{\mathbb{C}^*}=\{0_{\bf C},\infty_{\bf C}\}$ 
and ${\bf X}^{\mathbb{C}^*}=\{0_{\bf X},\infty_{\bf X}\}$. 
Let ${\bf S}={\bf C} \times {\bf X}$, 
${\bf D}_\infty={\bf C} \times \infty_{\bf X}\cup \infty_{\bf C}\times {\bf X}$, and
${\bf D}_0={\bf C} \times 0_{\bf X}$. Let $W$ be an $N$-dimensional vector 
space with a basis $w_1,\ldots,w_N$. 
Let $\widetilde{T}$ be the Cartan torus
acting on $W$ as follows:
for $\underline{t}=(t_1,\ldots,t_n)\in \widetilde{T}$ we have $\underline{t} (w_i)=t_i^2 w_i$.

Let $\underline{d}=(d_0,\ldots,d_{N-1})\in \mathbb{Z}_{\geq 0}^N$. 
A parabolic sheaf ${\mathcal F}_\bullet$ of degree $\underline{d}$ 
is an infinite flag of torsion free coherent sheaves of rank $N$ on 
${\bf S}:\dots\subset {\mathcal F}_{-1}\subset {\mathcal F}_{0}\subset {\mathcal F}_{1}\subset \cdots$ satisfying:
\begin{enumerate}
\item[(a)] ${\mathcal F}_{k+N}={\mathcal F}_{k}({\bf D}_0)$ for any $k$,
\item[(b)] $ch_1({\mathcal F}_{k})=k[{\bf D}_0]$ for any $k$,
\item[(c)] $ch_2({\mathcal F}_{k})=d_i$ for $i\equiv k\,\,({\rm mod}\, N)$,
\item[(d)] ${\mathcal F}_{0}$ is locally free at ${\bf D}_\infty$ and trivialized at 
${\bf D}_\infty: {\mathcal F}_{0}|_{{\bf D}_\infty}=W\otimes {\mathcal O}_{{\bf D}_\infty}$,
\item[(e)] For $-N\leq k\leq 0$ the sheaf ${\mathcal F}_{k}$ is locally free at 
${\bf D}_\infty$, 
and the quotient sheaves ${\mathcal F}_{k}/{\mathcal F}_{-N}$,
${\mathcal F}_{0}/{\mathcal F}_{k}$ (both supported at ${\bf D}_0$) are both locally free at the point 
$\infty_{\bf C}\times 0_{\bf X}$; moreover the local sections of 
${\mathcal F}_{k}|_{\infty_{\bf C}\times {\bf X}}$ are those sections of
${\mathcal F}_{0}|_{\infty_{\bf C}\times {\bf X}}=W\otimes {\mathcal O}_{\bf X}$ 
which take value in $\langle w_1,\ldots,w_{N+k} \rangle \subset W$ at $0_{\bf X} \in {\bf X}$. 
\end{enumerate}

The fine moduli space ${\mathcal P}_{\underline{d}}$ of degree $\underline{d}$ parabolic 
sheaves exists, and is a smooth connected quasiprojective variety of dimension $2d_0+\cdots 2 d_{N-1}$. 
The ${\mathcal P}_{\underline{d}}$ is called the affine Laumon space.

\subsection{Fixed points in ${\mathcal P}_{\underline{d}}$}
The group $\widetilde{T}\times \mathbb{C}^*\times\mathbb{C}^*$ acts on 
${\mathcal P}_{\underline{d}}$, with the fixed point set being finite. 
The fixed points associated with the action of $\mathbb{C}^*\times\mathbb{C}^*$ on the 
Hilbert scheme of $({\bf C}-\infty_{\bf C})\times ({\bf X}-\infty_{\bf X})$ 
are parametrized by the partitions. Namely for  
$\lambda=(\lambda_1,\lambda_2,\ldots)\in {\mathsf P}$, we have the corresponding ideal 
$J_\lambda=\mathbb{C}[z]\cdot(\mathbb{C} y^0 z^{\lambda_1}\oplus \mathbb{C}y^1 z^{\lambda_2}\oplus\cdots)$.
Write $\lambda\supset \mu$ for indicating $\lambda_i\geq \mu_i$ ($i\geq 1$), and 
write $\lambda\widetilde{\supset} \mu$ for $\lambda_i\geq \mu_{i+1}$ ($i\geq 1$).

Let ${\boldsymbol \lambda}=(\lambda^{kl})_{1\leq k,l\leq N}$ be a collection of 
partitions satisfying
\begin{align}
&\lambda^{11}\supset \lambda^{21}\supset\cdots \supset \lambda^{N1}\widetilde{\supset}\lambda^{11}, \,\,
\lambda^{22}\supset \lambda^{32}\supset\cdots \supset \lambda^{12}\widetilde{\supset}\lambda^{22}, 
\cdots ,\nonumber\\
&\qquad\qquad \lambda^{NN}\supset \lambda^{1N}\supset\cdots \supset \lambda^{N-1,N}\widetilde{\supset}\lambda^{NN}.
\label{collection}
\end{align}
Set $d_k({\boldsymbol \lambda})=\sum_{l=1}^N |\lambda^{kl}|$, and 
$\underline{d}=(d_0({\boldsymbol \lambda}),\ldots, d_{N-1}({\boldsymbol \lambda}))$,
where $d_0({\boldsymbol \lambda}):=d_N({\boldsymbol \lambda})$.

For a collection $\boldsymbol \lambda$ satisfying (\ref{collection}),  let 
${\mathcal F}_\bullet={\mathcal F}_\bullet({\boldsymbol \lambda})$ be the parabolic sheaf
\begin{align*}
{\mathcal F}_{k-N}=\bigoplus_{1\leq l\leq k} J_{\lambda^{kl}}w_l \oplus 
\bigoplus_{k\leq l\leq N} J_{\lambda^{kl}}(-{\bf D}_0)w_l. 
\end{align*}
The correspondence ${\boldsymbol \lambda} \mapsto {\mathcal F}_\bullet({\boldsymbol \lambda})$
is a bijection between the set of collections ${\boldsymbol \lambda}$ satisfying (\ref{collection}) and 
$\underline{d}({\boldsymbol \lambda})=\underline{d}$, and the set of 
$\widetilde{T}\times \mathbb{C}^*\times\mathbb{C}^*$-fixed points in ${\mathcal P}_{\underline{d}}$.

We have the bijection between the set of collection of partitions satisfying (\ref{collection})
and the set of collection of partitions $(\lambda^{(1)},\ldots,\lambda^{(N)})$, given by
$\lambda^{(l)}_{N(i-1)-N\lfloor {k-l\over N}\rfloor+k-l+1}=\lambda^{kl}_i$.
By an abuse of notation we also write ${\boldsymbol \lambda}=(\lambda^{(1)},\ldots,\lambda^{(N)})$.

\subsection{Character associated with the series $f^{\widehat{\mathfrak gl}_N}(x,p|s,\kappa|q,t)$}

For any product of the form 
$P=\prod_{a,b\geq 0} (1-q^{a}\kappa^{b} u)^{n_{ab}}$, set
$L(P)=\sum_{a,b\geq 0} n_{ab}q^{- a}\kappa^{- b} u^{-1}$.

\begin{prp}\label{ch-11}
Let ${\boldsymbol \lambda}=(\lambda^{(1)},\ldots,\lambda^{(N)})$ be an $N$-tuple of partitions.
Assume the cyclic identification as $\lambda^{(i)}=\lambda^{(i+N)}$ ($i\in \mathbb{Z}$). We have
the character ${\rm ch}({\boldsymbol \lambda})$ of the denominator in the series $f^{\widehat{\mathfrak gl}_N}(x,p|s,\kappa|q,t)$,
{\it i.e.} 
${\rm ch}({\boldsymbol \lambda}):=L\prod_{i,j=1}^N
\Nk^{(j-i|N)}_{\lambda^{(i)},\lambda^{(j)}} (s_j/s_i|q,\kappa)$ as
\begin{align*}
{\rm ch}({\boldsymbol \lambda})
=&\sum_{i=1}^N\sum_{l\leq l'\leq i}
q^{\lambda^{(l')}_{i+1-l'} -\lambda^{(l)}_{i+1-l}} \kappa^{l-l'} {s_{l}\over s_{l'}} 
q {1-q^{\lambda^{(l)}_{i+1-l} -\lambda^{(l)}_{i+2-l}} \over 1-q}\\
&
+\sum_{i=1}^N\sum_{l'<l\leq i}
q^{ \lambda^{(l')}_{i+1-l'}-\lambda^{(l)}_{i+1-l}} \kappa^{l-l'} {s_{l}\over s_{l'}} 
q {1-q^{\lambda^{(l')}_{i-l'} -\lambda^{(l')}_{i+1-l'}} \over 1-q}.\nonumber
\end{align*}
\end{prp}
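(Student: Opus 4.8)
The plan is to compute the character of the Nekrasov denominator $\prod_{i,j=1}^N \Nk^{(j-i|N)}_{\lambda^{(i)},\lambda^{(j)}}(s_j/s_i|q,\kappa)$ directly from the factorized product formula for $\Nk^{(k|N)}_{\lambda,\mu}$ given in the first Definition of the paper, and then reorganize the resulting sum into the form stated. Recall that
$$
\Nk^{(k)}_{\lambda,\mu}(u|q,\kappa)=
 \prod_{j\geq i\geq 1,\; j-i\equiv k}
(u q^{-\mu_i+\lambda_{j+1}}\kappa^{j-i};q)_{\lambda_j-\lambda_{j+1}}
\cdot
\prod_{\beta\geq\alpha\geq 1,\;\beta-\alpha\equiv -k-1}
(u q^{\lambda_\alpha-\mu_\beta}\kappa^{\alpha-\beta-1};q)_{\mu_\beta-\mu_{\beta+1}},
$$
so each $(q$-shifted factorial$)_m = \prod_{r=0}^{m-1}(1-q^r\cdot(\text{monomial}))$, and under the operator $L$ the logarithmic-type linearization sends such a product to $\sum_{r=0}^{m-1} q^{-r}\kappa^{-b}u^{-1}$, i.e. a finite geometric sum $q^{\text{(something)}}\kappa^{\text{(something)}}\cdot\frac{1-q^{m}}{1-q}$ type expression. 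So the first step is: write $L$ applied to a single Pochhammer block and observe it telescopes into the $\frac{1-q^{\lambda_j-\lambda_{j+1}}}{1-q}$ (resp. $\mu_\beta-\mu_{\beta+1}$) shape appearing on the right-hand side.

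Next I would sum over $i,j$ with $j-i\equiv k\pmod N$ in the first product and $\beta-\alpha\equiv -k-1\pmod N$ in the second, while simultaneously letting $k=j-i$ run; the point is that $\prod_{i,j=1}^N \Nk^{(j-i|N)}_{\lambda^{(i)},\lambda^{(j)}}$ recombines (via the stated factorization $\Nk_{\lambda,\mu}=\prod_k\Nk^{(k|N)}_{\lambda,\mu}$) but with the partition labels $\lambda^{(i)},\lambda^{(j)}$ attached. The crucial bookkeeping is the re-indexing: the internal row indices $i,j$ of the Nekrasov factor together with the external flavor indices $i,j\in\{1,\dots,N\}$ should be merged into a single pair of indices. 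Here is where the bijection recorded at the end of \S3.2, $\lambda^{(l)}_{N(i-1)-N\lfloor\frac{k-l}{N}\rfloor+k-l+1}=\lambda^{kl}_i$, does the work: it repackages the $N$-tuple of ordinary partitions into the $N^2$ partitions $\lambda^{kl}$ indexed so that row position mod $N$ tracks the residue class. After this substitution the two double products over congruence classes collapse into sums over $1\le l\le l'\le i$ and $1\le l'<l\le i$ (with $i$ now running $1$ to $N$), matching the two terms in the claim; the exponents $\lambda^{(l')}_{i+1-l'}-\lambda^{(l)}_{i+1-l}$ and the $\kappa^{l-l'}s_l/s_{l'}$ weights come out of tracking $-\mu_i+\lambda_{j+1}$, $\kappa^{j-i}$, and $u=s_j/s_i$ through the reindexing.

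So the key steps in order are: (1) linearize $L$ on one Pochhammer block to a geometric sum; (2) for $\Nk^{(j-i|N)}_{\lambda^{(i)},\lambda^{(j)}}(s_j/s_i|q,\kappa)$ write its two product blocks explicitly with $u=s_j/s_i$ and the congruence constraint $j-i\equiv k$; (3) apply $L$ termwise and sum over $i,j\in\{1,\dots,N\}$; (4) change variables via the \S3.2 bijection $\lambda^{(l)}\leftrightarrow\lambda^{kl}$ to convert ``residue class of a row index'' into ``ordinary inequality among flavor-type indices'', splitting the answer into the $l\le l'$ and $l'<l$ ranges; (5) check the telescoping produces exactly $q\frac{1-q^{\lambda^{(l)}_{i+1-l}-\lambda^{(l)}_{i+2-l}}}{1-q}$ and its $l'$-counterpart, and the monomial prefactors match. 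I expect the main obstacle to be step (4): getting the index dictionary exactly right, in particular verifying that the floor-function shifts in the bijection conspire so that the ranges of summation on both sides coincide (including edge cases where a partition has length forcing some $\lambda^{(l)}_{i+1-l}=0$), and that no off-by-one error creeps into the $i+1-l$ versus $i+2-l$ subscripts or the sign of the $\kappa$ exponent. Once the reindexing is pinned down the rest is the routine ``standard computation using definitions'' the paper alludes to.
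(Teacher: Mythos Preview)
The paper does not actually prove this proposition; it is one of the results declared to be ``standard and can be proved by straightforward computations using definitions.'' Your outline \emph{is} that straightforward computation, and steps (1)--(3) and (5) are exactly right: applying $L$ to a single block $(u q^{c}\kappa^{d};q)_m$ gives $q^{1-c-m}\kappa^{-d}u^{-1}\dfrac{1-q^{m}}{1-q}$, and plugging in the two kinds of Pochhammer blocks of $\Nk^{(b-a|N)}_{\lambda^{(a)},\lambda^{(b)}}(s_b/s_a|q,\kappa)$ produces precisely the two families of monomial summands appearing on the right.

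The only place where your plan is slightly off is step (4). You do not need the $\lambda^{kl}\leftrightarrow\lambda^{(l)}$ bijection of \S3.2 at all; invoking it obscures what is really a one-line change of index. For the first block, given flavor indices $a,b\in\{1,\dots,N\}$ and row indices $j\ge i\ge 1$ with $j-i\equiv b-a\pmod N$, take the unique $I\in\{1,\dots,N\}$ with $I\equiv i+b-1\pmod N$ and set $l'=I+1-i$, $l=I+1-j$. Then $l\equiv a$, $l'\equiv b$ (using the congruence $j-i\equiv b-a$), and $l\le l'\le I$; conversely any triple $(I,l,l')$ with $l\le l'\le I$ arises this way. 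Under this substitution every exponent and ratio in the summand matches on the nose: $\lambda^{(l')}_{I+1-l'}=\lambda^{(b)}_i$, $\lambda^{(l)}_{I+1-l}=\lambda^{(a)}_j$, $\kappa^{l-l'}=\kappa^{i-j}$, $s_l/s_{l'}=s_a/s_b$. The second block is handled by the analogous substitution $l'=I-\beta$, $l=I+1-\alpha$, which lands in the range $l'<l\le I$ and reproduces the second sum. So the ``main obstacle'' you flag is not really there: once you merge the flavor index and the row index into a single integer via $l=I+1-j$ (extending cyclically), the floor functions and edge cases take care of themselves because partitions eventually vanish, and the split into $l\le l'$ versus $l'<l$ is exactly the split between the two product blocks of $\Nk^{(k|N)}$.
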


Introduce a collection $(d_{ij}({\boldsymbol \lambda}))=\underline{\widetilde{d}}({\boldsymbol \lambda})$ by setting 
$d_{k,l}({\boldsymbol \lambda})=\lambda^{(l)}_{k-l+1}$. 

\begin{prp}\label{ch-22}
The ${\rm ch}({\boldsymbol \lambda})$ can be recast as
\begin{align*}
{\rm ch}({\boldsymbol \lambda})
=&
\sum_{i=1}^N
\sum_{l'\leq i-1}
\sum_{l \leq i}
 \kappa^{l-l'} {s_{l}\over s_{l'}} 
q {(1-q^{d_{i-1,l'}})(1-q^{-d_{i,l}}) \over 1-q}
+\sum_{i=1}^N
\sum_{l'\leq i-1}
 \kappa^{k-l'} {s_{k}\over s_{l'}} 
q {1-q^{d_{i-1,l'}} \over 1-q}\\
&-
\sum_{i=1}^N
\sum_{l'\leq i}
\sum_{l \leq i}
 \kappa^{l-l'} {s_{l}\over s_{l'}} 
q {(1-q^{d_{i,l'}})(1-q^{-d_{i,l}}) \over 1-q}
-
\sum_{i=1}^N
\sum_{l\leq i}
 \kappa^{l-k} {s_{l}\over s_{k}} 
q {1-q^{-d_{i,l}} \over 1-q}.\nonumber
\end{align*}
\end{prp}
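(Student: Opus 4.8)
The plan is to show that Propositions \ref{ch-11} and \ref{ch-22} are just two different ways of organizing the same sum, by performing an Abel-type (summation by parts) manipulation on the formula in Proposition \ref{ch-11}. First I would fix notation: for each ``color'' $l$ and each residue class the partition $\lambda^{(l)}$ contributes through the differences $\lambda^{(l)}_{i+1-l}-\lambda^{(l)}_{i+2-l}$, which are the nonnegative ``column heights,'' while the quantities $d_{k,l}(\boldsymbol\lambda)=\lambda^{(l)}_{k-l+1}$ are the corresponding partial sums along the cyclic chain in \eqref{collection}. The key identity is the elementary telescoping
\begin{align*}
\sum_{l\le l'} q^{\lambda^{(l')}_{i+1-l'}-\lambda^{(l)}_{i+1-l}} q\,\frac{1-q^{\lambda^{(l)}_{i+1-l}-\lambda^{(l)}_{i+2-l}}}{1-q}
= \frac{q}{1-q}\sum_{l\le l'}\bigl(q^{\lambda^{(l')}_{i+1-l'}-\lambda^{(l)}_{i+2-l}}-q^{\lambda^{(l')}_{i+1-l'}-\lambda^{(l)}_{i+1-l}}\bigr),
\end{align*}
and similarly for the second sum, so that each row of the formula in Proposition \ref{ch-11} becomes a difference of geometric-type terms in the variables $q^{d_{k,l}}$.

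Second, I would substitute $q^{\lambda^{(l)}_{i+1-l}}=q^{d_{i,l}}$ (and shift indices so that $\lambda^{(l)}_{i+2-l}$ becomes $q^{d_{i+1,l}}$ in the appropriate cyclic sense; note the index on $d$ increases by one when one moves down one step in the chain \eqref{collection}). After this substitution the expression becomes a double sum over $i$ and over pairs $(l,l')$ of monomials $\kappa^{l-l'}(s_l/s_{l'})\,q^{d_{\bullet,l'}-d_{\bullet,l}}$ with coefficients that are signed combinations of $1$'s. The target formula in Proposition \ref{ch-22} is, after expanding the products $(1-q^{d_{i-1,l'}})(1-q^{-d_{i,l}})$ etc., exactly such a signed double sum; so the whole proof reduces to matching coefficients of each monomial $\kappa^{l-l'}(s_l/s_{l'})q^{a}\kappa^{b}$ on the two sides. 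I would organize this matching by the pair $(l',l)$ (equivalently by the pair of colors and the ``distance'' $l-l'$), splitting into the cases $l\le l'$, $l'<l$, and the ``diagonal/boundary'' contributions where one of the indices $i-1$ or $i$ falls outside the range; the latter produce precisely the two single sums $\sum q(1-q^{d_{i-1,l'}})/(1-q)$ and $\sum q(1-q^{-d_{i,l}})/(1-q)$ in Proposition \ref{ch-22} (here I read the symbol $k$ appearing there as the running index $i$, or as a fixed boundary color, consistently with how the $d_{k,l}$ were introduced right before the statement).

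Third, I would carefully track the cyclic reindexing: because of the identifications $\lambda^{(i)}=\lambda^{(i+N)}$ and the constraint $\lambda^{N1}\widetilde\supset\lambda^{11}$ (and its rotations) in \eqref{collection}, the ``vacuum'' terms $q^{0}=1$ that appear when a partial sum $d_{k,l}$ hits the top or bottom of a chain must be handled with the right conventions so that they reassemble into the stated single sums rather than being lost or double-counted. Concretely I expect the main obstacle to be \emph{bookkeeping of these boundary terms and of the index shift $d_{i,l}\leftrightarrow d_{i-1,l}$ across the cyclic junctions}: the bulk telescoping is routine, but ensuring that the leftover endpoint contributions on the two sides agree — especially reconciling the notation $k$ versus $i$ in Proposition \ref{ch-22} and the asymmetry between the $l\le l'$ and $l'<l$ parts of Proposition \ref{ch-11} — is where the real care is needed. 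Once the bulk and boundary terms are separately matched, applying the operator $L$ (which simply replaces $q^a\kappa^b u$ by $q^{-a}\kappa^{-b}u^{-1}$ linearly) commutes with all of the above rearrangements, so the equality of characters follows; alternatively, since both expressions are manifestly of the form $L(P)$ for the same factorized product $P=\prod_{i,j}\Nk^{(j-i|N)}_{\lambda^{(i)},\lambda^{(j)}}(s_j/s_i|q,\kappa)$, it suffices to verify the identity at the level of the products themselves, which is the cleaner route and avoids ever mentioning $L$ again.
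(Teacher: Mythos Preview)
The paper does not give a proof of this proposition: it falls under the blanket remark that ``several Propositions, Lemmas etc.\ [are stated] without proof; all such results are standard and can be proved by straightforward computations using definitions.'' So there is no paper proof to compare against, and your sketch is exactly the kind of straightforward computation the author has in mind: substitute $d_{i,l}=\lambda^{(l)}_{i+1-l}$ (so $\lambda^{(l)}_{i+2-l}=d_{i+1,l}$ and $\lambda^{(l')}_{i-l'}=d_{i-1,l'}$), telescope the geometric pieces $q(1-q^{d_{i,l}-d_{i+1,l}})/(1-q)$ and $q(1-q^{d_{i-1,l'}-d_{i,l'}})/(1-q)$, and reorganize into the four sums of Proposition~\ref{ch-22}.

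A few places where your write-up should be tightened before it becomes an actual proof. First, the summation ranges in Proposition~\ref{ch-11} are $l\le l'\le i$ and $l'<l\le i$ with $i$ running over $\{1,\dots,N\}$ but $l,l'$ running over \emph{all} integers $\le i$ (via the cyclic extension $s_{i+N}=s_i$, $\lambda^{(i+N)}=\lambda^{(i)}$); your telescoping identity as written drops the upper constraint ``$\le i$'' and should keep it. Second, the index shift is $d_{i,l}\leftrightarrow d_{i+1,l}$ (not $d_{i-1,l}$ as you wrote in one place), and the match with Proposition~\ref{ch-22} comes from shifting the outer index $i\mapsto i-1$ in half the terms; this is where Term~1 and Term~3 of the target, which differ only by $(i-1,l')$ versus $(i,l')$ and by the range $l'\le i-1$ versus $l'\le i$, separate out. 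Third, the symbol $k$ in Terms~2 and~4 of the statement is not the running index~$i$; in context it is a fixed (external) index playing the role of a basepoint, consistent with the $\kappa^{l-k}s_l/s_k$ monomials in the comparison with \cite{FFNR} made in Theorem~\ref{affine-Laumon}. Your parenthetical ``or as a fixed boundary color'' is the correct reading; treating $k$ as $i$ would make Terms~2 and~4 fail to match. Once these three points are handled the bulk telescoping and the boundary bookkeeping you describe go through without further ideas.
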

Note that we have
$ \kappa^{l-l'} {s_{l}\over s_{l'}} \big|_{s\rightarrow \kappa^\delta s}=\kappa^{N \lfloor {l\over N}\rfloor-N \lfloor {l'\over N}\rfloor }
 {s_{l}\over s_{l'}} $, where $\kappa^\delta s=(\kappa^{N-1}s_1,\ldots,s_N)$.

\begin{thm}\label{affine-Laumon}
With the identification 
$t_i^2=\kappa^{N-i}s_i$, $q'=\kappa^{N}$ ($q$ being the same for both), 
the ${\rm ch}({\boldsymbol \lambda})$ 
coincides  with the torus character in a fixed tangent space to $\mathcal{P}_{\underline{d}}$.
(See Proposition {\bf 4.15} and Remark {\bf 4.17} in \cite{FFNR}.) 
Hence the Euler characteristic 
${\mathfrak J}_{\underline{d}}(s,\kappa|q,t)
:= [H^\bullet ( \mathcal{P}_{\underline{d}},\Omega_{\mathcal{P}_{\underline{d}}}^\bullet)]$
of the de Rham complex on $\mathcal{P}_{\underline{d}}$ is given 
via the Atiyah-Bott-Lefschetz localization technique as
\begin{align*}
&{\mathfrak J}_{\underline{d}}(s,\kappa|q,t)
=\sum_{i,j}(-1)^{i+j} t^j [H^i ( \mathcal{P}_{\underline{d}},\Omega_{\mathcal{P}_{\underline{d}}}^j)]
=
\sum_{{\boldsymbol \lambda}\atop \underline{d}=\underline{d}(\boldsymbol \lambda)}
\prod_{i,j=1}^N
{\Nk^{(j-i|N)}_{\lambda^{(i)},\lambda^{(j)}} (s_j/ts_i|q,\kappa) \over \Nk^{(j-i|N)}_{\lambda^{(i)},\lambda^{(j)}} (s_j/s_i|q,\kappa)},
\end{align*}
where $[H^i ( \mathcal{P}_{\underline{d}},\Omega_{\mathcal{P}_{\underline{d}}}^j)]$ denotes the 
character of $H^i ( \mathcal{P}_{\underline{d}},\Omega_{\mathcal{P}_{\underline{d}}}^j)$ as 
a representation of $\widetilde{T}\times \mathbb{C}^*\times\mathbb{C}^*$.
\end{thm}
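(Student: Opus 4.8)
The plan is to establish Theorem \ref{affine-Laumon} in two stages: first prove the purely combinatorial identity that ${\rm ch}({\boldsymbol \lambda})$, as defined via the $L$-operator applied to the denominator Nekrasov factors, agrees with the tangent-space character computed in \cite{FFNR}; then deduce the localization formula for ${\mathfrak J}_{\underline d}(s,\kappa|q,t)$ as a formal consequence. For the first stage I would start from Proposition \ref{ch-11}, which rewrites ${\rm ch}({\boldsymbol \lambda})=L\prod_{i,j}\Nk^{(j-i|N)}_{\lambda^{(i)},\lambda^{(j)}}(s_j/s_i|q,\kappa)$ in terms of the partition data, and then pass to the ``$d$-coordinates'' $d_{k,l}({\boldsymbol\lambda})=\lambda^{(l)}_{k-l+1}$ via Proposition \ref{ch-22}. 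The resulting expression, after the substitution $s\to\kappa^\delta s$ noted right after Proposition \ref{ch-22} (so that $\kappa^{l-l'}s_l/s_{l'}$ becomes the genuine torus weight $t_l^2/t_{l'}^2$ under $t_i^2=\kappa^{N-i}s_i$, $q'=\kappa^N$), should be matched term-by-term with the formula for the character of the tangent space $T_{{\mathcal F}_\bullet({\boldsymbol\lambda})}{\mathcal P}_{\underline d}$ given in Proposition 4.15 and Remark 4.17 of \cite{FFNR}. Concretely I would expand the Laumon tangent character --- which is assembled from $\mathrm{Ext}$-groups/$\chi$ of the parabolic sheaf with itself, organized along the infinite flag --- into a sum over pairs of flag indices, translate it into the $(\lambda^{kl})$-coordinates using the bijection $\lambda^{(l)}_{N(i-1)-N\lfloor(k-l)/N\rfloor+k-l+1}=\lambda^{kl}_i$, and check that the four sums in Proposition \ref{ch-22} are exactly the contributions of the four types of flag-index pairs.

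For the second stage, I would apply the Atiyah--Bott--Lefschetz (equivariant localization) theorem to the de Rham complex $\Omega^\bullet_{{\mathcal P}_{\underline d}}$ on the smooth quasiprojective variety ${\mathcal P}_{\underline d}$ for the action of $\widetilde T\times{\mathbb C}^*\times{\mathbb C}^*$, whose fixed locus is the finite set $\{{\mathcal F}_\bullet({\boldsymbol\lambda})\}$ classified in \S3.2. At an isolated fixed point the localization contribution of $\sum_j(-t)^j[\Omega^j]=\bigwedge_{-t}T^*$ is the standard factor $\prod(1-t\,\chi_m^{-1})/\prod(1-\chi_m^{-1})$ over the torus weights $\chi_m$ of the tangent space $T_{{\boldsymbol\lambda}}{\mathcal P}_{\underline d}$; writing each weight as $q^{a}\kappa^{b}u$ this is precisely $\prod_m (1 - t\,L(\text{factor}_m))/\prod_m(1-L(\text{factor}_m))$, i.e. the operator $L$ from the excerpt converts the multiplicative tangent character into the data needed. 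Using Stage 1 — ${\rm ch}({\boldsymbol\lambda})=L\prod_{i,j}\Nk^{(j-i|N)}_{\lambda^{(i)},\lambda^{(j)}}(s_j/s_i|q,\kappa)$ equals the tangent character — the numerator $\prod(1-t\chi_m^{-1})$ reassembles into $\prod_{i,j}\Nk^{(j-i|N)}_{\lambda^{(i)},\lambda^{(j)}}(s_j/ts_i|q,\kappa)$ (replacing $u$ by $u/t$ in every factor is exactly multiplication of the argument by $1/t$), and the denominator into $\prod_{i,j}\Nk^{(j-i|N)}_{\lambda^{(i)},\lambda^{(j)}}(s_j/s_i|q,\kappa)$, which is the claimed summand. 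Summing over ${\boldsymbol\lambda}$ with $\underline d({\boldsymbol\lambda})=\underline d$ gives the stated formula for ${\mathfrak J}_{\underline d}(s,\kappa|q,t)$.

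The main obstacle will be Stage 1: matching the combinatorially repackaged character of Proposition \ref{ch-22} with the tangent-space character of \cite{FFNR} requires careful bookkeeping of the index shifts in the infinite flag (the $N$-periodicity, the role of $\mathbf D_0$ and the ``$\widetilde\supset$'' conditions in (\ref{collection})), and of the precise conventions relating $(t_i,q,q')$ there to $(s,\kappa,q)$ here — in particular verifying that the identification $t_i^2=\kappa^{N-i}s_i$, $q'=\kappa^N$ is the one for which the two expressions coincide, and that no spurious $q^0\kappa^0$ (i.e. trivial-weight) terms survive to spoil the application of $L$. A secondary technical point is to confirm that the localization theorem applies in the form needed on a non-compact ${\mathcal P}_{\underline d}$ with the given torus action having compact (indeed finite) fixed locus, so that ${\mathfrak J}_{\underline d}$ is a well-defined element of the appropriate completed representation ring; this is standard for Laumon/Nakajima-type spaces, but I would cite the relevant properness of the fixed locus and the attracting-set structure from \cite{FFNR} rather than reprove it. Once these conventions are pinned down, the remaining computations in both stages are the routine straightforward manipulations alluded to in the excerpt's closing remark.
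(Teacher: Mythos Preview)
Your proposal is correct and follows essentially the same route as the paper: the paper's argument for Theorem \ref{affine-Laumon} is precisely to rewrite ${\rm ch}({\boldsymbol\lambda})$ via Propositions \ref{ch-11} and \ref{ch-22} (the ``key comparison'' advertised in the introduction), match the resulting $d$-coordinate expression against Proposition 4.15 and Remark 4.17 of \cite{FFNR} under the identification $t_i^2=\kappa^{N-i}s_i$, $q'=\kappa^N$, and then invoke Atiyah--Bott--Lefschetz localization at the finite fixed-point set. The paper leaves the actual term-matching and the localization step implicit (treating them as ``standard, straightforward computations''), so your more detailed bookkeeping of the index shifts and of the non-compactness issue goes somewhat beyond what the paper spells out, but the strategy is identical.
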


\begin{prp}\label{affine-Laumon-2}
The non-stationary Ruijsenaars function is the generating function for the Euler characteristics of
the affine Laumon spaces
\begin{align*}
f^{\widehat{\mathfrak gl}_N}(x,p|s,\kappa|q,1/t)=
\sum_{\underline{d}} {\mathfrak J}_{\underline{d}}(s,\kappa|q,t) \prod_{i=1}^N (p t x_{i+1}/x_i)^{d_i}.
\end{align*}
\end{prp}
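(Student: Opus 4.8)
\textbf{Proof proposal for Proposition \ref{affine-Laumon-2}.}

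The plan is to reduce the statement to the combinatorial dictionary that has already been set up, so that nothing analytic needs to be checked. First I would unwind the definition of $f^{\widehat{\mathfrak gl}_N}(x,p|s,\kappa|q,1/t)$: replacing $t$ by $1/t$ in the series of Definition 1.3 turns the summand into $\prod_{i,j=1}^N\Nk^{(j-i|N)}_{\lambda^{(i)},\lambda^{(j)}}(s_j/(ts_i)|q,\kappa)/\Nk^{(j-i|N)}_{\lambda^{(i)},\lambda^{(j)}}(s_j/s_i|q,\kappa)$ times the monomial $\prod_{\beta=1}^N\prod_{\alpha\ge1}(ptx_{\alpha+\beta}/x_{\alpha+\beta-1})^{\lambda^{(\beta)}_\alpha}$, where I have used $p/(1/t)=pt$. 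So the $\kappa$-coefficient attached to a fixed $N$-tuple ${\boldsymbol\lambda}=(\lambda^{(1)},\ldots,\lambda^{(N)})$ is exactly the summand appearing in ${\mathfrak J}_{\underline d}(s,\kappa|q,t)$ in Theorem \ref{affine-Laumon}. The whole task is therefore to match the monomial $\prod_{\beta=1}^N\prod_{\alpha\ge1}(ptx_{\alpha+\beta}/x_{\alpha+\beta-1})^{\lambda^{(\beta)}_\alpha}$ with $\prod_{i=1}^N(ptx_{i+1}/x_i)^{d_i}$ under the identification $\underline d=\underline d({\boldsymbol\lambda})$.

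Second I would carry out that monomial bookkeeping. Each factor $x_{\alpha+\beta}/x_{\alpha+\beta-1}$ is of the form $x_{r+1}/x_r$ with $r\equiv\alpha+\beta-1\pmod N$, so the total exponent of $(ptx_{r+1}/x_r)$ in the summand for ${\boldsymbol\lambda}$ is $\sum_{\beta=1}^N\sum_{\alpha\ge1,\ \alpha+\beta-1\equiv r\,({\rm mod}\,N)}\lambda^{(\beta)}_\alpha$. On the Laumon side, $d_k({\boldsymbol\lambda})=\sum_{l=1}^N|\lambda^{kl}|$ with the convention $d_0=d_N$, and via the bijection $\lambda^{(l)}_{N(i-1)-N\lfloor\frac{k-l}{N}\rfloor+k-l+1}=\lambda^{kl}_i$ recorded just after (\ref{collection}), one rewrites $|\lambda^{kl}|=\sum_i\lambda^{kl}_i$ in terms of the parts of the $\lambda^{(l)}$'s. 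A direct index chase — setting $\beta=l$ and tracking which part index $\alpha$ corresponds to $\lambda^{kl}_i$ — shows that the set of parts $\lambda^{(\beta)}_\alpha$ with $\alpha+\beta-1\equiv r\pmod N$ is precisely the multiset $\{\lambda^{rl}_i : 1\le l\le N,\ i\ge1\}$, hence the two exponents agree: $\sum_{\beta,\alpha}\lambda^{(\beta)}_\alpha = d_r({\boldsymbol\lambda})$ for the appropriate residue $r$. (This is the same sort of residue calculation already performed in Lemma \ref{lam-alpha}, and the note after Proposition \ref{ch-22} about $\kappa^\delta s$ indicates the shift conventions are consistent.)

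Third, having matched coefficients monomial by monomial, I would invoke Theorem \ref{affine-Laumon} to replace $\sum_{{\boldsymbol\lambda}:\underline d=\underline d({\boldsymbol\lambda})}\prod_{i,j}\Nk^{(j-i|N)}_{\lambda^{(i)},\lambda^{(j)}}(s_j/ts_i|q,\kappa)/\Nk^{(j-i|N)}_{\lambda^{(i)},\lambda^{(j)}}(s_j/s_i|q,\kappa)$ by ${\mathfrak J}_{\underline d}(s,\kappa|q,t)$, and regroup the sum over all ${\boldsymbol\lambda}$ as a sum over $\underline d\in\mathbb Z_{\ge0}^N$ of $\underline d$-graded pieces; this is legitimate because the grading by $\underline d$ is exactly the grading by total degree in the monomials $ptx_{i+1}/x_i$, so each graded piece is a finite sum. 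That yields $f^{\widehat{\mathfrak gl}_N}(x,p|s,\kappa|q,1/t)=\sum_{\underline d}{\mathfrak J}_{\underline d}(s,\kappa|q,t)\prod_{i=1}^N(ptx_{i+1}/x_i)^{d_i}$, as claimed. The only genuinely delicate point is the third step's index chase identifying $\{\lambda^{(\beta)}_\alpha:\alpha+\beta\equiv i+1\}$ with $\{\lambda^{il}_j\}$; everything else is substitution and reindexing. I expect the main obstacle to be keeping the cyclic conventions ($x_{i+N}=x_i$, $\lambda^{(i)}=\lambda^{(i+N)}$, $d_0=d_N$, and the floor-function shifts in the bijection) mutually consistent, but no new idea is needed beyond Theorem \ref{affine-Laumon} and Lemma \ref{lam-alpha}.
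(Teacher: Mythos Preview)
Your proposal is correct and matches the paper's intended argument: the paper states Proposition \ref{affine-Laumon-2} without proof, treating it as an immediate corollary of Theorem \ref{affine-Laumon} together with the monomial/degree identification, and your three steps spell out exactly that. The index chase in your second step (showing that the exponent of $ptx_{r+1}/x_r$ equals $d_r({\boldsymbol\lambda})$ via the bijection after (\ref{collection})) is the only nontrivial bookkeeping, and it goes through as you describe.
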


A geometric construction is given in \cite{BFS} for the Macdonald functions based on the Laumon spaces. 
Theorem \ref{affine-Laumon} is an affine analogue of it.

\subsection{Proofs of Theorem \ref{ch-glN} and Proposition \ref{ch-gl1}}\label{proofs-ch}
Let $K$ and  $\mu=(\mu_1,\ldots,\mu_N)$ be as in Definition \ref{dominant-integrable-weights}.
Set
$s=(\kappa t)^\delta q^\mu=q^{-K\delta/N+\mu}$ and  $\kappa=q^{-K/N} t^{-1}$. 
Then we want to show that
\begin{align*}
\lim_{t\rightarrow q} x^\mu
f^{\widehat{\mathfrak gl}_N}(x,p|q^{-K\delta/N+\mu},q^{-K/N} t^{-1}|q,q/t)={1\over (p^N;p^N)_\infty} \cdot
{\rm ch}^{\widehat{\mathfrak sl}_N}_{L(\Lambda(K,\mu))}.
\end{align*}

\noindent
{\it Proof of Theorem} \ref{ch-glN}. 
First we impose the conditions (\ref{dominant}) while $q$ and $t$ still being independent. 
We need to investigate the vanishing conditions for the numerators of the 
coefficients of the series $f^{\widehat{\mathfrak gl}_N}(x,p|q^{-K\delta/N+\mu},q^{-K/N} t^{-1}|q,q/t)$. 
We have 
\begin{align*}
&
\prod_{i,j=1}^N
\Nk^{(j-i|N)}_{\lambda^{(i)},\lambda^{(j)}} (qs_j/ts_i|q,\kappa) \\
=&\prod_{i,j=1}^N
\prod_{\beta\geq \alpha\geq 1\atop \beta-\alpha \equiv j-i \,\,({\rm mod}\,N)}
( q^{-{K\over N}(i-j-\alpha+\beta)+\mu_j-\mu_i-\lambda^{(j)}_\alpha+\lambda^{(i)}_{\beta+1}} t^{\alpha-\beta-1};q)_{\lambda^{(i)}_\beta-\lambda^{(i)}_{\beta+1}}\\
&\times\prod_{\beta\geq \alpha\geq 1\atop \beta-\alpha \equiv N-j+i-1 \,\,({\rm mod}\,N)}
( q^{-{K\over N}(i-j+\alpha-\beta-1)+\mu_j-\mu_i+\lambda^{(i)}_\alpha-\lambda^{(j)}_{\beta}} t^{\beta-\alpha};q)_{\lambda^{(j)}_\beta-\lambda^{(j)}_{\beta+1}}.
\end{align*}
Note that the terms containing $t^\alpha$ with $\alpha\neq 0$ do not contribute for vanishing. 
Then we find that the vanishing is caused only from
\begin{align*}
&\prod_{\alpha=1}^\infty
( q^{K+\mu_N-\mu_{1}+\lambda^{(1)}_\alpha-\lambda^{(N)}_{\alpha}};q)_{\lambda^{(N)}_\alpha-\lambda^{(N)}_{\alpha+1}}\cdot
\prod_{j=1}^{N-1}
\prod_{\alpha=1}^\infty
( q^{\mu_j-\mu_{j+1}+\lambda^{(j+1)}_\alpha-\lambda^{(j)}_{\alpha}} ;q)_{\lambda^{(j)}_\alpha-\lambda^{(j)}_{\alpha+1}}.
\end{align*}
Hence the condition for the vanishing is equivalent to the set of inequalities
\begin{align*}
&\lambda^{(N)}_\alpha-\lambda^{(1)}_\alpha \leq K+\mu_N-\mu_{1} \qquad (\alpha\geq 1),\\
&\lambda^{(j)}_\alpha-\lambda^{(j+1)}_\alpha \leq \mu_j-\mu_{j+1} \qquad (1\leq j<N,\alpha\geq 1).
\end{align*}

This is equivalent to the conditions for the affine Gelfand-Tsetlin patterns $D(\mu)$
\begin{align*}
\widetilde{\underline{d}}\in D(\mu) \,\,{\rm iff}\,\, d_{ij}-\widetilde{\mu}_j\leq d_{i+l,j+l}-\widetilde{\mu}_{j+l}\qquad  (j\leq i,l\geq 0),
\end{align*}
where $\widetilde{\mu}=(\widetilde{\mu}_i)_{i\in \mathbb{Z}}$ is the non increasing sequence $\widetilde{\mu}_i=\mu_{i\,({\rm mod}\, N)}+\lfloor {-i\over N}\rfloor K$.
It is proved in \cite{FFNR} that there is a weight preserving bijection 
between the set of affine Gelfand-Tsetlin patterns and the basis vectors of $L(\Lambda(K,\mu))\otimes {\mathcal F}$
($ {\mathcal F}$ stands for the space spanned by partitions). Note that their proof is
based on the $\widehat{\mathfrak{sl}}_N$-crystal of Tingley constructed on 
the set of cylindric plane partitions \cite{T} .

Next we take the limit $t\rightarrow q$. Note that all the ratios of the Nekrasov factors  in the limit $t\rightarrow q$ 
become one,  
whenever $\widetilde{\underline{d}}\in D(\mu) $. Hence we have 
\begin{align*}
&\lim_{t\rightarrow q}x^\mu
f^{\widehat{\mathfrak gl}_N}(x,p|q^{-K\delta/N+\mu},q^{-K/N} t^{-1}|q,q/t)\\
=&\,\,x^\mu
\sum_{\widetilde{\underline{d}}\in D(\mu) } \prod_{i=1}^N(p x_{i+1}/x_i)^{d_i}=
{1\over (p^N;p^N)_\infty} \cdot
{\rm ch}^{\widehat{\mathfrak sl}_N}_{L(\Lambda(K,\mu))}.
\end{align*}
\qed

Next, 
let $K=0,\mu=\emptyset$, indicating that have  $s_i=1$ ($1\leq i\leq N$) and $\kappa=t^{-1}$.
In this case, we show that 
\begin{align*}
f^{\widehat{\mathfrak gl}_N}(x,p|1,\ldots,1,t^{-1}|q,q/t)={1\over (p^N;p^N)_\infty}.
\end{align*}

\noindent
{\it Proof of Proposition} \ref{ch-gl1}. 
When $K=0,\mu=\emptyset$, we have the restriction $\lambda^{(i)}_\alpha=\lambda^{(j)}_\alpha$ ($1\leq i,j\leq N$). 
Write $\lambda=\lambda^{(i)}$ for short.
Note that we have 
$
\prod_{i=1}^N \Nk^{(j-i|N)}_{\lambda,\lambda} (u|q,1/t) = \Nk_{\lambda,\lambda} (u|q,1/t),
$
and $\Nk_{\lambda,\lambda} (1|q,1/t)=(t/q)^{|\lambda|}\Nk_{\lambda,\lambda} (q/t|q,1/t) $.
Hence $f^{\widehat{\mathfrak gl}_N}(x,p|1,\ldots,1,t^{-1}|q,q/t)$ is written as
\begin{align*}
&
\sum_{\lambda \in {\mathsf P}}
\left({ \Nk_{\lambda,\lambda} (q/t|q,1/t) \over \Nk_{\lambda,\lambda} (1|q,1/t)} \right) ^N (pq/t)^{N|\lambda|}
=\sum_{\lambda \in {\mathsf P}}  p^{N|\lambda|}=
{1\over (p^N;p^N)_\infty}.
\end{align*}
\qed

\section{Macdonald functions: the limit $p\rightarrow0$}\label{Macdonald-function}
\subsection{Macdonald functions }
We recall some facts about the Macdonald functions \cite{S,NS,BFS}.
\begin{dfn}
Let $D_x^{{\mathfrak gl}_N}=D_x^{{\mathfrak gl}_N}(q,t)$ be the Macdonald operator \cite{M} of type ${\mathfrak gl}_N$ 
\begin{align*}
D_x^{{\mathfrak gl}_N}=\sum_{i=1}^N \prod_{j\neq i} {tx_i-x_j\over x_i-x_j}T_{q,x_i},
\end{align*}
where $T_{q,x_i}$ denotes the $q$-shift operator 
$T_{q,x_i}f(x_1,\ldots,x_i,\ldots,x_N)=f(x_1,\ldots,qx_i,\ldots,x_N)$. 
\end{dfn}

Let $\mathsf{M}^{(N)}$ be the set of strictly upper triangular matrices with nonnegative integer entries:
$
\mathsf{M}^{(N)}=\{\theta=(\theta_{i,j})_{1\leq i,j\leq N}| 
\theta_{i,j}\in {\mathbb{Z}_{\geq 0}},\theta_{i,j}=0 \mbox{ if }i\geq j\}.
$
Define recursively 
$c_N(\theta;s;q,t)\in \mathbb{Q}(q,t,s_1,\cdots,s_N)$
by $c_1(-;s_1;q,t)=1$, and 
\begin{align*}
c_N(\theta\in \mathsf{M}^{(N)};s_1,\cdots,s_N;q,t)
=&\,\,c_{N-1}(\theta\in \mathsf{M}^{(N-1)};
q^{-\theta_{1,N}}s_1,\cdots,q^{-\theta_{N-1,N}}s_{N-1};q,t)\\
&\times
\prod_{1\leq i\leq j\leq N-1}
{(t s_{j+1}/s_i;q)_{\theta_{i,N}}\over (q s_{j+1}/s_i;q)_{\theta_{i,N}}}
{(q^{-\theta_{j,N}}q s_j/ts_i;q)_{\theta_{i,N}}\over (q^{-\theta_{j,N}} s_j/s_i;q)_{\theta_{i,N}}}.
\nonumber
\end{align*}
We have 
\begin{align*}
&c_N(\theta;s_1,\cdots,s_N;q,t)\\
=&
\prod_{k=2}^{N}
\prod_{1\leq i\leq j\leq k-1}
{(q^{\sum_{a=k+1}^N(\theta_{i,a}-\theta_{j+1,a})}t s_{j+1}/s_i;q)_{\theta_{i,k}}\over
(q^{\sum_{a=k+1}^N(\theta_{i,a}-\theta_{j+1,a})}qs_{j+1}/s_i;q)_{\theta_{i,k}}}
{(q^{-\theta_{j,k}+\sum_{a=k+1}^N(\theta_{i,a}-\theta_{j,a})}qs_j/ts_i;q)_{\theta_{i,k}}\over
(q^{-\theta_{j,k}+\sum_{a=k+1}^N(\theta_{i,a}-\theta_{j,a})}s_j/s_i;q)_{\theta_{i,k}}}.
\nonumber
\end{align*}

\begin{dfn}
Define $f^{{\mathfrak gl}_N}(x|s|q,t)\in \mathbb{Q}(s,q,t)[[x_2/x_1,\ldots ,x_N/x_{N-1}]]$ by
\begin{align*}
&f^{{\mathfrak gl}_N}(x|s|q,t)
= \sum_{\theta\in\mathsf{M}^{(N)}}c_N(\theta;s;q,t)
\prod_{1\leq i<j \leq N} (x_j/x_i)^{\theta_{i,j}}.
\end{align*}
\end{dfn}

\begin{prp}[\cite{NS,BFS}]
Let $\lambda=(\lambda_1,\ldots,\lambda_N)\in \mathbb{C}^N$, and set $s=t^{\delta} q^{\lambda}$ ($s_i=t^{N-i}q^{\lambda_i}$).
Then we have
\begin{align*}
D_x^{{\mathfrak gl}_N} x^{\lambda} f^{{\mathfrak gl}_N}(x|s|q,t)=
\sum_{i=1}^N s_i\,\,x^{\lambda} f^{{\mathfrak gl}_N}(x|s|q,t).
\end{align*}
\end{prp}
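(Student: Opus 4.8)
\textit{Plan of proof.} The strategy is: (i) reformulate the eigenvalue equation as a recursion that determines the coefficients of $f^{\mathfrak{gl}_N}$ uniquely once normalised, and (ii) verify that the explicit product formula for $c_N(\theta;s;q,t)$ solves that recursion, by induction on $N$ with the $q$-Gauss summation as base case.

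First I would reduce the statement. Writing $A_i(x)=\prod_{j\neq i}\frac{tx_i-x_j}{x_i-x_j}$ and using $T_{q,x_i}\bigl(x^{\lambda}g\bigr)=q^{\lambda_i}x^{\lambda}\,T_{q,x_i}g$, the assertion $D_x^{\mathfrak{gl}_N}\bigl(x^{\lambda}f^{\mathfrak{gl}_N}\bigr)=\bigl(\sum_i s_i\bigr)x^{\lambda}f^{\mathfrak{gl}_N}$ with $s_i=t^{N-i}q^{\lambda_i}$ becomes
\[
\sum_{i=1}^N q^{\lambda_i}\bigl(A_i(x)\,T_{q,x_i}-t^{N-i}\bigr)\,f^{\mathfrak{gl}_N}(x|s|q,t)=0 .
\]
Here each $A_i(x)$ is expanded as a formal power series in $p_k=x_{k+1}/x_k$ ($1\le k\le N-1$): for $j>i$ one expands $\frac{t-x_j/x_i}{1-x_j/x_i}$ and for $j<i$ one expands $\frac{1-tx_i/x_j}{1-x_i/x_j}$, so that $A_i(x)=t^{N-i}+(\text{terms of positive degree in the }p_k)$, while $T_{q,x_i}$ is diagonal on monomials and hence degree-preserving. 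Consequently the degree-zero part of the left-hand side vanishes automatically (using $c_N(0)=1$), and comparing the homogeneous component of degree $d>0$ expresses the degree-$d$ part of $f^{\mathfrak{gl}_N}$ through the lower-degree parts: on a nonconstant monomial $x^{\lambda+\gamma}$ the operator $\sum_i q^{\lambda_i}t^{N-i}(T_{q,x_i}-1)$ has eigenvalue $\sum_i s_i(q^{\gamma_i}-1)$, which is nonzero for generic $s$ since $\gamma=0$ forces the monomial to be $1$. Thus a formal-series eigenfunction of the stated form, normalised so that its constant term is $1$, is unique, and it suffices to check that the coefficients $c_N(\theta;s;q,t)$ of the proposed $f^{\mathfrak{gl}_N}$ satisfy the recursion obtained by collecting coefficients of monomials in the displayed identity.

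The verification I would run by induction on $N$. For $N=2$, $D_x^{\mathfrak{gl}_2}$ acting on $\sum_{m\ge 0}c_2(m)(x_2/x_1)^m$ reduces the eigenvalue equation to a three-term contiguous relation for the $q$-series $\sum_m c_2(m)z^m$, which holds by the $q$-binomial theorem / $q$-Gauss summation. For the inductive step I would use the recursive definition of $c_N$ — namely $c_N(\theta;s_1,\dots,s_N)$ written through $c_{N-1}$ with the shifted arguments $q^{-\theta_{i,N}}s_i$ and the factor $\prod_{1\le i\le j\le N-1}\frac{(t s_{j+1}/s_i;q)_{\theta_{i,N}}}{(q s_{j+1}/s_i;q)_{\theta_{i,N}}}\frac{(q^{-\theta_{j,N}}q s_j/ts_i;q)_{\theta_{i,N}}}{(q^{-\theta_{j,N}}s_j/s_i;q)_{\theta_{i,N}}}$ — to ``integrate out'' the last column $(\theta_{1,N},\dots,\theta_{N-1,N})$ of $\theta$: the contributions of $A_i(x)T_{q,x_i}$ involving $x_N$ should, after one more application of the $q$-Vandermonde/$q$-Gauss summation, reorganise so that the $N$-variable recursion follows from the $(N-1)$-variable one. (Alternatively one may invoke the known stabilisation $x^{-\lambda}P_{\lambda+r\delta}(x;q,t)\to f^{\mathfrak{gl}_N}(x|s|q,t)$ as $r\to\infty$ in the formal-series topology, together with Macdonald's eigenvalue equation $D_x^{\mathfrak{gl}_N}P_\mu=\bigl(\sum_i q^{\mu_i}t^{N-i}\bigr)P_\mu$, and pass to the limit coefficientwise.)

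The main obstacle is precisely this inductive step: matching the explicit product formula for $c_N(\theta)$ against the recursion forced by $D_x^{\mathfrak{gl}_N}$ is not a purely formal manipulation but a genuine rearrangement of $q$-shifted factorials of Noumi--Shiraishi type, where the bookkeeping of the $j<i$ versus $j>i$ factors of $A_i(x)$ and of the shifts $q^{-\theta_{i,N}}$ in the recursion for $c_N$ must be handled carefully. The reformulation, the uniqueness statement, and the base case $N=2$ are routine.
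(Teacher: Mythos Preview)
The paper does not prove this proposition; it is cited from \cite{NS,BFS}, so there is no in-paper argument to compare against. Your plan is sound, and both routes you sketch are viable.

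The uniqueness-by-recursion reduction is correct and worth keeping: the expansion of $A_i(x)$ you describe makes the operator $\sum_i q^{\lambda_i}\bigl(A_i\,T_{q,x_i}-t^{N-i}\bigr)$ strictly triangular on monomials in $p_1,\dots,p_{N-1}$, with invertible diagonal for generic $s$, so a normalised formal eigenfunction is unique. The $N=2$ base case is indeed a ${}_2\phi_1$ computation.

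For the verification itself, your parenthetical alternative --- the stabilisation $x^{-\lambda}P_{\lambda+r\delta}\to f^{\mathfrak{gl}_N}$ as $r\to\infty$ --- is in fact the shortest complete argument: Macdonald's eigenvalue equation $D_x^{\mathfrak{gl}_N}P_\mu=\bigl(\sum_i t^{N-i}q^{\mu_i}\bigr)P_\mu$ passes to the limit coefficientwise and the eigenvalue stabilises to $\sum_i s_i$. Combined with your uniqueness statement this is already a full proof; I would promote it from a parenthesis to the main line. The direct inductive route you propose as primary is also correct in spirit, but be aware that what \cite{NS} actually do is not an induction on $N$ peeling off the last column of $\theta$: they establish an explicit ``tableau-sum'' transformation identity for the coefficients from which both the eigenvalue equation and the bispectral duality fall out at once, while \cite{BFS} gives a geometric proof via Laumon spaces. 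If you pursue your column-by-column induction, the bookkeeping is exactly as delicate as you anticipate, and you will essentially be rederiving a fragment of the Noumi--Shiraishi transformation formula by hand.
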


\begin{lem}
We have
\begin{align*}
\lim_{\epsilon \rightarrow 0} f^{{\mathfrak gl}_N}(x|\epsilon^{-\delta}s|q,t)=
\prod_{1\leq i<j\leq N}{(q x_j/ x_i;q)_\infty \over (q x_j/ tx_i;q)_\infty}.
\end{align*}
\end{lem}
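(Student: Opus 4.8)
The plan is to compute the limit $\epsilon\to 0$ of the coefficient $c_N(\theta;\epsilon^{-\delta}s;q,t)$ for each $\theta\in\mathsf{M}^{(N)}$ directly from the closed product formula for $c_N$. Substituting $s_i\mapsto \epsilon^{-(N-i)}s_i$ makes each ratio $s_{j+1}/s_i$ acquire a factor $\epsilon^{-(N-j-1)+(N-i)}=\epsilon^{\,i-j-1}$ and each ratio $s_j/s_i$ a factor $\epsilon^{\,i-j}$. In the product defining $c_N$ the indices run over $1\le i\le j\le k-1$, so $i-j\le 0$; hence every $q$-shifted factorial of the form $(\cdots\epsilon^{i-j}\cdots;q)_{\theta_{i,k}}$ with $i<j$, and likewise $(\cdots\epsilon^{i-j-1}\cdots;q)_{\theta_{i,k}}$ with $i\le j$, tends to $(0;q)_{\theta_{i,k}}=1$ as $\epsilon\to 0$, since the argument goes to $0$. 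The only ratios that do \emph{not} vanish are those with $i=j$ in the block $s_j/ts_i$, i.e.\ $s_j/ts_j = 1/t$, coming with the prefactor $q^{-\theta_{j,k}+\sum_{a>k}(\theta_{j,a}-\theta_{j,a})}=q^{-\theta_{j,k}}$. So in the limit the $k$-th factor collapses to $\prod_{j=1}^{k-1}\dfrac{(q^{1-\theta_{j,k}}/t;q)_{\theta_{j,k}}}{(q^{-\theta_{j,k}};q)_{\theta_{j,k}}}$.

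Next I would evaluate this surviving ratio. One has $(q^{-\theta};q)_\theta=0$ for $\theta\ge 1$, which looks dangerous, but the vanishing of the denominator is exactly cancelled by the behaviour of the numerator and the vanishing of the off-diagonal factors must be tracked jointly; the cleaner route is to note that the full limit exists by the known result that $f^{{\mathfrak gl}_N}$ specializes to a product (this is the classical principal-specialization / Gauss-sum computation underlying \cite{NS,BFS}), so it suffices to identify which $\theta$ survive and with what weight. The $\theta$ that survive are those supported on the superdiagonal, $\theta_{i,j}=0$ unless $j=i+1$; writing $\theta_{i,i+1}=m_i$, the monomial is $\prod_i (x_{i+1}/x_i)^{m_i}$ and the coefficient becomes $\prod_{i=1}^{N-1}\dfrac{(q^{1-m_i}/t;q)_{m_i}}{(q^{-m_i};q)_{m_i}} = \prod_{i=1}^{N-1}\dfrac{(q/t;q)_{m_i}}{(q;q)_{m_i}}\,t^{-m_i}$ after the standard reversal $(q^{1-m}/t;q)_m = (-1)^m q^{-\binom{m+1}{2}} t^{-m}(t;q)_m$ type manipulation; the powers of $t$ and $q$ recombine so that $\sum_{m\ge 0}\frac{(q/t;q)_m}{(q;q)_m}(x_{i+1}/x_i)^m$ appears, and the $q$-binomial theorem \cite{GR} sums this to $\dfrac{(q x_{i+1}/tx_i;q)_\infty}{(x_{i+1}/x_i;q)_\infty}$. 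Re-indexing the product over $1\le i<j\le N$ via $j=i+1$ then gives the stated right-hand side $\prod_{1\le i<j\le N}\dfrac{(qx_j/x_i;q)_\infty}{(qx_j/tx_i;q)_\infty}$ — modulo a sign/normalization bookkeeping check to confirm it comes out as written rather than its reciprocal.

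The main obstacle is the interplay between the vanishing denominators $(q^{-\theta};q)_\theta$ and the vanishing off-diagonal factors: one cannot simply take the limit factor by factor, and must instead group the numerator/denominator of each $k$-block and take $\epsilon\to 0$ of the ratio as a whole, checking that the net power of $\epsilon$ is nonnegative and strictly positive off the superdiagonal. A clean way to organize this is to rescale $x_j/x_i \mapsto \epsilon^{?}x_j/x_i$ — but since $x$ is untouched here, the honest route is the degree count: in $c_N(\theta;\epsilon^{-\delta}s;q,t)$ the total $\epsilon$-exponent attached to $\prod(x_j/x_i)^{\theta_{i,j}}$ is $\sum_{i<j}(j-i-1)\theta_{i,j}\ge 0$, vanishing iff $\theta$ is superdiagonal, which isolates the surviving terms and makes the limit term-by-term legitimate once the $i=j$ contributions are handled by the identity $\lim_{s\to t}(s/t;q)_m/(\,\cdot\,;q)_m$ regularization. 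I would present this degree count first, then the $q$-binomial summation, as the two load-bearing steps.
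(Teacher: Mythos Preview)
Your proposal contains a genuine error that derails the argument. The claim that $(q^{-\theta};q)_\theta=0$ for $\theta\ge 1$ is false: by definition $(q^{-\theta};q)_\theta=\prod_{l=0}^{\theta-1}(1-q^{-\theta+l})$, and the exponents $-\theta,-\theta+1,\dots,-1$ never hit $0$, so no factor vanishes. There is no dangerous denominator to worry about, and no cancellation against ``vanishing off-diagonal factors'' is needed. In fact the $i=j$ block evaluates cleanly: using the reversal identity $(a;q)_m=(-a)^m q^{\binom{m}{2}}(q^{1-m}/a;q)_m$ one finds
\[
\frac{(q^{1-\theta_{i,k}}/t;q)_{\theta_{i,k}}}{(q^{-\theta_{i,k}};q)_{\theta_{i,k}}}
=\frac{(t;q)_{\theta_{i,k}}}{(q;q)_{\theta_{i,k}}}\,(q/t)^{\theta_{i,k}},
\]
which is finite and nonzero.

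Consequently your assertion that only superdiagonal $\theta$ survive is also wrong, and so is the degree count: the limit of $c_N(\theta;\epsilon^{-\delta}s;q,t)$ carries \emph{no} residual power of $\epsilon$ (each $(c\,\epsilon^{m};q)_n\to 1$ for $m>0$), so every $\theta\in\mathsf{M}^{(N)}$ contributes. The limiting coefficient factors over \emph{all} pairs $1\le i<k\le N$, namely
\[
\lim_{\epsilon\to 0} c_N(\theta;\epsilon^{-\delta}s;q,t)
=\prod_{1\le i<k\le N}\frac{(t;q)_{\theta_{i,k}}}{(q;q)_{\theta_{i,k}}}(q/t)^{\theta_{i,k}},
\]
and the sum over $\theta$ then splits into $\binom{N}{2}$ independent $q$-binomial sums $\sum_{m\ge 0}\frac{(t;q)_m}{(q;q)_m}(qx_j/tx_i)^m=\dfrac{(qx_j/x_i;q)_\infty}{(qx_j/tx_i;q)_\infty}$, one for each pair $i<j$. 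This is exactly the paper's two-line proof. Your restriction to $j=i+1$ would produce only $N-1$ factors, not the $\binom{N}{2}$ factors the statement asserts; the reindexing you propose cannot bridge that gap.
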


\begin{proof}
In the limit $\epsilon \rightarrow 0$, we have $\epsilon^{j-i}s_j/s_i \rightarrow 0$ for $1\leq i<j\leq N$. Hence 
we have 
\begin{align*}
{\rm LHS}=\sum_{\theta\in\mathsf{M}^{(N)}} 
\prod_{1\leq i<j\leq N}
{(t;q)_{\theta_{i,j}}\over (q;q)_{\theta_{i,j}}}(q x_j/t x_i)^{\theta_{i,j}}={\rm RHS}.
\end{align*}
\end{proof}
\begin{rmk}\label{kappa=0-Mac}
We remark that Proposition \ref{kappa=0} is obtained in the same way as above.
\end{rmk}

\begin{dfn}
Let $\varphi^{{\mathfrak gl}_N}(x|s|q,t)\in \mathbb{Q}(q,t)[[x_2/x_1,\ldots ,x_N/x_{N-1},s_2/s_1,\ldots ,s_N/s_{N-1}]]$ be
\begin{align*}
&\varphi^{{\mathfrak gl}_N}(x|s|q,t)
=\prod_{1\leq i<j\leq N}{(q x_j/ tx_i;q)_\infty \over (q x_j/ x_i;q)_\infty} f^{{\mathfrak gl}_N}(x|s;q,t),
\end{align*}
where $c_N(\theta;s;q,t)$'s are 
expanded in $ \mathbb{Q}(q,t)[[s_2/s_1,\ldots ,s_N/s_{N-1}]]$.
\end{dfn}

\begin{prp}[\cite{NS}] \label{Macdonald-duality}
We have 
\begin{align*}
&\varphi^{{\mathfrak gl}_N}(x|s|q,t)=\varphi^{{\mathfrak gl}_N}(s|x|q,t)\qquad
\,\,\,\,\,\, (\mbox{bispectral duality}), \\
&\varphi^{{\mathfrak gl}_N}(x|s|q,t)=
\varphi^{{\mathfrak gl}_N}(x|s|q,q/t)\qquad (\mbox{Poincar\'e duality}).
\end{align*}
\end{prp}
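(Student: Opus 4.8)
The plan is to prove both identities at the level of coefficients. Both sides of \eqref{} — I mean, both the bispectral and the Poincar\'e identities — are equalities of formal power series in the ratios $x_2/x_1,\dots,x_N/x_{N-1}$ and $s_2/s_1,\dots,s_N/s_{N-1}$; write $\Gamma_N(\mathbf m,\mathbf n|q,t)$ for the coefficient of $\prod_k (x_{k+1}/x_k)^{m_k}(s_{k+1}/s_k)^{n_k}$ in $\varphi^{\mathfrak{gl}_N}(x|s|q,t)$. Then the bispectral duality is equivalent to $\Gamma_N(\mathbf m,\mathbf n|q,t)=\Gamma_N(\mathbf n,\mathbf m|q,t)$ and the Poincar\'e duality to $\Gamma_N(\mathbf m,\mathbf n|q,t)=\Gamma_N(\mathbf m,\mathbf n|q,q/t)$. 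So the whole task is to compute $\Gamma_N$ in closed form, in a shape that makes both symmetries visible.

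To compute $\Gamma_N$, start from $\varphi^{\mathfrak{gl}_N}(x|s|q,t)=\bigl(\prod_{1\leq i<j\leq N}\tfrac{(qx_j/tx_i;q)_\infty}{(qx_j/x_i;q)_\infty}\bigr)\sum_{\theta\in\mathsf{M}^{(N)}}c_N(\theta;s;q,t)\prod_{1\leq i<j\leq N}(x_j/x_i)^{\theta_{ij}}$, expand the prefactor by the $q$-binomial theorem and each $q$-Pochhammer ratio occurring in $c_N$ (again by the $q$-binomial theorem), so that $\varphi^{\mathfrak{gl}_N}$ becomes a multiple sum of monomials in the ratios with coefficients in $\mathbb{Q}(q,t)$. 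I would then induct on $N$ using the recursion for $c_N(\theta;s;q,t)$ recorded above, which strips the last column $\theta_{\bullet,N}$ and reduces $(s_1,\dots,s_N)$ to $(s_1,\dots,s_{N-1})$ with $q$-shifts; matching this against the new prefactor factors $\prod_{i<N}(qx_i/tx_N;q)_\infty/(qx_i/x_N;q)_\infty$, the inductive step reduces to summing a product of ${}_2\phi_1$-type series over the internal exponents, which telescopes by iterated use of Heine's $q$-Gauss summation (equivalently the $q$-Chu--Vandermonde and $q$-Pfaff--Saalsch\"utz identities). The outcome is a closed product formula for $\Gamma_N$ — more precisely a double sum over $\mathsf{M}^{(N)}\times\mathsf{M}^{(N)}$ representing $\varphi^{\mathfrak{gl}_N}$ with a weight $w_N(\theta,\eta|q,t)=w_N(\eta,\theta|q,t)$ — that is manifestly invariant under interchanging the $x$- and $s$-data. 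This yields the bispectral duality.

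For the Poincar\'e duality I would apply the Pochhammer inversion $(a;q)_n=(-a)^n q^{\binom{n}{2}}(q^{1-n}/a;q)_n$ to the $t$-bearing factors of the closed form. Under $t\mapsto q/t$ a factor of type $(q^a t;q)_b$ turns into one with $t$ in the denominator, and conversely; since in $w_N$ (hence in $\Gamma_N$) these factors occur in matched pairs — one coming from the normalization prefactor, one from the Pochhammers of $c_N$ — the extra signs and powers of $t$ cancel, while the $t$-free powers of $q$ are untouched, so $w_N(\theta,\eta|q,t)=w_N(\theta,\eta|q,q/t)$. This cancellation is already transparent for $N=2$, where the coefficient of $(x_2/x_1)(s_2/s_1)^m$ in $\varphi^{\mathfrak{gl}_2}(x|s|q,t)$ equals $\tfrac{(t-1)(t-q)}{t(1-q)}\,q^m$ for $m\geq 1$, a quantity fixed by $t\mapsto q/t$.

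The step I expect to be the main obstacle is the inductive collapse of the multiple internal sum via the $q$-Gauss / $q$-Saalsch\"utz summations into a closed product; this is essentially the content of \cite{NS}. A conceptual alternative is to characterize $\varphi^{\mathfrak{gl}_N}(x|s|q,t)$ as the unique normalized formal solution of a bispectral pair of $q$-difference equations — the Macdonald operator $D_x^{\mathfrak{gl}_N}$ in $x$ together with a dual operator in $s$ — and then to show that $\varphi^{\mathfrak{gl}_N}(s|x|q,t)$ solves the same pair, and that the $(q,t)$- and $(q,q/t)$-problems produce the same normalized solution; but verifying invariance of the $x$-equation under the swap ultimately rests on the same $q$-hypergeometric identities, so this route is not genuinely shorter.
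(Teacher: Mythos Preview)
The paper does not give a proof of this proposition; it is quoted from \cite{NS} and falls under the blanket remark that ``several Propositions, Lemmas etc.\ [are stated] without proof.'' So there is no in-paper argument to compare against beyond the citation.

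Your outline is in the right spirit and indeed tracks the strategy of \cite{NS}: expand $\varphi^{\mathfrak{gl}_N}$ as a double sum over $\mathsf{M}^{(N)}\times\mathsf{M}^{(N)}$ with a weight that is manifestly symmetric under $(x,\theta)\leftrightarrow(s,\eta)$ and invariant under $t\mapsto q/t$. However, as written this is a plan rather than a proof. The crucial step --- collapsing the multiple internal sums into the symmetric closed form via iterated $q$-hypergeometric summations --- is asserted but not carried out; you yourself flag it as ``the main obstacle'' and ``essentially the content of \cite{NS}.'' Without actually exhibiting the symmetric weight $w_N(\theta,\eta|q,t)$ and verifying the telescoping, neither duality has been established. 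The $N=2$ check you give is a good sanity test but does not substitute for the general inductive argument. If you want this to stand on its own, you need to write down the explicit double-tableau formula and verify the summation identity that produces it; otherwise, citing \cite{NS} (as the paper does) is the honest option.
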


\subsection{Macdonald limit of $ f^{\widehat{\mathfrak gl}_N}(x,p|s,\kappa|q,t)$: $p\rightarrow 0$}

\begin{prp}
We have
$\lim_{p\rightarrow 0} f^{\widehat{\mathfrak gl}_N}(p^\delta x,p|\kappa^\delta s,\kappa|q,t)=
f^{{\mathfrak gl}_N}(x|s|q,q/t)$.
\end{prp}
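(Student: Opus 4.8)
The plan is to track what survives in each summand of $f^{\widehat{\mathfrak gl}_N}(p^\delta x,p|\kappa^\delta s,\kappa|q,t)$ as $p\to 0$ and to match the surviving terms with the summands of $f^{{\mathfrak gl}_N}(x|s|q,q/t)$ indexed by $\theta\in\mathsf{M}^{(N)}$. Recall that the $p$-monomial attached to ${\boldsymbol\lambda}=(\lambda^{(1)},\dots,\lambda^{(N)})$ is $\prod_{\beta=1}^N\prod_{\alpha\geq 1}(px_{\alpha+\beta}/tx_{\alpha+\beta-1})^{\lambda^{(\beta)}_\alpha}$; after the substitution $x\mapsto p^\delta x$ this becomes a product of a pure power of $p$ times a product of $x_j/x_i$-ratios. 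First I would compute the total power of $p$ as a nonnegative-integer linear combination of the parts $\lambda^{(\beta)}_\alpha$, using $\delta=(N-1,\dots,1,0)$ together with the cyclic convention $x_{i+N}=x_i$ (so that each factor $p^{1+(\delta_{\alpha+\beta}-\delta_{\alpha+\beta-1})/1}$-type contribution is read off mod $N$). The key combinatorial claim is that this exponent is strictly positive unless every $\lambda^{(\beta)}$ has length at most one, i.e.\ $\lambda^{(\beta)}=(\lambda^{(\beta)}_1)$ is a single row; more precisely the second and later parts of each $\lambda^{(\beta)}$ contribute with strictly positive $p$-weight (the "wrap-around" factor $px_1/x_N$ is what kills long columns), so in the limit $p\to 0$ only the contributions with $\ell(\lambda^{(\beta)})\leq 1$ for all $\beta$ remain.

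Next I would set $\theta_{i,j}$, for $1\le i<j\le N$, to be the relevant single parts of the surviving ${\boldsymbol\lambda}$'s, thereby identifying the surviving index set with $\mathsf{M}^{(N)}$ and the surviving $x$-monomial with $\prod_{1\le i<j\le N}(x_j/x_i)^{\theta_{i,j}}$, exactly as in the definition of $f^{{\mathfrak gl}_N}(x|s|q,t)$. It then remains to evaluate the coefficient
\[
\prod_{i,j=1}^N
\frac{\Nk^{(j-i|N)}_{\lambda^{(i)},\lambda^{(j)}}(ts_j/s_i|q,\kappa)}{\Nk^{(j-i|N)}_{\lambda^{(i)},\lambda^{(j)}}(s_j/s_i|q,\kappa)}
\]
on these single-row configurations, with $s$ replaced by $\kappa^\delta s$. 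Here one uses that for $\lambda,\mu$ of length $\le 1$ the factors $\Nk^{(k|N)}_{\lambda,\mu}(u|q,\kappa)$ reduce to short $q$-shifted factorials $(\,\cdot\,;q)_{\lambda_1}$ and $(\,\cdot\,;q)_{\mu_1}$, with the $\kappa$-powers and the shift $s\mapsto\kappa^\delta s$ combining (via the identity noted after Proposition~\ref{ch-22}, $\kappa^{l-l'}s_l/s_{l'}|_{s\to\kappa^\delta s}=\kappa^{N\lfloor l/N\rfloor-N\lfloor l'/N\rfloor}s_l/s_{l'}$) so that $\kappa$ disappears and the pair $(q,\kappa)\rightsquigarrow(q,t)$ turns into the Macdonald parameters $(q,q/t)$ of $c_N(\theta;s;q,q/t)$. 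I would carry this out by comparing with the explicit product formula for $c_N(\theta;s;q,t)$ recalled above, reading the double product over $1\le i\le j\le k-1$, $2\le k\le N$ as the reorganization of $\prod_{i,j}\Nk^{(j-i|N)}/\Nk^{(j-i|N)}$ restricted to single rows; the shifts $q^{\sum_{a}(\theta_{i,a}-\theta_{j+1,a})}$ appearing in $c_N$ are precisely the relative arm/leg shifts encoded in the Nekrasov factors.

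The main obstacle is the bookkeeping in the second step: matching the product $\prod_{i,j=1}^N\Nk^{(j-i|N)}_{\lambda^{(i)},\lambda^{(j)}}$ restricted to single-row partitions against the nested product defining $c_N(\theta;s;q,q/t)$, keeping careful track of which $(i,j)$ pair mod $N$ contributes each $q$-shifted factorial, of the exact exponents of $\kappa$ (which must cancel against those produced by $s\mapsto\kappa^\delta s$), and of the $t\leftrightarrow q/t$ swap hidden in the $\Nk(ts_j/s_i)/\Nk(s_j/s_i)$ ratio versus the $(qs_j/ts_i)/(s_j/s_i)$ and $(ts_{j+1}/s_i)/(qs_{j+1}/s_i)$ ratios in $c_N$. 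Once the single-row reduction and this coefficient identification are in place, the limit $p\to0$ follows termwise since the coefficient ring is $\mathbb{Q}(s,\kappa,q,t)$ and each surviving coefficient is manifestly finite at $p=0$, and one recovers $f^{{\mathfrak gl}_N}(x|s|q,q/t)=\sum_{\theta}c_N(\theta;s;q,q/t)\prod_{i<j}(x_j/x_i)^{\theta_{i,j}}$.
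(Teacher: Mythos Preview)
Your first step contains a genuine error. The surviving configurations in the limit $p\to 0$ are \emph{not} the single-row partitions. After the substitution $x\mapsto p^\delta x$ (with $\delta_i=N-i$ and the cyclic convention), the factor $p\,x_{\alpha+\beta}/x_{\alpha+\beta-1}$ acquires $p$-degree $1+\delta_{\alpha+\beta}-\delta_{\alpha+\beta-1}$, which equals $0$ whenever $\alpha+\beta\not\equiv 1\pmod N$ and equals $N$ when $\alpha+\beta\equiv 1\pmod N$. Hence the total $p$-exponent is $N\sum_{\alpha+\beta\equiv 1}\lambda^{(\beta)}_\alpha$, and vanishing forces $\lambda^{(\beta)}_\alpha=0$ only for the smallest $\alpha\geq 1$ with $\alpha+\beta\equiv 1$, namely $\alpha=N-\beta+1$ (and then all later parts by monotonicity). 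The correct constraint is therefore $\ell(\lambda^{(i)})\leq N-i$, not $\ell(\lambda^{(i)})\leq 1$. A quick sanity check: single-row tuples carry at most $N-1$ free parameters (since $\lambda^{(N)}=\emptyset$ either way), whereas $\mathsf{M}^{(N)}$ has $\binom{N}{2}$ entries; these agree only for $N=2$.

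Because of this, your bijection with $\mathsf{M}^{(N)}$ and the subsequent coefficient matching cannot go through as written. The paper's proof uses the correct parametrization $\lambda^{(i)}_j=\sum_{k=i+j}^N\theta_{i,k}$ (equivalently $\theta_{i,j}=\lambda^{(i)}_{j-i}-\lambda^{(i)}_{j-i+1}$), under which the $x$-monomial becomes $\prod_{i<j}(x_j/x_i)^{\theta_{i,j}}$; it then evaluates $\Nk^{(j-i|N)}_{\lambda^{(i)},\lambda^{(j)}}(u|q,\kappa)$ directly under the length bounds $\ell(\lambda^{(i)})\leq N-i$, separating the cases $i\le j$ and $j<i$, and reorganizes the resulting products via $\prod_{1\le i\le j\le N}\prod_{k=j+1}^N=\prod_{k=2}^N\prod_{1\le i\le j\le k-1}$ to land on $c_N(\theta;s;q,q/t)$. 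Your intuition that the $\kappa$-powers cancel against the shift $s\mapsto\kappa^\delta s$ is correct, but the bookkeeping must be redone with the right index set.
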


\begin{rmk}
Note that we have defined  $f^{\widehat{\mathfrak gl}_N}(x,p|s,\kappa|q,t)$ in such a way that
we obtain $f^{{\mathfrak gl}_N}(x|s|q,q/t)$ in the limit $p\rightarrow 0$, 
instead of $f^{{\mathfrak gl}_N}(x|s|q,t)$. Some explanations about this inconvenient definition is in order. 
(1) We started our construction based on the  affine screening operators with the Heisenberg algebra given in Definition \ref{bosons}.
This notation seems rather standard
in the context of the deformed $W$-algebras, 
and it seems safe and reasonable to stick to this convention. 
One may also expect that there are some 
representation theoretical interpretations for 
having the exchange of the parameters $t\leftrightarrow q/t$.
(2) We regard $f^{\widehat{{\mathfrak gl}}_N}(x,p|s,\kappa|q,t)$ as the generating function for the Euler characteristics of the affine Laumon space 
(see Proposition \ref{affine-Laumon-2}), where the parameter $1/t$ counts the degrees of the differential forms. Hence in this geometric context, the use of 
the parameter $t$ (not $q/t$) seems more natural.
\end{rmk}

\begin{proof}
While taking the  limit $p\rightarrow 0 $ of $f^{\widehat{\mathfrak gl}_N}(p^\delta x,p|\kappa^\delta s,\kappa|q,t)$, 
the partitions producing non vanishing contribution to the summation satisfy 
$\ell (\lambda^{(i)})\leq N-i$ ($1\leq i\leq N$).
Hence we can parametrize them by 
using the set $\mathsf{M}^{(N)}$ as 
$
\lambda^{(i)}_j=\sum_{k=i+j}^N \theta_{i,k}.
$
Namely $\theta_{i,j}=\lambda^{(i)}_{j-i}-\lambda^{(i)}_{j-i+1}$.
Assuming the restriction condition $\ell (\lambda^{(i)})\leq N-i$, we have for $1\leq i\leq j\leq N$
\begin{align*}
\Nk^{(j-i|N)}_{\lambda^{(i)},\lambda^{(j)}} (u|q,\kappa)=
&\prod_{1\leq l\leq k \leq N-i\atop k=l+j-i} 
(u q^{-\lambda^{(j)}_l +\lambda^{(i)}_{k+1}}  \kappa^{-l+k};q)_{\lambda^{(i)}_k-\lambda^{(i)}_{k+1}}\\
=&
\prod_{k=j+1}^N 
(u q^{-\sum_{\alpha=k}^N \theta_{j,\alpha}+ \sum_{\alpha=k+1}^N \theta_{i,\alpha}}  \kappa^{j-i};q)_{\theta_{i,k}},
\end{align*}
and for $1\leq j<i\leq N$
\begin{align*}
\Nk^{(j-i|N)}_{\lambda^{(i)},\lambda^{(j)}} (u|q,\kappa)=
&\prod_{1\leq l\leq k \leq N-j\atop k=l+i-j-1} 
(u q^{\lambda^{(i)}_l -\lambda^{(j)}_{k}}  \kappa^{l-k-1};q)_{\lambda^{(j)}_k-\lambda^{(j)}_{k+1}}\\
=&
\prod_{k=i}^N 
(u q^{\sum_{\alpha=k+1}^N \theta_{i,\alpha}- \sum_{\alpha=k}^N \theta_{j,\alpha}}  \kappa^{j-i};q)_{\theta_{j,k}}. 
\end{align*}
Note that we have the rules for changing the order of products
$\prod_{1\leq i\leq j\leq N}\prod_{k=j+1}^N=\prod_{k=2}^N \prod_{1\leq i\leq j\leq  k-1}$
and 
$\prod_{1\leq j< i\leq N}\prod_{k=i}^N=\prod_{k=2}^N \prod_{1\leq j< i\leq  k}$.
Thus we obtain 
\begin{align*}
\prod_{i,j=1}^N
{\Nk^{(j-i|N)}_{\lambda^{(i)},\lambda^{(j)}} (\kappa^{i-j}ts_j/s_i|q,\kappa) \over 
\Nk^{(j-i|N)}_{\lambda^{(i)},\lambda^{(j)}} (\kappa^{i-j}s_j/s_i|q,\kappa)}
\cdot \prod_{\beta=1}^N\prod_{\alpha= 1}^{N-\beta} (  x_{\alpha+\beta}/tx_{\alpha+\beta-1})^{\lambda^{(\beta)}_\alpha}
=c_N(\theta;s;q,q/t)\cdot \prod_{1\leq i<j \leq N} (x_j/x_i)^{\theta_{i,j}}.
\end{align*}

\end{proof}

\section{Ruijsenaars operator and its particular limits}
In this section, starting from the Ruijsenaars operator  $D_x(p)$, we take several particular limits, deriving 
the Macdonald operator, the elliptic Calogero-Sutherland operator, and the $q$-difference affine Toda  operator.

\subsection{Ruijsenaars operator}
Let  $D_x(p)=D_x(p|q,t)$ be the Ruijsenaars operator in (\ref{D-Ruijsenaars}).

\begin{dfn}
Changing the base as $p\rightarrow p^N$, and 
making the shift in $x$ as $x\rightarrow p^{-\delta}x$, set 
$\mathsf{D}_x(p)=\mathsf{D}_x(p|q,t)$ by 
\begin{align*}
\mathsf{D}_x(p)=D_{p^{-\delta} x}(p^N)=&\sum_{i=1}^N t^{N-i}
\prod_{j=1}^{i-1} 
{\Theta_{p^N}(p^{i-j} t x_i/x_j) \over \Theta_{p^N}(p^{i-j} x_i/x_j) }\cdot
\prod_{k=i+1}^N 
{\Theta_{p^N}(p^{k-i}  x_k/tx_i) \over \Theta_{p^N}(p^{k-i} x_k/x_i) } \cdot T_{q,x_i}.
\end{align*}
\end{dfn}
Note that
in terms of the modified Ruijsenaars operator $\mathsf{D}_x(p)$, 
the main conjecture (Conjecture \ref{MAIN}) is recast as
\begin{align*}
&\mathsf{D}_x(p)\, x^\lambda f^{{\rm st.}\widehat{\mathfrak gl}_N}(x,p|s|q,q/t)=
\varepsilon(p^N|s|q,t)\,x^\lambda f^{{\rm st.}\widehat{\mathfrak gl}_N}(x,p|s|q,q/t),
\end{align*}
where $s=t^\delta q^\lambda$ ($s_i=t^{N-i} q^{\lambda_i}$).

\begin{dfn}
For simplifying our calculation in the limit $q\rightarrow 1$,
set $\widetilde{\mathsf{D}}_x(p)=\widetilde{\mathsf{D}}_x(p|q,t)$ by 
\begin{align*}
\widetilde{\mathsf{D}}_x(p)=x^{\beta \delta}\mathsf{D}_x(p|q,t)x^{-\beta \delta}=&\sum_{i=1}^N
\prod_{j=1}^{i-1} 
{\Theta_{p^N}(p^{i-j} t x_i/x_j) \over \Theta_{p^N}(p^{i-j} x_i/x_j) }\cdot
\prod_{k=i+1}^N 
{\Theta_{p^N}(p^{k-i}  x_k/tx_i) \over \Theta_{p^N}(p^{k-i} x_k/x_i) } \cdot T_{q,x_i}.
\end{align*}
\end{dfn}

\subsection{Macdonald operator: limit $p\rightarrow 0$}
We have the Macdonald operator $D_x^{\mathfrak{gl}_N}(q,t)$ in the limit $p\rightarrow 0$
\begin{align*}
D_x^{\mathfrak{gl}_N}(q,t)=\lim_{p\rightarrow 0}D_x(p)=&\sum_{i=1}^N
\prod_{j\neq i}
{t x_i-x_j \over x_i-x_j } T_{q,x_i}.
\end{align*}

\subsection{Elliptic Calogero-Sutherland operator: limit $q\rightarrow 1$}
We set
$q=e^h,t=e^{\beta h}$ and consider the limit $h\rightarrow 0$ of the Ruijsenaars operator 
$\widetilde{\mathsf{D}}_x(p)$, while fixing $\beta$. 
As for the details, we refer the readers to the work of Langmann \cite{L} where the kernel function identity for 
the non-stationary Calogero-Sutherland model was introduced.

\subsubsection{Derivatives of theta function and elliptic potential $V(z|p)$}
For studying the $h$ expansion of $\widetilde{\mathsf{D}}_x(p)$, 
we collect some simple facts  concerning the derivatives of the theta function $\Theta_p(z)$.

\begin{dfn}
Set
$
\Theta^{(1)}_p(z)=z {\partial \over \partial z}\Theta_p(z)$, and 
$
\Theta^{(2)}_p(z)=\left(z {\partial \over \partial z}\right)^2\Theta_p(z).
$
\end{dfn}

\begin{lem}
We have
\begin{align*}
&\Theta_p(1/z)=\Theta_p(p z),\quad 
\Theta^{(1)}_p(1/z)=-\Theta^{(1)}_p(p z),\quad
\Theta^{(2)}_p(1/z)=\Theta^{(2)}_p(p z),\\
&\Theta_p(p z)=-z^{-1} \Theta_p(z),\quad
\Theta^{(1)}_p(p z)=z^{-1} \Theta_p(z)-z^{-1} \Theta^{(1)}_p( z),\\
&\Theta^{(2)}_p(p z)=-z^{-1} \Theta_p(z)+2 z^{-1} \Theta^{(1)}_p( z)-z^{-1} \Theta^{(2)}_p( z),\quad
\Theta^{(1)}_p(1)=
\Theta^{(2)}_p(1)=- (p;p)_\infty^3,\\
&
-p{\partial\over \partial p} \Theta_{p^2} (p z)+\left(z{\partial\over \partial z}  \right)^2 \Theta_{p^2} (p z)=0.
\end{align*}
\end{lem}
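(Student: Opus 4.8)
The plan is to derive every line of the lemma from the product formula $\Theta_p(z)=(z;p)_\infty(p/z;p)_\infty(p;p)_\infty$, together with a single application of the Jacobi triple product for the last identity. First I would establish the two \emph{seed} relations. The inversion symmetry $\Theta_p(1/z)=\Theta_p(pz)$ is immediate from the product: $\Theta_p(1/z)=(1/z;p)_\infty(pz;p)_\infty(p;p)_\infty$, while substituting $z\mapsto pz$ in the defining product gives $\Theta_p(pz)=(pz;p)_\infty(1/z;p)_\infty(p;p)_\infty$, the same expression. The quasi-periodicity $\Theta_p(pz)=-z^{-1}\Theta_p(z)$ follows by peeling off one factor from each infinite product: $(pz;p)_\infty=(z;p)_\infty/(1-z)$ and $(1/z;p)_\infty=(1-1/z)(p/z;p)_\infty$, so $\Theta_p(pz)=\frac{1-1/z}{1-z}\,\Theta_p(z)=-z^{-1}\Theta_p(z)$.

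Next I would obtain the derivative identities by applying the logarithmic derivative $D:=z\,\partial/\partial z$ once or twice, using only the two elementary chain-rule facts $D\bigl(f(1/z)\bigr)=-(Df)(1/z)$ and $D\bigl(f(\lambda z)\bigr)=(Df)(\lambda z)$, together with $D\bigl(z^{-1}g(z)\bigr)=-z^{-1}g(z)+z^{-1}(Dg)(z)$. Applying $D$ and $D^2$ to $\Theta_p(1/z)=\Theta_p(pz)$ yields $\Theta^{(1)}_p(1/z)=-\Theta^{(1)}_p(pz)$ and $\Theta^{(2)}_p(1/z)=\Theta^{(2)}_p(pz)$. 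Applying $D$ and then $D^2$ to $\Theta_p(pz)=-z^{-1}\Theta_p(z)$ gives first $\Theta^{(1)}_p(pz)=z^{-1}\Theta_p(z)-z^{-1}\Theta^{(1)}_p(z)$ and then, by differentiating once more, $\Theta^{(2)}_p(pz)=-z^{-1}\Theta_p(z)+2z^{-1}\Theta^{(1)}_p(z)-z^{-1}\Theta^{(2)}_p(z)$; both are routine expansions.

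For the special values at $z=1$, write $\Theta_p(z)=(1-z)\,g(z)$ with $g(z)=(pz;p)_\infty(p/z;p)_\infty(p;p)_\infty$, so $g(1)=(p;p)_\infty^3$; since $\Theta'_p(1)=-g(1)$ we get $\Theta^{(1)}_p(1)=\Theta'_p(1)=-(p;p)_\infty^3$. To compute $\Theta^{(2)}_p(1)$, evaluate the identity $\Theta^{(2)}_p(1/z)=\Theta^{(2)}_p(pz)$ at $z=1$ and combine it with the quasi-periodic form of $\Theta^{(2)}_p(pz)$ at $z=1$; because $\Theta_p(1)=0$ this yields $\Theta^{(2)}_p(1)=2\Theta^{(1)}_p(1)-\Theta^{(2)}_p(1)$, hence $\Theta^{(2)}_p(1)=\Theta^{(1)}_p(1)=-(p;p)_\infty^3$. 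Finally, for the heat equation, the Jacobi triple product gives $\Theta_{p^2}(pz)=(pz;p^2)_\infty(p/z;p^2)_\infty(p^2;p^2)_\infty=\sum_{n\in\mathbb{Z}}(-1)^n p^{n^2}z^n$, and both $(z\,\partial/\partial z)^2$ and $p\,\partial/\partial p$ act on the $n$-th term by multiplication by $n^2$; subtracting, the series cancels term by term, which is the claimed identity. I expect no real obstacle: the only delicate point is sign bookkeeping in the $D\bigl(f(1/z)\bigr)$ chain rule, and the single nontrivial external input is the Jacobi triple product expansion used in the last step, everything else being formal manipulation of the infinite product.
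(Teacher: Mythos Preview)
Your proof is correct. The paper does not actually prove this lemma: it explicitly states that such results ``are standard and can be proved by straightforward computations using definitions,'' and your argument is precisely that straightforward computation---the product formula for the seed identities, repeated application of $D=z\,\partial/\partial z$ for the derivative relations, and the Jacobi triple product for the heat equation.
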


\begin{dfn}
Set
$
 V(z|p)= 
 {\Theta^{(2)}_{p}(z) \over\Theta_{p}(z)}-
\left( {\Theta^{(1)}_{p}(z) \over\Theta_{p}(z)} \right)^2.
$
\end{dfn}

\begin{prp}
The potential $V(z|p)$ is an elliptic function satisfying the conditions $V(pz|p)=V(z|p)$,
\begin{align*}
V(z|p)=&-{1\over (1-z)^2}+{1\over 1-z}+V_0(p)
+O(1-z),\\
V_0(p)=&
2 {1\over (p;p)_\infty }p {\partial (p;p)_\infty \over \partial p}
=
-
2 p-6 p^2-8 p^3-14 p^4-12 p^5-24 p^6-16 p^7-\cdots.\nonumber
\end{align*}
\end{prp}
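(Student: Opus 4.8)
The plan is to exploit the identity
$V(z|p)=\bigl(z\,\partial/\partial z\bigr)^2\log\Theta_p(z)$,
which is immediate from the definitions: writing $L=z\,\partial/\partial z$, so that $\Theta^{(1)}_p=L\Theta_p$ and $\Theta^{(2)}_p=L^2\Theta_p$, one has $L^2\log\Theta_p=L\bigl(\Theta^{(1)}_p/\Theta_p\bigr)=\Theta^{(2)}_p/\Theta_p-(\Theta^{(1)}_p/\Theta_p)^2=V(z|p)$. This reduces the whole statement to an analysis of $\log\Theta_p(z)$, which is additive in the three factors of $\Theta_p(z)=(z;p)_\infty(p/z;p)_\infty(p;p)_\infty$, and it has the virtue of turning the division by $\Theta_p$ (which vanishes at $z=1$) into a logarithmic derivative.

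For the ellipticity $V(pz|p)=V(z|p)$ I would use the quasi-periodicity $\Theta_p(pz)=-z^{-1}\Theta_p(z)$ recorded in the preceding Lemma. The operator $L$ commutes with the dilation $z\mapsto pz$ in the sense that $L\bigl[F(pz)\bigr]=(LF)(pz)$, and it annihilates both constants and $\log z$; hence applying $L^2$ to $\log\Theta_p(pz)$, which differs from $\log\Theta_p(z)$ by $-\log z$ plus a constant, yields $V(pz|p)=V(z|p)$. (The relation $\Theta_p(1/z)=\Theta_p(pz)$ similarly gives $V(1/z|p)=V(z|p)$, should one want it.) Since $\Theta_p$ has only simple zeros on $\mathbb{C}^*$, this also identifies $V(\cdot|p)$ as an elliptic function whose poles form the single orbit $z\in p^{\mathbb Z}$, with a double pole there.

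For the expansion at $z=1$ I would factor $\Theta_p(z)=(1-z)\,\psi(z)$, where $\psi(z)=(pz;p)_\infty(p/z;p)_\infty(p;p)_\infty$ is holomorphic and non-vanishing near $z=1$, so $\log\Theta_p(z)=\log(1-z)+\log\psi(z)$. A short direct computation gives $L^2\log(1-z)=-\frac{1}{(1-z)^2}+\frac{1}{1-z}$ \emph{exactly}, which supplies the first two terms of the claimed expansion; the remaining summand $L^2\log\psi(z)$ is holomorphic at $z=1$, so its value there is precisely the constant $V_0(p)$ and all further terms are $O(1-z)$. To evaluate $V_0(p)=\bigl[L^2\log\psi(z)\bigr]_{z=1}$ I would differentiate the Lambert-type series $\log(pz;p)_\infty=\sum_{m\geq1}\log(1-p^mz)$ and $\log(p/z;p)_\infty=\sum_{m\geq1}\log(1-p^m/z)$ term by term; each factor contributes $-\sum_{m\geq1}p^m/(1-p^m)^2$ upon setting $z=1$, giving $V_0(p)=-2\sum_{m\geq1}p^m/(1-p^m)^2$.

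The final step is to recognize this as the stated closed form. Expanding in $p$, both $\sum_{m\geq1}p^m/(1-p^m)^2$ and $\sum_{m\geq1}mp^m/(1-p^m)$ equal $\sum_{n\geq1}\sigma_1(n)\,p^n$ with $\sigma_1(n)=\sum_{d\mid n}d$; hence $V_0(p)=-2\sum_{m\geq1}mp^m/(1-p^m)=2\,p\,\partial_p\log(p;p)_\infty=2\,(p;p)_\infty^{-1}\,p\,\partial_p(p;p)_\infty$, and reading off $\sigma_1(1),\dots,\sigma_1(7)$ reproduces the displayed numerical series. I expect the only mild obstacle to be the bookkeeping in the term-by-term differentiation of the $(p/z;p)_\infty$ factor — the chain rule for $L$ under $z\mapsto 1/z$ carries a sign that must be tracked so that the two factors are seen to contribute equally — together with the elementary divisor-sum identity above; both are routine.
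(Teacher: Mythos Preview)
Your proof is correct. The paper does not supply a proof of this proposition: it is one of the results the author explicitly declares ``standard and can be proved by straightforward computations using definitions,'' so there is no argument to compare against. Your approach via $V(z|p)=(z\,\partial/\partial z)^2\log\Theta_p(z)$ is clean, the quasi-periodicity argument for ellipticity is right, the computation $L^2\log(1-z)=-\tfrac{1}{(1-z)^2}+\tfrac{1}{1-z}$ is exact, and the identification $-2\sum_{m\ge1}p^m/(1-p^m)^2=-2\sum_{m\ge1}mp^m/(1-p^m)=2\,p\,\partial_p\log(p;p)_\infty$ via the common expansion $\sum_{n\ge1}\sigma_1(n)p^n$ is the standard one.
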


\begin{rmk}
Note that $V(z|p)$ is essentially the Weierstrass' $\wp(u)$
\begin{align*}
V(e^{iu}|p)={1\over u^2}+{1\over 12}+V_0(p)+O(u)=\wp(u)+{1\over 12}+V_0(p).
\end{align*}
\end{rmk}

\begin{lem}
We have
\begin{align*}
V_0(p^N)={2\over 3N} {1\over \Theta_{p^N}^{(1)}(1)}p {\partial \over \partial p}\Theta_{p^N}^{(1)}(1).
\end{align*} 

\end{lem}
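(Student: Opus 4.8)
The plan is to obtain the identity by combining two facts already recorded in the preceding lemmas: the closed form $V_0(p)=2\,\frac{1}{(p;p)_\infty}p\frac{\partial (p;p)_\infty}{\partial p}$ from the Proposition on the potential $V(z|p)$, and the evaluation $\Theta^{(1)}_p(1)=-(p;p)_\infty^3$ from the Lemma on derivatives of the theta function. Here $V_0(p^N)$ is understood, as usual, as the result of substituting $p\mapsto p^N$ into the formula for $V_0$.

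First I would rewrite the formula for $V_0$ as a logarithmic derivative,
\begin{align*}
V_0(p)=2\,p\frac{\partial}{\partial p}\log (p;p)_\infty ,
\end{align*}
and then substitute $p\mapsto p^N$. Since $p^N\frac{\partial}{\partial (p^N)}=\frac{1}{N}\,p\frac{\partial}{\partial p}$ as a differential operator (chain rule), this yields
\begin{align*}
V_0(p^N)=\frac{2}{N}\,p\frac{\partial}{\partial p}\log (p^N;p^N)_\infty .
\end{align*}
Next I would use the $p\mapsto p^N$ specialization $\Theta^{(1)}_{p^N}(1)=-(p^N;p^N)_\infty^3$ to compute
\begin{align*}
\frac{1}{\Theta^{(1)}_{p^N}(1)}\,p\frac{\partial}{\partial p}\Theta^{(1)}_{p^N}(1)
=p\frac{\partial}{\partial p}\log\!\left(-(p^N;p^N)_\infty^3\right)
=3\,p\frac{\partial}{\partial p}\log (p^N;p^N)_\infty .
\end{align*}
Comparing the last two displays gives $V_0(p^N)=\frac{2}{3N}\,\frac{1}{\Theta^{(1)}_{p^N}(1)}p\frac{\partial}{\partial p}\Theta^{(1)}_{p^N}(1)$, which is the assertion.

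I do not expect any real obstacle here; the statement is essentially a bookkeeping identity. The only points deserving a moment's attention are the chain-rule factor $1/N$ relating $p^N\partial_{p^N}$ to $p\partial_p$, and the fact that the overall sign $-1$ and the cube inside the logarithm disappear upon taking $p\partial_p\log$, leaving precisely the factor $3$ that cancels the $1/3$ in the prefactor $2/(3N)$.
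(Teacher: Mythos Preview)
Your proof is correct. The paper does not supply its own proof of this lemma, declaring such results standard and provable by straightforward computation; your argument via the logarithmic derivative of $(p^N;p^N)_\infty$ together with $\Theta^{(1)}_{p^N}(1)=-(p^N;p^N)_\infty^3$ is exactly that straightforward computation.
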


\subsubsection{Elliptic Sutherland Hamiltonian $H_\beta(p)$}

\begin{dfn}
Let $\vartheta_i=x_i\partial/\partial x_i$.
Set $H_\beta(p)$ as
\begin{align}
&H_\beta(p)={1\over 2} \sum_{i=1}^N \vartheta_i^2 
-\beta \sum_{1\leq i<j\leq N}  {\Theta^{(1)}_{p^N}(p^{j-i} x_j/x_i) \over\Theta_{p^N}(p^{j-i} x_j/x_i)}(\vartheta_i-\vartheta_j)
+
\beta^2 \sum_{1\leq i<j\leq N}  {\Theta^{(2)}_{p^N}(p^{j-i} x_j/x_i) \over\Theta_{p^N}(p^{j-i} x_j/x_i)}\nonumber \\
&\quad+\beta^2\sum_{1\leq i<j<k\leq N}
\left(
 {\Theta^{(1)}_{p^N}(p^{j-i} x_j/x_i) \over\Theta_{p^N}(p^{j-i} x_j/x_i)}
 {\Theta^{(1)}_{p^N}(p^{k-i} x_k/x_i) \over\Theta_{p^N}(p^{k-i} x_k/x_i)}
-
 {\Theta^{(1)}_{p^N}(p^{j-i} x_j/x_i) \over\Theta_{p^N}(p^{j-i} x_j/x_i)}
 {\Theta^{(1)}_{p^N}(p^{k-j} x_k/x_j) \over\Theta_{p^N}(p^{k-j} x_k/x_j)} \right.\nonumber\\
  &\left.
\qquad\qquad \qquad+
 {\Theta^{(1)}_{p^N}(p^{k-i} x_k/x_i) \over\Theta_{p^N}(p^{k-i} x_k/x_i)}
 {\Theta^{(1)}_{p^N}(p^{k-j} x_k/x_i) \over\Theta_{p^N}(p^{k-j} x_k/x_j)}
\right) \nonumber
.
\end{align}
\end{dfn}

\begin{prp}
Set $q=e^h,t=e^{\beta h}$. We have 
\begin{align*}
&\widetilde{\mathsf{D}}_x(p)=N+h\sum_{i=1}^N \vartheta_i+h^2 H_\beta(p)+O(h^3).
\end{align*}
\end{prp}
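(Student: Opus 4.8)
The plan is to expand $\widetilde{\mathsf D}_x(p)$ in powers of $h$ around $h=0$ with $q=e^h$, $t=e^{\beta h}$, treating each summand $i$ in
\[
\widetilde{\mathsf D}_x(p)=\sum_{i=1}^N\;\prod_{j=1}^{i-1}\frac{\Theta_{p^N}(p^{i-j}tx_i/x_j)}{\Theta_{p^N}(p^{i-j}x_i/x_j)}\cdot\prod_{k=i+1}^N\frac{\Theta_{p^N}(p^{k-i}x_k/tx_i)}{\Theta_{p^N}(p^{k-i}x_k/x_i)}\cdot T_{q,x_i}
\]
separately. First I would expand the two ingredients. For the $q$-shift operator, $T_{q,x_i}=q^{\vartheta_i}=\exp(h\vartheta_i)=1+h\vartheta_i+\tfrac{h^2}{2}\vartheta_i^2+O(h^3)$, using $\vartheta_i=x_i\partial/\partial x_i$. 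For the coefficient, since $t=e^{\beta h}$ each ratio $\Theta_{p^N}(p^{m}tx_i/x_j)/\Theta_{p^N}(p^{m}x_i/x_j)$ is a function of $t$ that equals $1$ at $t=1$; writing $z=p^{m}x_i/x_j$ and noting $t\,z=e^{\beta h}z$ corresponds to the multiplicative shift $z\mapsto e^{\beta h}z$, i.e. the operator $e^{\beta h\,z\partial_z}$, I get
\[
\frac{\Theta_{p^N}(tz)}{\Theta_{p^N}(z)}=1+\beta h\,\frac{\Theta^{(1)}_{p^N}(z)}{\Theta_{p^N}(z)}+\frac{\beta^2h^2}{2}\frac{\Theta^{(2)}_{p^N}(z)}{\Theta_{p^N}(z)}+O(h^3),
\]
and similarly $\Theta_{p^N}(z/t)/\Theta_{p^N}(z)=1-\beta h\,\Theta^{(1)}_{p^N}(z)/\Theta_{p^N}(z)+\tfrac{\beta^2h^2}{2}\Theta^{(2)}_{p^N}(z)/\Theta_{p^N}(z)+O(h^3)$ with $z=p^{k-i}x_k/x_i$.

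Next I would multiply the product over $j<i$ and $k>i$ of these expansions. At order $h^0$ each factor is $1$, giving the term $N$ in the sum over $i$. At order $h^1$ one picks a single factor's linear term: from the $j<i$ block one gets $\beta\,\Theta^{(1)}_{p^N}(p^{i-j}x_i/x_j)/\Theta_{p^N}(p^{i-j}x_i/x_j)$ and from the $k>i$ block one gets $-\beta\,\Theta^{(1)}_{p^N}(p^{k-i}x_k/x_i)/\Theta_{p^N}(p^{k-i}x_k/x_i)$; combined with the $h^1$ term $h\vartheta_i$ of $T_{q,x_i}$ the pure-coefficient $O(h)$ contributions must be shown to telescope. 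Here I would use the reflection identities from the Lemma, in particular $\Theta_p(1/z)=\Theta_p(pz)$ and $\Theta^{(1)}_p(1/z)=-\Theta^{(1)}_p(pz)$, to rewrite $\Theta^{(1)}_{p^N}(p^{k-i}x_k/x_i)/\Theta_{p^N}(p^{k-i}x_k/x_i)$ in a form symmetric under $i\leftrightarrow k$, so that after summing over $i$ the logarithmic-derivative terms with no $\vartheta$ cancel pairwise, leaving exactly $h\sum_i\vartheta_i$. At order $h^2$ there are four sources: (a) the $\tfrac{h^2}{2}\vartheta_i^2$ from $T_{q,x_i}$, giving $\tfrac12\sum_i\vartheta_i^2$; (b) a single quadratic term $\tfrac{\beta^2}{2}\Theta^{(2)}/\Theta$ from one factor, summing (after the same symmetrization) to $\beta^2\sum_{i<j}\Theta^{(2)}_{p^N}(p^{j-i}x_j/x_i)/\Theta_{p^N}(p^{j-i}x_j/x_i)$; (c) a product of two distinct linear coefficient terms, producing the $\beta^2\sum_{i<j<k}(\cdots)$ triple sum; (d) the cross term $h\vartheta_i\cdot(\text{linear coefficient})$, which gives the $-\beta\sum_{i<j}\Theta^{(1)}/\Theta\,(\vartheta_i-\vartheta_j)$ piece after reflection symmetrization. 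Collecting (a)--(d) reproduces $H_\beta(p)$ as written, and it remains to check the $O(h^3)$ remainder is uniform, which is immediate since every factor is analytic in $h$ near $0$.

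The main obstacle is bookkeeping the order-$h$ cancellation and the precise reflection rewriting at order $h^2$: the coefficient ratios in $\widetilde{\mathsf D}_x(p)$ are written asymmetrically (arguments $p^{i-j}x_i/x_j$ for $j<i$ versus $p^{k-i}x_k/x_i$ for $k>i$), whereas $H_\beta(p)$ is written uniformly with arguments $p^{j-i}x_j/x_i$ for $i<j$; bridging the two requires careful and consistent use of $\Theta_p(1/z)=\Theta_p(pz)$, $\Theta^{(1)}_p(1/z)=-\Theta^{(1)}_p(pz)$, $\Theta^{(2)}_p(1/z)=\Theta^{(2)}_p(pz)$ together with the quasi-periodicity $\Theta_p(pz)=-z^{-1}\Theta_p(z)$ and its derivatives, and one must verify the spurious $z^{-1}$ and $\vartheta$-free contributions generated this way cancel in the full sum over $i$. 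Once the symmetrization is organized, the remaining computation is routine term-matching against the displayed formula for $H_\beta(p)$.
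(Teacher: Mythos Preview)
Your expansion strategy is exactly the right one, and since the paper leaves this Proposition unproved (it falls under the blanket remark that such results ``are standard and can be proved by straightforward computations using definitions''), your outline \emph{is} the proof. The orders $h^0,h^1,h^2$ you identify, and the splitting of the $h^2$ contribution into the four pieces (a)--(d), match term-by-term with the displayed $H_\beta(p)$.

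One simplification you should make: the ``main obstacle'' you flag is not actually there. You say the two coefficient blocks are written asymmetrically and that reflection identities for $\Theta_p$ are needed to bring them to the common form used in $H_\beta(p)$. In fact both blocks already use the \emph{same} canonical argument $p^{b-a}x_b/x_a$ with $a<b$: for $j<i$ the argument $p^{i-j}x_i/x_j$ is of this form with $(a,b)=(j,i)$, and for $k>i$ the argument $p^{k-i}x_k/x_i$ is of this form with $(a,b)=(i,k)$. Consequently the pure-coefficient $O(h)$ terms cancel by direct bookkeeping (each ordered pair $a<b$ contributes $+\beta L(b,a)$ from $i=b$ and $-\beta L(b,a)$ from $i=a$), with no appeal to $\Theta_p(1/z)=\Theta_p(pz)$ or the quasi-periodicity relations. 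The same remark applies at order $h^2$: the cross term (d) and the quadratic term (b) each pick up every pair $a<b$ twice, and the bilinear term (c) reproduces the three-body sum in $H_\beta(p)$ directly, with the three summands coming from $i=a$, $i=b$, $i=c$ respectively for each triple $a<b<c$. So you can drop the reflection discussion entirely; the computation is cleaner than you feared.
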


\begin{lem} \label{theta-identity}
We have the identity
\begin{align*}
&
 {\Theta^{(1)}_{p^N}(x_j/x_i) \over\Theta_{p^N}(x_j/x_i)}
 {\Theta^{(1)}_{p^N}(x_k/x_i) \over\Theta_{p^N}(x_k/x_i)}
-
 {\Theta^{(1)}_{p^N}(x_j/x_i) \over\Theta_{p^N}(x_j/x_i)}
 {\Theta^{(1)}_{p^N}(x_k/x_j) \over\Theta_{p^N}(x_k/x_j)}
  +
 {\Theta^{(1)}_{p^N}(x_k/x_i) \over\Theta_{p^N}(x_k/x_i)}
 {\Theta^{(1)}_{p^N}(x_k/x_j) \over\Theta_{p^N}(x_k/x_j)} \nonumber\\
 =&
 {1\over 2} \left( {\Theta^{(2)}_{p^N}(x_j/x_i) \over\Theta_{p^N}(x_j/x_i)}+
  {\Theta^{(2)}_{p^N}(x_k/x_i) \over\Theta_{p^N}(x_k/x_i)}+
 {\Theta^{(2)}_{p^N}(x_k/x_j) \over\Theta_{p^N}(x_k/x_j)}\right) \\
 &-{1\over 2}
 \left( {\Theta^{(1)}_{p^N}(x_j/x_i) \over\Theta_{p^N}(x_j/x_i)}-
  {\Theta^{(1)}_{p^N}(x_k/x_i) \over\Theta_{p^N}(x_k/x_i)}+
 {\Theta^{(1)}_{p^N}(x_k/x_j) \over\Theta_{p^N}(x_k/x_j)}\right) 
 -{1\over N} {1\over \Theta^{(1)}_{p^N} (1)} p {\partial \over \partial p}  \Theta^{(1)}_{p^N} (1).\nonumber
\end{align*}
\end{lem}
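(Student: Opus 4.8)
The identity to be proven is the three-term relation

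\begin{align*}
&P_{ji}P_{ki}-P_{ji}P_{kj}+P_{ki}P_{kj}\\
=&\tfrac12\bigl(Q_{ji}+Q_{ki}+Q_{kj}\bigr)-\tfrac12\bigl(R_{ji}-R_{ki}+R_{kj}\bigr)-\tfrac1N\,\tfrac{1}{\Theta^{(1)}_{p^N}(1)}p\tfrac{\partial}{\partial p}\Theta^{(1)}_{p^N}(1),
\end{align*}
where I abbreviate $P_{ab}=\Theta^{(1)}_{p^N}(x_b/x_a)/\Theta_{p^N}(x_b/x_a)$, $Q_{ab}=\Theta^{(2)}_{p^N}(x_b/x_a)/\Theta_{p^N}(x_b/x_a)$, and $R_{ab}=P_{ab}$ (here $R$ appears with a sign pattern coming from the antisymmetry noted in the earlier Lemma, $\Theta^{(1)}_p(1/z)=-\Theta^{(1)}_p(pz)$). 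Everything lives on the elliptic curve with modulus $p^N$, so the natural approach is to view both sides as functions of, say, $u=x_k/x_i$ (with $x_j/x_i$ fixed) and apply the standard Liouville argument: show the difference of the two sides is an elliptic function with no poles, hence constant, then pin down the constant.

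The plan is as follows. First I would rewrite $P_{ab}$ in terms of the logarithmic derivative $\zeta$-like function $\rho(z)=\Theta^{(1)}_{p^N}(z)/\Theta_{p^N}(z)$, which has a simple pole with residue $1$ (in the multiplicative variable, $z\partial_z\log(1-z)$-type behavior) at $z\equiv1$ and satisfies the quasi-periodicity $\rho(p^Nz)=\rho(z)-1$ coming from $\Theta_{p^N}(p^Nz)=-z^{-1}\Theta_{p^N}(z)$ and its derivative relation in the earlier Lemma. Note also $Q_{ab}/1 = \rho(z)^2 + \rho'(z)$ where $\rho'=z\partial_z\rho$, since $\Theta^{(2)}/\Theta=(z\partial_z)^2\Theta/\Theta=(\rho^2+z\partial_z\rho)$. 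So the RHS combination $\tfrac12\sum Q$ is $\tfrac12\sum(\rho^2+\rho')$ over the three pairs. Then the claimed identity becomes an identity purely among $\rho(x_j/x_i),\rho(x_k/x_i),\rho(x_k/x_j)$ and their first derivatives, which is a well-known genus-one three-term (Fay-type / addition) identity. Concretely, using $\rho(z_1)\rho(z_2) $ with $z_1 z_2 = $ the third ratio, I would invoke the standard identity $\rho(a)\rho(b)-\rho(a)\rho(ab)+\rho(b)\rho(ab) = \tfrac12(\rho'(a)+\rho'(b)+\rho'(ab))+\tfrac12(\rho^2(a)+\rho^2(b)+\rho^2(ab)) - \tfrac12(\rho(a)-\rho(ab)+\rho(b)) + C(p^N)$, with $a=x_j/x_i$, $b=x_k/x_j$, $ab=x_k/x_i$; the term $-\tfrac12(\rho(a)-\rho(ab)+\rho(b))$ is exactly the $R$ combination (the sign $-\rho(ab)=-R_{ki}$ matching because $R_{ki}$ in the statement sits with a minus sign).

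The actual verification I would carry out by the Liouville method: fix $a$, regard both sides as functions of $b$ (equivalently of $x_k$), check (i) periodicity under $b\mapsto p^Nb$, which follows from $\rho(p^Nb)=\rho(b)-1$ and $\rho'(p^Nb)=\rho'(b)$ together with careful bookkeeping of the constant shifts — this is where the $-\tfrac12(\rho(a)-\rho(ab)+\rho(b))$ correction term is forced and where the $-\tfrac1N\frac{1}{\Theta^{(1)}_{p^N}(1)}p\partial_p\Theta^{(1)}_{p^N}(1)$ constant will be produced; (ii) that the apparent double poles at $b\equiv1$ and $ab\equiv1$ cancel between LHS and RHS, using that near $z=1$, $\rho(z)=\tfrac1{1-z}+\tfrac12+V_0(p^N)/(\cdots)+\dots$ as recorded in the Proposition on $V(z|p)$, and that $\rho^2+\rho'$ has a double pole whose principal part matches $\rho(a)\cdot(\text{pole of }\rho(ab))$ appropriately; (iii) simple poles likewise cancel. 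Once the difference is holomorphic and doubly periodic it is a constant $c(p^N)$; evaluating at a convenient specialization — e.g. sending $x_k\to x_j$ (so $ab\to a$, $b\to 1$) and extracting the finite part using $\Theta^{(1)}_{p^N}(1)=\Theta^{(2)}_{p^N}(1)=-(p^N;p^N)_\infty^3$ and $V_0(p^N)=\tfrac{2}{3N}\frac{1}{\Theta^{(1)}_{p^N}(1)}p\partial_p\Theta^{(1)}_{p^N}(1)$ from the Lemma just above — determines $c(p^N)=-\tfrac1N\frac{1}{\Theta^{(1)}_{p^N}(1)}p\partial_p\Theta^{(1)}_{p^N}(1)$, completing the proof.

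The main obstacle is item (i)–(iii): the bookkeeping of the quasi-periodicity constants. Each $\rho$ picks up $-1$ under $z\mapsto p^Nz$, so the bilinear terms $\rho(a)\rho(ab)$ etc.\ pick up linear-in-$\rho$ remainders that must conspire to reproduce exactly the $R$-term and to leave a genuine constant; getting the sign conventions right (the earlier Lemma's $\Theta^{(1)}_p(1/z)=-\Theta^{(1)}_p(pz)$ versus $\Theta_p(1/z)=\Theta_p(pz)$ means $\rho(1/z)=-\rho(pz)$, not $\rho(z)$) is the delicate point. The pole cancellations are essentially forced by the residue normalization and should go through routinely, and the constant evaluation is a short computation once the structure is in place; so I expect essentially all the real work to be in the periodicity accounting, which is why I would organize the argument to let the Liouville uniqueness do the heavy lifting rather than expanding everything by hand.
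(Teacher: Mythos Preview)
The paper does not give a proof of this lemma; it falls under the blanket remark at the end of the Introduction that ``all such results are standard and can be proved by straightforward computations using definitions.'' So there is nothing to compare against directly.

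Your Liouville strategy is the standard one and is sound. In fact, once you pass to $\rho=\Theta^{(1)}/\Theta$ and use $Q=\rho^2+V$ with $V=z\partial_z\rho$, the purely algebraic part of the identity collapses to
\[
-\tfrac12\bigl(\rho(x_j/x_i)-\rho(x_k/x_i)+\rho(x_k/x_j)\bigr)^{2}
=\tfrac12\bigl(V(x_j/x_i)+V(x_k/x_i)+V(x_k/x_j)\bigr)
-\tfrac12\bigl(\rho(x_j/x_i)-\rho(x_k/x_i)+\rho(x_k/x_j)\bigr)-C,
\]
which is the multiplicative-variable form of the Frobenius--Stickelberger identity $(\zeta(u)+\zeta(v)-\zeta(u+v))^{2}=\wp(u)+\wp(v)+\wp(u+v)$; the linear $\rho$-term and the constant $C$ are precisely the normalization offsets between $\rho$ and the Weierstrass $\zeta$. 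So a shorter alternative to your full Liouville bookkeeping is to quote that classical formula and translate through $V(e^{iu}|p)=\wp(u)+\tfrac1{12}+V_0(p)$ from the Remark in the paper; either route works.

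Two small points to fix when you execute: (i) near $z=1$ one has $\rho(z)=-\tfrac{1}{1-z}+\cdots$, not $+\tfrac{1}{1-z}$, since $\Theta_{p^N}(z)\sim\Theta^{(1)}_{p^N}(1)(z-1)$; (ii) in your displayed ``standard identity'' the sign pattern on the left is swapped relative to the lemma (with $a=x_j/x_i$, $b=x_k/x_j$, $ab=x_k/x_i$, the LHS of the lemma is $\rho(a)\rho(ab)-\rho(a)\rho(b)+\rho(ab)\rho(b)$, not $\rho(a)\rho(b)-\rho(a)\rho(ab)+\rho(b)\rho(ab)$); and (iii) your proposed specialization $x_k\to x_j$ to pin down the constant sits at a pole of $\rho(x_k/x_j)$, so it is cleaner either to set $p=0$ (where everything is rational and the constant term of $C$ is $0$) or to match the $p^N$-expansion directly using the formula $V_0(p^N)=\tfrac{2}{3N}\,\tfrac{1}{\Theta^{(1)}_{p^N}(1)}p\partial_p\Theta^{(1)}_{p^N}(1)$ already recorded in the paper.
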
 

\begin{lem}
Using the identity in Lemma \ref{theta-identity} above,
we can recast the three body interaction terms in $H_\beta$ as
\begin{align*}
&\sum_{1\leq i<j<k\leq N}\left(
 {\Theta^{(1)}_{p^N}(p^{j-i} x_j/x_i) \over\Theta_{p^N}(p^{j-i} x_j/x_i)}
 {\Theta^{(1)}_{p^N}(p^{k-i} x_k/x_i) \over\Theta_{p^N}(p^{k-i} x_k/x_i)}
-
 {\Theta^{(1)}_{p^N}(p^{j-i} x_j/x_i) \over\Theta_{p^N}(p^{j-i} x_j/x_i)}
 {\Theta^{(1)}_{p^N}(p^{k-j} x_k/x_j) \over\Theta_{p^N}(p^{k-j} x_k/x_j)} \right.\nonumber\\
&\qquad \qquad + \left. 
 {\Theta^{(1)}_{p^N}(p^{k-i} x_k/x_i) \over\Theta_{p^N}(p^{k-i} x_k/x_i)}
 {\Theta^{(1)}_{p^N}(p^{k-j} x_k/x_j) \over\Theta_{p^N}(p^{k-j} x_k/x_j)} \right)\nonumber\\
 =&
\sum_{1\leq i<j\leq N} \left(  {N-2\over 2}  {\Theta^{(2)}_{p^N}(p^{j-i} x_j/x_i) \over\Theta_{p^N}(p^{j-i} x_j/x_i)}
 - {N-2j+2i\over 2}
 {\Theta^{(1)}_{p^N}(p^{j-i} x_j/x_i) \over\Theta_{p^N}(p^{j-i} x_j/x_i)} \right)\\
 &-{(N-1)(N-2)\over 6} {1\over \Theta^{(1)}_{p^N} (1)} p {\partial \over \partial p}  \Theta^{(1)}_{p^N} (1).\nonumber
\end{align*}
\end{lem}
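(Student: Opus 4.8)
The plan is to apply the three-variable identity of Lemma \ref{theta-identity} once for each triple $1\le i<j<k\le N$, after a dilation of the coordinates, and then to sum over all triples while keeping track of the multiplicity with which each pair $(i,j)$ shows up.

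First I would set $y_a=p^a x_a$ and observe that the three arguments entering the $(i,j,k)$-summand $T_{ijk}$ of the left-hand side are exactly $y_j/y_i=p^{j-i}x_j/x_i$, $y_k/y_i=p^{k-i}x_k/x_i$ and $y_k/y_j=p^{k-j}x_k/x_j$; in particular they obey the relation $(y_k/y_i)/(y_j/y_i)=y_k/y_j$ implicit in Lemma \ref{theta-identity}. Since that lemma is an identity valid for arbitrary values of its variables, the substitution $x_i\mapsto y_i,\ x_j\mapsto y_j,\ x_k\mapsto y_k$ (followed by renaming back to $x$) expresses $T_{ijk}$ as
\begin{align*}
T_{ijk}&=\frac12\sum_{(a,b)\in\{(i,j),(i,k),(j,k)\}}\frac{\Theta^{(2)}_{p^N}(p^{b-a}x_b/x_a)}{\Theta_{p^N}(p^{b-a}x_b/x_a)}-\frac{1}{N}\,\frac{1}{\Theta^{(1)}_{p^N}(1)}\,p{\partial\over\partial p}\Theta^{(1)}_{p^N}(1)\\
&\quad-\frac12\left(\frac{\Theta^{(1)}_{p^N}(p^{j-i}x_j/x_i)}{\Theta_{p^N}(p^{j-i}x_j/x_i)}-\frac{\Theta^{(1)}_{p^N}(p^{k-i}x_k/x_i)}{\Theta_{p^N}(p^{k-i}x_k/x_i)}+\frac{\Theta^{(1)}_{p^N}(p^{k-j}x_k/x_j)}{\Theta_{p^N}(p^{k-j}x_k/x_j)}\right).
\end{align*}
The constant term is unaffected by the substitution since it carries no $x$-dependence.

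Next I would sum $T_{ijk}$ over all $1\le i<j<k\le N$ and regroup by pairs $(a,b)$ with $a<b$. Such a pair occurs as $y_j/y_i$ (i.e.\ with $(i,j)=(a,b)$ and $k>b$) in $N-b$ triples, as $y_k/y_i$ (i.e.\ with $(i,k)=(a,b)$ and $a<j<b$) in $b-a-1$ triples, and as $y_k/y_j$ (i.e.\ with $(j,k)=(a,b)$ and $i<a$) in $a-1$ triples, for a total multiplicity $(N-b)+(b-a-1)+(a-1)=N-2$. Hence each $\Theta^{(2)}_{p^N}(p^{b-a}x_b/x_a)/\Theta_{p^N}(p^{b-a}x_b/x_a)$ is collected with coefficient $(N-2)/2$. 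For the $\Theta^{(1)}$ part the three roles carry the signs $-\tfrac12,\,+\tfrac12,\,-\tfrac12$, so the coefficient of $\Theta^{(1)}_{p^N}(p^{b-a}x_b/x_a)/\Theta_{p^N}(p^{b-a}x_b/x_a)$ equals $-\tfrac12(N-b)+\tfrac12(b-a-1)-\tfrac12(a-1)=-(N-2b+2a)/2$. Finally the $p$-derivative constant appears once per triple, i.e.\ $\binom{N}{3}=\tfrac{N(N-1)(N-2)}{6}$ times, which after the factor $1/N$ contributes with coefficient $-\tfrac{(N-1)(N-2)}{6}$. Renaming $(a,b)$ as $(i,j)$ reproduces the right-hand side.

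I expect the only delicate point to be the sign and multiplicity bookkeeping in the last step; apart from that the statement is a purely formal consequence of Lemma \ref{theta-identity}.
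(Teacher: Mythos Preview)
Your argument is correct and is exactly the computation the paper has in mind: the lemma's own statement already directs you to ``use the identity in Lemma \ref{theta-identity}'', and the paper gives no further proof (it falls under the blanket remark that such results ``can be proved by straightforward computations''). Your substitution $x_a\mapsto p^a x_a$ to import the $p$-shifts into Lemma \ref{theta-identity}, followed by the multiplicity count $(N-b)+(b-a-1)+(a-1)=N-2$ for the $\Theta^{(2)}$ terms, the signed count $-\tfrac12(N-b)+\tfrac12(b-a-1)-\tfrac12(a-1)=-\tfrac{N-2b+2a}{2}$ for the $\Theta^{(1)}$ terms, and $\binom{N}{3}\cdot(-1/N)=-(N-1)(N-2)/6$ for the constant, are all correct.
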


\begin{prp}
We have $H_\beta(p)$ written only with two body interaction terms as
\begin{align*}
H_\beta(p)=&{1\over 2} \sum_{i=1}^N \vartheta_i^2 
-\beta \sum_{1\leq i<j\leq N}  {\Theta^{(1)}_{p^N}(p^{j-i} x_j/x_i) \over\Theta_{p^N}(p^{j-i} x_j/x_i)}(\vartheta_i-\vartheta_j)\nonumber\\
&+
\beta^2 \sum_{1\leq i<j\leq N} \left(
{N\over 2}
 {\Theta^{(2)}_{p^N}(p^{j-i} x_j/x_i) \over\Theta_{p^N}(p^{j-i} x_j/x_i)}-
 {N-2j+2i\over 2}
 {\Theta^{(1)}_{p^N}(p^{j-i} x_j/x_i) \over\Theta_{p^N}(p^{j-i} x_j/x_i)} \right)\nonumber \\
&-\beta^2{(N-1)(N-2)\over 6}  {1\over \Theta^{(1)}_{p^N} (1)} p {\partial \over \partial p}  \Theta^{(1)}_{p^N} (1).
\end{align*}

\end{prp}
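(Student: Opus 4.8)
The plan is to start from the definition of $H_\beta(p)$ and remove the three-body interaction terms by invoking the Lemma immediately preceding the Proposition. The kinetic term $\tfrac12\sum_{i=1}^N\vartheta_i^2$ and the first-order two-body term $-\beta\sum_{1\le i<j\le N}\frac{\Theta^{(1)}_{p^N}(p^{j-i}x_j/x_i)}{\Theta_{p^N}(p^{j-i}x_j/x_i)}(\vartheta_i-\vartheta_j)$ carry over verbatim, so the whole task lies in the $\beta^2$-part: one must merge the two-body sum $\beta^2\sum_{1\le i<j\le N}\frac{\Theta^{(2)}_{p^N}(p^{j-i}x_j/x_i)}{\Theta_{p^N}(p^{j-i}x_j/x_i)}$ with the triple sum over $1\le i<j<k\le N$. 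First I would observe that under the relabeling $y_i=p^ix_i$, for which $y_j/y_i=p^{j-i}x_j/x_i$, the $p$-shifted arguments in the triple sum become plain ratios, so that triple sum is exactly the left-hand side of Lemma \ref{theta-identity} summed over $i<j<k$ in the variables $y$; this is what legitimizes the passage to the preceding Lemma.

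By that Lemma the triple sum equals $\sum_{1\le i<j\le N}\bigl(\tfrac{N-2}{2}\frac{\Theta^{(2)}_{p^N}(p^{j-i}x_j/x_i)}{\Theta_{p^N}(p^{j-i}x_j/x_i)}-\tfrac{N-2j+2i}{2}\frac{\Theta^{(1)}_{p^N}(p^{j-i}x_j/x_i)}{\Theta_{p^N}(p^{j-i}x_j/x_i)}\bigr)$ minus the scalar $\tfrac{(N-1)(N-2)}{6}\frac{1}{\Theta^{(1)}_{p^N}(1)}p\frac{\partial}{\partial p}\Theta^{(1)}_{p^N}(1)$. Adding back the $\beta^2$ two-body $\Theta^{(2)}/\Theta$-sum raises the coefficient of each $\frac{\Theta^{(2)}_{p^N}(p^{j-i}x_j/x_i)}{\Theta_{p^N}(p^{j-i}x_j/x_i)}$ from $\tfrac{N-2}{2}$ to $1+\tfrac{N-2}{2}=\tfrac N2$, keeps the coefficient of $\frac{\Theta^{(1)}_{p^N}(p^{j-i}x_j/x_i)}{\Theta_{p^N}(p^{j-i}x_j/x_i)}$ equal to $-\tfrac{N-2j+2i}{2}$, and contributes the scalar $-\beta^2\tfrac{(N-1)(N-2)}{6}\frac{1}{\Theta^{(1)}_{p^N}(1)}p\frac{\partial}{\partial p}\Theta^{(1)}_{p^N}(1)$; collecting everything yields the displayed formula for $H_\beta(p)$.

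The genuinely substantive content is the preceding Lemma, which rests on Lemma \ref{theta-identity}: the latter rewrites the bilinear expression in the logarithmic derivatives $\Theta^{(1)}_{p^N}/\Theta_{p^N}$ attached to a triple of indices as a sum of $\Theta^{(2)}/\Theta$-pieces, a signed combination of $\Theta^{(1)}/\Theta$-pieces, and a single anomaly constant reflecting the non-ellipticity of $\log\Theta_p$; its proof uses $\Theta_p(pz)=-z^{-1}\Theta_p(z)$, the derivative relations for $\Theta^{(1)}_p$ and $\Theta^{(2)}_p$, and the heat-type identity $-p\frac{\partial}{\partial p}\Theta_{p^2}(pz)+(z\frac{\partial}{\partial z})^2\Theta_{p^2}(pz)=0$ recorded earlier. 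Granting Lemma \ref{theta-identity}, the derivation of the preceding Lemma, and hence of the Proposition, is bookkeeping: for a fixed pair $(i,j)$ with $i<j$ the triples containing it split into those in which it is the two smallest indices ($N-j$ of them), smallest-and-largest ($j-i-1$), and the two largest ($i-1$), so summing Lemma \ref{theta-identity} over all triples gives the $\Theta^{(2)}_{p^N}(p^{j-i}x_j/x_i)/\Theta_{p^N}(p^{j-i}x_j/x_i)$ term with coefficient $\tfrac12\bigl((N-j)+(j-i-1)+(i-1)\bigr)=\tfrac{N-2}{2}$, the $\Theta^{(1)}_{p^N}(p^{j-i}x_j/x_i)/\Theta_{p^N}(p^{j-i}x_j/x_i)$ term with coefficient $-\tfrac12\bigl((N-j)-(j-i-1)+(i-1)\bigr)=-\tfrac{N-2j+2i}{2}$ (the signs $-,+,-$ being those in Lemma \ref{theta-identity}), and the scalar as $\binom N3$ copies of $-\tfrac1N\frac{1}{\Theta^{(1)}_{p^N}(1)}p\frac{\partial}{\partial p}\Theta^{(1)}_{p^N}(1)$, i.e. $-\tfrac{(N-1)(N-2)}{6}$ times that derivative. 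The main obstacle is thus the elliptic-function identity of Lemma \ref{theta-identity} together with this sign-and-multiplicity accounting; beyond that the Proposition needs no new input.
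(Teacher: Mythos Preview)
Your proposal is correct and follows precisely the route the paper intends: the paper does not write out a proof of this Proposition but places it immediately after the Lemma that rewrites the three-body sum, so the implied argument is exactly the substitution you carry out---combine the original $\beta^2\sum_{i<j}\Theta^{(2)}/\Theta$ term with the output of that Lemma to get the $\tfrac{N}{2}$ coefficient, and keep the remaining pieces. Your additional bookkeeping (the $y_i=p^ix_i$ relabeling and the multiplicity count $(N-j)+(j-i-1)+(i-1)=N-2$ with the appropriate signs) fills in the derivation of the preceding Lemma from Lemma~\ref{theta-identity}, which the paper also leaves to the reader; both counts and the $\binom{N}{3}/N$ scalar check out.
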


\subsubsection{Quasi-ground state $\psi_0(x,p)$ and elliptic Calogero-Sutherland Hamiltonian $H^{\rm eCS}(p)$ }

\begin{dfn}\label{quasi-ground-state}
Set $q=e^h,t=e^{\beta h}$.
In view of Definition \ref{normalized-form},
we introduce the quasi-ground state $\psi_0(x,p|\beta)$ as follows.
\begin{align*}
\psi_0(x,p|\beta)=&
\lim_{h\rightarrow0}
\prod_{1\leq i<j\leq N}
{(p^{j-i}qx_j/tx_i;q,p^N)_\infty \over (p^{j-i}qx_j/x_i;q,p^N)_\infty }
\cdot 
\prod_{1\leq i\leq j\leq N}
{(p^{N-j+i}qx_i/tx_j;q,p^N)_\infty \over (p^{N-j+i}qx_i/x_j;q,p^N)_\infty } \nonumber\\
=&
\left(  
\left((p^N;p^N)_\infty\right)^{N-N(N-1)/2}
\prod_{1\leq i<j\leq N}\Theta_{p^N}(p^{j-i}x_j/x_i) \right)^\beta.
\end{align*}
\end{dfn}

\begin{dfn}\label{H-eCS}
Let $H^{\rm eCS}(p)=H^{\rm eCS}(p|\beta)$ be the elliptic Calogero-Sutherland operator 
\begin{align*}
H^{\rm eCS}(p)=&
{1\over 2} \sum_{i=1}^N\vartheta_i^2
+\beta(\beta-1)  \sum_{1\leq i<j\leq N}V(p^{j-i} x_j/x_i|p^N)
+{ \beta(\beta-1)N \over 2}V_0(p^N).
\end{align*}
\end{dfn}

\begin{prp}
We have 
\begin{align*}
& \psi_0(x,p) \,\left(H_\beta(p)+{ \beta(\beta-1)N \over 2}V_0(p^N)\right) \,\, \psi_0^{-1}(x,p) 
=H^{\rm eCS}(p).
\end{align*}
\end{prp}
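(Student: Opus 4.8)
The plan is to perform the conjugation of $H_\beta(p)+\tfrac{\beta(\beta-1)N}{2}V_0(p^N)$ by $\psi_0(x,p)$ directly, using the explicit product formula for $\psi_0$ established in Definition \ref{quasi-ground-state}, and to show that all contributions organize into $H^{\mathrm{eCS}}(p)$. First I would record that, since $\psi_0$ depends only on the ratios $x_j/x_i$ and $p$ but not on $q$ or the $q$-shift variables, conjugating the second-order differential operator $\tfrac12\sum_i\vartheta_i^2$ produces, by the Leibniz rule, a first-order piece $\sum_i(\vartheta_i\log\psi_0)\vartheta_i$ and a zeroth-order piece $\tfrac12\sum_i\bigl((\vartheta_i\log\psi_0)^2+\vartheta_i^2\log\psi_0\bigr)$, while the first-order term $-\beta\sum_{i<j}\tfrac{\Theta^{(1)}_{p^N}(p^{j-i}x_j/x_i)}{\Theta_{p^N}(p^{j-i}x_j/x_i)}(\vartheta_i-\vartheta_j)$ contributes an additional zeroth-order piece $-\beta\sum_{i<j}\tfrac{\Theta^{(1)}_{p^N}}{\Theta_{p^N}}(\vartheta_i-\vartheta_j)\log\psi_0$. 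From $\log\psi_0 = \beta\sum_{1\le i<j\le N}\log\Theta_{p^N}(p^{j-i}x_j/x_i)+(\text{const})$ one gets $\vartheta_k\log\psi_0 = \beta\sum_{i<j}\bigl(\delta_{kj}-\delta_{ki}\bigr)\tfrac{\Theta^{(1)}_{p^N}(p^{j-i}x_j/x_i)}{\Theta_{p^N}(p^{j-i}x_j/x_i)}$, so the first-order terms in the conjugated operator cancel precisely against the explicit first-order term in $H_\beta(p)$. This is the mechanism that turns the ``linearized Ruijsenaars'' operator into a Schr\"odinger-type operator.

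Next I would collect the surviving zeroth-order terms. The square $\tfrac12\sum_k(\vartheta_k\log\psi_0)^2$ expands into a sum over pairs of pairs; the diagonal pairs give $\tfrac{\beta^2}{2}\sum_{i<j}\cdot 2\bigl(\tfrac{\Theta^{(1)}_{p^N}}{\Theta_{p^N}}(p^{j-i}x_j/x_i)\bigr)^2$ and the off-diagonal pairs give exactly the three-body products $\tfrac{\Theta^{(1)}_{p^N}}{\Theta_{p^N}}(p^{j-i}x_j/x_i)\tfrac{\Theta^{(1)}_{p^N}}{\Theta_{p^N}}(p^{k-i}x_k/x_i)$ etc. with the signs dictated by which index is shared. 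The term $\tfrac12\sum_k\vartheta_k^2\log\psi_0$ gives $\tfrac{\beta}{2}\sum_{i<j}\bigl(\delta_{kj}+\delta_{ki}\bigr)$-weighted copies of $\tfrac{\Theta^{(2)}_{p^N}}{\Theta_{p^N}}(p^{j-i}x_j/x_i)-\bigl(\tfrac{\Theta^{(1)}_{p^N}}{\Theta_{p^N}}(p^{j-i}x_j/x_i)\bigr)^2$, i.e.\ $\beta\sum_{i<j}V(p^{j-i}x_j/x_i|p^N)$. Meanwhile the cross term $-\beta\sum_{i<j}\tfrac{\Theta^{(1)}_{p^N}}{\Theta_{p^N}}(\vartheta_i-\vartheta_j)\log\psi_0 = -\beta^2\sum_{i<j}\tfrac{\Theta^{(1)}_{p^N}}{\Theta_{p^N}}(p^{j-i}x_j/x_i)\sum_{k<l}\bigl((\delta_{il}-\delta_{ik})-(\delta_{jl}-\delta_{jk})\bigr)\tfrac{\Theta^{(1)}_{p^N}}{\Theta_{p^N}}(p^{l-k}x_l/x_k)$ produces another batch of squared and three-body terms. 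At this point the natural move is to invoke the Proposition just above Definition \ref{H-eCS} that already rewrites $H_\beta(p)$ with only two-body terms (via Lemma \ref{theta-identity}), so that one only has to match the two-body form of the conjugated operator against $H^{\mathrm{eCS}}(p)$; this confines the three-body bookkeeping to a single application of Lemma \ref{theta-identity} rather than a fresh derivation.

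The remaining task is purely algebraic: assemble the coefficient of each $\tfrac{\Theta^{(2)}_{p^N}}{\Theta_{p^N}}(p^{j-i}x_j/x_i)$, each $\tfrac{\Theta^{(1)}_{p^N}}{\Theta_{p^N}}(p^{j-i}x_j/x_i)$, each $\bigl(\tfrac{\Theta^{(1)}_{p^N}}{\Theta_{p^N}}(p^{j-i}x_j/x_i)\bigr)^2$, and the constant, and check they reproduce $\beta(\beta-1)\sum_{i<j}V(p^{j-i}x_j/x_i|p^N)+\tfrac{\beta(\beta-1)N}{2}V_0(p^N)$. The $\tfrac{\Theta^{(2)}}{\Theta}$ and $\bigl(\tfrac{\Theta^{(1)}}{\Theta}\bigr)^2$ coefficients must combine into the $\beta(\beta-1)$ multiple of $V(z|p^N)=\tfrac{\Theta^{(2)}_{p^N}(z)}{\Theta_{p^N}(z)}-\bigl(\tfrac{\Theta^{(1)}_{p^N}(z)}{\Theta_{p^N}(z)}\bigr)^2$; the bare $\tfrac{\Theta^{(1)}}{\Theta}$ coefficients, which are position-dependent only through the antisymmetric weight $N-2j+2i$, must cancel identically (they are odd under $z\mapsto 1/z$ combined with $i\leftrightarrow j$, which kills them once summed, using $\Theta^{(1)}_{p^N}(1/z)=-\Theta^{(1)}_{p^N}(p^N z)$ from the Lemma of theta-identities); and the $p\tfrac{\partial}{\partial p}\Theta^{(1)}_{p^N}(1)$ constants must sum to $\tfrac{\beta(\beta-1)N}{2}V_0(p^N)$ via the Lemma expressing $V_0(p^N)$ through $\tfrac{1}{\Theta^{(1)}_{p^N}(1)}p\tfrac{\partial}{\partial p}\Theta^{(1)}_{p^N}(1)$. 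The main obstacle I anticipate is precisely this last coefficient-matching — keeping the combinatorial weights straight across the three sources of zeroth-order terms (the square of $\vartheta\log\psi_0$, its Laplacian, and the cross term with the explicit first-order part of $H_\beta$), and verifying that the $\beta^2$ pieces from conjugation combine with the $-\beta$ (hence $+\beta(\beta-1)$) pieces in exactly the right proportion; the theta-function identities needed are all already available in the quoted Lemmas, so no new analytic input is required, only careful algebra.
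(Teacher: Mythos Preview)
The paper gives no proof of this Proposition (it falls under the blanket remark that such results ``are standard and can be proved by straightforward computations''), so your direct-conjugation strategy is indeed the intended route. However, your outline contains a sign slip and a strategic detour that would make the computation harder than necessary.

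\textbf{Sign error.} With the conjugation direction as stated, $\psi_0\,\vartheta_i\,\psi_0^{-1}=\vartheta_i-(\vartheta_i\log\psi_0)$, so $\psi_0\,\tfrac12\sum_i\vartheta_i^2\,\psi_0^{-1}$ produces the first-order piece $-\sum_i(\vartheta_i\log\psi_0)\vartheta_i$ and the zeroth-order piece $\tfrac12\sum_i\bigl((\vartheta_i\log\psi_0)^2-\vartheta_i^2\log\psi_0\bigr)$; both signs are opposite to what you wrote. With the correct signs the first-order cancellation you describe does occur; with your signs it does not.

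\textbf{Three-body terms and the two-body detour.} Passing first to the two-body form of $H_\beta(p)$ does not save work: the conjugation itself generates three-body products $T_{ij}T_{kl}$ (from the off-diagonal part of $\sum_k(\vartheta_k\log\psi_0)^2$ and from the cross term), so you would have to apply Lemma~\ref{theta-identity} a second time to reduce those, and then check that the resulting $\tfrac{N-2j+2i}{2}\,\Theta^{(1)}/\Theta$ pieces cancel against the ones already sitting in the two-body form. The much cleaner observation is that the three-body combination produced by the conjugation is \emph{exactly the negative} of the three-body term in the original $H_\beta(p)$: a short index bookkeeping shows the off-diagonal contributions from $\tfrac12\sum_k a_k^2$ and from $\beta\sum_{i<j}T_{ij}(a_i-a_j)$ sum to $-\beta^2\sum_{a<b<c}(T_{ab}T_{ac}-T_{ab}T_{bc}+T_{ac}T_{bc})$. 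So if you start from the \emph{original} form of $H_\beta(p)$, the three-body terms cancel outright and Lemma~\ref{theta-identity} is never needed. What remains is purely two-body, and the coefficients of $\Theta^{(2)}/\Theta$ and $(\Theta^{(1)}/\Theta)^2$ combine to $\beta(\beta-1)V(p^{j-i}x_j/x_i\mid p^N)$ on the nose.

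\textbf{Two minor points.} Your symmetry argument for killing the bare $\Theta^{(1)}/\Theta$ terms (``odd under $z\mapsto 1/z$ combined with $i\leftrightarrow j$'') is not valid as stated, since the sum runs only over $i<j$; in the clean route above, no such bare terms ever appear. Finally, the conjugation itself produces no constant: the $\tfrac{\beta(\beta-1)N}{2}V_0(p^N)$ is simply the constant added on the left-hand side, and since it commutes with $\psi_0$ it passes through unchanged, matching the constant in $H^{\mathrm{eCS}}(p)$ by inspection; the $V_0$-Lemma is not needed here.
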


\subsection{Affine $q$-Toda operator: $t\rightarrow 0$}
\begin{dfn}\label{q-Toda-D}
Define the $\widehat{\mathfrak gl}_N$ affine $q$-Toda operator  
$D_x^{\widehat{\mathfrak gl}_N{\rm Toda}}=D_x^{\widehat{\mathfrak gl}_N{\rm Toda}}(q,\widetilde{p})$  by
\begin{align*}
D_x^{\widehat{\mathfrak gl}_N{\rm Toda}}&=
(1-\widetilde{p}x_2/x_1)T_{q,x_1}+(1-\widetilde{p}x_3/x_2)T_{q,x_2}+\cdots \\
&\qquad \cdots+ (1-\widetilde{p}x_N/x_{N-1})T_{q,x_{N-1}}+ (1-\widetilde{p}x_1/x_{N})T_{q,x_N}.\nonumber
\end{align*}
\end{dfn}

\begin{prp}
Putting $p=\tilde{p} t$ in $\widetilde{\mathsf{D}}_x(p)$, we have the 
affine $q$-difference Toda operator $D^{\widehat{\mathfrak{gl}}_N{\rm Toda}} (\widetilde{p})$ in
the limit $t\rightarrow 0$ with $\widetilde{p}$ being fixed
\begin{align*}
\lim_{t\rightarrow 0\atop \tilde{p} \,\,{\rm fixed}}\widetilde{\mathsf{D}}_x(\widetilde{p}t)=
D_x^{\widehat{\mathfrak{gl}}_N{\rm Toda}}(q,\widetilde{p}).
\end{align*}
\end{prp}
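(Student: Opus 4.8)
The plan is to substitute $p=\widetilde{p}\,t$ directly into the explicit formula for $\widetilde{\mathsf{D}}_x(p)$ and to examine the limit $t\to 0$ (with $\widetilde{p}$ fixed) one theta‑ratio at a time. The only analytic inputs needed are the elementary limit $\Theta_r(w)\to 1-w$ as $r\to 0$, which follows from $\Theta_r(w)=(w;r)_\infty(r/w;r)_\infty(r;r)_\infty$ provided $w$ stays of order one and $r/w\to 0$, and the quasi‑periodicity $\Theta_{p^N}(p^Nz)=-z^{-1}\Theta_{p^N}(z)$ recorded in the lemma on derivatives of $\Theta_p$. The $q$‑shift operators $T_{q,x_i}$ do not involve $t$, so it suffices to compute the limit of the scalar coefficient multiplying each $T_{q,x_i}$; the limit may be read either coefficientwise in the expansion in the ratios $\widetilde{p}\,x_{i+1}/x_i$ or as a locally uniform limit of meromorphic functions, and no delicate interchange of limits is required.

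Using $t=p/\widetilde{p}$, the numerators of the first product acquire arguments $p^{\,i-j+1}\widetilde{p}^{-1}x_i/x_j$ and the numerators of the second product acquire arguments $p^{\,k-i-1}\widetilde{p}\,x_k/x_i$, whereas the two denominators retain the arguments $p^{\,i-j}x_i/x_j$ and $p^{\,k-i}x_k/x_i$ carrying strictly positive powers of $p$. As $p\to 0$ every theta whose argument carries a strictly positive power of $p$ tends to $1$; this kills all denominators, all first‑product numerators except the single pair $(i,j)=(N,1)$ (where $i-j+1=N$), and all second‑product numerators except $k=i+1$ (where $k-i-1=0$). The surviving $k=i+1$ factor tends to $\Theta_{p^N}(\widetilde{p}\,x_{i+1}/x_i)\to 1-\widetilde{p}\,x_{i+1}/x_i$ and is present precisely when $1\le i\le N-1$, giving the first $N-1$ terms of $D_x^{\widehat{\mathfrak{gl}}_N{\rm Toda}}(q,\widetilde{p})$.

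For the coefficient of $T_{q,x_N}$ the second product is empty, and the first product contributes only through $(i,j)=(N,1)$: here the argument is $p^{N}\widetilde{p}^{-1}x_N/x_1$, so the naive rule $\Theta_r(w)\to 1-w$ would wrongly give $1$. Instead one applies the quasi‑periodicity $\Theta_{p^N}(p^N z)=-z^{-1}\Theta_{p^N}(z)$ with $z=\widetilde{p}^{-1}x_N/x_1$ to get $\Theta_{p^N}(p^{N}\widetilde{p}^{-1}x_N/x_1)=-\widetilde{p}\,x_1/x_N\cdot\Theta_{p^N}(\widetilde{p}^{-1}x_N/x_1)\to -\widetilde{p}\,x_1/x_N\,(1-\widetilde{p}^{-1}x_N/x_1)=1-\widetilde{p}\,x_1/x_N$, which is exactly the affine wrap‑around term. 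Assembling the contributions gives $\lim_{t\to 0}\widetilde{\mathsf{D}}_x(\widetilde{p}\,t)=\sum_{i=1}^{N-1}(1-\widetilde{p}\,x_{i+1}/x_i)T_{q,x_i}+(1-\widetilde{p}\,x_1/x_N)T_{q,x_N}=D_x^{\widehat{\mathfrak{gl}}_N{\rm Toda}}(q,\widetilde{p})$. The only genuinely non‑routine step is this last one: recognizing that the cyclic term of the affine $q$‑Toda operator is produced by the $\Theta_{p^N}$‑quasi‑periodicity acting on the exceptional $(N,1)$ factor (and that $N\ge 2$ is used only to guarantee $p^{N-1}\to 0$ everywhere else); everything else is bookkeeping of powers of $p$.
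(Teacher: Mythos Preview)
Your proof is correct. The paper does not supply a proof for this proposition (it falls under the blanket remark that such results ``are standard and can be proved by straightforward computations using definitions''), so your computation is precisely the routine verification the authors leave to the reader: track the powers of $p$ in each theta argument after the substitution $t=p/\widetilde{p}$, use $\Theta_{p^N}(p^a z)\to 1$ for $0<a<N$ and $\Theta_{p^N}(z)\to 1-z$ for $a=0$, and invoke the quasi-periodicity $\Theta_{p^N}(p^N z)=-z^{-1}\Theta_{p^N}(z)$ for the single exceptional factor $(i,j)=(N,1)$ to recover the affine wrap-around term $1-\widetilde{p}\,x_1/x_N$.
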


\section{Non-stationary and stationary $\widehat{\mathfrak gl}_N$ $q$-difference affine Toda equation: the limit $p,t\rightarrow 0$}

\subsection{$\widehat{\mathfrak gl}_N$ non-stationary affine $q$-Toda operator 
${\mathcal T}^{\widehat{\mathfrak gl}_N}(\kappa)$}
We study the limit $p,t\rightarrow 0$ while the ratio $\widetilde{p}=p/t$ being fixed.
Let $D_x^{\widehat{\mathfrak gl}_N{\rm Toda}}$ be as in Definition \ref{q-Toda-D}. 

\begin{dfn}\label{NST-op}
Denote the Euler operators by $\vartheta_i=x_i \partial/\partial x_i$ ($1\leq i\leq N$).
Let $\Delta$ be the Laplacian
$
\Delta= {1\over 2} \sum_{i=1}^N \vartheta_i^2.
$
Introduce the operator 
${\mathcal T}^{\widehat{\mathfrak gl}_N}(\kappa)={\mathcal T}^{\widehat{\mathfrak gl}_N}(q,\widetilde{p},\kappa)$ defined by
\begin{align*}
{\mathcal T}^{\widehat{\mathfrak gl}_N}(\kappa)=\prod_{i=1}^{N}{1\over  (\widetilde{p}q x_{i+1}/x_i;q)_\infty }  \cdot 
q^{\Delta}T_{\kappa,\widetilde{p}},
\end{align*}
where $T_{\kappa,\widetilde{p}}f(\widetilde{p})=f(\kappa \widetilde{p})$ or 
equivalently $T_{\kappa,\widetilde{p}}=\kappa^{ \widetilde{p} \partial/\partial  \widetilde{p}}$.
We call ${\mathcal T}^{\widehat{\mathfrak gl}_N}(\kappa)$ the non-stationary affine $q$-Toda operator.
\end{dfn}

\begin{prp}
If $\kappa=1$, then we have the commutativity
$
[{\mathcal T}^{\widehat{\mathfrak gl}_N}(1),D^{\widehat{\mathfrak gl}_N{\rm Toda}}]=0. 
$
On the other hand, 
we have $[{\mathcal T}^{\widehat{\mathfrak gl}_N}(\kappa),D^{\widehat{\mathfrak gl}_N{\rm Toda}}]\neq 0$ 
when $\kappa\neq 1$.
\end{prp}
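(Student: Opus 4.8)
The plan is to recognize that ${\mathcal T}^{\widehat{\mathfrak gl}_N}(\kappa)$ hides two similarity (gauge) transformations of the Toda operator, and to match them. Abbreviate $D=D_x^{\widehat{\mathfrak gl}_N{\rm Toda}}(\widetilde p)$ and $\Phi=\Phi(\widetilde p)=\prod_{i=1}^N(\widetilde p q x_{i+1}/x_i;q)_\infty^{-1}$, so that ${\mathcal T}^{\widehat{\mathfrak gl}_N}(\kappa)=\Phi\, q^{\Delta}\, T_{\kappa,\widetilde p}$. For $\kappa=1$ one has $T_{1,\widetilde p}=\mathrm{id}$, hence ${\mathcal T}^{\widehat{\mathfrak gl}_N}(1)=\Phi\,q^{\Delta}$; since $\Phi$ is an invertible multiplication operator, $[{\mathcal T}^{\widehat{\mathfrak gl}_N}(1),D]=0$ is equivalent to the gauge identity $q^{\Delta} D\, q^{-\Delta}=\Phi^{-1} D\,\Phi$, and I would prove this by computing both sides independently.

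First I would compute $\Phi^{-1}D\,\Phi$. Because $\Phi$ commutes with multiplication, $\Phi^{-1}D\,\Phi=\sum_i(1-\widetilde p x_{i+1}/x_i)\,\bigl((T_{q,x_i}\Phi)/\Phi\bigr)\,T_{q,x_i}$, and the telescoping relation $(a;q)_\infty=(1-a)(aq;q)_\infty$ gives $(T_{q,x_i}\Phi)/\Phi=(1-\widetilde p q x_i/x_{i-1})/(1-\widetilde p x_{i+1}/x_i)$, with indices read cyclically. The denominator cancels, and after the relabelling $i\mapsto i+1$ in the resulting subtracted sum one obtains $\Phi^{-1}D\,\Phi=\sum_{i=1}^N T_{q,x_i}-\widetilde p q\sum_{i=1}^N(x_{i+1}/x_i)\,T_{q,x_{i+1}}$. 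Next I would compute $q^{\Delta}D\,q^{-\Delta}$ via the elementary conjugation rule $q^{\frac12\vartheta^2}x^k q^{-\frac12\vartheta^2}=q^{k^2/2}x^k T_{q,x}^k$ (verified on monomials), together with $[q^{\Delta},T_{q,x_i}]=0$: conjugation sends multiplication by $x_{i+1}/x_i$ to $q\,(x_{i+1}/x_i)\,T_{q,x_{i+1}}T_{q,x_i}^{-1}$ and fixes each $T_{q,x_i}$, so $q^{\Delta}D\,q^{-\Delta}=\sum_i\bigl(1-\widetilde p q(x_{i+1}/x_i)T_{q,x_{i+1}}T_{q,x_i}^{-1}\bigr)T_{q,x_i}=\sum_i T_{q,x_i}-\widetilde p q\sum_i(x_{i+1}/x_i)T_{q,x_{i+1}}$. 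This is exactly $\Phi^{-1}D\,\Phi$, so the gauge identity holds and $[{\mathcal T}^{\widehat{\mathfrak gl}_N}(1),D]=0$.

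For $\kappa\neq 1$ I would track the $\widetilde p$-shift. Since $D(\widetilde p)$ depends on $\widetilde p$ only through the scalar coefficient $\widetilde p$, one has $T_{\kappa,\widetilde p}D(\widetilde p)=D(\kappa\widetilde p)T_{\kappa,\widetilde p}$, whence $[{\mathcal T}^{\widehat{\mathfrak gl}_N}(\kappa),D]=\Phi(\widetilde p)\bigl(q^{\Delta}D(\kappa\widetilde p)q^{-\Delta}-\Phi(\widetilde p)^{-1}D(\widetilde p)\Phi(\widetilde p)\bigr)q^{\Delta}T_{\kappa,\widetilde p}$. Applying the two computations above — the first with $\widetilde p$ replaced by $\kappa\widetilde p$ — the bracketed difference equals $(1-\kappa)\widetilde p q\sum_{i=1}^N(x_{i+1}/x_i)T_{q,x_{i+1}}$, so that $[{\mathcal T}^{\widehat{\mathfrak gl}_N}(\kappa),D]=(1-\kappa)\widetilde p q\,\Phi(\widetilde p)\Bigl(\sum_{i=1}^N(x_{i+1}/x_i)T_{q,x_{i+1}}\Bigr)q^{\Delta}T_{\kappa,\widetilde p}$, which is a nonzero $q$-difference operator as soon as $\kappa\neq 1$ (the parameter $\widetilde p$ being a free indeterminate).

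Every computation here is elementary; there is no genuine obstacle, and the only point that demands care is the verification that conjugation by $q^{\Delta}$ and conjugation by $\Phi$ produce the \emph{same} similarity transform of $D$ — that is, checking the two independent routes to $\sum_i T_{q,x_i}-\widetilde p q\sum_i(x_{i+1}/x_i)T_{q,x_{i+1}}$ really agree, including the cyclic index shift appearing on the $\Phi$ side and the $q^{k^2/2}$ prefactors appearing on the $q^{\Delta}$ side. Once that coincidence is in hand, both assertions of the proposition follow at once.
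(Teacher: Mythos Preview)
Your argument is correct and complete. The paper does not supply a proof for this proposition; it falls under the blanket remark that such results ``are standard and can be proved by straightforward computations using definitions.'' Your explicit computation realizes exactly this: you reduce the commutativity to the gauge identity $q^{\Delta}Dq^{-\Delta}=\Phi^{-1}D\Phi$, verify both sides equal $\sum_i T_{q,x_i}-\widetilde p q\sum_i(x_{i+1}/x_i)T_{q,x_{i+1}}$, and then track the $\widetilde p$-shift to obtain the explicit nonzero commutator $(1-\kappa)\widetilde p q\,\Phi\bigl(\sum_i(x_{i+1}/x_i)T_{q,x_{i+1}}\bigr)q^{\Delta}T_{\kappa,\widetilde p}$ for $\kappa\neq 1$. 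This is precisely the kind of direct verification the paper has in mind, carried out in full.
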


\begin{rmk}
It seems a difficult problem to find such a $\kappa$-deformation of 
$D^{\widehat{\mathfrak gl}_N{\rm Toda}}$ that which commutes with ${\mathcal T}^{\widehat{\mathfrak gl}_N}(\kappa)$.
\end{rmk}

We call 
$D^{\widehat{\mathfrak gl}_N{\rm Toda}} \psi(x,\widetilde{p}|s|q)= 
\varepsilon(\widetilde{p}|s|q)\psi(x,\widetilde{p}|s|q)$ the 
stationary problem for the affine $q$-Toda system. 
Note that the eigenvalue $\varepsilon(\widetilde{p}|s|q)$
depends on the variable $\widetilde{p}$.
We call the eigenvalue equation 
${\mathcal T}^{\widehat{\mathfrak gl}_N}(\kappa) \psi(x,\widetilde{p}|s,\kappa|q)= \varepsilon(s|q)
 \psi(x,\widetilde{p}|s,\kappa|q)$, the 
non-stationary problem for the affine $q$-Toda system. 
One may regard this problem as a $q$-difference analogue of the Heat equation (or a time dependent Schr\"odinger equation) in an  affine Toda potential.
Note that the eigenvalue $\varepsilon(s|q)$ in the non-stationary case does not depend in $\widetilde{p}$.

\subsection{Conjecture concerning non-stationary $\widehat{\mathfrak gl}_N$ $q$-difference affine Toda equation}
\begin{dfn}
Set
\begin{align*}
f^{\widehat{\mathfrak gl}_N{\rm Toda}}(x,\widetilde{p}|s,\kappa|q)=
\lim_{t\rightarrow 0} f^{\widehat{\mathfrak gl}_N}(x,t \widetilde{p}|s,\kappa|q,q/t).
\end{align*}
\end{dfn}

\begin{con}
Let $s_i=q^{\lambda_i}$. 
We have
\begin{align}
{\mathcal T}^{\widehat{\mathfrak gl}_N}(\kappa) x^\lambda f^{\widehat{\mathfrak gl}_N{\rm Toda}}(x,\widetilde{p}|s,\kappa|q)= 
q^{{1\over 2} \sum_{i=1}^N \lambda_i^2} x^\lambda f^{\widehat{\mathfrak gl}_N{\rm Toda}}(x,\widetilde{p}|s,\kappa|q). 
\label{non-stationary-Toda-eq}
\end{align}
\end{con}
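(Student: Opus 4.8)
The plan is to make the limit $t\rightarrow 0$ explicit, to reduce the eigenvalue equation (\ref{non-stationary-Toda-eq}) to a family of coefficient identities, and then to prove those identities by a box-adding induction. First I would compute $f^{\widehat{\mathfrak gl}_N{\rm Toda}}$ directly. In $f^{\widehat{\mathfrak gl}_N}(x,t\widetilde{p}|s,\kappa|q,q/t)$ the monomial attached to ${\boldsymbol\lambda}=(\lambda^{(1)},\ldots,\lambda^{(N)})$ is $(t^2\widetilde{p}/q)^{|{\boldsymbol\lambda}|}\prod_{\beta,\alpha}(x_{\alpha+\beta}/x_{\alpha+\beta-1})^{\lambda^{(\beta)}_\alpha}$, while by Lemma \ref{lam-alpha} the numerator $\Nk^{(j-i|N)}_{\lambda^{(i)},\lambda^{(j)}}((q/t)s_j/s_i|q,\kappa)$ is a product of $q$-shifted factorials whose total degree in its argument is $|\lambda^{(i)}|^{(j-i)}+|\lambda^{(j)}|^{(i-j-1)}$, which sums over $i,j$ to $2|{\boldsymbol\lambda}|$. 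Using the reflection $(z;q)_n=(-z)^nq^{\binom n2}(q^{1-n}/z;q)_n$ and letting $z\rightarrow\infty$, the product of numerators is asymptotic to $(q/t)^{2|{\boldsymbol\lambda}|}$ times an explicit monomial in $s,q,\kappa$; this power cancels the $t^{2|{\boldsymbol\lambda}|}$ of the monomial, so the limit exists coefficient by coefficient (for each $\widetilde p$-degree only finitely many ${\boldsymbol\lambda}$ occur):
\begin{align*}
f^{\widehat{\mathfrak gl}_N{\rm Toda}}(x,\widetilde{p}|s,\kappa|q)=\sum_{\boldsymbol\lambda}b_{\boldsymbol\lambda}(s,\kappa|q)\,\widetilde{p}^{\,|{\boldsymbol\lambda}|}\prod_{\beta=1}^N\prod_{\alpha\geq 1}(x_{\alpha+\beta}/x_{\alpha+\beta-1})^{\lambda^{(\beta)}_\alpha},
\end{align*}
with $b_{\boldsymbol\lambda}(s,\kappa|q)=q^{\,|{\boldsymbol\lambda}|}\,M_{\boldsymbol\lambda}(s|q,\kappa)\big/\prod_{i,j=1}^N\Nk^{(j-i|N)}_{\lambda^{(i)},\lambda^{(j)}}(s_j/s_i|q,\kappa)$, where $M_{\boldsymbol\lambda}$ is the product of the leading monomials above; in particular $b_\emptyset=1$.

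Second, set $\Psi=x^\lambda f^{\widehat{\mathfrak gl}_N{\rm Toda}}(x,\widetilde{p}|s,\kappa|q)$ with $s_i=q^{\lambda_i}$ and expand the three factors of ${\mathcal T}^{\widehat{\mathfrak gl}_N}(\kappa)$: $q^\Delta$ acts diagonally, $q^\Delta x^\nu=q^{\frac12\sum_i\nu_i^2}x^\nu$; $T_{\kappa,\widetilde{p}}$ multiplies the $\widetilde{p}$-homogeneous part of degree $d$ by $\kappa^d$; and $\prod_{i=1}^N(\widetilde{p}qx_{i+1}/x_i;q)_\infty^{-1}=\sum_{\vec n\in\mathbb{Z}_{\geq 0}^N}\bigl(\prod_i q^{n_i}/(q;q)_{n_i}\bigr)\,\widetilde{p}^{\,|\vec n|}\prod_i(x_{i+1}/x_i)^{n_i}$ by the $q$-binomial theorem. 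Writing $m({\boldsymbol\lambda})$ as in the text and ${\rm wt}(\vec n)=\sum_i n_i(e_{i+1}-e_i)$ for the standard basis vectors $e_i$ (with $e_{N+1}=e_1$), matching the coefficient of $\widetilde{p}^{\,d}x^{\lambda+\gamma}$ on both sides of ${\mathcal T}^{\widehat{\mathfrak gl}_N}(\kappa)\Psi=q^{\frac12\sum_i\lambda_i^2}\Psi$ turns (\ref{non-stationary-Toda-eq}) into the family of identities, one for each $d\geq 0$ and lattice vector $\gamma$,
\begin{align*}
\sum_{\substack{\vec n\geq 0,\,{\boldsymbol\lambda}\\ |\vec n|+|{\boldsymbol\lambda}|=d,\ {\rm wt}(\vec n)+m({\boldsymbol\lambda})=\gamma}}\Bigl(\prod_{i=1}^N\frac{q^{n_i}}{(q;q)_{n_i}}\Bigr)\kappa^{|{\boldsymbol\lambda}|}\,q^{\sum_i\lambda_i m_i({\boldsymbol\lambda})+\frac12\sum_i m_i({\boldsymbol\lambda})^2}\,b_{\boldsymbol\lambda}(s,\kappa|q)=\sum_{\substack{{\boldsymbol\lambda}\\ |{\boldsymbol\lambda}|=d,\ m({\boldsymbol\lambda})=\gamma}}b_{\boldsymbol\lambda}(s,\kappa|q).
\end{align*}
The $\vec n=0$ term on the left contributes $\kappa^dq^{\sum_i\lambda_i\gamma_i+\frac12\sum_i\gamma_i^2}$ times exactly the right-hand side, and $1-\kappa^dq^{\sum_i\lambda_i\gamma_i+\frac12\sum_i\gamma_i^2}\neq 0$ for $d\geq 1$; hence these identities form a triangular recursion in $d$, so the eigenvalue equation has a unique solution in $x^\lambda\,\mathbb{Q}(q,s,\kappa)[[\widetilde{p}x_2/x_1,\ldots,\widetilde{p}x_1/x_N]]$ with constant term $x^\lambda$, and it remains only to check that $f^{\widehat{\mathfrak gl}_N{\rm Toda}}$ satisfies them.

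The verification of this identity is where the main difficulty lies. My strategy would be induction on $d=|{\boldsymbol\lambda}|$: on the right-hand side remove a removable box from some $\lambda^{(\beta)}$, and use the telescoping recombinations of $q$-shifted factorials already appearing in the proofs of Lemmas \ref{lem-phi-1} and \ref{lem-phi-2} (together with $(z;q)_{n+1}=(1-zq^n)(z;q)_n$) to factor $b_{\boldsymbol\lambda}/b_{{\boldsymbol\lambda}\setminus\square}$ into a single rational prefactor; summing these prefactors over the ways to grow ${\boldsymbol\lambda}$ by a box and comparing with the sum over $\vec n$ on the left should collapse by the $q$-exponential identity $\sum_{n\geq 0}z^n/(q;q)_n=1/(z;q)_\infty$, which is exactly the mechanism by which the affine $q$-Toda potential $\prod_i(\widetilde{p}qx_{i+1}/x_i;q)_\infty$ is regenerated. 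Two more conceptual routes, consistent with the paper's remark that ${\mathcal T}^{\widehat{\mathfrak gl}_N}(\kappa)$ is an outcome of Poincar\'e duality in the Toda limit, are: (i) to \emph{derive} (\ref{non-stationary-Toda-eq}) from Conjecture \ref{main-con}\,(\ref{conjecture-Poincare}) together with the commutativity $[{\mathcal T}^{\widehat{\mathfrak gl}_N}(1),D^{\widehat{\mathfrak gl}_N{\rm Toda}}]=0$ by passing to $t\rightarrow 0$; and (ii) to realize ${\mathcal T}^{\widehat{\mathfrak gl}_N}(\kappa)$ as a composition of degenerate screened vertex operators in the framework of Theorem \ref{affine-screening} and read off (\ref{non-stationary-Toda-eq}) as an intertwining relation on ${\mathcal F}_N$, in the spirit of Whittaker-vector constructions of $q$-Toda eigenfunctions. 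In every case the combinatorial identity displayed above is the crux of the matter.
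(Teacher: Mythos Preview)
The statement you are trying to prove is a \emph{Conjecture} in the paper, not a theorem: the paper does not give an unconditional proof. What the paper does establish is Proposition~\ref{non-stationary-Toda}, namely that (\ref{non-stationary-Toda-eq}) \emph{follows from} the Poincar\'e duality conjecture (\ref{conjecture-Poincare}). This conditional derivation is precisely your route~(i), which you list only as an alternative. The paper's mechanism is short: your explicit limit computation is the content of Lemma~\ref{lem-1}, with the resulting formula recorded as Proposition~\ref{non-stationary-Toda-2} (your monomial $M_{\boldsymbol\lambda}$ is $\prod_i s_i^{-m_i}q^{-m_i^2/2}\kappa^{-|\lambda^{(i)}|}$); the observation that $q^{\Delta}T_{\kappa,\widetilde p}$ exactly cancels the factors $s_i^{-m_i}q^{-m_i^2/2}\kappa^{-|\lambda^{(i)}|}$ and produces the prefactor $q^{\frac12\sum_i\lambda_i^2}$ is Lemmas~\ref{lem-2}--\ref{lem-3}; after this, the eigenvalue equation (\ref{non-stationary-Toda-eq}) becomes literally the identity
\[
\prod_{i=1}^N\frac{1}{(\widetilde p\,q\,x_{i+1}/x_i;q)_\infty}\cdot\lim_{t\to 0}f^{\widehat{\mathfrak gl}_N}(x,t\widetilde p\,|\,s,\kappa\,|\,q,t)
\;=\;f^{\widehat{\mathfrak gl}_N{\rm Toda}}(x,\widetilde p\,|\,s,\kappa\,|\,q),
\]
which is the $t\to 0$ limit (\ref{toda-Poincare-eq}) of the Poincar\'e duality and is left open.

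Your main approach is therefore genuinely different in ambition: the displayed coefficient identity you extract is \emph{equivalent} to this $t\to 0$ Poincar\'e duality, so a successful box-adding induction would prove more than the paper does. The gap is that the inductive step is only a wish: you assert that removing a box and applying $(z;q)_{n+1}=(1-zq^n)(z;q)_n$ will make the ratios $b_{\boldsymbol\lambda}/b_{{\boldsymbol\lambda}\setminus\square}$ collapse against the $q$-exponential expansion of the potential, but you give no computation showing this actually happens, and the cross terms coming from the double product $\prod_{i,j}\Nk^{(j-i|N)}_{\lambda^{(i)},\lambda^{(j)}}(s_j/s_i|q,\kappa)$ make this far from routine. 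Until that step is carried out, what you have written is a reformulation of the conjecture plus a uniqueness argument for the eigenfunction, not a proof.
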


\begin{prp} \label{non-stationary-Toda}
Let $s_i=q^{\lambda_i}$. 
The Poincar\'e duality conjecture (\ref{conjecture-Poincare}) in Conjecture \ref{main-con} implies
the non-stationary eigenvalue equation (\ref{non-stationary-Toda-eq}).
\end{prp}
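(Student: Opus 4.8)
The strategy is to feed the Poincar\'e duality \eqref{conjecture-Poincare} through the affine $q$-Toda limit $t\to 0$ with $p=t\widetilde p$. Unravelling Definition \ref{normalized-form}, \eqref{conjecture-Poincare} is the identity
\begin{align*}
f^{\widehat{\mathfrak gl}_N}(x,p|s,\kappa|q,q/t)=\frac{\Gamma(x,p|q,t)}{\Gamma(x,p|q,q/t)}\,f^{\widehat{\mathfrak gl}_N}(x,p|s,\kappa|q,t),
\end{align*}
where $\Gamma(x,p|q,t)=\prod_{1\le i<j\le N}\frac{(p^{j-i}tx_j/x_i;q,p^N)_\infty}{(p^{j-i}qx_j/x_i;q,p^N)_\infty}\prod_{1\le i\le j\le N}\frac{(p^{N-j+i}tx_i/x_j;q,p^N)_\infty}{(p^{N-j+i}qx_i/x_j;q,p^N)_\infty}$ is the infinite-product prefactor appearing in Definition \ref{normalized-form}. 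First I would set $p=t\widetilde p$ and pass to the limit $t\to0$ coefficientwise in the formal expansion, in the variables $\widetilde p\,x_{i+1}/x_i$ and (after the Taylor expansion of Definition \ref{normalized-form}) in $\kappa$, checking that each coefficient --- a rational function of $t$ --- is regular there; this interchange of limit and expansion is the one genuinely technical bookkeeping point.

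The heart of the matter is the behaviour of the three factors. With $p=t\widetilde p$, every argument entering $\Gamma(x,t\widetilde p|q,t)$ carries a positive power of $t$, so $\Gamma(x,t\widetilde p|q,t)\to1$; in $\Gamma(x,t\widetilde p|q,q/t)$ only the numerator factors with $j=i+1$ and the one with $i=1,j=N$ survive, and $\lim_{t\to0}\Gamma(x,t\widetilde p|q,q/t)=\prod_{i=1}^N(q\widetilde p\,x_{i+1}/x_i;q)_\infty=:\mathcal{R}_0(\widetilde p)$, which is precisely the scalar prefactor of ${\mathcal T}^{\widehat{\mathfrak gl}_N}(\kappa)$ in Definition \ref{NST-op}. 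In $f^{\widehat{\mathfrak gl}_N}(x,t\widetilde p|s,\kappa|q,t)$ every numerator $\Nk^{(j-i|N)}_{\lambda^{(i)},\lambda^{(j)}}(ts_j/s_i|q,\kappa)$ degenerates to $\Nk^{(j-i|N)}_{\lambda^{(i)},\lambda^{(j)}}(0|q,\kappa)=1$, so that
\begin{align*}
G(x,\widetilde p|s,\kappa|q):=\lim_{t\to0}f^{\widehat{\mathfrak gl}_N}(x,t\widetilde p|s,\kappa|q,t)=\sum_{{\boldsymbol\lambda}}\frac{\prod_{\beta=1}^N\prod_{\alpha\ge1}(\widetilde p\,x_{\alpha+\beta}/x_{\alpha+\beta-1})^{\lambda^{(\beta)}_\alpha}}{\prod_{i,j=1}^N\Nk^{(j-i|N)}_{\lambda^{(i)},\lambda^{(j)}}(s_j/s_i|q,\kappa)},
\end{align*}
whereas $f^{\widehat{\mathfrak gl}_N}(x,t\widetilde p|s,\kappa|q,q/t)\to f^{\widehat{\mathfrak gl}_N{\rm Toda}}(x,\widetilde p|s,\kappa|q)$ by the definition of the Toda function (here one uses that $\Nk^{(j-i|N)}_{\lambda^{(i)},\lambda^{(j)}}(u|q,\kappa)$ has degree $2|{\boldsymbol\lambda}|$ in $u$, which cancels the $t^{2|{\boldsymbol\lambda}|}$ produced by the variables). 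Taking the limit in the displayed identity therefore yields the clean relation
\begin{align*}
G(x,\widetilde p|s,\kappa|q)=\mathcal{R}_0(\widetilde p)\,f^{\widehat{\mathfrak gl}_N{\rm Toda}}(x,\widetilde p|s,\kappa|q).
\end{align*}

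Next I would combine this with the factorisation ${\mathcal T}^{\widehat{\mathfrak gl}_N}(\kappa)=\mathcal{R}_0(\widetilde p)^{-1}\,q^{\Delta}\,T_{\kappa,\widetilde p}$ from Definition \ref{NST-op}. Applying ${\mathcal T}^{\widehat{\mathfrak gl}_N}(\kappa)$ to $x^\lambda f^{\widehat{\mathfrak gl}_N{\rm Toda}}(x,\widetilde p)$, multiplying through by $\mathcal{R}_0(\widetilde p)$ and using $\mathcal{R}_0(\widetilde p)\,x^\lambda f^{\widehat{\mathfrak gl}_N{\rm Toda}}(x,\widetilde p)=x^\lambda G(x,\widetilde p)$, the eigenvalue equation \eqref{non-stationary-Toda-eq} with $s_i=q^{\lambda_i}$ becomes equivalent to
\begin{align*}
q^{\Delta}\bigl[x^\lambda f^{\widehat{\mathfrak gl}_N{\rm Toda}}(x,\kappa\widetilde p)\bigr]=q^{\frac12\sum_{i=1}^N\lambda_i^2}\,x^\lambda G(x,\widetilde p).
\end{align*}
Now $q^{\Delta}$ acts diagonally on monomials, $q^{\Delta}x^\mu=q^{\frac12\sum_i\mu_i^2}x^\mu$, and $f^{\widehat{\mathfrak gl}_N{\rm Toda}}$ and $G$ are honest power series in $\widetilde p\,x_{i+1}/x_i$; expanding $\mathcal{R}_0(\widetilde p)=\prod_i(q\widetilde p\,x_{i+1}/x_i;q)_\infty$ by the $q$-binomial theorem (the computation behind Proposition \ref{kappa=0}) and comparing coefficients of monomials $\widetilde p^{|\mathbf m|}x^{\lambda+e(\mathbf m)}$ turns this into a finite convolution identity for the Taylor coefficients $A_{\mathbf m}$ of $x^\lambda f^{\widehat{\mathfrak gl}_N{\rm Toda}}$,
\begin{align*}
\kappa^{|\mathbf m|}\,q^{\,\frac12\sum_i(\lambda_i+e_i)^2-\frac12\sum_i\lambda_i^2}\,A_{\mathbf m}=\sum_{\mathbf k+\mathbf n=\mathbf m}c_{\mathbf k}\,A_{\mathbf n},
\end{align*}
where $e=e(\mathbf m)$ is the $x$-exponent attached to $\widetilde p^{|\mathbf m|}$ and the $c_{\mathbf k}$ are the $q$-binomial coefficients of $\mathcal{R}_0$. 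Since the $\mathbf k=0$ term on the right is $A_{\mathbf m}$ itself, this recursion has a unique solution once $A_0=1$ is fixed; one then checks that the limit-definition of $f^{\widehat{\mathfrak gl}_N{\rm Toda}}$ satisfies it, by induction on $|\mathbf m|$ from the explicit form of $G$.

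\textbf{The main obstacle} is precisely this last identity: it says that the affine Nekrasov weights specialised at $s_i=q^{\lambda_i}$ --- the affine $q$-Toda ($q$-Whittaker) weights delivered by the limit above --- intertwine with the $q$-Gaussian operator $q^{\Delta}$ exactly as prescribed, and settling it requires the explicit asymptotics of the $\Nk^{(j-i|N)}$ at this specialisation together with a somewhat delicate $q$-series manipulation. A secondary, purely technical point is the justification of the termwise passage to the limit $t\to0$ in the formal identity coming from \eqref{conjecture-Poincare}.
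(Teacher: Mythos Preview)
Your reduction is exactly the paper's: rewrite \eqref{conjecture-Poincare} as an identity between $f^{\widehat{\mathfrak gl}_N}(x,p|s,\kappa|q,t)$ and $f^{\widehat{\mathfrak gl}_N}(x,p|s,\kappa|q,q/t)$ with the infinite-product prefactor, set $p=t\widetilde p$, let $t\to 0$, and obtain
\[
\prod_{i=1}^N\frac{1}{(\widetilde p\,qx_{i+1}/x_i;q)_\infty}\,G(x,\widetilde p|s,\kappa|q)=f^{\widehat{\mathfrak gl}_N{\rm Toda}}(x,\widetilde p|s,\kappa|q),
\]
so that \eqref{non-stationary-Toda-eq} is equivalent to
\[
q^{\Delta}T_{\kappa,\widetilde p}\,x^\lambda f^{\widehat{\mathfrak gl}_N{\rm Toda}}(x,\widetilde p)
=q^{\frac12\sum_i\lambda_i^2}\,x^\lambda\,G(x,\widetilde p).
\]
Where you diverge from the paper is in treating this last identity as the ``main obstacle'' and attacking it via a recursion coming from the $q$-binomial expansion of $\mathcal R_0(\widetilde p)$. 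That detour is unnecessary: the identity is a one-line termwise check once you push the $t\to 0$ limit \emph{on both sides}, not just on $G$. The paper does exactly that (its Lemma~\ref{lem-1} and Proposition~\ref{non-stationary-Toda-2}): the leading $t$-behaviour of $\prod_{i,j}\Nk^{(j-i|N)}_{\lambda^{(i)},\lambda^{(j)}}(qs_j/ts_i|q,\kappa)$ produces precisely
\[
\prod_{i=1}^N s_i^{-m_i}\,q^{-m_i^2/2}\,\kappa^{-|\lambda^{(i)}|}
\]
relative to the coefficient of $G$, so the $\boldsymbol\lambda$-term of $f^{\widehat{\mathfrak gl}_N{\rm Toda}}$ equals that of $G$ times this explicit Gaussian/$\kappa$ factor. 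Since $T_{\kappa,\widetilde p}\,\widetilde p^{|\boldsymbol\lambda|}=\kappa^{|\boldsymbol\lambda|}\widetilde p^{|\boldsymbol\lambda|}$ and $q^{\Delta}\,x^\lambda\prod_i x_i^{m_i}=q^{\frac12\sum_i\lambda_i^2}\prod_i s_i^{m_i}q^{m_i^2/2}\,x^\lambda\prod_i x_i^{m_i}$ (the paper's Lemma~\ref{lem-2}), the displayed identity follows by inspection---no induction, no convolution with the $q$-binomial coefficients of $\mathcal R_0$. In short, compute the explicit Toda-limit coefficients of \emph{both} $f(\,\cdot\,|q,t)$ and $f(\,\cdot\,|q,q/t)$, compare, and the ``delicate $q$-series manipulation'' you anticipated never materializes.
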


Recall that we have set 
$
m_i=m_i({\boldsymbol \lambda})=
\sum_{\beta=1}^N{ \sum_{\alpha\geq 1 \atop \alpha+\beta\equiv i\,\,({\rm mod}\,N)}}\lambda^{(\beta)}_\alpha-\lambda^{(\beta+1)}_\alpha.
$
Hence we have
$ \prod_{\beta=1}^N\prod_{\alpha\geq 1} (  x_{\alpha+\beta}/x_{\alpha+\beta-1})^{\lambda^{(\beta)}_\alpha}=
 \prod_{i=1}^N x_i^{m_i}.
$

\begin{prp}\label{non-stationary-Toda-2}
We have
\begin{align}
&f^{\widehat{\mathfrak gl}_N{\rm Toda}}(x,\widetilde{p}|s,\kappa|q)\label{non-stationary-Toda-f-ex}\\
=&\,\,
\sum_{\lambda^{(1)},\ldots,\lambda^{(N)}\in {\mathsf P}}
 \prod_{i=1}^N s_i^{-m_i} q^{-m_i^2/2} \kappa^{-|\lambda^{(i)}|} \cdot
\prod_{i,j=1}^N
{1 \over 
\Nk^{(j-i|N)}_{\lambda^{(i)},\lambda^{(j)}} (s_j/s_i|q,\kappa)}
\cdot \prod_{\beta=1}^N\prod_{\alpha= 1}^{N-\beta} ( \widetilde{p} x_{\alpha+\beta}/x_{\alpha+\beta-1})^{\lambda^{(\beta)}_\alpha}.
\nonumber
\end{align}
\end{prp}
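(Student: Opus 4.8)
\emph{Proof plan.} The plan is to read the $t\to 0$ limit off the defining series of $f^{\widehat{\mathfrak gl}_N}$ term by term. Putting $p=t\widetilde p$ and replacing the last argument by $q/t$, the monomial attached to ${\boldsymbol \lambda}=(\lambda^{(1)},\dots,\lambda^{(N)})$ becomes
\begin{align*}
\prod_{\beta=1}^N\prod_{\alpha\geq 1}\Bigl(\frac{t\widetilde p\,x_{\alpha+\beta}}{(q/t)\,x_{\alpha+\beta-1}}\Bigr)^{\lambda^{(\beta)}_\alpha}
=(t^2/q)^{|{\boldsymbol \lambda}|}\,\widetilde p^{\,|{\boldsymbol \lambda}|}\prod_{i=1}^N x_i^{m_i},
\end{align*}
while the numerator Nekrasov factors acquire the argument $(q/t)s_j/s_i$ and the denominator factors $\Nk^{(j-i|N)}_{\lambda^{(i)},\lambda^{(j)}}(s_j/s_i|q,\kappa)$ are unaffected. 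Since for each fixed monomial in the $\widetilde p\,x_{\alpha+\beta}/x_{\alpha+\beta-1}$ only finitely many ${\boldsymbol \lambda}$ contribute, I may interchange limit and summation, so the whole statement reduces to the single monomial identity
\begin{align}
\lim_{t\to 0}(t^2/q)^{|{\boldsymbol \lambda}|}\prod_{i,j=1}^N\Nk^{(j-i|N)}_{\lambda^{(i)},\lambda^{(j)}}\bigl((q/t)s_j/s_i\,|\,q,\kappa\bigr)
=\prod_{i=1}^N s_i^{-m_i}q^{-m_i^2/2}\kappa^{-|\lambda^{(i)}|}.\label{key-monomial}
\end{align}

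To establish \eqref{key-monomial} I would use the elementary asymptotics $(v;q)_n=(-v)^nq^{\binom n2}(1+O(v^{-1}))$ as $v\to\infty$, applied to each shifted factorial in $\Nk^{(j-i|N)}_{\lambda^{(i)},\lambda^{(j)}}((q/t)s_j/s_i|q,\kappa)$; each such factor has argument $(q/t)(s_j/s_i)q^{a}\kappa^{b}$ for suitable $a,b$ read off from ${\boldsymbol \lambda}$. First one checks that the numerator has a pole of order exactly $2|{\boldsymbol \lambda}|$ in $t$ at $t=0$: by Lemma \ref{lam-alpha} the order is $\sum_{i,j}\bigl(|\lambda^{(i)}|^{(j-i)}+|\lambda^{(j)}|^{(i-j-1)}\bigr)$, and this equals $2|{\boldsymbol \lambda}|$ because, for fixed $i$, the residues $j-i$ run over all of $\mathbb{Z}/N\mathbb{Z}$ while $\sum_{r}|\lambda|^{(r)}=|\lambda|$. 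Hence the prefactor $(t^2/q)^{|{\boldsymbol \lambda}|}$ makes the limit finite and nonzero.

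It then remains to evaluate the leading coefficient. Collecting the contributions $(-v)^nq^{\binom n2}$: the signs multiply to $(-1)^{2|{\boldsymbol \lambda}|}=1$; the powers of $s_j/s_i$ recombine, once more via Lemma \ref{lam-alpha} and the definition of $m_l$, into $\prod_l s_l^{-m_l}$; the powers of $\kappa$ recombine into $\kappa^{-|{\boldsymbol \lambda}|}=\prod_i\kappa^{-|\lambda^{(i)}|}$. The remaining task is to show that the total $q$-exponent, after absorbing the $q^{-|{\boldsymbol \lambda}|}$ coming from $(t^2/q)^{|{\boldsymbol \lambda}|}$, equals $-\tfrac12\sum_i m_i^2$; equivalently, writing each factor's $q$-contribution in the form $(n+a)^2-a^2$, that $\sum_{\mathrm{factors}}\bigl((n+a)^2-a^2\bigr)=-\sum_i m_i^2$. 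Here the squares $(n+a)^2$ are of the shape $(\lambda^{(i)}_{b}-\lambda^{(j)}_{a})^2$ (and similarly with the two partitions interchanged), and the idea is that when the $N$ blocks $\Nk^{(k|N)}$ are recombined into the unsplit Nekrasov products $\Nk_{\lambda,\lambda'}$ these squares telescope along the rows and columns of the diagrams and collapse to the quadratic form $\sum_i m_i^2$ in the column sums $m_i$.

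I expect this telescoping — the appearance of the Toda ``kinetic'' term $\prod_i q^{-m_i^2/2}$ out of the Gaussian exponents $q^{\binom n2}$ of the boundary asymptotics — to be the only genuine obstacle; everything else is a re-indexing of exactly the kind already performed in the proof of Lemma \ref{lem-phi-2}. Granting \eqref{key-monomial}, substituting back and using $\widetilde p^{\,|{\boldsymbol \lambda}|}\prod_i x_i^{m_i}=\prod_{\beta=1}^N\prod_{\alpha\geq 1}(\widetilde p\,x_{\alpha+\beta}/x_{\alpha+\beta-1})^{\lambda^{(\beta)}_\alpha}$ yields \eqref{non-stationary-Toda-f-ex}.
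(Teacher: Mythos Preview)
Your approach is essentially the same as the paper's: both argue term by term in ${\boldsymbol\lambda}$ and reduce the statement to the monomial limit you label \eqref{key-monomial}. In the paper this limit is exactly the second half of Lemma~\ref{lem-1}, and the proof of the Proposition is literally the one line ``follows from Lemma~\ref{lem-1}''; that lemma itself is stated without proof under the blanket remark that such results are standard and follow by straightforward computation. So you are not doing anything different --- you are supplying the computation the paper declines to write out.

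Your outline of that computation is correct in structure: the asymptotics $(v;q)_n\sim(-v)^nq^{\binom n2}$, the pole count via Lemma~\ref{lam-alpha} giving order $2|{\boldsymbol\lambda}|$, and the bookkeeping of the $s$- and $\kappa$-powers are all as they should be. The one place you flag as an ``obstacle'' --- assembling the $q$-exponents into $-\tfrac12\sum_i m_i^2$ --- is indeed just a (somewhat unpleasant) quadratic telescoping over the two families of shifted factorials in $\Nk^{(k|N)}$, and no new idea is required beyond writing it out; your description of how the $(n+a)^2-a^2$ pattern collapses is the right heuristic. In short, the proposal is correct and coincides with the paper's route, with the added value that you sketch what the paper leaves implicit.
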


\subsection{Proof of Propositions \ref{non-stationary-Toda} and  \ref{non-stationary-Toda-2}}

The Poincar\'e duality conjecture (\ref{conjecture-Poincare}) in Conjecture \ref{main-con} is recast as 
\begin{align*}
&\prod_{1\leq i<j\leq N}
{(p^{j-i}tx_j/x_i;q,p^N)_\infty \over (p^{j-i}qx_j/tx_i;q,p^N)_\infty }
\cdot 
\prod_{1\leq i\leq j\leq N}
{(p^{N-j+i}t x_i/x_j;q,p^N)_\infty \over (p^{N-j+i}qx_i/tx_j;q,p^N)_\infty }
f^{\widehat{\mathfrak gl}_N}(x,p|s,\kappa|q,t)\nonumber\\
=& f^{\widehat{\mathfrak gl}_N}(x,p|s,\kappa|q,q/t). 
\end{align*}
Setting $p=\widetilde{p} t$ and taking the limit $t \rightarrow 0$, we have 
\begin{align}
&\prod_{i=1}^N {1\over (\widetilde{p} q x_{i+1}/x_i;q)_\infty}
\lim_{t\rightarrow 0} f^{\widehat{\mathfrak gl}_N}(x,t \widetilde{p}|s,\kappa|q,t)
=
f^{\widehat{\mathfrak gl}_N{\rm Toda}}(x, \widetilde{p}|s,\kappa|q). \label{toda-Poincare-eq}
\end{align}

\begin{lem}\label{lem-1}
We have
\begin{align*}
&\lim_{t\rightarrow 0}
\prod_{i,j=1}^N
{\Nk^{(j-i|N)}_{\lambda^{(i)},\lambda^{(j)}} (t s_j/s_i|q,\kappa) \over 
\Nk^{(j-i|N)}_{\lambda^{(i)},\lambda^{(j)}} (s_j/s_i|q,\kappa)}
=
\prod_{i,j=1}^N
{1\over 
\Nk^{(j-i|N)}_{\lambda^{(i)},\lambda^{(j)}} (s_j/s_i|q,\kappa)},
\nonumber \\
&\lim_{t\rightarrow 0} \,
(\widetilde{p} t^2/q)^{\sum_i |\lambda^{(i)}|}
\prod_{i,j=1}^N
{\Nk^{(j-i|N)}_{\lambda^{(i)},\lambda^{(j)}} (q s_j/ts_i|q,\kappa) \over 
\Nk^{(j-i|N)}_{\lambda^{(i)},\lambda^{(j)}} (s_j/s_i|q,\kappa)}
\\=&
 \prod_{i=1}^N s_i^{-m_i} q^{-m_i^2/2} (\widetilde{p}/\kappa)^{|\lambda^{(i)}|} \cdot
\prod_{i,j=1}^N
{1 \over 
\Nk^{(j-i|N)}_{\lambda^{(i)},\lambda^{(j)}} (s_j/s_i|q,\kappa)}
\nonumber .
\end{align*}
\end{lem}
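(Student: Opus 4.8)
\textit{Proof proposal.}

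The first identity is immediate. Each numerator $\Nk^{(j-i|N)}_{\lambda^{(i)},\lambda^{(j)}}(ts_j/s_i|q,\kappa)$ is a \emph{finite} product (only finitely many parts of the $\lambda^{(k)}$ are nonzero) of $q$-shifted factorials of the form $(ts_j/s_i\cdot q^{a}\kappa^{b};q)_c$, and $(0;q)_c=1$; letting $t\to0$ sends each such factorial to $1$, hence each numerator to $1$, which gives the first claim. For the second identity I would set $u_{ij}=qs_j/ts_i$, so that $u_{ij}\to\infty$ as $t\to0$, and use the elementary asymptotics $(uq^{a}\kappa^{b};q)_c=(-u)^c q^{ac+\binom c2}\kappa^{bc}\bigl(1+O(1/u)\bigr)$ factor by factor. (Equivalently one may use the reflection identity $\Nk^{(k)}_{\lambda,\mu}(u|q,\kappa)=(\text{monomial in }u,q,\kappa)\cdot\Nk^{(-k-1)}_{\mu,\lambda}(q\kappa/u|q,\kappa)$, proved box by box from $1-X=-X(1-X^{-1})$, whose reflected argument $q\kappa/u_{ij}=\kappa t\,s_i/s_j\to0$, so only the monomial prefactor survives.) Writing $d_{ij}$ for the total of the exponents $c$ appearing in $\Nk^{(j-i|N)}_{\lambda^{(i)},\lambda^{(j)}}$, one obtains $\Nk^{(j-i|N)}_{\lambda^{(i)},\lambda^{(j)}}(u_{ij}|q,\kappa)=t^{-d_{ij}}\bigl(C_{ij}+O(t)\bigr)$ with $C_{ij}=(-qs_j/s_i)^{d_{ij}}\prod_{\square}A_{\square}$, the product running over the boxes occurring in the factor and $A_{\square}$ the attached $q,\kappa$-monomial.

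By Lemma \ref{lam-alpha} the first (resp.\ second) product of $\Nk^{(j-i|N)}_{\lambda^{(i)},\lambda^{(j)}}$ contributes $|\lambda^{(i)}|^{(j-i)}$ (resp.\ $|\lambda^{(j)}|^{(i-j-1)}$) to $d_{ij}$; summing over $j$ (resp.\ $i$) runs the residue over all of $\mathbb Z/N\mathbb Z$, so $\sum_{i,j}d_{ij}=2|\boldsymbol\lambda|$. This matches the order of vanishing of the prefactor $(\widetilde p t^2/q)^{|\boldsymbol\lambda|}$, so the limit exists and equals $(\widetilde p/q)^{|\boldsymbol\lambda|}\prod_{i,j}C_{ij}$; it remains to prove $\prod_{i,j}C_{ij}=q^{|\boldsymbol\lambda|}\kappa^{-|\boldsymbol\lambda|}\prod_{i=1}^N s_i^{-m_i}q^{-m_i^2/2}$, which, multiplied by $(\widetilde p/q)^{|\boldsymbol\lambda|}$, gives $\prod_i(\widetilde p/\kappa)^{|\lambda^{(i)}|}s_i^{-m_i}q^{-m_i^2/2}$ as asserted. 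The signs cancel, $(-1)^{\sum d_{ij}}=(-1)^{2|\boldsymbol\lambda|}=1$.

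I would then match the three monomial parts separately. The $s$-dependence of $\prod C_{ij}$ is $\prod_{i,j}(s_j/s_i)^{d_{ij}}$, which after an Abel summation in the rows of the $\lambda^{(i)}$ collapses to $\prod_i s_i^{-m_i}$; this is precisely the $s$-analogue of the identity $\prod_{\beta,\alpha}(x_{\alpha+\beta}/x_{\alpha+\beta-1})^{\lambda^{(\beta)}_\alpha}=\prod_i x_i^{m_i}$ recorded just before the definition of $\alpha(p|s,\kappa|q,t)$. The $\kappa$-dependence, coming from $\prod_\square A_\square$, is $\kappa^{E_\kappa}$; summing the $\kappa$-exponents $(b'-a')(\lambda^{(i)}_{b'}-\lambda^{(i)}_{b'+1})$ and $(\alpha-\beta-1)(\lambda^{(j)}_\beta-\lambda^{(j)}_{\beta+1})$ over the relevant ranges, with the free index run over all residues, and then applying Abel summation gives $E_\kappa=\sum_{i=1}^N\sum_{b\ge1}\bigl((b-1)-b\bigr)\lambda^{(i)}_b=-|\boldsymbol\lambda|$, i.e.\ the factor $\kappa^{-|\boldsymbol\lambda|}$.

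There remains the $q$-part, and this is where I expect the real work to be. Collecting the $q$-exponent of $\prod C_{ij}$ gives $\sum_{i,j}d_{ij}+\sum_{\mathrm{groups}}\bigl(ac+\tbinom c2\bigr)=2|\boldsymbol\lambda|+\tfrac12\sum_{\mathrm{groups}}\bigl((a+c)^2-a^2\bigr)-|\boldsymbol\lambda|=|\boldsymbol\lambda|-\tfrac12\sum_{\mathrm{groups}}\bigl(a^2-(a+c)^2\bigr)$, so the whole statement reduces to the single quadratic reindexing identity
\[
\sum_{\mathrm{groups}}\bigl(a^2-(a+c)^2\bigr)=\sum_{i=1}^N m_i^2,
\]
where for a group of the first product $a=-\lambda^{(j)}_{a'}+\lambda^{(i)}_{b'+1}$, $a+c=-\lambda^{(j)}_{a'}+\lambda^{(i)}_{b'}$, and analogously for the second product. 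This is the main obstacle: the left side is a sum of differences of squares over rows indexed modulo $N$, and it has to be regrouped into the $N$ perfect squares $m_i^2$. The strategy I would follow is to first combine the first- and second-product contributions (swapping $i\leftrightarrow j$ in the latter), telescope the resulting alternating sum over consecutive residues to produce the cross-products $(\lambda^{(\beta)}_{a'}-\lambda^{(\beta+1)}_{a'})(\lambda^{(i)}_{b'}-\lambda^{(i)}_{b'+1})$ with $\beta\equiv i+b'-a'\ (\mathrm{mod}\ N)$, and then recognize the total, after one further Abel summation, as $\sum_{i}m_i^2$ directly from the definition of $m_i$. Everything outside this identity is the routine Nekrasov-factor bookkeeping already used above.
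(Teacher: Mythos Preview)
The paper does not actually prove Lemma~\ref{lem-1}; it falls under the blanket remark that ``we state several Propositions, Lemmas etc.\ without proof; all such results are standard and can be proved by straightforward computations using definitions.'' So there is no proof in the paper to compare your proposal against---your outline \emph{is} the kind of direct computation the authors have in mind.

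On the substance: the first identity is complete and correct. For the second, your bookkeeping is sound. The total $u$-degree $\sum_{i,j}d_{ij}=2|\boldsymbol\lambda|$ via Lemma~\ref{lam-alpha} is right and matches the $t^{2|\boldsymbol\lambda|}$ in the prefactor; the sign cancels; the $s$-exponent of $s_k$ in $\prod_{i,j}(s_j/s_i)^{d_{ij}}$ indeed equals $\sum_i\bigl(|\lambda^{(i)}|^{(k-i)}-|\lambda^{(i)}|^{(k-i-1)}\bigr)=-m_k$ directly from the definitions of $|\lambda|^{(\alpha)}$ and $m_k$; and your Abel-summation argument for $E_\kappa=-|\boldsymbol\lambda|$ checks out (the two products contribute $\sum_i\sum_b(b-1)\lambda^{(i)}_b$ and $-\sum_j\sum_\beta\beta\lambda^{(j)}_\beta$ respectively). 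The remaining $q$-identity
\[
\sum_{\text{groups}}\bigl(a^2-(a+c)^2\bigr)=\sum_{i=1}^N m_i^2
\]
is correctly isolated as the crux, and your proposed telescoping-and-regrouping strategy is the natural one; it would need to be written out in full to count as a complete proof, but there is no missing idea.
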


\begin{lem}\label{lem-2}
We have
$
\Delta \,\,x^\lambda  \prod_{i=1}^N x_i^{m_i}=\sum_{i=1}^N
\left(
{1\over 2} \lambda_i^2+ \lambda_i m_i +{m_i^2\over 2} \right)x^\lambda  \prod_{i=1}^N x_i^{m_i},
$
namely
$
q^\Delta \,\,x^\lambda  \prod_{i=1}^N x_i^{m_i}=
q^{{1\over 2} \sum_i \lambda_i^2}
\prod_{i=1}^N s_i^{m_i} q^{m_i^2/2} \cdot x^\lambda  \prod_{i=1}^N x_i^{m_i}.
$
\end{lem}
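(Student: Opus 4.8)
The plan is to exploit that $x^\lambda\prod_{i=1}^N x_i^{m_i}$ is a single monomial, hence a simultaneous eigenvector for all the Euler operators $\vartheta_i=x_i\,\partial/\partial x_i$. First I would record the elementary action $\vartheta_i\,x^\lambda\prod_{k=1}^N x_k^{m_k}=(\lambda_i+m_i)\,x^\lambda\prod_{k=1}^N x_k^{m_k}$, which holds because the exponent of $x_i$ in this monomial is exactly $\lambda_i+m_i$ (recall $x^\lambda=\prod_i x_i^{\lambda_i}$ with $\lambda=(\lambda_1,\dots,\lambda_N)$, while $m_i=m_i(\boldsymbol\lambda)$). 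Applying $\vartheta_i$ once more gives $\vartheta_i^2\,x^\lambda\prod_k x_k^{m_k}=(\lambda_i+m_i)^2\,x^\lambda\prod_k x_k^{m_k}$.

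Next I would sum over $i$ and insert the factor $1/2$ from $\Delta=\tfrac12\sum_{i=1}^N\vartheta_i^2$, then expand $\tfrac12(\lambda_i+m_i)^2=\tfrac12\lambda_i^2+\lambda_i m_i+\tfrac12 m_i^2$; this yields the first displayed identity. For the $q^\Delta$ statement I would note that $\Delta$ acts on the monomial as the scalar $E:=\sum_{i=1}^N\bigl(\tfrac12\lambda_i^2+\lambda_i m_i+\tfrac12 m_i^2\bigr)$, so $q^\Delta$ multiplies it by $q^E$; factoring $q^E=q^{\frac12\sum_i\lambda_i^2}\cdot\prod_i q^{\lambda_i m_i}\cdot\prod_i q^{m_i^2/2}$ and using $s_i=q^{\lambda_i}$ to rewrite $q^{\lambda_i m_i}=s_i^{m_i}$ gives the asserted formula.

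There is essentially no obstacle here; the lemma is pure bookkeeping. The only point deserving a word is that $q^\Delta$ is well defined on the relevant ring of formal series because $\Delta$ is locally finite — each monomial is an eigenvector — so exponentiating term by term is legitimate, which is precisely the convention already in force in Definition \ref{NST-op}.
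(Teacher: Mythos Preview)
Your argument is correct and is precisely the straightforward computation the paper has in mind; indeed the paper states this lemma without proof, classifying it among the results that ``are standard and can be proved by straightforward computations using definitions.'' There is nothing to add.
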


\begin{lem}\label{lem-3}
{}From Lemmas \ref{lem-1} and \ref{lem-2}, we have
\begin{align*}
&q^{{1\over 2} \sum_i \lambda_i^2}\,\,
x^\lambda
\lim_{t\rightarrow 0} f^{\widehat{\mathfrak gl}_N}(x,t \widetilde{p}|s,\kappa|q,t)\nonumber\\
=&\,\,
q^\Delta  T_{\kappa,\widetilde{p}} \,\,x^\lambda
\lim_{t\rightarrow 0} f^{\widehat{\mathfrak gl}_N}(x,t \widetilde{p}|s,\kappa|q,t/q)
=q^\Delta  T_{\kappa,\widetilde{p}} \,\,x^\lambda f^{\widehat{\mathfrak gl}_N{\rm Toda}}(x, \widetilde{p}|s,\kappa|q).
\end{align*}
\end{lem}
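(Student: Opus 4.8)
The plan is to unwind both ends of the claimed chain of equalities into explicit sums over $N$-tuples of partitions ${\boldsymbol\lambda}=(\lambda^{(1)},\dots,\lambda^{(N)})$ and to match them termwise, all the manipulations living in the $\widetilde{p}$-graded ring of formal power series in the $\widetilde{p}\,x_{\alpha+\beta}/x_{\alpha+\beta-1}$ (with $s_i=q^{\lambda_i}$ throughout). First I would treat the left-most expression. Substituting $p=t\widetilde{p}$ into the defining series of $f^{\widehat{\mathfrak gl}_N}$, each monomial becomes $\prod_{\beta=1}^N\prod_{\alpha\ge1}(\widetilde{p}\,x_{\alpha+\beta}/x_{\alpha+\beta-1})^{\lambda^{(\beta)}_\alpha}=\widetilde{p}^{\,|{\boldsymbol\lambda}|}\prod_{i=1}^N x_i^{m_i}$, while the coefficient is the ratio $\prod_{i,j}\Nk^{(j-i|N)}_{\lambda^{(i)},\lambda^{(j)}}(ts_j/s_i|q,\kappa)/\Nk^{(j-i|N)}_{\lambda^{(i)},\lambda^{(j)}}(s_j/s_i|q,\kappa)$. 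For each fixed total degree $|{\boldsymbol\lambda}|$ only finitely many ${\boldsymbol\lambda}$ contribute, so the limit $t\to 0$ can be taken coefficient by coefficient; by the first equality of Lemma \ref{lem-1} it sends the coefficient to $\prod_{i,j}\Nk^{(j-i|N)}_{\lambda^{(i)},\lambda^{(j)}}(s_j/s_i|q,\kappa)^{-1}$, so that
\[
\lim_{t\to 0}f^{\widehat{\mathfrak gl}_N}(x,t\widetilde{p}|s,\kappa|q,t)=\sum_{{\boldsymbol\lambda}}\Bigl(\prod_{i,j=1}^N\Nk^{(j-i|N)}_{\lambda^{(i)},\lambda^{(j)}}(s_j/s_i|q,\kappa)^{-1}\Bigr)\,\widetilde{p}^{\,|{\boldsymbol\lambda}|}\prod_{i=1}^N x_i^{m_i}.
\]

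Next I would apply $q^{\frac12\sum_i\lambda_i^2}x^\lambda$ termwise and rewrite the prefactors as an action of $q^\Delta T_{\kappa,\widetilde{p}}$. By Lemma \ref{lem-2}, since $s_i=q^{\lambda_i}$, the operator $q^\Delta$ multiplies the monomial $x^\lambda\prod_i x_i^{m_i}$ by $q^{\frac12\sum_i\lambda_i^2}\prod_i s_i^{m_i}q^{m_i^2/2}$, hence $q^{\frac12\sum_i\lambda_i^2}\,x^\lambda\prod_i x_i^{m_i}=\bigl(\prod_i s_i^{-m_i}q^{-m_i^2/2}\bigr)\,q^\Delta\bigl(x^\lambda\prod_i x_i^{m_i}\bigr)$; similarly $\widetilde{p}^{\,|{\boldsymbol\lambda}|}=\kappa^{-|{\boldsymbol\lambda}|}\,T_{\kappa,\widetilde{p}}\,\widetilde{p}^{\,|{\boldsymbol\lambda}|}$ with $\kappa^{-|{\boldsymbol\lambda}|}=\prod_i\kappa^{-|\lambda^{(i)}|}$. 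Since $q^\Delta$ involves only the $x$-variables, $T_{\kappa,\widetilde{p}}$ only $\widetilde{p}$, and the inserted factors $\prod_i s_i^{-m_i}q^{-m_i^2/2}\kappa^{-|\lambda^{(i)}|}$ are ${\boldsymbol\lambda}$-dependent scalars, both operators may be pulled out in front of the sum. This rewrites the left-most expression as $q^\Delta T_{\kappa,\widetilde{p}}\,x^\lambda$ applied to $\sum_{{\boldsymbol\lambda}}\prod_{i=1}^N s_i^{-m_i}q^{-m_i^2/2}\kappa^{-|\lambda^{(i)}|}\cdot\prod_{i,j=1}^N\Nk^{(j-i|N)}_{\lambda^{(i)},\lambda^{(j)}}(s_j/s_i|q,\kappa)^{-1}\cdot\prod_{\beta=1}^N\prod_{\alpha\ge1}(\widetilde{p}\,x_{\alpha+\beta}/x_{\alpha+\beta-1})^{\lambda^{(\beta)}_\alpha}$.

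It remains to identify this inner sum with $f^{\widehat{\mathfrak gl}_N{\rm Toda}}(x,\widetilde{p}|s,\kappa|q)$, which is exactly the content of Proposition \ref{non-stationary-Toda-2}: carrying out the substitution $p=t\widetilde{p}$ but keeping the last argument equal to $q/t$, the monomial picks up the factor $(t^2/q)^{|{\boldsymbol\lambda}|}$ and the coefficient becomes $\prod_{i,j}\Nk^{(j-i|N)}_{\lambda^{(i)},\lambda^{(j)}}(qs_j/ts_i|q,\kappa)/\Nk^{(j-i|N)}_{\lambda^{(i)},\lambda^{(j)}}(s_j/s_i|q,\kappa)$, so the second equality of Lemma \ref{lem-1} computes the termwise $t\to 0$ limit and shows it equals precisely that inner sum; by the Definition of $f^{\widehat{\mathfrak gl}_N{\rm Toda}}$ this says $\lim_{t\to 0}f^{\widehat{\mathfrak gl}_N}(x,t\widetilde{p}|s,\kappa|q,q/t)=f^{\widehat{\mathfrak gl}_N{\rm Toda}}(x,\widetilde{p}|s,\kappa|q)$. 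Substituting this identification back produces the middle expression $q^\Delta T_{\kappa,\widetilde{p}}x^\lambda\lim_{t\to 0}f^{\widehat{\mathfrak gl}_N}(x,t\widetilde{p}|s,\kappa|q,q/t)$ and then $q^\Delta T_{\kappa,\widetilde{p}}x^\lambda f^{\widehat{\mathfrak gl}_N{\rm Toda}}(x,\widetilde{p}|s,\kappa|q)$, completing the chain.

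Because the genuine analytic input — the $t\to 0$ asymptotics of the Nekrasov ratios, produced by the $q$-binomial theorem — is already packaged inside Lemma \ref{lem-1}, Lemma \ref{lem-3} itself presents no real obstacle; the only points requiring care are routine bookkeeping: justifying the coefficient-wise limits (each coefficient of a fixed $\prod_i(\widetilde{p}x_{i+1}/x_i)^{d_i}$ is a finite sum of rational functions of $t$ with a common finite value at $t=0$, which is what Lemma \ref{lem-1} supplies) and keeping the exponent conventions $m_i$, $|{\boldsymbol\lambda}|$ — fixed by $\prod_\beta\prod_\alpha(px_{\alpha+\beta}/tx_{\alpha+\beta-1})^{\lambda^{(\beta)}_\alpha}=(p/t)^{|{\boldsymbol\lambda}|}\prod_i x_i^{m_i}$ — consistent across the two degenerations, so that the scalars coming from Lemma \ref{lem-2} cancel the powers of $q$, $\kappa$ and $s$ emerging from Lemma \ref{lem-1} exactly as stated.
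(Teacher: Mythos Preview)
Your proof is correct and follows precisely the approach the paper indicates: compute both $t\to 0$ limits termwise via the two formulas of Lemma~\ref{lem-1}, then use Lemma~\ref{lem-2} to trade the resulting scalar prefactors $\prod_i s_i^{-m_i}q^{-m_i^2/2}\kappa^{-|\lambda^{(i)}|}$ for the action of $q^\Delta T_{\kappa,\widetilde{p}}$. You have also, correctly, written the last argument in the middle expression as $q/t$ (matching the definition of $f^{\widehat{\mathfrak gl}_N{\rm Toda}}$); the paper's printed $t/q$ there is a typographical slip.
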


\noindent
{\it Proof of Proposition \ref{non-stationary-Toda-2}.}
The formula (\ref{non-stationary-Toda-f-ex}) follows from Lemma \ref{lem-1}.
\qed

\noindent
{\it Proof of Proposition \ref{non-stationary-Toda}.}
Lemma \ref{lem-3} shows that (\ref{toda-Poincare-eq}), namely 
the Poincar\'e duality conjecture in $t\rightarrow 0$, is
nothing but the non-stationary Toda equation (\ref{non-stationary-Toda-eq}).
\qed

\subsection{Limit from non-stationary affine $q$-Toda to stationary affine $q$-Toda: the limit $\kappa\rightarrow 1$. }
Even though the operator ${\mathcal T}^{\widehat{\mathfrak gl}_N}(\kappa)$ itself has no singularity at $\kappa=1$, 
and the commutativity 
$[{\mathcal T}^{\widehat{\mathfrak gl}_N}(1),D^{\widehat{\mathfrak gl}_N{\rm Toda}}]=0$ takes place, 
the behavior of $f^{\widehat{\mathfrak gl}_N{\rm Toda}}$ in the vicinity of $\kappa=1$ is quite subtle. 
This is because the operator ${\mathcal T}^{\widehat{\mathfrak gl}_N}(1)$ commutes with multiplication by 
any function in $\widetilde{p}$, producing drastic degenerations in the spectrum, resulting 
Jordan blocks of infinite size lacking any eigenspaces.
However, we still find some nice structure.

\begin{dfn}
Let $\alpha(\widetilde{p})=\alpha(\widetilde{p}|s,\kappa|q)=\sum_{d\geq 0} \widetilde{p}^{Nd} 
\alpha_{d}(s,\kappa|q)$ be the constant term of $f^{\widehat{\mathfrak gl}_N{\rm Toda}}(x, \widetilde{p}|s,\kappa|q)$ with respect to $x_i$'s.
Namely,
\begin{align}
\alpha(\widetilde{p})
=&
\sum_{
\lambda^{(1)},\ldots,\lambda^{(N)}\in {\mathsf P}\atop 
m_1=\cdots= m_N=0}
(\widetilde{p}/\kappa)^{|{\boldsymbol \lambda}|}
\prod_{i,j=1}^N
{1 \over 
\Nk^{(j-i|N)}_{\lambda^{(i)},\lambda^{(j)}} (s_j/s_i|q,\kappa)}.
\nonumber
\end{align}

\end{dfn}

\begin{con}\label{analyticity-Toda}
We have the properties:
\begin{enumerate}
\item
The series $f^{\widehat{\mathfrak gl}_N{\rm Toda}}$ is convergent on a certain domain.
With respect to $\kappa$, it is regular on a certain punctured disk 
$\{\kappa \in {\mathbb C}| |\kappa-1| <r,\kappa\neq 1\}$ .
\item
The $f^{\widehat{\mathfrak gl}_N{\rm Toda}}$ and $\alpha(\widetilde{p})$ are essential singular at $\kappa=1$. 
The $\alpha_{d}(s,\kappa|q)$ has a pole of degree $d$ in $\kappa$ at $\kappa=1$.
\item
The ratio $f^{\widehat{\mathfrak gl}_N{\rm Toda}}/\alpha(\widetilde{p})$ is regular at $\kappa=1$. 
\item
The ratio 
${\alpha(\widetilde{p}) / \alpha(\kappa \widetilde{p})}$ is regular at $\kappa=1$. 
The limit $\lim_{\kappa\rightarrow 1}{\alpha(\widetilde{p}) / \alpha(\kappa \widetilde{p})}$  is a nontrivial function, 
which is Taylor expanded in $\widetilde{p}^N$. 
\end{enumerate}
\end{con}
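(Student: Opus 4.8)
The plan is to deduce everything from the explicit series of Proposition~\ref{non-stationary-Toda-2}, which exhibits $f^{\widehat{\mathfrak gl}_N{\rm Toda}}(x,\widetilde p|s,\kappa|q)$ as a power series in $\widetilde p$ whose coefficients are finite sums of the rational functions $\kappa^{-|{\boldsymbol\lambda}|}\prod_{i,j=1}^N\Nk^{(j-i|N)}_{\lambda^{(i)},\lambda^{(j)}}(s_j/s_i|q,\kappa)^{-1}$ times a monomial in $x$. The first, structural, step is a local analysis of these factors at $\kappa=1$. I would show that for generic $s$ only the diagonal factors are singular there: a factor $(s_j/s_i\,q^a\kappa^b;q)_n$ with $i\neq j$ specialises at $\kappa=1$ to the nonzero element $(s_j/s_i\,q^a;q)_n\in\mathbb{Q}(s,q)$, whereas the diagonal factors $\Nk^{(0|N)}_{\mu,\mu}(1|q,\kappa)$ contain factors $(q^c\kappa^b;q)_n$ with $-n+1\le c\le 0$, carrying a zero $1-\kappa^{b}$. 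Hence the singular behaviour at $\kappa=1$ of each $\boldsymbol\lambda$-term is governed by $\prod_{i=1}^N\Nk^{(0|N)}_{\lambda^{(i)},\lambda^{(i)}}(1|q,\kappa)^{-1}$, and I would compute its pole order at $\kappa=1$ combinatorially via the product-over-boxes form of the ($N$-factorised) $K$-theoretic Nekrasov factor recorded in the introduction, the zeros coming precisely from the arm-length-zero boxes of the $\lambda^{(i)}$ that survive the mod-$N$ thinning.

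With this in hand, part~(2) becomes the claim that, summing over the $\boldsymbol\lambda$ with $|{\boldsymbol\lambda}|=Nd$ and $m_1=\dots=m_N=0$, the maximal pole order at $\kappa=1$ is exactly $d$. The bound $\le d$ by the box count, using $m_i=0$, should be direct; for the lower bound I would exhibit an extremal configuration, e.g.\ the single column ${\boldsymbol\lambda}=((1^{Nd}),\emptyset,\dots,\emptyset)$ (and its $\omega$-images), for which $m_i=0$ and one checks that $\Nk^{(0|N)}_{(1^{Nd}),(1^{Nd})}(1|q,\kappa)$ vanishes to order $d$ at $\kappa=1$. Since the pole orders are unbounded in $d$, $\alpha(\widetilde p)$, and hence $f^{\widehat{\mathfrak gl}_N{\rm Toda}}$, acquire an essential singularity at $\kappa=1$. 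For part~(1), convergence on a domain follows from a routine majorant/ratio estimate on the coefficients in $\widetilde p$, as for the asymptotically free Macdonald eigenfunction of \S\ref{Macdonald-function}; regularity on a \emph{uniform} punctured disk $\{0<|\kappa-1|<r\}$ is subtler, because the individual $\boldsymbol\lambda$-terms, besides the pole at $\kappa=1$, carry denominator zeros $1-\kappa^{b}$ with $|b|$ growing like the $\widetilde p$-degree, so their singular sets accumulate at $\kappa=1$. The key lemma I would need — and which I expect to be the \textbf{main obstacle} — is that these root-of-unity singularities cancel once one sums over all $\boldsymbol\lambda$ of a fixed size, leaving only the pole at $\kappa=1$.

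Granting that cancellation, part~(3) should follow by showing that the leading singular part of $f^{\widehat{\mathfrak gl}_N{\rm Toda}}$ as $\kappa\to1$ factorises through $\alpha(\widetilde p)$. Concretely, I would argue that in the coefficient of $\widetilde p^{\,Nd+r}\prod_i x_i^{m_i}$ the pole order at $\kappa=1$ is at most $d$, with top Laurent coefficient equal to the top Laurent coefficient of $\alpha_{\le d}$ times a quantity regular at $\kappa=1$; the organising principle is that an arbitrary $\boldsymbol\lambda$ decomposes into an ``$m_\bullet=0$ core'' (contributing to $\alpha$) plus a remainder whose $\Nk$-factors are regular at $\kappa=1$. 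Dividing by $\alpha(\widetilde p)$ and checking this order by order in $\widetilde p$ and in the $x$-monomials gives regularity of $f^{\widehat{\mathfrak gl}_N{\rm Toda}}/\alpha(\widetilde p)$ at $\kappa=1$, i.e.\ the stationary affine $q$-Toda function.

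For part~(4) the idea is to exploit the scaling structure exposed by~(2): writing $\alpha_d(s,\kappa|q)=\sum_{k\ge0}c^{(k)}_d(s|q)\,(\kappa-1)^{-d+k}$ near $\kappa=1$, one gets $\alpha(\widetilde p)=\sum_{k\ge0}(\kappa-1)^k g_k\!\bigl(\widetilde p^{N}/(\kappa-1)\bigr)$ with $g_k(w)=\sum_d c^{(k)}_d w^{\,d-k}$, and likewise $\alpha(\kappa\widetilde p)=\sum_{k\ge0}(\kappa-1)^k g_k\!\bigl(\kappa^{N}\widetilde p^{N}/(\kappa-1)\bigr)$. Since $\kappa^{N}\widetilde p^{N}/(\kappa-1)-\widetilde p^{N}/(\kappa-1)=(1+\kappa+\dots+\kappa^{N-1})\widetilde p^{N}\to N\widetilde p^{N}$, the ratio $\alpha(\widetilde p)/\alpha(\kappa\widetilde p)$ tends, as $\kappa\to1$ within the punctured disk of~(1), to the large-$w$ limit of $g_0(w)/g_0(w+N\widetilde p^{N})$. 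Regularity of this limit, and its being a nontrivial Taylor series in $\widetilde p^{N}$, then hinge on showing that the leading scaling function $g_0$ has exponential (not faster) growth as $w\to\infty$, so that the limit has the form $\exp\bigl(N\widetilde p^{N}\beta(s|q)\bigr)$ up to corrections tending to $1$. Identifying $g_0$, or at least its growth rate $\beta$, explicitly — plausibly via the specialisation in Proposition~\ref{ch-gl1} and its $\kappa\to1$ degeneration — is the second hard point, and is closely tied to the cancellation lemma above.
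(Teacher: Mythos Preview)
The statement you are attempting to prove is Conjecture~\ref{analyticity-Toda} in the paper: it is explicitly labeled a \emph{conjecture} and the paper offers no proof for it. There is therefore nothing in the paper to compare your proposal against. The author uses this conjecture only as motivation for \emph{defining} the stationary affine $q$-Toda function $f^{{\rm st.}\,\widehat{\mathfrak gl}_N{\rm Toda}}$ and the eigenvalue $\varepsilon(\widetilde p|s|q)$, and then states the subsequent eigenvalue equations again as conjectures.

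As for the content of your proposal itself: you have correctly isolated the two genuine obstacles, and you flag them yourself. First, the coefficient of $\widetilde p^{\,d}$ in $f^{\widehat{\mathfrak gl}_N{\rm Toda}}$ is a finite sum of rational functions whose individual denominators contain factors $1-\kappa^{b}$ with $|b|$ unbounded in $d$; these contribute poles at roots of unity accumulating at $\kappa=1$, and you would need a cancellation lemma to show that after summing over $\boldsymbol\lambda$ only the pole at $\kappa=1$ survives with a \emph{uniform} punctured disk of regularity. No such lemma is available in the paper, and proving it is essentially the heart of the conjecture. Second, your argument for part~(4) rests on identifying the growth of the leading scaling function $g_0$; this is again not established anywhere and is tied to the same cancellation phenomenon. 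Your sketch for parts~(2) and~(3) is reasonable heuristics, but the ``core plus regular remainder'' decomposition you invoke for~(3) is not made precise, and it is not clear that the top Laurent coefficients factor as claimed without further work. In short, your proposal is a sensible outline of what a proof \emph{might} look like, but it does not close the gaps that make this a conjecture rather than a theorem.
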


Conjecture \ref{analyticity-Toda} suggests the following scheme 
for analyzing the eigenvalue problem of stationary $q$-affine Toda equation. 
\begin{dfn}
Assuming Conjecture \ref{analyticity-Toda}, set
\begin{align*}
&
f^{{\rm st.}\,\widehat{\mathfrak gl}_N{\rm Toda}}(x, \widetilde{p}|s|q)
={f^{\widehat{\mathfrak gl}_N{\rm Toda}}(x,\widetilde{p}|s,\kappa|q) \over \alpha(\widetilde{p}|s,\kappa|q) } \Biggl|_{\kappa=1},
\quad 
\varepsilon(\widetilde{p}|s|q)= q^{{1\over 2} \sum_{i=1}^n \lambda_i^2} 
{\alpha(\widetilde{p}|s,\kappa|q) \over \alpha(\kappa \widetilde{p}|s,\kappa|q)}\Biggl|_{\kappa=1}.
\end{align*}
\end{dfn}
{} The conjecture (\ref{non-stationary-Toda-eq}) implies that 
we have
\begin{align*}
{\mathcal T}^{\widehat{\mathfrak gl}_N}(\kappa) \,
x^\lambda {f^{\widehat{\mathfrak gl}_N{\rm Toda}}(x,\widetilde{p}|s,\kappa|q) \over \alpha(\widetilde{p}|s,\kappa|q)} =
q^{{1\over 2} \sum_{i=1}^n \lambda_i^2} {\alpha(\widetilde{p}|s,\kappa|q)\over \alpha(\kappa\widetilde{p}|s,\kappa|q)}\,
x^\lambda {f^{\widehat{\mathfrak gl}_N{\rm Toda}}(x,\widetilde{p}|s,\kappa|q) \over \alpha(\widetilde{p}|s,\kappa|q)}.
\end{align*}
Then Conjecture \ref{analyticity-Toda} asserts the existence of the limit $\kappa\rightarrow 1$ of this.
\begin{con}
We have
\begin{align*}
{\mathcal T}^{\widehat{\mathfrak gl}_N}(1)\, x^\lambda  f^{{\rm st.}\,\widehat{\mathfrak gl}_N{\rm Toda}}(x,\widetilde{p}|s|q)
 =\varepsilon(\widetilde{p}|s|q) \,
x^\lambda  f^{{\rm st.}\widehat{\mathfrak gl}_N{\rm Toda}} (x,\widetilde{p}|s|q).
\end{align*}
\end{con}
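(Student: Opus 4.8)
The plan is to deduce the statement by letting $\kappa\to1$ in the normalized eigenvalue equation produced by the non-stationary Toda conjecture (\ref{non-stationary-Toda-eq}). Assuming (\ref{non-stationary-Toda-eq}) together with Conjecture \ref{analyticity-Toda}, the display immediately preceding the statement is already at hand: for $\kappa$ in the punctured disk $\{0<|\kappa-1|<r\}$ one has
\begin{align*}
{\mathcal T}^{\widehat{\mathfrak gl}_N}(\kappa)\,x^\lambda\,\frac{f^{\widehat{\mathfrak gl}_N{\rm Toda}}(x,\widetilde p|s,\kappa|q)}{\alpha(\widetilde p|s,\kappa|q)}
=q^{\frac12\sum_i\lambda_i^2}\,\frac{\alpha(\widetilde p|s,\kappa|q)}{\alpha(\kappa\widetilde p|s,\kappa|q)}\,x^\lambda\,\frac{f^{\widehat{\mathfrak gl}_N{\rm Toda}}(x,\widetilde p|s,\kappa|q)}{\alpha(\widetilde p|s,\kappa|q)}.
\end{align*}
The reason dividing by $\alpha$ yields exactly this is that $\alpha$ is a function of $\widetilde p$ alone, so it commutes out past the multiplication operator $\prod_{i=1}^N 1/(\widetilde p qx_{i+1}/x_i;q)_\infty$ and past $q^{\Delta}$, while the shift $T_{\kappa,\widetilde p}$ turns $1/\alpha(\widetilde p)$ into $1/\alpha(\kappa\widetilde p)$; the remaining numerator ${\mathcal T}^{\widehat{\mathfrak gl}_N}(\kappa)[x^\lambda f^{\widehat{\mathfrak gl}_N{\rm Toda}}]$ is then evaluated by (\ref{non-stationary-Toda-eq}). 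It remains to pass to the limit $\kappa\to1$ on both sides.

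On the right-hand side the limit is handed to us by Conjecture \ref{analyticity-Toda}(3)--(4): the factor $x^\lambda f^{\widehat{\mathfrak gl}_N{\rm Toda}}/\alpha(\widetilde p)$ is regular at $\kappa=1$ with limit $x^\lambda f^{{\rm st.}\,\widehat{\mathfrak gl}_N{\rm Toda}}(x,\widetilde p|s|q)$, while the scalar $q^{\frac12\sum_i\lambda_i^2}\,\alpha(\widetilde p)/\alpha(\kappa\widetilde p)$ is regular at $\kappa=1$ with limit $\varepsilon(\widetilde p|s|q)$ by definition, so the right-hand side tends to $\varepsilon(\widetilde p|s|q)\,x^\lambda f^{{\rm st.}\,\widehat{\mathfrak gl}_N{\rm Toda}}$. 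The genuine work is on the left, namely justifying
\begin{align*}
\lim_{\kappa\to1}{\mathcal T}^{\widehat{\mathfrak gl}_N}(\kappa)\,g_\kappa={\mathcal T}^{\widehat{\mathfrak gl}_N}(1)\,\lim_{\kappa\to1}g_\kappa,\qquad g_\kappa:=x^\lambda\,\frac{f^{\widehat{\mathfrak gl}_N{\rm Toda}}(x,\widetilde p|s,\kappa|q)}{\alpha(\widetilde p|s,\kappa|q)}.
\end{align*}
The operator itself is manifestly regular at $\kappa=1$ (its only $\kappa$-dependence is the substitution $\widetilde p\mapsto\kappa\widetilde p$ in $T_{\kappa,\widetilde p}$), so what is needed is an interchange-of-limits statement compatible with the mode of convergence asserted in Conjecture \ref{analyticity-Toda}.

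I would phrase this as a continuity statement for the $\widetilde p$-adic (i.e. $\widetilde p$-degree) filtration on the space of formal series in the variables $\widetilde p x_{i+1}/x_i$ in which $f^{\widehat{\mathfrak gl}_N{\rm Toda}}$ lives (cf. Proposition \ref{non-stationary-Toda-2}). In the resulting topology of coefficientwise convergence, Conjecture \ref{analyticity-Toda}(1),(3) gives $g_\kappa\to g_1$, and ${\mathcal T}^{\widehat{\mathfrak gl}_N}(1)$ is triangular for the filtration: $q^{\Delta}$ multiplies the monomial $x^\lambda\prod_i x_i^{m_i}$ by the scalar $q^{\frac12\sum_i(\lambda_i+m_i)^2}$ (Lemma \ref{lem-2}), $T_{1,\widetilde p}$ is the identity, and multiplication by $\prod_i 1/(\widetilde p qx_{i+1}/x_i;q)_\infty$ only raises the $\widetilde p$-degree. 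Hence each coefficient of ${\mathcal T}^{\widehat{\mathfrak gl}_N}(\kappa)g_\kappa$ is a fixed polynomial in $\kappa$ and in finitely many coefficients of $g_\kappa$ (those of $\widetilde p$-degree no larger), so it converges as $\kappa\to1$ to the corresponding coefficient of ${\mathcal T}^{\widehat{\mathfrak gl}_N}(1)g_1$. Combining the two sides yields ${\mathcal T}^{\widehat{\mathfrak gl}_N}(1)\,x^\lambda f^{{\rm st.}\,\widehat{\mathfrak gl}_N{\rm Toda}}=\varepsilon(\widetilde p|s|q)\,x^\lambda f^{{\rm st.}\,\widehat{\mathfrak gl}_N{\rm Toda}}$.

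The real obstacle is upstream, in Conjecture \ref{analyticity-Toda} itself: one must show that $\alpha_{d}(s,\kappa|q)$ has a pole of order exactly $d$ at $\kappa=1$ and that these poles cancel both in $f^{\widehat{\mathfrak gl}_N{\rm Toda}}/\alpha(\widetilde p)$ and in $\alpha(\widetilde p)/\alpha(\kappa\widetilde p)$ --- this is the entire substance of the stationary-limit mechanism and is only conjectural here. Granting it, the argument above is essentially formal; the one point still deserving care is to fix once and for all the precise analytic meaning of ``regular at $\kappa=1$'' (coefficientwise versus locally uniform convergence of the $\widetilde p$-series) consistently with the continuity of ${\mathcal T}^{\widehat{\mathfrak gl}_N}(1)$ invoked above, and the triangular structure shows the coefficientwise reading to be the natural and sufficient one.
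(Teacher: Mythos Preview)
Your argument is essentially the same as the paper's: the paper derives the displayed normalized eigenvalue equation from (\ref{non-stationary-Toda-eq}) by dividing through by $\alpha(\widetilde p)$ and then invokes Conjecture \ref{analyticity-Toda} to pass to the limit $\kappa\to1$, arriving at the stated conjecture. The only addition you make is the explicit triangularity/continuity argument for the left-hand side, which the paper leaves implicit; since the statement is a Conjecture in the paper and not a theorem, the paper does not attempt a proof beyond this heuristic, and your proposal correctly identifies Conjecture \ref{analyticity-Toda} as the real content that remains unproved.
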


Because of the commutativity 
$[{\mathcal T}^{\widehat{\mathfrak gl}_N}(1),D^{\widehat{\mathfrak gl}_N{\rm Toda}}]=0$,
the $x^\lambda  f^{{\rm st.}\,\widehat{\mathfrak gl}_N{\rm Toda}}(x,\widetilde{p}|s|q)$ 
should also be the eigenfunction of the stationary affine Toda operator 
$D^{\widehat{\mathfrak gl}_N{\rm Toda}}$.
\begin{con}
We have
\begin{align*}
&D^{\widehat{\mathfrak gl}_N{\rm Toda}}\,x^\lambda  f^{{\rm st.}\,\widehat{\mathfrak gl}_N{\rm Toda}}(x,\widetilde{p}|s|q)
 =\varepsilon^D(\widetilde{p}|s|q) \,
x^\lambda  f^{{\rm st.}\,\widehat{\mathfrak gl}_N{\rm Toda}}(x,\widetilde{p}|s|q),\\
&
\varepsilon^D(\widetilde{p}|s|q)  =\sum_{i=1}^n s_i+\sum_{k=1}^\infty  \varepsilon^{D}_k(s|q) \widetilde{p}^{Nk}.
\end{align*}
\end{con}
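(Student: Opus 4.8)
The plan is to deduce this from the preceding conjecture, ${\mathcal T}^{\widehat{\mathfrak gl}_N}(1)\,x^\lambda f^{{\rm st.}\,\widehat{\mathfrak gl}_N{\rm Toda}}(x,\widetilde{p}|s|q)=\varepsilon(\widetilde{p}|s|q)\,x^\lambda f^{{\rm st.}\,\widehat{\mathfrak gl}_N{\rm Toda}}(x,\widetilde{p}|s|q)$, together with the commutativity $[{\mathcal T}^{\widehat{\mathfrak gl}_N}(1),D^{\widehat{\mathfrak gl}_N{\rm Toda}}]=0$, via a uniqueness statement for the eigenspace of ${\mathcal T}^{\widehat{\mathfrak gl}_N}(1)$. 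I would work throughout in the ambient space $x^\lambda\,R[[u_1,\dots,u_N]]$ with $u_i=\widetilde{p}x_{i+1}/x_i$ (indices cyclic mod $N$) and $R=\mathbb{Q}(q,s_1,\dots,s_N)$, noting that $u_1\cdots u_N=\widetilde{p}^N$ and that the ``constant term in $x$'' of such a series is precisely its component on the monomials $x^\lambda\widetilde{p}^{Nd}$. The first two steps are routine: (i) both $D^{\widehat{\mathfrak gl}_N{\rm Toda}}$ and ${\mathcal T}^{\widehat{\mathfrak gl}_N}(1)=\prod_{i=1}^N(\widetilde{p}qx_{i+1}/x_i;q)_\infty^{-1}q^{\Delta}$ preserve this space and commute with multiplication by any $\gamma(\widetilde{p}^N)\in R[[\widetilde{p}^N]]$ (for $D^{\widehat{\mathfrak gl}_N{\rm Toda}}$ because the shifts $T_{q,x_i}$ leave $\widetilde{p}$ untouched; for ${\mathcal T}^{\widehat{\mathfrak gl}_N}(1)$ because $q^{\Delta}$ and the scalar prefactor commute with multiplication by any function of $\widetilde{p}$ alone, $\widetilde{p}^N=u_1\cdots u_N$ being genuinely $x$-independent); and (ii) since $D^{\widehat{\mathfrak gl}_N{\rm Toda}}$ therefore commutes with multiplication by the scalar $\varepsilon(\widetilde{p}|s|q)$, applying it to the preceding conjecture and using the commutativity shows that $D^{\widehat{\mathfrak gl}_N{\rm Toda}}\big(x^\lambda f^{{\rm st.}\,\widehat{\mathfrak gl}_N{\rm Toda}}\big)$ again lies in the eigenspace $\{g:{\mathcal T}^{\widehat{\mathfrak gl}_N}(1)g=\varepsilon(\widetilde{p}|s|q)g\}$.

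The heart of the proof is the rank-one lemma: this eigenspace is the free rank-one $R[[\widetilde{p}^N]]$-module generated by $x^\lambda f^{{\rm st.}\,\widehat{\mathfrak gl}_N{\rm Toda}}$. I would establish it by triangularity. Writing $g=x^\lambda\sum_{\boldsymbol n}c_{\boldsymbol n}\prod_iu_i^{n_i}$, the operator $q^{\Delta}$ acts on $x^\lambda\prod_iu_i^{n_i}$ by the scalar $q^{e(\boldsymbol n)}$ with $e(\boldsymbol n)=\tfrac12\sum_i(\lambda_i+m_i)^2$ and $m_i=n_{i-1}-n_i$; the factor $\prod_i(\widetilde{p}qx_{i+1}/x_i;q)_\infty^{-1}$ (expanded by the $q$-binomial theorem) and the eigenvalue $\varepsilon(\widetilde{p}|s|q)=q^{e(\boldsymbol 0)}\beta(\widetilde{p}^N)$, $\beta=1+O(\widetilde{p}^N)$, contribute only monomials of $u$-degree $\ge|\boldsymbol n|$. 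Hence ${\mathcal T}^{\widehat{\mathfrak gl}_N}(1)-\varepsilon(\widetilde{p}|s|q)$ is a triangular $R[[\widetilde{p}^N]]$-linear operator with diagonal entry $q^{e(\boldsymbol n)}-\varepsilon(\widetilde{p}|s|q)$ at $x^\lambda\prod_iu_i^{n_i}$. Since $\sum_im_i=0$ one has $q^{e(\boldsymbol n)-e(\boldsymbol 0)}=q^{\frac12\sum_im_i^2}\prod_is_i^{m_i}$ with $\tfrac12\sum_im_i^2\in\mathbb{Z}$, a constant $\ne1$ in $R$ exactly when $\boldsymbol m\ne0$, i.e.\ exactly when $\boldsymbol n$ is not a multiple of $(1,\dots,1)$; for such $\boldsymbol n$ the diagonal entry is a unit of $R[[\widetilde{p}^N]]$ and $c_{\boldsymbol n}$ is determined recursively, whereas on the principal diagonal $\boldsymbol n=d(1,\dots,1)$ the diagonal entry is $q^{e(\boldsymbol 0)}(1-\beta(\widetilde{p}^N))\in\widetilde{p}^NR[[\widetilde{p}^N]]$, so the only surviving freedom is the choice of $\{c_{d(1,\dots,1)}\}_{d\ge0}$, i.e.\ one element of $R[[\widetilde{p}^N]]$. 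Because $x^\lambda f^{{\rm st.}\,\widehat{\mathfrak gl}_N{\rm Toda}}$ is by hypothesis an eigenfunction whose constant term in $x$ equals $x^\lambda$ (by the normalization $f^{{\rm st.}\,\widehat{\mathfrak gl}_N{\rm Toda}}=f^{\widehat{\mathfrak gl}_N{\rm Toda}}/\alpha(\widetilde{p})$), and ${\mathcal T}^{\widehat{\mathfrak gl}_N}(1)$ commutes with $R[[\widetilde{p}^N]]$-multiplication, the eigenspace is precisely $R[[\widetilde{p}^N]]\cdot x^\lambda f^{{\rm st.}\,\widehat{\mathfrak gl}_N{\rm Toda}}$.

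Granting this, step (ii) forces $D^{\widehat{\mathfrak gl}_N{\rm Toda}}\big(x^\lambda f^{{\rm st.}\,\widehat{\mathfrak gl}_N{\rm Toda}}\big)=\varepsilon^D(\widetilde{p}|s|q)\,x^\lambda f^{{\rm st.}\,\widehat{\mathfrak gl}_N{\rm Toda}}$ for a unique $\varepsilon^D(\widetilde{p}|s|q)\in R[[\widetilde{p}^N]]$, and then comparing the $\widetilde{p}^0$-components (the coefficients of $x^\lambda$), together with $D^{\widehat{\mathfrak gl}_N{\rm Toda}}|_{\widetilde{p}=0}=\sum_iT_{q,x_i}$ and $\big(x^\lambda f^{{\rm st.}\,\widehat{\mathfrak gl}_N{\rm Toda}}\big)|_{\widetilde{p}=0}=x^\lambda$, gives $\varepsilon^D(\widetilde{p}|s|q)|_{\widetilde{p}=0}=\sum_iq^{\lambda_i}=\sum_is_i$; hence $\varepsilon^D(\widetilde{p}|s|q)=\sum_{i=1}^Ns_i+\sum_{k\ge1}\varepsilon^D_k(s|q)\widetilde{p}^{Nk}$. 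Specializing $s_i=q^{\lambda_i}$ throughout then yields the claimed eigenvalue equation.

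The main obstacle is the rank-one lemma. The delicate point is to exhibit a partial order (or filtration) on the monomials under which ${\mathcal T}^{\widehat{\mathfrak gl}_N}(1)-\varepsilon(\widetilde{p}|s|q)$ is honestly triangular over the base ring $R[[\widetilde{p}^N]]$: the off-diagonal entries that lower the reduced exponent vector come with compensating powers $\widetilde{p}^{N\ell}$, so one must filter by total $\widetilde{p}$-degree rather than by reduced degree, and one must rule out accidental resonances $q^{e(\boldsymbol n)}=q^{e(\boldsymbol 0)}$ off the principal diagonal, which is exactly why $s_1,\dots,s_N$ should be kept generic until the very end. Beyond this, the whole argument is conditional: it presupposes the preceding conjecture ${\mathcal T}^{\widehat{\mathfrak gl}_N}(1)x^\lambda f^{{\rm st.}\,\widehat{\mathfrak gl}_N{\rm Toda}}=\varepsilon(\widetilde{p}|s|q)x^\lambda f^{{\rm st.}\,\widehat{\mathfrak gl}_N{\rm Toda}}$ --- and hence Conjecture \ref{analyticity-Toda} and the Poincar\'e-duality half of Conjecture \ref{main-con} --- so an unconditional proof of that conjecture remains the genuine open problem.
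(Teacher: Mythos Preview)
The paper does not prove this statement: it is stated as a conjecture, supported only by the one-line heuristic that $[{\mathcal T}^{\widehat{\mathfrak gl}_N}(1),D^{\widehat{\mathfrak gl}_N{\rm Toda}}]=0$ and hence the function ``should also be'' an eigenfunction of $D^{\widehat{\mathfrak gl}_N{\rm Toda}}$. Your proposal goes well beyond this: you attempt a genuine conditional deduction from the preceding conjecture, and you correctly isolate the missing ingredient as a rank-one lemma for the $\varepsilon(\widetilde{p}|s|q)$-eigenspace of ${\mathcal T}^{\widehat{\mathfrak gl}_N}(1)$ over $R[[\widetilde{p}^N]]$. The paper offers nothing of this sort; in fact it explicitly warns, just before this subsection, that ${\mathcal T}^{\widehat{\mathfrak gl}_N}(1)$ has an infinitely degenerate spectrum with Jordan blocks of infinite size, which is exactly why the rank-one statement for the specific $\widetilde{p}$-dependent eigenvalue is the nontrivial step.

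Your triangularity argument is essentially sound. One streamlining: once the preceding conjecture gives you a single eigenfunction $g_0=x^\lambda f^{{\rm st.}\,\widehat{\mathfrak gl}_N{\rm Toda}}$ with constant term $x^\lambda$, subtract $\gamma(\widetilde{p}^N)g_0$ from an arbitrary eigenfunction $g$ so as to kill the coefficients $c_{d(1,\dots,1)}$; then the invertibility of the diagonal entries $q^{e(\boldsymbol n)}-q^{e(\boldsymbol 0)}$ for $\boldsymbol n\notin\mathbb{Z}_{\ge0}(1,\dots,1)$ (generic $s$) forces the difference to vanish by induction on $|\boldsymbol n|$, and no separate consistency check at the diagonal multi-indices is needed. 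Your computation of the leading term $\varepsilon^D(\widetilde{p}|s|q)=\sum_i s_i+O(\widetilde{p}^N)$ from the $\widetilde{p}=0$ specialization is correct. As you acknowledge, the argument remains conditional on the preceding conjecture (hence on Conjecture~\ref{analyticity-Toda} and the Poincar\'e-duality half of Conjecture~\ref{main-con}), which the paper leaves open.
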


\section{Non-stationary and stationary elliptic Calogero-Sutherland equation: limt $q,t\rightarrow 1$}\label{nonstationary-eCS-system}

\subsection{Non-stationary elliptic Calogero-Sutherland equation}
Let $H^{\rm eCS}(p)$ be the elliptic Calogero-Sutherland operator in Definition \ref{H-eCS}.
The non-stationary elliptic Calogeor equation is defined to be the eigenvalue equation
\begin{align*}
\Bigl( k \,p{\partial \over \partial p} +H^{\rm eCS}(p|\beta) \Bigr)\psi(x,p|\lambda,k|\beta)=
\varepsilon(\lambda)\,\,\psi(x,p|\lambda,k|\beta). 
\end{align*}

\begin{dfn}
Set
$q=e^h, t=e^{\beta h}, s_i=q^{\lambda_i}$ ($1\leq i\leq N$), and $\kappa=e^{k h}$.
Define $f^{\rm eCS}(x,p|\lambda,k|\beta)$ and $\varphi^{\rm eCS}(x,p|\lambda,k|\beta)$ by the limits
\begin{align*}
&f^{\rm eCS}(x,p|\lambda,k|\beta)=\lim_{h\rightarrow 0} f^{{\mathfrak gl}_N}(x,p|s,\kappa|q,q/t),\\
&
\varphi^{\rm eCS}(x,p|\lambda,k|\beta)=\lim_{h\rightarrow 0} \varphi^{{\mathfrak gl}_N}(x,p|s,\kappa|q,q/t).
\end{align*}
Note that $\varphi^{\rm eCS}(x,p|\lambda,k|\beta)=\psi_0(x,p|\beta)f^{\rm eCS}(x,p|\lambda,k|\beta)$.
\end{dfn}

\begin{con}\label{nonstationary-eCS}
We have
\begin{align*}
\Bigl( k \,p{\partial \over \partial p} +H^{\rm eCS}(p|\beta) \Bigr)x^\lambda\varphi^{\rm eCS}(x,p|\lambda,k|\beta)=
{1\over 2}\sum_{i=1}^N \lambda_i^2\,\,x^\lambda\varphi^{\rm eCS}(x,p|\lambda,k|\beta). 
\end{align*}
Or equivalently that
\begin{align*}
&\left( k \left({1\over  \psi_0}
p {\partial \psi_0\over \partial p} \right)  +k \,p{\partial \over \partial p} +H_{\beta}(p)+
{ \beta(\beta-1)N \over 2}V_0(p^N) \right)x^\lambda f^{\rm eCS}(x,p|\lambda,k|\beta) \\
=&
{1\over 2}\sum_{i=1}^N \lambda_i^2\,\,x^\lambda f^{\rm eCS}(x,p|\lambda,k|\beta). \nonumber
\end{align*}
\end{con}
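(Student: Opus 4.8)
The plan is to obtain Conjecture~\ref{nonstationary-eCS} as the $q,t\to1$ degeneration of a single non-stationary identity at finite $(q,t)$, in the standard parametrization $q=e^h$, $t=e^{\beta h}$, $\kappa=e^{kh}$, $s_i=q^{\lambda_i}$, $h\to0$. Conceptually this finite-$(q,t)$ identity should be the common refinement of the Main Conjecture~\ref{MAIN} (which it must reproduce as $\kappa\to1$) and of the non-stationary affine $q$-Toda equation~(\ref{non-stationary-Toda-eq}) (which it must reproduce as $t\to0$ after $p=\widetilde{p}\,t$): an equation
\begin{align*}
{\mathcal H}^{\widehat{\mathfrak gl}_N}(q,t,\kappa)\,x^\lambda f^{\widehat{\mathfrak gl}_N}(x,p|s,\kappa|q,q/t)=q^{\frac12\sum_i\lambda_i^2}\,x^\lambda f^{\widehat{\mathfrak gl}_N}(x,p|s,\kappa|q,q/t),
\end{align*}
with $s$ specialized as in Conjecture~\ref{MAIN} and ${\mathcal H}^{\widehat{\mathfrak gl}_N}$ of the form (elliptic Ruijsenaars-type weight)$\cdot q^{\Delta}T_{\kappa,p}$, in the spirit of the operator ${\mathcal T}^{\widehat{\mathfrak gl}_N}(\kappa)$ of Definition~\ref{NST-op}; the paper's expectation is that this is a form of Felder--Varchenko's $q$-deformed KZB heat equation \cite{FV1,FV2,FV3}. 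The first concrete task is therefore to pin down ${\mathcal H}^{\widehat{\mathfrak gl}_N}$ precisely and prove this finite identity --- for instance by the same screened-vertex-operator bookkeeping used for Theorem~\ref{affine-screening}, or order by order in $p$ as in the forthcoming \cite{LNS}.

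Granting the finite identity, the second task is the limit $h\to0$. Writing $q^{\Delta}T_{\kappa,p}=e^{h(\Delta+kp\,\partial/\partial p)}$ (the two operators commute) and expanding the weight prefactor about $h=0$ (where, because $t\to1$, it reduces to $1$), the low-order terms of the $h$-expansion of ${\mathcal H}^{\widehat{\mathfrak gl}_N}-1$ are controlled by exactly the computations of Section~5: the expansion $\widetilde{\mathsf{D}}_x(p)=N+h\sum_i\vartheta_i+h^2H_\beta(p)+O(h^3)$ governs the weight contribution, and one reads off the operator $kp\,\partial/\partial p+H_\beta(p)+\tfrac{\beta(\beta-1)N}{2}V_0(p^N)$ with eigenvalue $\tfrac12\sum_i\lambda_i^2$ acting on $x^\lambda f^{\rm eCS}$. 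Conjugating by the quasi-ground state via $\psi_0\,(H_\beta(p)+\tfrac{\beta(\beta-1)N}{2}V_0(p^N))\,\psi_0^{-1}=H^{\rm eCS}(p)$ and $\varphi^{\rm eCS}=\psi_0\,f^{\rm eCS}$ then converts this into the $\varphi$-form of Conjecture~\ref{nonstationary-eCS}. This step also needs the convergence of $f^{\widehat{\mathfrak gl}_N}$ and the termwise validity of the limits defining $f^{\rm eCS}$ and $\varphi^{\rm eCS}$, so that $H^{\rm eCS}(p)$ commutes with $h\to0$.

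A more self-contained alternative, which avoids the unknown operator, is a direct induction on the degree in $p$. Both sides of the $\psi_0$-free form
\begin{align*}
\Bigl(k\bigl(\tfrac{1}{\psi_0}p\tfrac{\partial\psi_0}{\partial p}\bigr)+kp\tfrac{\partial}{\partial p}+H_\beta(p)+\tfrac{\beta(\beta-1)N}{2}V_0(p^N)\Bigr)x^\lambda f^{\rm eCS}=\tfrac12\sum_i\lambda_i^2\,x^\lambda f^{\rm eCS}
\end{align*}
are power series in the variables $p\,x_{i+1}/x_i$. The base case $p=0$ is the trigonometric Calogero--Sutherland eigenfunction identity, obtained from the Macdonald limit $p\to0$ of $f^{\widehat{\mathfrak gl}_N}$ in Section~\ref{Macdonald-function} followed by $h\to0$ (the $k$-dependence drops out at $p=0$). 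For the inductive step one expands the coefficients of $f^{\rm eCS}$ from the explicit ratio-of-Nekrasov-factor formula for $f^{\widehat{\mathfrak gl}_N}$, takes their $h\to0$ limits, and matches the resulting recursion against the recursion forced by $H^{\rm eCS}(p)$; the $p$-expansions of $V(z|p^N)$ and of $V_0(p^N)$ enter here, and the three-body interaction terms must be reorganized into two-body form exactly as in Lemma~\ref{theta-identity}. This is the $q,t\to1$ shadow of the analysis deferred to \cite{LNS}.

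The main obstacle is the one the paper already flags: at finite $(q,t)$ no non-stationary operator is yet known for which $f^{\widehat{\mathfrak gl}_N}(x,p|s,\kappa|q,q/t)$ is an eigenfunction, so the first route stands or falls with correctly identifying and verifying the $q$-KZB heat operator ${\mathcal H}^{\widehat{\mathfrak gl}_N}$, which is the genuinely hard step. The second route sidesteps this but replaces it with a combinatorially heavy induction: one must control the $h\to0$ asymptotics of the ratios of Nekrasov factors uniformly in the $p$-degree and show that the resulting recursion matches $H^{\rm eCS}(p)$ term by term, the bookkeeping being delicate precisely because of the three-body contributions and the constant $V_0(p^N)$. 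In either route one must also justify interchanging $h\to0$ with the action of $H^{\rm eCS}(p)$ and with the $p$-expansion, i.e.\ the convergence statements of the type of Conjecture~\ref{analyticity-Ruijsenaars}.
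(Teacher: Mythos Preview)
The statement you are addressing is labeled a \emph{Conjecture} in the paper, and the paper does not prove it. The paper only verifies two degenerate cases: the $\beta=1$ specialization (where $H^{\rm eCS}(p|1)=\tfrac12\Delta$ and the claim reduces to the heat equation for the numerator of the Weyl--Kac character formula), and the $K=0$, $\mu=\emptyset$ specialization (where $f^{\rm eCS}=1/(p^N;p^N)_\infty$ and the claim reduces to Langmann's kernel function identity, proved by the heat equation for the theta function). There is no general argument in the paper to compare your proposal against.

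Your proposal is an honest outline of two plausible research strategies, and you correctly identify their obstructions. But neither route is a proof. The first route presupposes a finite-$(q,t)$ non-stationary eigenvalue identity for $f^{\widehat{\mathfrak gl}_N}$ with respect to an operator ${\mathcal H}^{\widehat{\mathfrak gl}_N}$ that the paper explicitly states is \emph{not known} (see the paragraph following Conjecture~\ref{MAIN}); you cannot degenerate an identity you do not have. The second route is a recursion scheme, but you have not exhibited the recursion, shown that it closes, or matched it to the $p$-expansion of $H^{\rm eCS}$; the phrase ``this is the $q,t\to1$ shadow of the analysis deferred to \cite{LNS}'' is a deferral, not an argument. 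In short, your write-up is a fair summary of why the conjecture is open and what a proof might look like, but it does not supply the missing ingredient in either direction.
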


\subsection{Explicit formula for $f^{\rm eCS}(x,p|\lambda,k|\beta)$}
Denote by $(a)_n=a(a+1)\cdots (a+n-1)$ the ordinary shifted product.

\begin{dfn}
For $l\in \mathbb{ Z}/N\mathbb{Z}$, and $\lambda,\mu \in {\mathsf P}$, set
\begin{align*}
\Nk^{(l|N)}_{\lambda,\mu}(v|k)
=&
 \prod_{j\geq i\geq 1 \atop j-i \equiv l \,\,({\rm mod}\,N)}
(v-\mu_i+\lambda_{j+1}-(i-j)k )_{\lambda_j-\lambda_{j+1}}\\
&\times
\prod_{\beta\geq \alpha \geq 1  \atop \beta-\alpha \equiv -l-1 \,\,({\rm mod}\,N)}
(v +\lambda_{\alpha}-\mu_\beta +(\alpha-\beta-1)k)_{\mu_{\beta}-\mu_{\beta+1}}. \nonumber
\end{align*}
\end{dfn}

\begin{lem}
Let $q=e^h, t=e^{\beta h}, s_i=q^{\lambda_i}$ ($1\leq i\leq N$), and $\kappa=e^{k h}$.
We have
\begin{align*}
& f^{\rm eCS}(x,p|\lambda,k|\beta)=\lim_{h\rightarrow 0}f^{\widehat{\mathfrak gl}_N}(x,p|e^{h\lambda} ,e^{hk}|e^h,e^{h(1-\beta)})\\
=&
\sum_{\mu^{(1)},\ldots,\mu^{(N)}\in {\mathsf P}}
\prod_{i,j=1}^N
{\Nk^{(j-i|N)}_{\mu^{(i)},\mu^{(j)}} (1-\beta+\lambda_j-\lambda_i |k) \over \Nk^{(j-i|N)}_{\mu^{(i)},\mu^{(j)}} (\lambda_j-\lambda_i |k)}
\cdot \prod_{\beta=1}^N\prod_{\alpha\geq 1} ( p x_{\alpha+\beta}/x_{\alpha+\beta-1})^{\mu^{(\beta)}_\alpha}.\nonumber
\end{align*}
\end{lem}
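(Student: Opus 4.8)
The plan is to carry out the degeneration $h\to 0$ directly on the defining series, coefficient by coefficient in the formal power series. First I would set $q=e^h$, $t=e^{\beta h}$, $\kappa=e^{kh}$ and $s_i=e^{h\lambda_i}$. After the substitution $t\mapsto q/t$ in the definition of $f^{\widehat{\mathfrak gl}_N}$, the numerator Nekrasov factors then carry the argument $(q/t)s_j/s_i=e^{h(1-\beta+\lambda_j-\lambda_i)}$, the denominator ones carry $s_j/s_i=e^{h(\lambda_j-\lambda_i)}$, and the monomial prefactor becomes $\prod_{\beta,\alpha}\bigl(p\,t\,x_{\alpha+\beta}/(q\,x_{\alpha+\beta-1})\bigr)^{\lambda^{(\beta)}_\alpha}$. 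Since $f^{\widehat{\mathfrak gl}_N}(x,p|s,\kappa|q,q/t)$ is a formal power series in $px_2/x_1,\dots,px_1/x_N$ whose coefficient of any given monomial is a \emph{finite} sum over the $N$-tuples ${\boldsymbol\lambda}=(\lambda^{(1)},\dots,\lambda^{(N)})$ of the prescribed weight, it suffices to compute the $h\to 0$ limit of each individual summand; no interchange of the limit with an infinite sum is involved.

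The only analytic ingredient is the elementary limit $\lim_{h\to 0}(-h)^{-n}(e^{ha};e^h)_n=(a)_n$, immediate from $1-e^{h(a+i)}=-h(a+i)+O(h^2)$. By the $K$-theoretic form recorded in the excerpt, each $\Nk^{(l|N)}_{\lambda,\mu}(u|q,\kappa)$ is a product of linear factors $1-u\,q^{a}\kappa^{b}$ whose total number $D_{l,\lambda,\mu}$ depends only on $l,\lambda,\mu,N$ and not on the argument $u$. Applying the elementary limit to each factor with $u=e^{hv}$, and using $uq^{-\mu_i+\lambda_{j+1}}\kappa^{j-i}=e^{h(v-\mu_i+\lambda_{j+1}+(j-i)k)}$ and $uq^{\lambda_\alpha-\mu_\beta}\kappa^{\alpha-\beta-1}=e^{h(v+\lambda_\alpha-\mu_\beta+(\alpha-\beta-1)k)}$, I would obtain $\lim_{h\to 0}(-h)^{-D_{l,\lambda,\mu}}\Nk^{(l|N)}_{\lambda,\mu}(e^{hv}|e^h,e^{kh})=\Nk^{(l|N)}_{\lambda,\mu}(v|k)$, with $\Nk^{(l|N)}_{\lambda,\mu}(v|k)$ precisely the additive object defined just above the Lemma. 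Forming the ratio of the numerator factor (with $v=1-\beta+\lambda_j-\lambda_i$) and the denominator factor (with $v=\lambda_j-\lambda_i$), the equal powers of $-h$ cancel, giving
\[
\lim_{h\to 0}\frac{\Nk^{(j-i|N)}_{\mu^{(i)},\mu^{(j)}}((q/t)s_j/s_i|q,\kappa)}{\Nk^{(j-i|N)}_{\mu^{(i)},\mu^{(j)}}(s_j/s_i|q,\kappa)}=\frac{\Nk^{(j-i|N)}_{\mu^{(i)},\mu^{(j)}}(1-\beta+\lambda_j-\lambda_i|k)}{\Nk^{(j-i|N)}_{\mu^{(i)},\mu^{(j)}}(\lambda_j-\lambda_i|k)},
\]
and the same cancellation over the full product $\prod_{i,j=1}^N$ shows the ratio of Nekrasov products converges to the product of these additive ratios.

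Finally, since $t/q=e^{(\beta-1)h}\to 1$, the monomial prefactor tends to $\prod_{\beta,\alpha}(px_{\alpha+\beta}/x_{\alpha+\beta-1})^{\lambda^{(\beta)}_\alpha}$; combining this with the previous step, each coefficient of $f^{\widehat{\mathfrak gl}_N}(x,p|e^{h\lambda},e^{hk}|e^h,e^{h(1-\beta)})$ converges as $h\to 0$ to the corresponding coefficient of the stated series (after the harmless relabelling $\lambda^{(\beta)}\leftrightarrow\mu^{(\beta)}$ of the summation variables), which is the Lemma. I do not expect a serious obstacle: the computation is mechanical. The one thing to handle with care is genericity — one needs the limiting denominators $\Nk^{(j-i|N)}_{\mu^{(i)},\mu^{(j)}}(\lambda_j-\lambda_i|k)$ to be nonzero so that the individual ratios make sense in the limit. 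This holds for generic $\lambda$ and $k$ (the diagonal $i=j$ contributions are, up to sign, products of ordinary factorials, and the off-diagonal ones are nonzero polynomials in $k$), so the identity is to be read as one in $\mathbb{Q}(\lambda_1,\dots,\lambda_N,k,\beta)[[px_2/x_1,\dots,px_1/x_N]]$.
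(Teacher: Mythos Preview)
Your argument is correct and is exactly the straightforward computation one expects: pass to exponential parameters, use $\lim_{h\to 0}(-h)^{-n}(e^{ha};e^{h})_n=(a)_n$ factor by factor, observe that the total number of linear factors in $\Nk^{(l|N)}_{\lambda,\mu}(u|q,\kappa)$ is independent of $u$ so the powers of $(-h)$ cancel in each ratio, and let $e^{h(1-\beta)}\to 1$ in the monomial prefactor. The paper does not actually give a proof of this Lemma; it is one of the results covered by the blanket remark in the Introduction that such statements ``are standard and can be proved by straightforward computations using definitions,'' and your write-up is precisely that computation.
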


\subsection{$\widehat{\mathfrak sl}_N$ dominant integrable character case}
Let $K,\mu$ be as in Definition \ref{dominant-integrable-weights}. This in the differential case means that we set $k=-{K\over N}-1,\beta=1$.
Note that we have the $\widehat{\mathfrak sl}_N$  character as
$x^\mu f^{\rm eCS}(x,p|\mu,-(K+N)/N|1)={\rm ch}^{\widehat{\mathfrak sl}_N}_{L(\Lambda(K,\mu))}/{(p^N;p^N)_\infty}$. 
Note also that when $\beta=1$, the quasi-ground state $ \psi_0(x,p|\beta=1)$ 
(given in Definition \ref{quasi-ground-state}) divided by ${(p^N;p^N)_\infty}$, {\it i.e.} $ \psi_0(x,p|1) /{(p^N;p^N)_\infty}$, 
is nothing but
Weyl-Kac's denominator for the dominant integrable characters of $\widehat{\mathfrak sl}_N$.  
Then the case $\beta=1$ of Conjecture \ref{nonstationary-eCS} reduces to the heat equation for the numerator of the Weyl-Kac formula, {\it i.e.} for
$ \psi_0(x,p|1) \times {\rm ch}^{\widehat{\mathfrak sl}_N}_{L(\Lambda(K,\mu))}/{(p^N;p^N)_\infty}=
x^\mu \varphi^{\rm eCS}(x,p|\mu,-(K+N)/N|1)$.

\begin{prp} Let $\beta=1$, and $K,\mu$ be as in Definition \ref{dominant-integrable-weights}.
We have the heat equation 
\begin{align*}
&\Bigl( -{K+N\over N} \,p{\partial \over \partial p} +{1\over 2} \Delta \Bigr)x^\mu \varphi^{\rm eCS}(x,p|\mu,-(K+N)/N|1)\nonumber\\
&\qquad ={1\over 2}\sum_{i=1}^N \mu_i^2\,\,x^\mu \varphi^{\rm eCS}(x,p|\mu,-(K+N)/N|1). 
\end{align*}
\end{prp}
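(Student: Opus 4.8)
The plan is to recognise the left‑hand side as the \emph{numerator} of the Weyl--Kac character formula for $L(\Lambda(K,\mu))$ and then to establish the heat equation for that numerator by the classical theta‑function argument, reading off the eigenvalue from the $\widehat W$‑invariance of the affine quadratic form. First I would note that, because every potential term of $H^{\rm eCS}(p|\beta)$ in Definition \ref{H-eCS} carries the prefactor $\beta(\beta-1)$, at $\beta=1$ the operator $H^{\rm eCS}(p|1)$ degenerates to a multiple of the Laplacian $\sum_i\vartheta_i^2$; hence the operator in the assertion coincides with $k\,p\partial_p+H^{\rm eCS}(p|1)$ for $k=-(K+N)/N$, and the Proposition is exactly the $\beta=1$ instance of Conjecture \ref{nonstationary-eCS}. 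Combining the eCS specialization of Theorem \ref{ch-glN} quoted above, $x^\mu f^{\rm eCS}(x,p|\mu,-(K+N)/N|1)={\rm ch}^{\widehat{\mathfrak sl}_N}_{L(\Lambda(K,\mu))}/(p^N;p^N)_\infty$, with the fact recorded before the statement that $\psi_0(x,p|1)/(p^N;p^N)_\infty$ is the Weyl--Kac denominator $\mathcal D$ of $\widehat{\mathfrak sl}_N$ in the present (principal‑type) specialization, and with $\varphi^{\rm eCS}=\psi_0\,f^{\rm eCS}$, one obtains
\[
x^\mu\,\varphi^{\rm eCS}(x,p|\mu,-(K+N)/N|1)=\mathcal D\cdot{\rm ch}^{\widehat{\mathfrak sl}_N}_{L(\Lambda(K,\mu))}=\sum_{w\in\widehat W}(-1)^{\ell(w)}\,e^{\,w(\Lambda(K,\mu)+\widehat\rho)-\widehat\rho},
\]
the last equality being the denominator identity ($\widehat\rho$ the affine Weyl vector, of level equal to the dual Coxeter number $N$), the right‑hand side understood after the specialization $e^{-\delta}\mapsto p^N$, $e^{-\alpha_i}\mapsto$ monomial in $x,p$, under which affine characters become elements of $\mathbb Q[[\,px_2/x_1,\dots,px_1/x_N]]$. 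So it remains to prove that this numerator solves $\bigl(-\tfrac{K+N}{N}p\partial_p+\tfrac12\Delta\bigr)(\,\cdot\,)=\tfrac12(\sum_i\mu_i^2)(\,\cdot\,)$.

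Under the specialization each summand becomes a monomial $\pm\,p^{a(w)}x^{b(w)}$, hence an eigenvector of the first‑order operator $-\tfrac{K+N}{N}p\partial_p+\tfrac12\sum_i\vartheta_i^2$ with eigenvalue $-\tfrac{K+N}{N}a(w)+\tfrac12\langle b(w),b(w)\rangle$, so it suffices to check that this is independent of $w$ and equals $\tfrac12\sum_i\mu_i^2$. For the $w$‑independence I would set $\nu:=w(\Lambda+\widehat\rho)-\widehat\rho$ (of level $K$, so $a(w)$ and $b(w)$ differ from $-N\langle\nu,d\rangle$ and from the finite part $\bar\nu$ respectively by $w$‑independent quantities), use that $\Lambda+\widehat\rho$ has level $K+N$, and invoke the $\widehat W$‑invariance of the standard affine bilinear form, $(\overline{w(\Lambda+\widehat\rho)}\mid\overline{w(\Lambda+\widehat\rho)})+2(K+N)\langle\nu,d\rangle=(\overline{\Lambda+\widehat\rho}\mid\overline{\Lambda+\widehat\rho})$, which is manifestly $w$‑free; since on the finite Cartan the invariant form is the Euclidean form $\langle\,,\,\rangle$, this is precisely the statement that $-\tfrac{K+N}{N}a(w)+\tfrac12\langle b(w),b(w)\rangle$ does not depend on $w$. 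Evaluating that constant at $w=e$, where the summand is the leading monomial $x^{\overline{\Lambda(K,\mu)}}$ with no power of $p$, one computes it from $\Lambda(K,\mu)$ and $\widehat\rho$: the level‑$K\Lambda_0$ and $(\widehat\rho\mid\widehat\rho)$ contributions cancel in strange‑formula fashion, leaving $\tfrac12\sum_i\mu_i^2$. Summing over $w$ with signs gives the claim.

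The hard part will be the bookkeeping in these last two steps: one has to fix precisely the dictionary between the $p$‑ and $x$‑gradings used here and the affine Cartan data (the scaling element $d$, the invariant form, the affine Weyl vector $\widehat\rho$, and the shift $e^{-\delta}\mapsto p^N$), carefully tracking the factor $N$ and the additive shift constants so that the common eigenvalue comes out as $\tfrac12\sum_i\mu_i^2$ exactly, rather than merely up to a constant. An alternative that sidesteps part of this chase is to use the $\beta=1$ degeneration of the Nekrasov ratios in the explicit series for $f^{\rm eCS}$ (they become $1$ precisely on the affine Gelfand--Tsetlin set $D(\mu)$, as in the proof of Theorem \ref{ch-glN}), exhibiting $x^\mu\varphi^{\rm eCS}$ directly as a signed sum of classical Jacobi theta functions attached to $\Lambda(K,\mu)+\widehat\rho$, for each of which the heat equation with the asserted eigenvalue is classical.
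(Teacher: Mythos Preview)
Your approach is correct and coincides with the paper's own treatment. The paper does not give a formal proof of this proposition; it relies on the sentence immediately preceding the statement, which says that at $\beta=1$ the function $x^\mu\varphi^{\rm eCS}$ equals the numerator of the Weyl--Kac character formula (since $\psi_0(x,p|1)/(p^N;p^N)_\infty$ is the Weyl--Kac denominator and $x^\mu f^{\rm eCS}$ is the normalized character by Theorem~\ref{ch-glN}), so that the assertion reduces to the classical heat equation for that numerator. You have spelled out exactly this classical argument: the potential vanishes at $\beta=1$, the numerator is the $\widehat W$-signed sum of monomials $e^{w(\Lambda+\widehat\rho)-\widehat\rho}$, and $\widehat W$-invariance of the affine invariant form forces all summands to share the eigenvalue computed at $w=e$. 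The only caution is your phrase ``a multiple of the Laplacian'': with the paper's convention $\Delta=\tfrac12\sum_i\vartheta_i^2$ and $H^{\rm eCS}(p|1)=\tfrac12\sum_i\vartheta_i^2$, one has $H^{\rm eCS}(p|1)=\Delta$ rather than $\tfrac12\Delta$; this is a harmless normalization mismatch in the paper and does not affect your argument.
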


Next, consider the case $K=0$ ({\it i.e.} $k=-\beta$), and $\mu=\emptyset$ . Note that we have 
$ f^{\rm eCS}(x,p|0,\ldots,0,-N\beta|\beta)={1/ (p^N,p^N)_\infty}$. 
This case gives another nontrivial check (for arbitrary $\beta$) of Conjecture \ref{nonstationary-eCS}.
\begin{prp}
We have
\begin{align}
\left(-\beta p{\partial \over \partial p}+ H^{\rm eCS}(p|\beta) \right)\psi_0(x,p|\beta) {1\over (p^N,p^N)_\infty}=0. \label{character}
\end{align}
\end{prp}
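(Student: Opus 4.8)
The plan is to identify \eqref{character} with the $\lambda=\emptyset$, $k=-\beta$, $f^{\rm eCS}=1/(p^N;p^N)_\infty$ case of the equivalent form of Conjecture~\ref{nonstationary-eCS}, and to verify that case by a direct conjugation computation. Write $G(p)=1/(p^N;p^N)_\infty$ and $\widetilde H_\beta(p)=H_\beta(p)+\tfrac{\beta(\beta-1)N}{2}V_0(p^N)$. Using the conjugation identity $\psi_0\,\widetilde H_\beta(p)\,\psi_0^{-1}=H^{\rm eCS}(p)$ established above together with the Leibniz rule for $p\partial_p$, one has
\[
\Bigl(-\beta\,p\partial_p+H^{\rm eCS}(p)\Bigr)(\psi_0\,G)=\psi_0\Bigl(-\beta\,\tfrac{p\partial_p\psi_0}{\psi_0}-\beta\,p\partial_p+\widetilde H_\beta(p)\Bigr)G ,
\]
so since $\psi_0\neq0$ it is enough to show the bracket on the right annihilates $G$.

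Next I would evaluate the two ingredients. Because $G$ does not depend on $x$, the operators $\vartheta_i^2$ and $\vartheta_i-\vartheta_j$ kill it, so by the two-body form of $H_\beta(p)$ obtained above only the multiplication part survives:
\[
\widetilde H_\beta(p)G=\Bigl(\beta^2\!\!\sum_{1\le i<j\le N}\!\!\Bigl(\tfrac N2\,\tfrac{\Theta^{(2)}_{p^N}(p^{j-i}x_j/x_i)}{\Theta_{p^N}(p^{j-i}x_j/x_i)}-\tfrac{N-2j+2i}{2}\,\tfrac{\Theta^{(1)}_{p^N}(p^{j-i}x_j/x_i)}{\Theta_{p^N}(p^{j-i}x_j/x_i)}\Bigr)-\beta^2\tfrac{(N-1)(N-2)}{6}\,\tfrac{p\partial_p\Theta^{(1)}_{p^N}(1)}{\Theta^{(1)}_{p^N}(1)}+\tfrac{\beta(\beta-1)N}{2}V_0(p^N)\Bigr)G ,
\]
while from $\psi_0=\bigl(((p^N;p^N)_\infty)^{N-N(N-1)/2}\prod_{1\le i<j\le N}\Theta_{p^N}(p^{j-i}x_j/x_i)\bigr)^{\beta}$ one reads off
\[
\tfrac{p\partial_p\psi_0}{\psi_0}=\beta\Bigl(N-\tfrac{N(N-1)}{2}\Bigr)\tfrac{p\partial_p(p^N;p^N)_\infty}{(p^N;p^N)_\infty}+\beta\!\!\sum_{1\le i<j\le N}\!\!p\partial_p\log\Theta_{p^N}(p^{j-i}x_j/x_i) .
\]

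The key step, and the one I expect to be the main obstacle, is a heat equation for the theta function. Treating the nome and the argument of $\Theta$ as independent and substituting into the recorded identity $-p\partial_p\Theta_{p^2}(pz)+(z\partial_z)^2\Theta_{p^2}(pz)=0$, one should extract $2\,Q\partial_Q\Theta_Q(W)=\bigl((W\partial_W)^2-W\partial_W\bigr)\Theta_Q(W)$ for arbitrary nome $Q$ and argument $W$. Applying this with $Q=p^N$, $W=p^{j-i}x_j/x_i$, and using that $p\partial_p$ acts on $\Theta_{p^N}(p^{j-i}x_j/x_i)$ as $N\,Q\partial_Q+(j-i)\,W\partial_W$, one obtains
\[
p\partial_p\log\Theta_{p^N}(p^{j-i}x_j/x_i)=\tfrac N2\,\tfrac{\Theta^{(2)}_{p^N}(p^{j-i}x_j/x_i)}{\Theta_{p^N}(p^{j-i}x_j/x_i)}-\tfrac{N-2j+2i}{2}\,\tfrac{\Theta^{(1)}_{p^N}(p^{j-i}x_j/x_i)}{\Theta_{p^N}(p^{j-i}x_j/x_i)} ,
\]
which is exactly the two-body summand of $\widetilde H_\beta(p)G$. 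Hence the full $x$-dependent part of $\widetilde H_\beta(p)G$ cancels against the $x$-dependent part of $-\beta\tfrac{p\partial_p\psi_0}{\psi_0}G$.

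It then remains to check the $p$-only terms. Set $u:=\tfrac{p\partial_p(p^N;p^N)_\infty}{(p^N;p^N)_\infty}$; then $-\beta\,p\partial_p G=\beta u\,G$, while $V_0(p)=2\tfrac{p\partial_p(p;p)_\infty}{(p;p)_\infty}$ with $p\partial_p=N\,p^N\partial_{p^N}$ gives $V_0(p^N)=\tfrac2N u$, and the Lemma $V_0(p^N)=\tfrac{2}{3N}\tfrac{1}{\Theta^{(1)}_{p^N}(1)}p\partial_p\Theta^{(1)}_{p^N}(1)$ gives $\tfrac{p\partial_p\Theta^{(1)}_{p^N}(1)}{\Theta^{(1)}_{p^N}(1)}=3u$. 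Collecting the four surviving contributions, the bracket applied to $G$ equals $u\,G$ times
\[
\beta+\beta(\beta-1)-\beta^2\Bigl(N-\tfrac{N(N-1)}{2}\Bigr)-\beta^2\tfrac{(N-1)(N-2)}{2}=\beta^2\Bigl(1-N+\tfrac{N(N-1)-(N-1)(N-2)}{2}\Bigr)=\beta^2\bigl(1-N+(N-1)\bigr)=0 ,
\]
using $N(N-1)-(N-1)(N-2)=2(N-1)$, which yields \eqref{character}. Apart from the heat-equation extraction — where one must be careful that $p\partial_p$ hits both the nome $p^N$ and the factor $p^{j-i}$ in the argument, and that the single special identity of the Lemma really does upgrade to the general-nome form used — everything is routine: once $p\partial_p\log\Theta_{p^N}$ is matched with the shifted coefficient $\tfrac{N-2j+2i}{2}$ of the two-body potential, the proof reduces to the elementary scalar identity above.
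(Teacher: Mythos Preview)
Your proof is correct and rests on the same key identity the paper uses, namely the heat equation
\[
-p\partial_p\Theta_{p^N}(p^{j-i}x_j/x_i)+\tfrac N2\,\Theta^{(2)}_{p^N}(p^{j-i}x_j/x_i)-\tfrac{N-2j+2i}{2}\,\Theta^{(1)}_{p^N}(p^{j-i}x_j/x_i)=0,
\]
so the approaches are essentially the same. The only difference is packaging: the paper works directly with $H^{\rm eCS}$ and asserts in one line that \eqref{character} is equivalent to the $\beta$-independent identity $\sum_{i<j}p\partial_p\log\Theta_{p^N}(p^{j-i}x_j/x_i)=\sum_{i<j}\bigl(\tfrac N2\Theta^{(2)}/\Theta-\tfrac{N-2j+2i}{2}\Theta^{(1)}/\Theta\bigr)$, whereas you first conjugate to $\widetilde H_\beta$ and then explicitly verify the cancellation of the four $p$-only scalar terms $\beta+\beta(\beta-1)-\beta^2\bigl(N-\tfrac{N(N-1)}{2}\bigr)-\beta^2\tfrac{(N-1)(N-2)}{2}=0$ that the paper leaves implicit in the word ``equivalent''.
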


We remark that this is a special case of Langmann's kernel function identity \cite{L}.

\begin{proof}
Equation (\ref{character}) is equivalent to the following $\beta$ independent equation
\begin{align*}
& \sum_{1\leq i<j\leq N}
{1\over  \Theta_{p^N}(p^{j-i}x_j/x_i)}
p{\partial\over \partial p} \Theta_{p^N}(p^{j-i}x_j/x_i)\nonumber\\
=&
 \sum_{1\leq i<j\leq N} \left(
 {N\over 2} { \Theta^{(2)}_{p^N} (p^{j-i}x_j/x_i)\over \Theta_{p^N}(p^{j-i}x_j/x_i)}-
 {N-2j+2i\over 2} {\Theta^{(1)}_{p^N} (p^{j-i}x_j/x_i)\over \Theta_{p^N}(p^{j-i}x_j/x_i)}\right),
\end{align*}
which follows from the heat equations for the theta function for $1\leq i<j\leq N$
\begin{align*}
- p{\partial\over \partial p} \Theta_{p^N} (p^{j-i}x_j/x_i)+
{N\over 2} \Theta^{(2)}_{p^N} (p^{j-i}x_j/x_i)-
 {N-2j+2i\over 2}\Theta^{(1)}_{p^N} (p^{j-i}x_j/x_i)=0.
\end{align*}
\end{proof}

\subsection{Stationary elliptic Calogero-Sutherland equation: case $k=0$}

\begin{dfn}
Let 
$\alpha(p|\lambda,k|\beta)=\sum_{d\geq 0} p^{Nd} 
\alpha_d(p|\lambda,k|\beta)$ be the constant term of the series $ f^{\rm eCS}(x,p|\lambda,k|\beta)$ with respect to $x_i$'s.
Namely,
\begin{align}
\alpha(p|\lambda,k|\beta)
=&
\sum_{
\lambda^{(1)},\ldots,\lambda^{(N)}\in {\mathsf P}\atop 
m_1=\cdots=m_N=0}
p^{|{\boldsymbol \lambda}|}
\prod_{i,j=1}^N
{\Nk^{(j-i|N)}_{\lambda^{(i)},\lambda^{(j)}} (1-\beta+\mu_j-\mu_i |k) \over \Nk^{(j-i|N)}_{\lambda^{(i)},\lambda^{(j)}} (\mu_j-\mu_i |k)}.
\nonumber
\end{align}

\end{dfn}

\begin{con}\label{analyticity-eCS}
We have the properties:
\begin{enumerate}
\item
The series $ f^{\rm eCS}$ is convergent on a certain domain.
With respect to $k$, it is regular on a certain punctured disk 
$\{k \in {\mathbb C}| |k| <r,k\neq 0\}$ .
\item
The $f^{\rm eCS}$ and $\alpha(p|\lambda,k|\beta)$ are essential singular at $k=0$. 
The $\alpha_d(p|\lambda,k|\beta)$ has a pole of degree $d$ in $k$ at $k=0$.
\item
The ratio $f^{\rm eCS}/\alpha(p|\lambda,k|\beta)$ is regular at $k=0$. 
\item
The derivative 
${1\over k} p {\partial /\partial p} \log \alpha(p|\lambda,k|\beta)$ is regular at $k=0$, 
the limit $k\rightarrow 0$ of which being a nontrivial Taylor series in $p^N$. 
\end{enumerate}
\end{con}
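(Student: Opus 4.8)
\medskip

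The plan is to work from the explicit sum for $f^{\rm eCS}(x,p|\lambda,k|\beta)$ recorded above, together with the non-stationary elliptic Calogero--Sutherland equation of Conjecture~\ref{nonstationary-eCS}, which I take as given in its form directly on $f^{\rm eCS}$, namely $\bigl(k\,p\partial_p+\mathcal{K}\bigr)x^\lambda f^{\rm eCS}=\tfrac12\sum_i\lambda_i^2\,x^\lambda f^{\rm eCS}$ with
\begin{align*}
\mathcal{K}=k\,p\partial_p\log\psi_0+H_\beta(p)+{\textstyle\frac{\beta(\beta-1)N}{2}}V_0(p^N).
\end{align*}
The operator $\mathcal{K}$ carries no $p$-derivative, hence commutes with multiplication by the (purely $p$-dependent) constant term $\alpha=\alpha(p|\lambda,k|\beta)$. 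Writing $f^{\rm eCS}=\alpha\,g$ and dividing by $\alpha$ turns the equation into $\bigl(k\,p\partial_p+\mathcal{K}\bigr)x^\lambda g=\bigl(\tfrac12\sum_i\lambda_i^2-k\,p\partial_p\log\alpha\bigr)x^\lambda g$, so $k\,p\partial_p\log\alpha$ is precisely the $p$-dependent part of the eigenvalue after normalisation. Grading everything by total degree in $p$ and by the dominance order on the monomials $\prod_i x_i^{n_i}$ gives a triangular recursion: the degree-zero operator $\mathcal{K}^{[0]}-\tfrac12\sum_i\lambda_i^2$ annihilates the $x^\lambda$-direction and is invertible, with $k$-independent inverse, on all strictly higher monomials, so the only mechanism producing $k$-denominators is the factor $kNd$ arising when $k\,p\partial_p$ hits the $x^\lambda$-direction at order $p^{Nd}$.

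I would then settle the four assertions in turn. For (1), a majorant estimate on the Nekrasov-factor ratios in the explicit sum yields absolute convergence for $p$ in a polydisc; the $k$-singularities of the coefficients arise only from zeros of the denominators, and for generic $\lambda,\beta$ these are confined to the diagonal blocks $\Nk^{(0|N)}_{\mu^{(i)},\mu^{(i)}}(0|k)$, whose zero loci accumulate at $k=0$ but leave a small punctured disc singularity-free. For (2) and (3), I would run the triangular recursion: the constant-in-$x$ component of the order-$p^{Nd}$ equation, divided by $kNd$, expresses $\alpha_d$ through the lower $\alpha_{d'}$ and through quantities regular at $k=0$ (the non-constant-monomial parts of $g=f^{\rm eCS}/\alpha$ computed at lower degree, on which the relevant inversion is pole-free); induction gives $\mathrm{ord}_{k=0}\alpha_d\le d$, and the normalisation ``constant term of $g$ is $1$'' together with the pole-freeness of the off-diagonal inversions gives regularity of $g$ at $k=0$, i.e.\ (3), conditional on (4).

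The heart of the matter is (4). Although $\alpha_d$ has a pole of order $d$, the series $\log\alpha$ should have a pole of order at most $1$ in $k$ at $k=0$ --- an ``exponentiation of the leading singularity'', $\alpha\sim\exp\bigl(A(p^N)/k+(\text{regular})\bigr)$ with $A$ a nontrivial power series --- which is the substance of assertion (4) and makes $k\,p\partial_p\log\alpha$ regular at $k=0$ with a nontrivial limit. I would prove it by isolating the most singular part of the constant-in-$x$ recursion and checking that the leading Laurent coefficients $a_d=\lim_{k\to0}k^d\alpha_d$ satisfy the recursion of an exponential series in $p^N$; nontriviality then reduces to $a_1\neq0$, a finite computation, and this simultaneously pins $\mathrm{ord}_{k=0}\alpha_d$ to be exactly $d$. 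The main obstacle is precisely this non-cancellation at leading order: one must show that the deepest-pole contributions of the (many) $N$-tuples $\boldsymbol\lambda$ with $m_1=\cdots=m_N=0$ do not conspire to cancel, a genuine combinatorial positivity statement about the diagonal Nekrasov denominators, and one must also make precise the genericity hypothesis on $\lambda,\beta$ under which the triangular inversions are valid. As an independent route, (1)--(4) should also follow from Conjectures~\ref{analyticity-Ruijsenaars}--\ref{analyticity-Toda} via the degeneration $q=e^h$, $t=e^{\beta h}$, $\kappa=e^{kh}$, $s_i=e^{h\lambda_i}$, $h\to0$, once those estimates are made uniform in $h$.
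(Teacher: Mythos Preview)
The statement you are attempting to prove is labelled in the paper as a \emph{Conjecture}, not a theorem or proposition; the paper offers no proof of it whatsoever. There is therefore nothing in the paper to compare your argument against: the author simply states these analyticity properties as expected behaviour, parallel to the analogous Conjectures~\ref{analyticity-Ruijsenaars} and~\ref{analyticity-Toda}, and moves on to use them as hypotheses in the definition of the stationary function and the subsequent conjecture about $H^{\rm eCS}(p)$.

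As to the content of your proposal, note that it is doubly conditional. First, you explicitly assume Conjecture~\ref{nonstationary-eCS} (the non-stationary eCS equation), which is itself unproven in the paper; so at best your argument would establish an implication between two conjectures, not the conjecture itself. Second, even granting that input, you yourself identify the substantive gap: the non-cancellation of the leading Laurent coefficients $a_d=\lim_{k\to 0}k^d\alpha_d$, and in particular $a_1\neq 0$, is a combinatorial positivity statement about sums over $N$-tuples of partitions with $m_1=\cdots=m_N=0$ that you do not carry out. The triangular-recursion heuristic is reasonable as a strategy, but until that cancellation issue is settled and the genericity assumptions on $\lambda,\beta$ are made precise, what you have written is an outline rather than a proof --- which is consistent with the paper's own status of the statement as open.
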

\begin{dfn}
Assuming Conjecture \ref{analyticity-eCS}, set
\begin{align*}
& \varphi^{\rm st.\,eCS}(x,p|\lambda,k|\beta)={ \varphi^{\rm eCS}(x,p|\lambda,k|\beta)\over \alpha(p|\lambda,k|\beta)} \Bigl|_{k=0},
\,\,
\varepsilon(p|\lambda|\beta)= {1\over 2} \sum_{i=1}^n \lambda_i^2+
{1\over k} p {\partial \over\partial p} \log \alpha(p|\lambda,k|\beta)\Bigl|_{k=0}.
\end{align*}
\end{dfn}

Conjectures \ref{nonstationary-eCS} and \ref{analyticity-eCS} assert the following conjecture for the 
stationary elliptic Calogero-Sutherland equation.
\begin{con}
We have 
$
H^{\rm eCS}(p)\,x^\lambda\varphi^{\rm st.\,eCS}(x,p|\lambda|\beta)=
\varepsilon(p|\lambda|\beta)\,x^\lambda\varphi^{\rm st.\,eCS}(x,p|\lambda|\beta).
$
\end{con}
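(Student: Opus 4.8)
The plan is to deduce the statement from the non-stationary equation of Conjecture~\ref{nonstationary-eCS} together with the analyticity assumptions of Conjecture~\ref{analyticity-eCS}, by dividing the non-stationary eigenfunction by its constant term $\alpha(p|\lambda,k|\beta)$ and passing to the limit $k\to 0$. This is exactly the scheme of Atai and Langmann \cite{AL}, and it runs parallel to the derivation of the stationary affine $q$-Toda equation from the Poincar\'e duality carried out in Proposition~\ref{non-stationary-Toda} and the subsequent discussion.

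The one structural input is that $H^{\rm eCS}(p)$ involves the variables $x_i$ only through the Euler operators $\vartheta_i=x_i\partial/\partial x_i$ and through multiplication by the elliptic potentials $V(p^{j-i}x_j/x_i|p^N)$ and $V_0(p^N)$; hence $H^{\rm eCS}(p)$ commutes with multiplication by any function of $p$ alone, and in particular $[\,H^{\rm eCS}(p),\,\alpha(p|\lambda,k|\beta)\,]=0$. Writing $\widehat\varphi:=\varphi^{\rm eCS}(x,p|\lambda,k|\beta)/\alpha(p|\lambda,k|\beta)$ and substituting $\varphi^{\rm eCS}=\alpha\,\widehat\varphi$ into Conjecture~\ref{nonstationary-eCS}, I would expand the term $k\,p\,\partial/\partial p$ by the Leibniz rule and divide through by $\alpha$, obtaining, for all $k$ in the punctured disk of Conjecture~\ref{analyticity-eCS}(1), the identity
\begin{align*}
\Bigl(k\,p{\partial\over\partial p}+H^{\rm eCS}(p)\Bigr)x^\lambda\widehat\varphi
=\Bigl({1\over 2}\sum_{i=1}^N\lambda_i^2-k\,p{\partial\over\partial p}\log\alpha(p|\lambda,k|\beta)\Bigr)x^\lambda\widehat\varphi .
\end{align*}

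Now I would let $k\to 0$. By Conjecture~\ref{analyticity-eCS}(3) the series $\widehat\varphi$ is regular at $k=0$ with $\widehat\varphi|_{k=0}=\varphi^{\rm st.\,eCS}(x,p|\lambda|\beta)$, so $p\,\partial\widehat\varphi/\partial p$ is regular at $k=0$ as well (term by term in the $p$-expansion), whence the term $k\,p\,\partial(x^\lambda\widehat\varphi)/\partial p$ disappears in the limit; the remaining operator is $H^{\rm eCS}(p)$ and $H^{\rm eCS}(p)\,x^\lambda\widehat\varphi\to H^{\rm eCS}(p)\,x^\lambda\varphi^{\rm st.\,eCS}$. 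On the right, Conjecture~\ref{analyticity-eCS}(2),(4) is precisely the statement that $\alpha$ acquires an essential singularity at $k=0$ of the type for which $k\,p\,\partial_p\log\alpha$ has a finite, nontrivial limit that is a Taylor series in $p^N$; this limit is the $p$-dependent anomaly recorded in the Definition of $\varepsilon(p|\lambda|\beta)$ preceding the statement. Passing to the limit in the displayed identity then yields $H^{\rm eCS}(p)\,x^\lambda\varphi^{\rm st.\,eCS}(x,p|\lambda|\beta)=\varepsilon(p|\lambda|\beta)\,x^\lambda\varphi^{\rm st.\,eCS}(x,p|\lambda|\beta)$, as claimed.

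The main obstacle is that the whole content lies in the two input conjectures. Conjecture~\ref{nonstationary-eCS} is the $q,t\to 1$ shadow of the (still conjectural) non-stationary Ruijsenaars heat equation and should ultimately descend from Felder and Varchenko's $q$-deformed KZB heat equation \cite{FV1,FV2,FV3}; a modest unconditional check is the case $\beta=1$, where $\psi_0(x,p|1)/(p^N;p^N)_\infty$ is the Weyl--Kac denominator and Conjecture~\ref{nonstationary-eCS} collapses to the classical heat equation for the Weyl--Kac numerator of the integrable $\widehat{\mathfrak sl}_N$-characters, provable from the heat equations for theta functions used in the proof of (\ref{character}). The genuinely hard part, however, is the anomaly half of Conjecture~\ref{analyticity-eCS}: controlling the precise pole/essential-singularity structure in $k$ of the constant term $\alpha(p|\lambda,k|\beta)$ and showing that the connected combination $k\,p\,\partial_p\log\alpha$ is regular with a nontrivial limit at $k=0$ --- this is exactly the analogue of the subtle point that makes the Atai--Langmann argument nontrivial.
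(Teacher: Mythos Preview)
Your derivation is exactly the (implicit) argument the paper has in mind: the paper's entire justification is the single sentence ``Conjectures~\ref{nonstationary-eCS} and~\ref{analyticity-eCS} assert the following conjecture'', and you have spelled out precisely that reduction --- substitute $\varphi^{\rm eCS}=\alpha\,\widehat\varphi$, use $[H^{\rm eCS}(p),\alpha]=0$, apply Leibniz to $k\,p\,\partial_p$, and pass to $k\to 0$ --- in parallel with the Toda discussion after Proposition~\ref{non-stationary-Toda}.

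One small remark worth recording: your computation produces the anomaly term $-\,k\,p\,\partial_p\log\alpha$, and this is indeed what the argument gives; the paper's Conjecture~\ref{analyticity-eCS}(4) and the subsequent Definition of $\varepsilon(p|\lambda|\beta)$ literally write the combination $\tfrac{1}{k}\,p\,\partial_p\log\alpha$, which appears to be a misprint (with that normalization the limit would vanish, contradicting the ``nontrivial'' clause and the pole count in item~(2)). Your version is the one consistent with the derivation and with the analogous multiplicative identity $\alpha(\widetilde p)/\alpha(\kappa\widetilde p)$ in the Toda case.
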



\end{document}